\DeclareMathOperator{\lcm}{lcm}
\newtheorem{theorem}{Theorem}[section]
\newtheorem{lemma}{Lemma}[section]
\newtheorem{proposition}{Proposition}[section]
\newtheorem{corollary}{Corollary}[section]
\theoremstyle{definition}
\newtheorem{definition}{Definition}[section]
\newtheorem{example}{Example}[section]
\theoremstyle{remark}
\newtheorem{remark}{Remark}[section]
\numberwithin{equation}{section}
\begin{document}

\title[Eigenvalue PP for Symmetric Group Permutation Representations]{The Eigenvalue Point Process for Symmetric Group Permutation Representations on $k$-tuples}

\author{Benjamin Tsou}
\address{Department of Mathematics, University of California, Berkeley, CA 94720-3840}
\email{benjamintsou@gmail.com}

\begin{abstract}
Equip the symmetric group $\mathfrak{S}_n$ with the Ewens distribution.  We study the eigenvalue point process of the permutation representation of $\mathfrak{S}_n$ on $k$-tuples of distinct integers chosen from the set $\{1,2,...,n\}$.  Taking $n \to \infty$, we find the limiting point process in the microscopic regime, i.e. when the eigenvalue point process is viewed at the scale of the mean eigenvalue spacing.  A formula for the limiting eigenvalue gap probability in an interval is also given.  In certain cases, a power series representation exists and a combinatorial procedure is given for computing the coefficients.   
 
\vspace{12pt}
\noindent \textbf{AMS MSC (2010): } Primary 60B20, 60F99, 60C05; secondary 11K60, 11L15.

\vspace{12pt}
\noindent \textbf{Keywords:} random matrices; symmetric group; permutation representations; eigenvalue point processes; equidistribution; discrepancy; exponential sums.

\end{abstract}

\maketitle

\allowdisplaybreaks

\section{Introduction} \label{S:Intro}

Historically, much of the attention in random matrix theory has focused on the spectrum of continuous matrix ensembles such as random Hermitian matrices or compact Lie groups (e.g. $O(n)$ or $U(n)$) equipped with Haar measure.  Although less studied, there has been increasing interest in the asymptotic eigenvalue behavior of discrete matrix groups and in particular discrete (finite) subgroups of $U(n)$.  Perhaps the most natural family of such subgroups to consider are the $n \times n$ permutation matrices, which can be thought of as a (faithful) representation of the symmetric group $\mathfrak{S}_n$ in $U(n)$.  In \cite{diaconisshah}, Diaconis and Shahshahani note that the spectrum of uniformly distributed permutation matrices (thought of as a random probability measure) converges weakly to a point mass at the Haar measure on the unit circle.  Wieand \cite{wieand} has shown that eigenvalue fluctuations of uniformly distributed permutation matrices satisfy an asymptotically Gaussian law.  These results have been generalized via wreath products to more general families of finite groups.  In \cite{wieand2}, Wieand extends her limiting normality results to representations corresponding to the natural action of $(\mathbb{Z}_{K_1} \times... \times \mathbb{Z}_{K_M}) \wr \mathfrak{S}_n$ and $\mathbb{T}^M \wr \mathfrak{S}_n$ on $\mathbb{T}^M \times \{1,...,n\}$.  Evans \cite{evans} studied the eigenvalue distribution of the natural permutation representation of the $n$-fold wreath product $G \wr G... \wr G$ of a permutation group $G$ acting transitively on a set.

Rather than looking at representations arising from the natural permutation action of more complicated permutation groups, one can take a somewhat different perspective and instead consider higher dimensional representations of the symmetric group itself.  In particular, let $\rho_{n,k}$ denote the permutation representation of $\mathfrak{S}_n$ on $k$-tuples of distinct integers chosen from the set $[n] := \{1,...,n\}$ induced from the usual action of $\mathfrak{S}_n$ on $[n]$.  In \cite{tsou}, the author studied the fluctuation of eigenvalue statistics of $\rho_{n,k}$ (as well as a few related representations) in a given arc of the unit circle as $n \to \infty$ when $\mathfrak{S}_n$ is equipped with the Ewens measure and showed convergence to a compactly supported limiting distribution.  This corresponds to the macroscopic regime in which the arc being considered stays fixed and does not shrink as $n \to \infty$.  In this paper, we instead investigate the microscopic regime and characterize the limiting behavior of the eigenvalue point process of $\rho_{n,k}$ when viewed at the scale of the average eigenvalue spacing.  

To begin, we define the representation $\rho_{n,k}$ precisely.  Let $Q_{n,k}$ be the set of ordered $k$-tuples $(t_1,...,t_k)$ of distinct integers chosen from the set $[n]$.  The symmetric group $\mathfrak{S}_n$ acts naturally on this set by $\sigma(t_1,...,t_k) = (\sigma(t_1),...,\sigma(t_k))$.  We can form the $\frac{n!}{(n-k)!}$-dimensional $\mathbb{C}$-vector space $V_{n,k}$ with basis elements $e_{(t_1,...,t_k)}$.  Then the action of $\mathfrak{S}_n$ on $Q_{n,k}$ induces the permutation representation $\rho_{n,k}: \mathfrak{S}_n \rightarrow O(V_{n,k})$ where $O(V_{n,k})$ is the orthogonal group on $V_{n,k}$.

For $\theta > 0$, recall that the Ewens measure with parameter $\theta$ is given by 
\begin{equation} \label{ewensdef} \nu_{n, \theta}(\sigma) = \frac{\theta^{K(\sigma)}}{\theta(\theta+1)...(\theta+n-1)} 
\end{equation}  
where $K(\sigma)$ denotes the number of cycles of $\sigma$.  A random permutation chosen from this distribution is called $\theta$-biased.  Note that the uniform distribution corresponds to the case $\theta = 1$.  See \cite{ewens} for historical motivation of this measure from population genetics.

Endowing $\mathfrak{S}_n$ with the Ewens measure induces the corresponding eigenvalue point process of $\rho_{n,k}$.  When $k=1$, the eigenvalues of $\rho_{n,1}(\sigma)$ have a simple characterization.  Since all the eigenvalues are of the form $e^{2 \pi i \phi}$, it will be convenient to refer to each eigenvalue $e^{2 \pi i \phi}$ by its \textit{eigenangle} $\phi \in [0, 1)$.  For each $\sigma \in \mathfrak{S}_n$, let $C_j^{(n)}(\sigma)$ denote the number of cycles of $\sigma$ of length $j$.  For $\rho_{n,1}$, the set of eigenangles corresponding to a $j$-cycle of a permutation is $\left\{0, \frac{1}{j},...,\frac{j-1}{j} \right\}$.  Thus, the point process of eigenangles is given by 
\begin{equation} \Lambda_n = \sum_{j=1}^n C_j^{(n)} \sum_{i=0}^{j-1} \delta_{i/j} 
\end{equation}

It is not hard to see (as noted in \cite{diaconisshah}) that the empirical spectral measure $\Lambda_n/n$ converges weakly to the point mass at the Haar measure on the unit circle.  To obtain a scaled point process limit, we renormalize by a factor of $n$ and extend the process to the entire real line by noting that the angles are periodic with period 1.  Then we are led to consider the convergence of the point process \begin{equation} \mathfrak{E}_n^0 := \sum_{j=1}^n C_j^{(n)} \sum_{i \in \mathbb{Z}} \delta_{n i/j}
\end{equation}
as $n \to \infty$.  To clarify the sense of convergence that will be used, we briefly review some notions from the theory of point processes.  Daley and Vere-Jones \cite[Chapt. 11]{daley2} discuss various modes of convergence for point processes.  Here, we just need a few basic facts.

\begin{definition}
Let $\mathfrak{N}$ be the space of locally finite counting measures on $\mathbb{R}$ and let $\mathcal{N}$ be the smallest $\sigma$-algebra on $\mathfrak{N}$ with respect to which the mappings $N \mapsto N(A)$ are measurable for each $A \in \mathcal{B}(\mathbb{R})$.  Then a point process $\xi$ on $\mathbb{R}$ is a measurable map $\xi: \Omega \to \mathfrak{N}$ from a probability space $(\Omega, \mathcal{F}, \mathbb{P})$ to the measurable space $(\mathfrak{N}, \mathcal{N})$.  Every point process can be represented as $\displaystyle{\xi = \sum_{i=1}^N \delta_{X_i}}$ where $\delta$ denotes Dirac measure, $N$ is an integer-valued random variable, and $X_i$ are real-valued random variables.  The point process $\xi$ is called simple if the $X_i$ are a.s. distinct.
\end{definition}
The space of measures $\mathfrak{N}$ can be made into a completely separable metric space \cite[pp. 402-406]{daley1}.  Under the induced metric topology, $\mathcal{N}$ is the Borel $\sigma$-algebra $\mathcal{B}(\mathfrak{N})$.  Then weak convergence of point processes can be defined in the usual way: If $\mu_n$ and $\mu$ are probability measures on the measurable space $(\mathfrak{N}, \mathcal{B}(\mathfrak{N}))$ then $\mu_n \to \mu$ weakly iff $\displaystyle{\int f d\mu_n \to \int f d \mu}$ for all continuous and bounded functions $f$ on $\mathfrak{N}$.  

\begin{definition}
Given a point process $\xi$, a Borel set $A$ is a stochastic continuity set for the process if $\mathbb{P}\{\xi(\delta A) = 0 \} = 1$.  The sequence of point processes $\xi_n$ converges in the sense of convergence of fidi distributions if for every finite family $\{A_1,...,A_k\}$ of bounded continuity sets $A_i \in \mathcal{B}(\mathbb{R})$, the joint distributions of $\{\xi_n(A_1),...,\xi_n(A_k)\}$ converge weakly in $\mathcal{B}(\mathbb{R}^k)$ to the joint distribution of $\xi(A_1),...,\xi(A_k)$.  
\end{definition}
It is often easier to check the following conditions for weak convergence:
\begin{proposition}[c.f. {\cite[p. 137]{daley2}}] \label{fididist}
Weak convergence of point processes is equivalent to convergence of the fidi distributions.
\end{proposition}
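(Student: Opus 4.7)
The plan is to prove both implications separately, following the standard approach to weak convergence on the Polish space $(\mathfrak{N}, \mathcal{B}(\mathfrak{N}))$.

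For the forward direction (weak convergence implies fidi convergence), I would invoke the continuous mapping theorem. The key observation is that for any bounded Borel set $A \subset \mathbb{R}$, the evaluation map $\pi_A: \mathfrak{N} \to \mathbb{Z}_{\geq 0}$ given by $\pi_A(N) = N(A)$ is continuous at every $N$ satisfying $N(\partial A) = 0$, when $\mathfrak{N}$ carries the topology described in \cite[pp. 402-406]{daley1}. By definition of a stochastic continuity set for $\xi$, the law $\mu$ of $\xi$ assigns mass zero to the set of discontinuity points of $\pi_A$. For a finite family $(A_1, \ldots, A_k)$ of bounded continuity sets, the joint evaluation $N \mapsto (N(A_1), \ldots, N(A_k))$ is likewise continuous $\mu$-almost surely, so the continuous mapping theorem yields the desired joint weak convergence in $\mathbb{R}^k$.

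For the reverse direction (fidi convergence implies weak convergence), the strategy is tightness plus uniqueness. First, I would establish that $\{\mu_n\}$ is tight on $\mathfrak{N}$. A standard characterization of tightness in spaces of locally finite measures reduces this to showing that for every bounded interval $I$, the real-valued family $\{\xi_n(I)\}$ is tight; this follows from fidi convergence applied to continuity sets $I' \supset I$, since weakly convergent real sequences are bounded in probability. By Prohorov's theorem, every subsequence admits a further weakly convergent subsubsequence with some limit $\mu^*$. By the forward direction already established, the fidi distributions under $\mu^*$ must match those of $\xi$, so it suffices to argue that fidi distributions determine a law on $\mathfrak{N}$ uniquely.

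The main obstacle is this last uniqueness step, since fidi convergence is only assumed for continuity sets, while Borel sets in $\mathfrak{N}$ are generated by cylinders over arbitrary bounded Borel sets. To overcome this, one observes that for any fixed $\xi$, the atoms of the mean measure $\mathbb{E}[\xi(\cdot)]$ form an at most countable set, so the family of $\xi$-continuity intervals is dense among bounded intervals and contains countable bases that cover $\mathbb{R}$. The cylinder sets built from finite tuples of such continuity intervals form a $\pi$-system generating $\mathcal{B}(\mathfrak{N})$, so Dynkin's $\pi$-$\lambda$ theorem forces $\mu^* = \mu$. Hence every subsequence of $\{\mu_n\}$ has a further subsequence converging to $\mu$, which gives $\mu_n \to \mu$ weakly.
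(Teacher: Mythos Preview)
The paper does not supply its own proof of this proposition; it simply cites the result from Daley and Vere-Jones \cite[p.~137]{daley2}. Your sketch is the standard argument that one finds there and is essentially correct: continuous mapping for the forward direction, and tightness plus Prohorov plus uniqueness of the limit for the converse.

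One small imprecision: you justify the countability of non-continuity points by speaking of ``atoms of the mean measure $\mathbb{E}[\xi(\cdot)]$''. This is fine when the mean measure is locally finite, but that is not assumed here. The correct statement is that the set of \emph{fixed atoms} of $\xi$, i.e.\ points $t$ with $\mathbb{P}\{\xi(\{t\})>0\}>0$, is at most countable; this holds for any point process and follows from local finiteness of $\xi$ itself via a Fatou-type argument, without invoking the mean measure. With that adjustment, your $\pi$-system of cylinder sets over continuity intervals does generate $\mathcal{B}(\mathfrak{N})$, and the uniqueness step goes through.
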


\begin{proposition}[c.f. {\cite[p. 138]{daley2}}]
Let $\xi_n$ and $\xi$ be point processes.  Then $\xi_n \to \xi$ weakly iff we have the weak convergence of random variables $\displaystyle{\int_\mathbb{R} f d \xi_n \to \int_\mathbb{R} f d \xi}$ for all continuous and compactly supported $f \colon \mathbb{R} \to \mathbb{R}$. 
\end{proposition}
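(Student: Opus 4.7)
The plan is to prove both directions separately, invoking Proposition \ref{fididist} for the reverse implication.

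For the forward direction ($\Rightarrow$), the key observation is that for each continuous, compactly supported $f \colon \mathbb{R} \to \mathbb{R}$, the functional $T_f \colon \mathfrak{N} \to \mathbb{R}$ defined by $T_f(\mu) = \int_\mathbb{R} f \, d\mu$ is continuous with respect to the metric topology on $\mathfrak{N}$ described in \cite{daley1} (this is essentially the content of vague convergence: the chosen metric on $\mathfrak{N}$ metrizes vague convergence, and vague convergence of measures $\mu_n \to \mu$ is equivalent to $\int f \, d\mu_n \to \int f \, d\mu$ for every $f \in C_c(\mathbb{R})$). Since $\xi_n \to \xi$ weakly means $\mathbb{E}[F(\xi_n)] \to \mathbb{E}[F(\xi)]$ for all bounded continuous $F$ on $\mathfrak{N}$, taking $F = g \circ T_f$ for an arbitrary bounded continuous $g \colon \mathbb{R} \to \mathbb{R}$ yields $\mathbb{E}[g(T_f(\xi_n))] \to \mathbb{E}[g(T_f(\xi))]$, which is precisely weak convergence of the real-valued random variables $\int f \, d\xi_n$ to $\int f \, d\xi$.

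For the reverse direction ($\Leftarrow$), by Proposition \ref{fididist} it suffices to establish convergence of all fidi distributions. That is, for any bounded continuity sets $A_1, \dots, A_k$, I would show $(\xi_n(A_1), \dots, \xi_n(A_k)) \to (\xi(A_1), \dots, \xi(A_k))$ in distribution in $\mathbb{R}^k$. By the Cram\'er-Wold device, this reduces to showing $\sum_{i=1}^k c_i \xi_n(A_i) \to \sum_{i=1}^k c_i \xi(A_i)$ in distribution for every $(c_1, \dots, c_k) \in \mathbb{R}^k$. Writing the left-hand side as $\int h \, d\xi_n$ with $h = \sum_i c_i \mathbf{1}_{A_i}$, the next step is to approximate this discontinuous integrand by continuous compactly supported functions.

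The main obstacle is bridging the gap between $C_c(\mathbb{R})$ and the discontinuous indicator functions $\mathbf{1}_{A_i}$. I would address this by choosing, for each $\epsilon > 0$ and each $i$, functions $\phi_i^-, \phi_i^+ \in C_c(\mathbb{R})$ with $\phi_i^- \leq \mathbf{1}_{A_i} \leq \phi_i^+$ and with $\phi_i^+ - \phi_i^-$ supported in the $\epsilon$-neighborhood of $\partial A_i$. Combining these (with signs chosen according to $\sgn(c_i)$) into two sandwiching functions $h_\epsilon^- \leq h \leq h_\epsilon^+$ in $C_c(\mathbb{R})$, the hypothesis yields weak convergence $\int h_\epsilon^{\pm} \, d\xi_n \to \int h_\epsilon^{\pm} \, d\xi$ for each fixed $\epsilon$. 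The stochastic continuity assumption $\mathbb{P}(\xi(\partial A_i) = 0) = 1$ ensures that $\int (h_\epsilon^+ - h_\epsilon^-) \, d\xi \to 0$ almost surely as $\epsilon \to 0$, so after a standard squeeze argument (passing $n \to \infty$ then $\epsilon \to 0$ and comparing Laplace transforms or characteristic functions of $\int h_\epsilon^{\pm} \, d\xi_n$ and $\int h \, d\xi_n$), one concludes that $\int h \, d\xi_n \to \int h \, d\xi$ in distribution, completing the proof.
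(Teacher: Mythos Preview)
The paper does not actually prove this proposition: it is stated as a citation to Daley and Vere-Jones \cite[p.~138]{daley2} and used as a black box, so there is no ``paper's own proof'' to compare against. Your argument is a correct reconstruction of the standard textbook proof.

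One comment on the reverse direction: the ``standard squeeze argument'' you invoke deserves one more sentence of justification. Sandwiching $\int h\,d\xi_n$ between $\int h_\epsilon^{\pm}\,d\xi_n$ and knowing that the two outer sequences converge weakly is not by itself enough; you also need that the gap $\int (h_\epsilon^+ - h_\epsilon^-)\,d\xi_n$ is small in probability \emph{uniformly in large $n$}. This does follow from your hypothesis (since $h_\epsilon^+ - h_\epsilon^- \in C_c(\mathbb{R})$, the gap converges weakly to $\int (h_\epsilon^+ - h_\epsilon^-)\,d\xi$, and by Portmanteau and the continuity-set assumption this limit tends to $0$ in probability as $\epsilon \to 0$), after which a Slutsky-type ``converging together'' lemma finishes the job. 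Making that step explicit would close the only soft spot in the argument.
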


The Poisson-Dirichlet distribution $PD(\theta)$ plays a central role in the limit of $\mathfrak{E}_n^0$ and in general the (appropriately scaled) eigenvalue point process for $\rho_{n,k}$ as $n \to \infty$.  Perhaps the most intuitive way to describe the $PD(\theta)$ distribution is in terms of the $GEM(\theta)$ distribution.  Let $U_1, U_2,...$ be i.i.d. random variables with density $\theta (1-x)^{\theta-1}$ on $[0,1]$.  Define $V_j = U_j \prod\limits_{i=1}^{j-1}(1-U_i)$.  By the Borel-Cantelli Lemma, $\sum_j V_j =1$ with probability 1.  The distribution of $(V_1, V_2,...)$ is called the $GEM(\theta)$ distribution and there is a visually pleasing interpretation of this as a stick-breaking process.  Then the decreasing order statistics of $GEM(\theta)$ have the $PD(\theta)$ distribution.  For more information, see e.g. \cite[pp. 38-49]{billingsley}, \cite[Sect. 5.7]{a}, \cite{pitmanyor}, or \cite{feng}.    

Kingman \cite{kingman2} proved the following:

\begin{proposition}[Kingman] \label{PDconvergence}
Let the random variable $L_i^{(n)}$ denote the length of the $i^\textrm{th}$ longest cycle in a $\theta$-biased random permutation of size $n$.  Then \[n^{-1} (L_1^{(n)}, L_2^{(n)},...) \buildrel d \over \rightarrow (L_1,L_2,...) \] as $n \to \infty$ where the limiting random vector is distributed $PD(\theta)$.   
\end{proposition}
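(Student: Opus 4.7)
The plan is to establish the convergence through the size-biased picking construction, which converts the statement to convergence to the $GEM(\theta)$ distribution; one then uses the fact that the decreasing rearrangement of $GEM(\theta)$ equals $PD(\theta)$ by definition. Concretely, for a $\theta$-biased $\sigma \in \mathfrak{S}_n$, pick a uniform element $t_1 \in [n]$ independently of $\sigma$, let $J_1^{(n)}$ be the length of the cycle of $\sigma$ containing $t_1$, delete that cycle, then pick $t_2$ uniformly from the remaining elements and iterate. This produces a size-biased sequence of cycle lengths $J_1^{(n)}, J_2^{(n)}, \ldots$ whose decreasing rearrangement is $(L_1^{(n)}, L_2^{(n)}, \ldots)$.

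The first step is to identify the one-dimensional limit. By exchangeability, $J_1^{(n)}$ has the same law as the length of the cycle containing element $1$. A direct count (choosing the $j-1$ other cycle members, arranging them into a cycle, and summing the Ewens weights of the residual permutation) gives
\[
\mathbb{P}\bigl(J_1^{(n)} = j\bigr) \;=\; \frac{\theta\,(n-1)!\,[\theta]_{n-j}}{(n-j)!\,[\theta]_n}, \qquad 1 \le j \le n,
\]
where $[\theta]_m := \theta(\theta+1)\cdots(\theta+m-1)$. Rewriting this as $\tfrac{\theta}{n}\prod_{k=0}^{j-1}\tfrac{n-k}{\theta+n-1-k}$ and expanding the logarithm of the product for $j=\lfloor xn\rfloor$ yields $\prod_{k=0}^{j-1}\tfrac{n-k}{\theta+n-1-k} \to (1-x)^{\theta-1}$, so the scaled mass function converges to $\theta(1-x)^{\theta-1}$ and $J_1^{(n)}/n$ converges in distribution to $U_1$.

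The second step is the Markov property of size-biased sampling: because $\nu_{n,\theta}(\sigma)\propto \theta^{K(\sigma)}$ factors over the cycles of $\sigma$, conditional on the first deleted cycle (its set of elements and its cyclic order), the induced permutation on the remaining $n-J_1^{(n)}$ elements is $\theta$-biased on $\mathfrak{S}_{n-J_1^{(n)}}$. Iterating Step~1 inside this conditional structure and using that the $U_j$ in the definition of $GEM(\theta)$ are i.i.d., one obtains the joint convergence $n^{-1}(J_1^{(n)}, J_2^{(n)}, \ldots) \to (V_1, V_2, \ldots) \sim GEM(\theta)$ in the product topology.

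The main obstacle is Step~3, the passage from convergence of the size-biased sequence to convergence of its decreasing rearrangement, since the sort map is not continuous in the product topology. I would handle this by first noting tightness of $n^{-1}(L_1^{(n)}, L_2^{(n)}, \ldots)$, which is automatic since its coordinates are monotone and bounded in $[0,1]$. Any subsequential limit $(\ell_1, \ell_2,\ldots)$ then inherits, via Step~2 applied to the conditional residual permutations, the property that a size-biased permutation of its atoms has the $GEM(\theta)$ law. By the characterization of $PD(\theta)$ as the unique law on decreasing sequences whose size-biased permutation is $GEM(\theta)$, we conclude $(\ell_1,\ell_2,\ldots) \sim PD(\theta)$, so the full sequence converges to the claimed limit.
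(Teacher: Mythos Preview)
The paper does not prove this proposition at all; it is quoted as Kingman's result with a citation to \cite{kingman2} and used as a black box. So there is no ``paper's own proof'' to compare against.

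Your proposal is a legitimate and standard route to the result, via the size-biased (Feller/Chinese-restaurant) construction and the $GEM(\theta)$ distribution. Steps~1 and~2 are correct: the formula for $\mathbb{P}(J_1^{(n)}=j)$ is right, the limit to the $Beta(1,\theta)$ stick-break is right, and the Markov decomposition of Ewens under deletion of a cycle is exactly the self-similarity that makes the iteration go through. Step~3 is where the real work lies, and your sketch is a bit thin. The subsequential-limit argument you outline can be made rigorous, but you have to be explicit about why ``size-biased permutation of the limit'' coincides with ``limit of the size-biased permutation''---this needs the fact that $\sum_i V_i = 1$ a.s.\ (no mass escape), together with a continuity argument for the ranked rearrangement on the simplex $\{x_1\ge x_2\ge\cdots\ge 0,\ \sum x_i=1\}$ in the $\ell^1$ topology. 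Alternatively, one can bypass Step~3 entirely by proving directly that $(L_1^{(n)}/n,\ldots,L_r^{(n)}/n)$ converges in distribution to the first $r$ coordinates of $PD(\theta)$; the paper in fact establishes a uniform \emph{local} limit theorem of exactly this type (Lemma~\ref{L:uniformbound}), which immediately yields the finite-dimensional convergence without needing to pass through $GEM$.
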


Now define \begin{equation}\label{e0} \mathfrak{E}_\infty^0 := \sum_{i=1}^\infty \sum_{q \in \mathbb{Z}} \delta_{q/L_i} \end{equation}
where $(L_1,L_2,...)$ is a random vector following the $PD(\theta)$ distribution.

\begin{remark}\label{GEMremark} We could replace the $PD(\theta)$-distributed process $(L_1,L_2,...)$ with a $GEM(\theta)$-distributed process $(V_1,V_2,...)$ in the definition of $\mathfrak{E}_\infty^0$ since this just corresponds to reordering the terms in the sum.
\end{remark}
Note that $\mathfrak{E}_\infty^0$ isn't quite a point process since it has an infinite Dirac mass at 0.  However, using Proposition \ref{PDconvergence} it is easy to see that $\mathfrak{E}_n^0 \to \mathfrak{E}_\infty^0$ weakly in the following sense: $\displaystyle{ \int f d\mathfrak{E}_n^0 \to \int f d\mathfrak{E}_\infty^0}$ weakly for all continuous, compactly supported $f$ such that $f(0) = 0$.  Note that if $f(0) > 0$, the distribution of $\int f d\mathfrak{E}_n^0$ converges weakly to the zero measure (i.e. mass escapes to $\infty$) and $\int f d\mathfrak{E}_\infty^0 = \infty$ almost surely. Najnudel and Nikeghbali \cite[Prop. 4.1]{najnudel} in fact show a.s. convergence when all the symmetric groups are put on the same probability space through a construction known as virtual permutations.

The point process $\mathfrak{E}_n^0$ can be thought of as the process $\Lambda_n$ ``zoomed in'' at 0 by a factor of $n$.  ``Zooming in'' on different ``windows'' of the point process $\Lambda_n$ can give very different limiting behaviors.  To be more precise, define for any real number $\alpha$ the shifted point process \begin{equation}\label{ea} \mathfrak{E}_n^\alpha := \sum_{j=1}^n C_j^{(n)} \sum_{i \in \mathbb{Z}} \delta_{n (i/j-\alpha)} \end{equation}
which corresponds to $\Lambda_n$ zoomed in at $\alpha$.
The number-theoretic properties of $\alpha$ play a very important role in determining the limiting process.  Recall that the irrationality measure $\mu(r)$ of a real number $r$ is given by 
\begin{equation} \label{defirrational}
\mu(r)= \inf \left\{ \lambda\colon \left\lvert r-\frac{p}{q}\right\rvert < \frac{1}{q^{\lambda}} \text{ has only finitely many integer solutions in } p  \text{ and } q \right\} 
\end{equation}

Then we can state the following convergence result, which we prove in Section \ref{S:k=1irrational}.

\begin{theorem}\label{T:1dmain}
Let $U_1,U_2,...$ be i.i.d. random variables distributed uniformly on the interval $[0,1]$ and independent of the $PD(\theta)$ process $(L_1,L_2,...)$.  Then define the point process \begin{equation}\label{estar} \mathfrak{E}_\infty^* := \sum_{i=1}^\infty \sum_{q \in \mathbb{Z}} \delta_{(U_i + q)/L_i} \end{equation} If $\alpha \in (0,1)$ is an irrational number with finite irrationality measure, then $\displaystyle{ \mathfrak{E}_n^\alpha \to \mathfrak{E}_\infty^*}$ weakly.
\end{theorem}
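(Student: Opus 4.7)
The plan is to establish convergence of finite-dimensional distributions (Proposition \ref{fididist}). Each cycle of length $j$ of $\sigma$ contributes to $\mathfrak{E}_n^\alpha$ the arithmetic progression
$$\{n(i/j-\alpha) : i \in \mathbb{Z}\} = \left\{\frac{n}{j}\bigl(q - \{j\alpha\}\bigr) : q \in \mathbb{Z}\right\},$$
so that after indexing the cycles of $\sigma$ one has
$$\mathfrak{E}_n^\alpha = \sum_i \sum_{q \in \mathbb{Z}} \delta_{(n/L_i^{(n)})(q - \{L_i^{(n)}\alpha\})}.$$
Comparing with \eqref{estar}, the task reduces to a joint convergence of the reciprocal spacings $n/L_i^{(n)}$ and the fractional parts $\{L_i^{(n)}\alpha\}$ to $1/L_i$ and an independent uniform on $[0,1]$. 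Following Remark \ref{GEMremark}, I would work with successive size-biased cycle picks $J_1^{(n)}, J_2^{(n)}, \ldots$ in place of the decreasing order statistics; the normalized lengths then converge jointly to a $\mathrm{GEM}(\theta)$ sequence $(V_1, V_2, \ldots)$ under Proposition \ref{PDconvergence}.

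The central step is to prove, for each fixed $M \ge 1$, the joint convergence
$$\left(\frac{J_1^{(n)}}{n}, \ldots, \frac{J_M^{(n)}}{n},\; \{J_1^{(n)}\alpha\}, \ldots, \{J_M^{(n)}\alpha\}\right) \Rightarrow (V_1, \ldots, V_M, U_1, \ldots, U_M),$$
where $(U_i)$ are i.i.d.\ uniform on $[0,1]$ and independent of $(V_i)$. By Weyl's equidistribution criterion this reduces to showing that for every nonzero $\vec k \in \mathbb{Z}^M$ and every bounded continuous $f \colon [0,1]^M \to \mathbb{C}$,
$$\mathbb{E}\!\left[ f\!\left( \tfrac{J_1^{(n)}}{n},\ldots,\tfrac{J_M^{(n)}}{n} \right) \exp\!\left(2\pi i \alpha \sum_{i=1}^M k_i J_i^{(n)}\right) \right] \to 0,$$
together with the marginal $\mathrm{GEM}(\theta)$ convergence of the lengths. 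The key Ewens input is the conditional structure of the size-biased picks: given $J_1^{(n)}=j_1,\ldots,J_{i-1}^{(n)}=j_{i-1}$, the next pick has the explicit density $(N-1)!\,\theta\,\theta^{(N-j)}/((N-j)!\,\theta^{(N)})$ for $j \in [N]$, with $N = n - j_1 - \cdots - j_{i-1}$. I would peel off the innermost index $i^*$ with $k_{i^*}\ne 0$, reducing the inner conditional expectation to an exponential sum of the form $\sum_{j=1}^N h_n(j)\, e^{2\pi i k j \alpha}$ against a slowly varying weight $h_n$, and apply summation by parts against the bounded partial Weyl sum $\sum_{j \le K} e^{2\pi i k j \alpha}$.

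Once this joint convergence is established, the truncated point process $\mathfrak{E}_{n,M}^\alpha$ built from the first $M$ size-biased cycles converges weakly to the analogous truncation of $\mathfrak{E}_\infty^*$ via the continuous mapping theorem, since the map sending $(s_i, o_i)_{i=1}^M \in (0,\infty)^M \times [0,1)^M$ to $\bigcup_i \{s_i(q - o_i) : q \in \mathbb{Z}\} \in \mathfrak{N}$ is continuous in the vague topology. To remove the truncation I would bound the expected number of points contributed by the cycles beyond the $M$-th in any bounded continuity set $A$ by an expression asymptotic to $|A|(1 - \sum_{i \le M} V_i)$, which tends a.s.\ to $0$ as $M \to \infty$; a standard point-process tail argument then yields weak convergence of the full processes.

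The main obstacle is uniform control of the iterated exponential sum. On the event that $s_{M-1}/n$ is close to $1$ the residual size $N$ is small and the equidistribution estimate degrades, so one must either truncate away this event (using $\mathrm{GEM}$-tail bounds to make its probability small) or invoke an effective estimate on $\sum_{j \le K} e^{2\pi i k j \alpha}$. The hypothesis that $\alpha$ has finite irrationality measure is precisely what supplies such an effective bound, together with the uniformity needed to survive composition through the $M$ levels of conditioning.
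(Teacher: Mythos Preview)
Your overall architecture---truncate to the first $M$ cycles, prove joint convergence of the rescaled lengths and fractional parts, then control the tail---matches the paper's. Working with size-biased picks $J_i^{(n)}$ and the $\mathrm{GEM}(\theta)$ limit rather than ordered statistics and $PD(\theta)$ is a legitimate alternative, and your Weyl-criterion/summation-by-parts route for the joint convergence is different from the paper's local-limit-theorem-plus-discrepancy argument (Lemma~\ref{L:uniformbound} feeding into Lemma~\ref{L:uni}) but should go through.

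However, you have misdiagnosed where the finite-irrationality-measure hypothesis does its work. The joint convergence of the first $M$ components holds for \emph{any} irrational $\alpha$: the partial sum $\sum_{j\le K}e^{2\pi ikj\alpha}$ is bounded by $|1-e^{2\pi ik\alpha}|^{-1}$ regardless of Diophantine type, and after truncating the event $\{N\text{ small}\}$ (which you yourself propose as one option) summation by parts against the smooth conditional density already gives the required vanishing. The paper confirms this: Lemma~\ref{L:uni} is stated and proved for arbitrary irrational $\alpha$.

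The real gap is in the tail step, which you dispatch in one line: ``bound the expected number of points contributed by the cycles beyond the $M$-th\ldots by an expression asymptotic to $|A|(1-\sum_{i\le M}V_i)$.'' This target is correct, but nothing you have written proves it, and it is exactly here that the hypothesis is indispensable. Conditional on the first $M$ picks, the expected number of tail points in $(-T,T)$ is controlled by $\sum_{j\le N,\,j\in J(n)}\theta/j$ with $J(n)=\{j:\|j\alpha\|<Tj/n\}$, and there is no automatic reason this is $O(N/n)$: for Liouville $\alpha$ one cannot exclude arbitrarily small $j$ from $J(n)$, and the harmonic weights $1/j$ are then fatal. (Indeed, if your tail argument worked as stated it would prove the theorem for every irrational $\alpha$, which the paper explicitly doubts.) The paper handles this in \eqref{Jineq}--\eqref{Jfrac} by partitioning $[n]$ into blocks $I_{m,b}$ with $b=n^{1-1/(\mu(\alpha)+1)}$, using the irrationality measure directly to show $J(n)\cap I_{1,b}=\varnothing$, and invoking the quantitative discrepancy bound of Proposition~\ref{P:1Ddisc} on the remaining blocks. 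You should relocate the hypothesis from the joint-convergence step, where it is not needed, to this tail estimate, where it carries the entire argument.
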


\begin{remark}
Najnudel and Nikeghbali \cite{najnudel} have studied the limiting eigenvalue distributions of ``randomized'' permutation matrices in which the 1's are replaced by i.i.d. random variables taking values in $\mathbb{C}^*$.  In the latter half of the paper, they study the eigenvalue point processes for these ensembles in the microscopic regime.  Among other results, they show that if the 1's are replaced by uniformly distributed entries on the complex unit circle, the limiting eigenangle process is also $\mathfrak{E}_\infty^*$.  Bahier \cite{bahier} has recently proved asymptotic normality results on the number of points of the limiting point processes $\mathfrak{E}_\infty^0$ and $\mathfrak{E}_\infty^*$ lying in an interval of size tending to infinity. It is easy to see that $\mathfrak{E}_\infty^*$ is a stationary simple point process with intensity 1, i.e. $\mathbb{E}[\mathfrak{E}_\infty^*]$ is Lebesgue measure on $\mathbb{R}$. 
\end{remark}

In Theorem \ref{T:k=1rational}, we show that at rational angles $\alpha=s/t$ where $s$ and $t$ are relatively prime, $\mathfrak{E}_n^\alpha$ converges weakly to some limit $\mathfrak{E}_{\infty}^t$.  The point process $\mathfrak{E}_{\infty}^t$ is defined essentially like $\mathfrak{E}_\infty^*$ except that the $U_i$ are discrete random variables uniform over the $t$ fractions $1/t,2/t,...,1$ instead of the interval $[0,1]$.  

Now let us consider the eigenvalue point process for general $k$.  For $\sigma \in \mathfrak{S}_n$, let $\sigma_k$ be the permutation corresponding to $\rho_{n,k}(\sigma)$ and $C_{j,k}^{(n)}(\sigma)$ be the number of $\sigma_k$-cycles of length $j$.  Note that $C_{j,k}^{(n)}(\sigma) > 0$ iff there exist $i_1,...,i_k$ (not necessarily distinct) such that $\prod_{l=1}^k C_{i_l}^{(n)}(\sigma) > 0$ and $\lcm(C_{i_1}^{(n)}(\sigma),...,C_{i_k}^{(n)}(\sigma)) = j$.  Then the eigenangle point process of $\rho_{n,k}$ is 
\begin{equation}
\Lambda_{n,k} = \sum_{j} C_{j,k}^{(n)} \sum_{i=0}^{j-1} \delta_{i/j}
\end{equation}
As for the $k = 1$ case, it is easy to see that $\Lambda_{n,k}/n^k$ converges weakly to the point mass at Haar measure on the unit circle.  
To obtain a scaled point process limit, we renormalize by a factor of $n^k$ and extend the process to the real line by periodicity.  Then we have the point process \begin{equation}
\mathfrak{E}_{n,k}^0 :=  \sum_{j} C_{j,k}^{(n)} \sum_{i \in \mathbb{Z}} \delta_{n^k i/j}
\end{equation}

As $n \to \infty$, it is not hard to see that the number-theoretic properties of the largest cycle lengths of $\sigma \in \mathfrak{S}_n$ play a dominant role in the behavior of $\mathfrak{E}_{n,k}^0$.  Thus, we introduce the following infinite array $X_{mi}$ of random variables giving the limiting prime factor distribution for a random integer in $[N]$ as $N \to \infty$. 
\begin{definition}

Let $p_m$ be the $m^\textrm{th}$ prime.  For positive integers $m$ and $i$, (by Kolmogorov's extension theorem) define $X_{m i}$ to be independent geometrically distributed random variables with parameter $1-1/p_m$, i.e. 
\begin{equation} \label{Xmi} \mathbb{P}\{X_{m i} = a\} = (1-1/p_m)(1/p_m)^{a} 
\end{equation} 
Let $\textbf{X}_l^{i_1,...,i_k}$ denote the $l \times k$ matrix consisting of entries $X_{m, i_j}$ for $1 \le m \le l$ and $1 \le j \le k$ as defined in \eqref{Xmi}.  We allow $l$ to be infinite.  
\end{definition}

 We will also need:

\begin{definition}
For $k > 0$, let $\mathbf{e}$ be an $\infty \times k$ matrix whose entries $\mathbf{e}_{mi}$ for $1 \le m < \infty$ and $1 \le i \le k$ are non-negative integers.  Then define the function 
\begin{equation}\label{gk} g_{k}(\mathbf{e}) := \prod_{m=1}^\infty \frac{p_m^{e_{m 1} + ...+ e_{m k}}}{p_m^{\max(e_{m1},...,e_{mk})}} 
\end{equation} 
We allow $g_k$ to take finite matrix arguments by filling in the remaining entries with $0$'s. 
\end{definition}
By the Borel-Cantelli lemma, $\prod\limits_{m=1}^\infty p_m^{\min(X_{m, i}, X_{m, j})}$ is a.s. finite.  Then $g_{k}(\textbf{X}_\infty^{i_1,...,i_k})$ is finite for all $k$ and sequences $(i_1,...,i_k)$ almost surely.  

Let the $PD(\theta)$ process $L_1,L_2,...$ be independent from all the $X_{mi}$ defined in \eqref{Xmi}.  Then we can define
\begin{equation} \mathfrak{E}_{\infty, k}^0 := k! \sum_{i_1 <...< i_k} g_{k}(\textbf{X}_\infty^{i_1,...,i_k}) \sum_{q \in \mathbb{Z}} \delta_{g_{k}(\textbf{X}_\infty^{i_1,...,i_k}) q /(L_{i_1}...L_{i_k}) } 
\end{equation}
The weak convergence $\mathfrak{E}_{n,k}^0 \to \mathfrak{E}_{\infty, k}^0$ will follow as a special case of Theorem \ref{T:rationalgeneral}.   

As for the $k = 1$ case, we define for each real number $\alpha$ the shifted point process \begin{equation} \label{kalpha} \mathfrak{E}_{n, k}^\alpha :=  \sum_{j} C_{j,k}^{(n)} \sum_{i \in \mathbb{Z}} \delta_{n^k (i/j - \alpha)} \end{equation} and consider $\mathfrak{E}_{n, k}^\alpha $ for rational and irrational $\alpha$ separately.  Theorem \ref{T:generalmain} below is the main result of the paper.  Section \ref{S:Factorization classes} discusses factorization classes and multi-dimensional discrepancy and well-distribution, which are used in the proof.  In Section \ref{S:main theorem}, we prove the theorem assuming the result in Theorem \ref{T:technical}.  Sections \ref{S:boundsmallcycle} and \ref{S:logdisc} are devoted to the proof of Theorem \ref{T:technical}, which addresses the primary technical difficulty: showing that only the largest cycles in $\sigma$ make non-negligible contributions to the point process in the limit $n \to \infty$.

\begin{theorem}\label{T:generalmain}

Let $U_{i_1,...,i_k}$ for $\displaystyle{i_1 < ...< i_k}$ be i.i.d. random variables distributed uniformly on $[0,1]$ and independent of the $PD(\theta)$ process $(L_1,L_2,...)$.  Let the variables $X_{mi}$ from \eqref{Xmi} be independent of both $U_{i_1,...,i_k}$ and $(L_1,L_2,...)$.  Then define the point process \begin{equation} \label{klimit} \mathfrak{E}_{\infty, k}^* := k!
\sum_{i_1 <...< i_k} g_{k}(\textbf{X}_\infty^{i_1,...,i_k}) \sum_{q \in \mathbb{Z}} \delta_{(q + U_{i_1,...,i_k}) g_{k}(\textbf{X}_\infty^{i_1,...,i_k})  /(L_{i_1}...L_{i_k}) } \end{equation}  If $\alpha \in (0,1)$ is an irrational number with finite irrationality measure, then $\mathfrak{E}_{n, k}^\alpha \to \mathfrak{E}_{\infty,k}^*$ weakly. 
\end{theorem}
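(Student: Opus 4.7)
The plan is to verify weak convergence via Proposition \ref{fididist}: show joint convergence of $(\mathfrak{E}_{n,k}^\alpha(A_1),\ldots,\mathfrak{E}_{n,k}^\alpha(A_m))$ for any finite family of bounded continuity sets. Each $\sigma_k$-cycle is naturally labeled by an ordered $k$-tuple $(i_1,\ldots,i_k)$ of distinct cycle indices of $\sigma$; by Theorem \ref{T:technical}, the contribution to $\mathfrak{E}_{n,k}^\alpha(A)$ coming from tuples that use any cycle outside the $K$ longest is negligible as $K\to\infty$, uniformly in $n$. Since the sum in \eqref{klimit} converges absolutely on bounded windows almost surely, the analogous truncation applies to $\mathfrak{E}_{\infty,k}^*$. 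It therefore suffices to prove fidi convergence of the truncated processes for each fixed $K$.

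For the ingredients attached to a fixed ordered tuple $(i_1,\ldots,i_k)$ with $i_j\le K$, Kingman's Proposition \ref{PDconvergence} gives $\lambda_i^{(n)}/n\to L_i$ jointly for $1\le i\le K$. A standard prime-factorization result, which extends to Ewens-distributed cycle lengths, shows that the $p_m$-adic valuations $v_{p_m}(\lambda_i^{(n)})$ converge jointly to the independent geometric family $X_{mi}$ of \eqref{Xmi}, independently of $(L_i)$. Writing $j_n:=\lcm(\lambda_{i_1}^{(n)},\ldots,\lambda_{i_k}^{(n)})$ and $D_n:=\lambda_{i_1}^{(n)}\cdots\lambda_{i_k}^{(n)}/j_n$, one then obtains $D_n\to g_{k}(\mathbf{X}_\infty^{i_1,\ldots,i_k})$, and the rescaled spacing $n^k/j_n$ converges to $g_{k}(\mathbf{X}_\infty^{i_1,\ldots,i_k})/(L_{i_1}\cdots L_{i_k})$. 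These match the multiplicity prefactor $k!\,g_{k}$ and the spacing appearing in \eqref{klimit}.

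The final and hardest task is to identify the joint limit of the lattice offsets. In any bounded window, the $k!\,D_n$ $\sigma_k$-cycles of common length $j_n$ arising from an unordered tuple (summed over its $k!$ orderings) contribute $k!\,D_n$ copies of a single shifted lattice, whose offset modulo the spacing equals $\{-j_n\alpha\}\in[0,1)$. I would show that, across distinct unordered tuples $\{i_1,\ldots,i_k\}\subseteq\{1,\ldots,K\}$, these fractional parts become asymptotically i.i.d.\ uniform on $[0,1]$ and independent of $(L_i,X_{mi})$ in the joint limit, producing precisely the $U_{i_1,\ldots,i_k}$ in \eqref{klimit}. The natural tool is the Erd\H{o}s--Tur\'an--Koksma inequality applied to the multi-dimensional sequence $(j_n^{(\tau)}\alpha)_\tau$ indexed by the finitely many relevant tuples $\tau$; the hypothesis that $\alpha$ has finite irrationality measure yields the polynomial lower bound $\|q\alpha\|\gtrsim q^{-O(1)}$ needed to control the exponential sums $\sum_q c_q\, e(q\alpha)$ for $q$ of polynomial size in $n$. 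The factorization-class framework of Section \ref{S:Factorization classes} is designed to decompose these sums according to the shared prime-power structure among the $\lambda_{i_j}^{(n)}$'s across different tuples $\tau$, making the multi-dimensional equidistribution tractable.

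The main obstacle is precisely this last step. The integers $j_n^{(\tau)}$ for distinct tuples $\tau$ are strongly correlated, because they share common cycle-length factors, so joint equidistribution of their fractional parts does not follow from a one-dimensional Weyl-type argument. Quantitative Diophantine input on $\alpha$ is essential and must be combined with careful combinatorial accounting of shared prime factors via the factorization classes. Once joint fidi convergence of the truncated processes is established for each $K$, letting $K\to\infty$ via Theorem \ref{T:technical} completes the proof.
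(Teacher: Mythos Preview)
Your overall architecture matches the paper's: truncate to the $r$ largest cycles, prove fidi convergence of the truncated process via joint convergence of (scaled cycle lengths, prime-valuation data, fractional offsets), and kill the remainder with Theorem~\ref{T:technical}. The paper carries this out via Lemma~\ref{L:unigeneral} and Corollary~\ref{C:firstr}.

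There is one genuine gap. Your opening claim that ``each $\sigma_k$-cycle is naturally labeled by an ordered $k$-tuple $(i_1,\ldots,i_k)$ of \emph{distinct} cycle indices'' is false: a $\sigma_k$-orbit through $(t_1,\ldots,t_k)$ with two coordinates in the same $\sigma$-cycle is labeled by a tuple with repeated indices. Such orbits have length $\lcm$ of only $s<k$ distinct cycle lengths, and they can involve only the $r$ largest cycles, so they are \emph{not} absorbed by your appeal to Theorem~\ref{T:technical} (which, as you invoke it, only treats tuples that reach outside the top $r$). The paper handles this explicitly: it decomposes the remainder into pieces $\nu_{n,s}^{r,c}$ for $1\le s\le k$, applies Theorem~\ref{T:technical} only when $c<s$, and disposes of the ``all-inside, repeated-index'' case $c=s<k$ by a separate scaling argument (equation~\eqref{scale}): these orbits live on a lattice of spacing $\sim n^{k-s}$ in the $n^k$-rescaled window, so they vanish in the limit for any fixed $r$.

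On the equidistribution step you make the problem harder than it is. For the \emph{truncated} process the paper does not need the Erd\H{o}s--Tur\'an--Koksma inequality or any quantitative input from the irrationality measure. After conditioning on a factorization class $\mathscr{P}_{\mathbf A}$ (which fixes $g_k$), the relevant offsets are $\{m_{i_1}\cdots m_{i_k}\,\alpha/g_k\}$, and the collection of monomials $\prod_j m_{i_j}$ over all $k$-subsets of $\{1,\ldots,r\}$ are linearly independent polynomials in $(m_1,\ldots,m_r)$. The qualitative multivariate Weyl/van der Corput result in Lemma~\ref{L:wellt} then gives well-distribution in $\mathbb{T}^{\binom{r}{k}}$ uniformly over boxes (Lemma~\ref{L:welltorus}); this needs only that $\alpha$ is irrational. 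The finite irrationality measure hypothesis is used \emph{solely} in the proof of Theorem~\ref{T:technical}, where quantitative discrepancy bounds (Erd\H{o}s--Tur\'an plus Weyl differencing, Section~\ref{S:logdisc}) are genuinely required to control the unbounded-in-$r$ remainder.
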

The analog of Theorem \ref{T:generalmain} for rational $\alpha$ is stated in Theorem \ref{T:rationalgeneral}. 
\begin{remark} \label{kintensity}
As before, the $PD(\theta)$-distributed vector $(L_1, L_2,...)$ could be replaced by a $GEM(\theta)$-distributed vector $(V_1, V_2,...)$ without changing the distribution of $\mathfrak{E}_{\infty,k}^*$.  The point process $\mathfrak{E}_{\infty,k}^*$ is clearly stationary.  If $\theta = 1$, the mean measure is  \begin{equation} \mathbb{E}[\mathfrak{E}_{\infty,k}^*] = k! \sum_{i_1 < ... < i_k} \mathbb{E}[L_{i_1}...L_{i_k}] \lambda = \frac{1}{k!} \lambda 
\end{equation}
by equation \eqref{summoments} where $\lambda$ denotes Lebesgue measure on $\mathbb{R}$.  Note that for $k > 1$ the mean intensity is less than 1, which reflects the fact that mass escapes to infinity as $\mathfrak{E}_{n, k}^\alpha \to \mathfrak{E}_{\infty,k}^*$.
\end{remark}

\begin{remark}
Numbers that have infinite irrationality measure are called Liouville numbers and are known to have Hausdorff dimension 0.  It seems unlikely that any nice limiting behavior exists for $\alpha$ an arbitrary Liouville number.  
\end{remark}
In contrast to the $\rho_{n,k}$ representations, recall that the $U(n)$ eigenvalue point processes are translation invariant and the limiting process when zooming in at any point on the unit circle is a determinantal point process characterized by the Dyson sine kernel. 

Finally, we use Theorem \ref{T:generalmain} to compute the limiting eigenvalue gap probability for $\mathfrak{E}_{n, k}^\alpha$ as $n \to \infty$.

\begin{definition}
Let $\alpha$ be irrational with finite irrationality measure.  If $\sigma \in \mathfrak{S}_n$ is a randomly chosen permutation from the Ewens measure with parameter $\theta$, then let $P^\theta_k(y_1,y_2)$ denote the limit as $n \to \infty$ of the gap probability that there are no eigenangles of $\rho_{n,k}(\sigma)$ in the interval $\displaystyle{ \left(\alpha + \frac{y_1}{n^k}, \alpha + \frac{y_2}{n^k} \right)}$.  
\end{definition}

By Proposition \ref{fididist}, the limiting eigenvalue gap probability is the gap probability of the limiting process $\mathfrak{E}_{\infty,k}^*$.  Since the $U_{i_1,...,i_k}$ are i.i.d. random variables distributed uniformly on $[0,1]$, it is easy to see that conditional on the $PD(\theta)$ process $(L_1,L_2,...)$ and independent variables $X_{mi}$, the gap probability is \[\lim_{r \to \infty} \prod_{1 \le i_1 <...< i_k \le r } \bigg(1 - \min \Big(\frac{(y_2-y_1) \prod_{u=1}^k L_{i_u}}{g_k(\mathbf{X}_\infty^{i_1,...,i_k})}, 1 \Big) \bigg) \]
The limit clearly exists since this is a decreasing sequence in $r$.  Taking the expectation, we get

\begin{corollary}[Formula for eigenvalue gap probability] \label{C:eigengapformula}
Let the random variables $X_{mi}$ be independent from the $PD(\theta)$ process $(L_1,L_2,...)$.  Then the eigenvalue gap probability is given by
\begin{equation}
P^\theta_k(y_1,y_2) = \mathbb{E}\bigg[ \prod_{i_1 <...< i_k } \bigg(1 - \min \Big(\frac{(y_2-y_1) \prod_{u=1}^k L_{i_u}}{g_k(\mathbf{X}_\infty^{i_1,...,i_k})}, 1 \Big) \bigg) \bigg]
\end{equation}
\end{corollary}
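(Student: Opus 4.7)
The proof naturally breaks into two moves: \emph{(i)} passing from $\mathfrak{E}_{n,k}^\alpha$ to the limiting process $\mathfrak{E}_{\infty,k}^*$, and \emph{(ii)} computing the gap probability of $\mathfrak{E}_{\infty,k}^*$ explicitly by conditioning on the $PD(\theta)$-process and the geometric array.

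For step \emph{(i)}, Theorem \ref{T:generalmain} gives weak convergence $\mathfrak{E}_{n,k}^\alpha \to \mathfrak{E}_{\infty,k}^*$, and Proposition \ref{fididist} converts this into convergence of finite-dimensional distributions. Applying this to the single bounded set $(y_1, y_2)$ would yield
\[ P^\theta_k(y_1,y_2) = \mathbb{P}\bigl(\mathfrak{E}_{\infty,k}^*((y_1,y_2)) = 0\bigr), \]
provided $\{y_1\}$ and $\{y_2\}$ are almost-sure stochastic continuity sets for the limit. This is immediate from the fact that the mean measure of $\mathfrak{E}_{\infty,k}^*$ is a constant multiple of Lebesgue measure (by stationarity together with Remark \ref{kintensity}), so any fixed singleton is almost surely unoccupied.

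For step \emph{(ii)}, I would condition on both the $PD(\theta)$-vector $(L_i)$ and the array $(X_{mi})$. After this conditioning the contributions to $\mathfrak{E}_{\infty,k}^*$ indexed by distinct tuples $i_1 < \ldots < i_k$ become independent, since the only remaining randomness is the i.i.d.\ family of offsets $U_{i_1,\ldots,i_k}$. The event that no atoms lie in $(y_1,y_2)$ then decomposes into the conditionally independent intersection of per-tuple miss events. For a single tuple with spacing $s := g_k(\mathbf{X}_\infty^{i_1,\ldots,i_k}) / \prod_u L_{i_u}$, the lattice $\{(q + U_{i_1,\ldots,i_k})\,s : q \in \mathbb{Z}\}$ has a uniform offset on $[0,s]$, so its probability of avoiding an interval of length $y_2 - y_1$ equals
\[ 1 - \min\!\left(\frac{y_2-y_1}{s},\,1\right) \;=\; 1 - \min\!\left(\frac{(y_2-y_1)\prod_u L_{i_u}}{g_k(\mathbf{X}_\infty^{i_1,\ldots,i_k})},\,1\right), \]
since the interval covers a proportional fraction of the period whenever it is shorter than one period, and otherwise is always hit.

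Multiplying these conditional miss probabilities over all tuples then gives the conditional gap probability; the product is well-defined as the monotone decreasing limit (in $r$) of the finite products over $1 \le i_1 < \ldots < i_k \le r$, with each factor in $[0,1]$. Taking expectation via dominated (or monotone) convergence yields the formula claimed in the corollary. The only step worth careful scrutiny is the continuity-set verification in \emph{(i)}; beyond that, the argument is just conditioning plus a one-line lattice calculation, and I do not anticipate any serious obstruction.
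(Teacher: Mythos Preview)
Your proposal is correct and follows essentially the same route as the paper: the paper also invokes Proposition~\ref{fididist} to identify $P^\theta_k(y_1,y_2)$ with the gap probability of $\mathfrak{E}_{\infty,k}^*$, then conditions on $(L_i)$ and $(X_{mi})$, uses independence of the uniform offsets $U_{i_1,\ldots,i_k}$ to factor into per-tuple miss probabilities, and takes the monotone limit in $r$ before integrating out. You are in fact slightly more careful than the paper in explicitly verifying the stochastic-continuity-set condition and justifying the product limit, but there is no substantive difference in approach.
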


Note that $\displaystyle{\prod_{u=1}^k L_{i_u} < \frac{1}{k^k}}$ almost surely.  Therefore, if $\displaystyle{y_2 - y_1 \le k^k}$, we can remove the $\min$ and get 
\begin{equation}\label{gapformula}
P^\theta_k(y_1,y_2) = \mathbb{E}\bigg[ \prod_{i_1 <...< i_k } \bigg(1 + \frac{(y_1-y_2) \prod_{u=1}^k L_{i_u}}{g_k(\mathbf{X}_\infty^{i_1,...,i_k})} \bigg) \bigg]
\end{equation}

Expanding out the product, $P^\theta_k(y_1,y_2)$ has a power series representation in $y_1 - y_2$.  For $\theta=1$, nicer formulas can be obtained and we will provide a combinatorial procedure for computing the series coefficients in Section \ref{S:power}. 

In the next section we begin by discussing joint cycle length statistics, which play an important role in understanding the eigenvalue point processes.  In essence, we want to sum over the events  $(C_1^{(n)},...,C_n^{(n)}) = (c_1,...,c_n)$ such that $\sum_{i=1}^n i c_i = n$ and determine when the $\sigma_k$-cycle of length $\lcm(C_{i_1},...,C_{i_k})$ contains an eigenangle in a given interval.  Thus, the proofs of Theorems \ref{T:1dmain} and \ref{T:generalmain} involve combining asymptotic point probability estimates of cycle length distributions with probabilistic number theory and Diophantine approximation methods.
  
\section{Joint Cycle Length Statistics} \label{S:JointCycleLengths}

We borrow much of the notation in this section from Arratia, Barbour, and Tavare's book \cite{a}.  This monograph contains essentially all the material we need about joint cycle length distributions of permutations drawn from the Ewens distribution.  

The Conditioning Relation \cite[(4.1)]{a} states that the distribution of cycle lengths $C_j^{(n)}$ behaves the same as independent Poisson random variables conditioned on the necessary restriction that the size of the permutation is $n$.  This relation in fact holds for a broad class of classical combinatorial structures such as mappings, partitions, and trees.  See \cite{a} for details.  To state the Conditioning Relation formally, we define the following random variables.
\begin{definition} \label{T_{0n}}
Let $Z_j$ be independent Poisson variables with mean $\theta/j$.  Then we define \begin{equation} T_{bn} := \sum_{j=b+1}^n j Z_j \end{equation}  For any subset $B \subset [n]$, we define \begin{equation} T_B := \sum_{j \in B} j Z_j \end{equation}
\end{definition}

\begin{proposition}[Conditioning Relation]
 For any vector $c = (c_1,...,c_n) \in \mathbb{Z}_+^n$, we have \begin{equation} \label{Conditioning Relation}
\mathbb{P}\{C_1^{(n)} = c_1,...,C_n^{(n)} = c_n \} = \mathbb{P}\{Z_1 = c_1,..., Z_n=c_n \mid T_{0n} = n\}
\end{equation} 
\end{proposition}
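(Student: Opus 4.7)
The plan is to verify the identity by writing out both sides explicitly and matching them term by term. Since the joint Poisson distribution of $(Z_1,\dots,Z_n)$ conditioned on $\{T_{0n}=n\}$ is supported on the set of compositions $c=(c_1,\dots,c_n)$ with $\sum_{j} j c_j = n$, and the same is true for the cycle count vector of a permutation in $\mathfrak{S}_n$, I only need to verify the identity on this set of $c$. Both vectors assign probability zero to all other $c$, so that case is automatic.

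For the left-hand side, I would use Cauchy's classical formula: the number of permutations in $\mathfrak{S}_n$ with cycle type $(c_1,\dots,c_n)$ is $n!/\prod_{j=1}^{n} j^{c_j}\, c_j!$, and each such $\sigma$ has $K(\sigma)=\sum_j c_j$ cycles and hence Ewens weight $\theta^{\sum c_j}/\theta^{(n)}$, where $\theta^{(n)}=\theta(\theta+1)\cdots(\theta+n-1)$. Multiplying, whenever $\sum j c_j = n$,
\begin{equation*}
\mathbb{P}\{C_1^{(n)}=c_1,\dots,C_n^{(n)}=c_n\} \;=\; \frac{n!}{\theta^{(n)}}\prod_{j=1}^n \frac{(\theta/j)^{c_j}}{c_j!}.
\end{equation*}
For the right-hand side, I would use independence of the $Z_j$'s: since $Z_j$ is Poisson with mean $\theta/j$,
\begin{equation*}
\mathbb{P}\{Z_1=c_1,\dots,Z_n=c_n\}\;=\; e^{-\theta H_n}\prod_{j=1}^n \frac{(\theta/j)^{c_j}}{c_j!},
\end{equation*}
where $H_n=\sum_{j=1}^{n} 1/j$. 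On the event $\sum j c_j=n$ this is precisely the joint probability that $(Z_1,\dots,Z_n)=(c_1,\dots,c_n)$ together with $T_{0n}=n$, so dividing by $\mathbb{P}\{T_{0n}=n\}$ gives the conditional probability.

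Comparing the two expressions, the identity reduces to the single normalization relation
\begin{equation*}
\mathbb{P}\{T_{0n}=n\}\;=\; \frac{e^{-\theta H_n}\,\theta^{(n)}}{n!}.
\end{equation*}
To establish this, I would sum the joint Poisson probabilities over all $c$ with $\sum j c_j = n$ to obtain $\mathbb{P}\{T_{0n}=n\} = e^{-\theta H_n}\sum_{c:\sum j c_j=n}\prod_j (\theta/j)^{c_j}/c_j!$, and then observe that the Ewens cycle-index sum $\sum_{c:\sum j c_j=n}(n!/\theta^{(n)})\prod_j (\theta/j)^{c_j}/c_j! = 1$ because it is just the total mass of the Ewens distribution, which is a probability measure. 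Dividing these two identities immediately gives the claimed formula for $\mathbb{P}\{T_{0n}=n\}$ and closes the proof.

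There is no serious obstacle; the only nontrivial ingredient is Cauchy's cycle-type counting formula, and the proof is purely a book-keeping exercise. The cleanest route avoids explicitly computing $\mathbb{P}\{T_{0n}=n\}$ from its probability generating function (which would involve the identity $\sum_{n\ge 0}(\theta^{(n)}/n!)s^n = (1-s)^{-\theta}$ for $|s|<1$); instead, the normalization is pinned down by the fact that the Ewens measure sums to one, making the argument essentially a one-line comparison of coefficients.
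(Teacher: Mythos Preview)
Your proof is correct. The paper does not actually prove this proposition; it simply cites it as equation~(4.1) from Arratia, Barbour, and Tavar\'e's monograph and treats it as a known background result, so there is no argument to compare against. Your direct verification---computing the Ewens cycle-type probability via Cauchy's formula, writing out the conditional Poisson probability, and pinning down $\mathbb{P}\{T_{0n}=n\}$ by the normalization of the Ewens measure---is the standard derivation and matches what one finds in that reference (and indeed the formula you obtain for $\mathbb{P}\{T_{0n}=n\}$ reappears later in the paper as equation~\eqref{T0nl} with $l=n$).
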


Using the Conditioning Relation, one can compute

\begin{proposition}[Ewens Sampling Formula]
Under the Ewens measure with parameter $\theta$,
\begin{equation} \mathbb{P}\{ (C_1^{(n)},...,C_n^{(n)}) = (b_1,...,b_n) \} = \frac{n!}{\theta(\theta+1)...(\theta+n-1)} \prod_{j=1}^n \bigg( \frac{\theta}{j} \bigg)^{b_j} \frac{1}{b_j!} \mathbbm{1}\bigg(\sum_{i=1}^n i b_i = n \bigg) 
\end{equation} 
\end{proposition}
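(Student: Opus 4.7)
The plan is to apply the Conditioning Relation and then evaluate the resulting ratio of Poisson probabilities. I would write
\begin{equation*}
\mathbb{P}\{(C_1^{(n)},\ldots,C_n^{(n)}) = (b_1,\ldots,b_n)\} \;=\; \frac{\mathbb{P}\{Z_1=b_1,\ldots,Z_n=b_n\}}{\mathbb{P}\{T_{0n}=n\}}\,\mathbbm{1}\!\left[\sum_{j=1}^n j b_j = n\right],
\end{equation*}
where the indicator appears because the event $\{Z_j=b_j\text{ for }1\le j\le n\}$ is inconsistent with $T_{0n}=n$ unless $\sum_j j b_j=n$, while under this constraint the event in the numerator is automatically contained in $\{T_{0n}=n\}$.

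The numerator is immediate from independence of the $Z_j$ and the point mass formula for the Poisson distribution:
\begin{equation*}
\mathbb{P}\{Z_1=b_1,\ldots,Z_n=b_n\} \;=\; \prod_{j=1}^n e^{-\theta/j}\,\frac{(\theta/j)^{b_j}}{b_j!} \;=\; e^{-\theta H_n}\prod_{j=1}^n \frac{(\theta/j)^{b_j}}{b_j!},
\end{equation*}
where $H_n=\sum_{j=1}^n 1/j$.

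The denominator is the one nontrivial computation and is where I expect the only real work to lie. Starting from the probability generating function
\begin{equation*}
\mathbb{E}[x^{T_{0n}}] \;=\; \prod_{j=1}^n \exp\!\bigl((\theta/j)(x^j-1)\bigr) \;=\; e^{-\theta H_n}\exp\!\left(\theta\sum_{j=1}^n \frac{x^j}{j}\right),
\end{equation*}
I would observe that truncating the series $-\log(1-x)=\sum_{j\ge 1} x^j/j$ past degree $n$ does not affect the coefficient of $x^n$ in the exponential, so the coefficient of $x^n$ in the second factor equals the coefficient of $x^n$ in $\exp(-\theta\log(1-x))=(1-x)^{-\theta}$. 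By the generalized binomial theorem this coefficient is $\theta(\theta+1)\cdots(\theta+n-1)/n!$, hence
\begin{equation*}
\mathbb{P}\{T_{0n}=n\} \;=\; e^{-\theta H_n}\cdot\frac{\theta(\theta+1)\cdots(\theta+n-1)}{n!}.
\end{equation*}
Dividing the numerator by this denominator, the $e^{-\theta H_n}$ factors cancel and the formula drops out exactly as stated.

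The main obstacle, such as it is, is the generating-function identification of the normalizing constant $\theta(\theta+1)\cdots(\theta+n-1)/n!$ as the $n$-th cycle-index coefficient of $(1-x)^{-\theta}$; once that identity is in hand, the rest is bookkeeping. As a sanity check, the formula can also be derived directly by noting that there are $n!/\prod_j j^{b_j}b_j!$ permutations with cycle type $(b_1,\ldots,b_n)$ and each carries Ewens weight $\theta^{\sum b_j}/[\theta(\theta+1)\cdots(\theta+n-1)]$, which instantly recovers the right-hand side.
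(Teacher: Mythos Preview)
Your proof is correct and follows exactly the route the paper indicates: the paper does not spell out a proof but simply says ``Using the Conditioning Relation, one can compute'' the formula, and your argument carries out precisely that computation. Your generating-function evaluation of $\mathbb{P}\{T_{0n}=n\}$ is valid (and in fact the paper records the same point probability later as equation \eqref{T0nl}, derived there via the size-biasing recursion rather than generating functions); either method yields the same normalizing constant, and the rest is, as you say, bookkeeping.
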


By \cite[Thm. 4.6]{a}, $\displaystyle{n^{-1} T_{0n} \buildrel d \over \rightarrow X_\theta}$ with Laplace transform given by 
\begin{equation} \label{Xtheta} \mathbb{E}[e^{-s X_\theta}] = \exp \left( - \int_0^1 (1 - e^{-sx}) \frac{\theta}{x} dx \right)
\end{equation} and $\mathbb{E}[X_\theta] = \theta$.

Equation \eqref{Xtheta} can be used to find the continuous density function $p_\theta$ of $X_\theta$ for $x > 0$ (see \cite[Lemma 4.7]{a}).  We won't need the exact form in the sequel.  However, for  $0 \le x \le 1$, we have the simple expression (see \cite[Cor. 4.8]{a})
\begin{equation}
p_\theta(x) = \frac{e^{-\gamma \theta} x^{\theta - 1}}{\Gamma(\theta)} 
\end{equation}

Now we can state the Poisson-Dirichlet density (c.f. \cite[(5.40)]{a}).  Here, $\Gamma(z)$ is the Gamma function and $\gamma$ is the Euler-Mascheroni constant.  

\begin{proposition} \label{PD}
Let $(L_1, L_2,...)$ follow the Poisson-Dirichlet distribution $PD(\theta)$.  Then the density of the finite-dimensional distributions $(L_1,...,L_r)$ are given by \begin{equation} \label{PDthetadensity} f_\theta^{(r)}(x_1,...,x_r) = \frac{e^{\gamma \theta} \theta^r \Gamma(\theta) x_r^{\theta-1}} {x_1...x_r} p_\theta \left(\frac{1-x_1-...-x_r}{x_r} \right) 
\end{equation}
and supported in the region given by $0 < x_r < ...< x_1 < 1$ and $x_1 + ... + x_r < 1$.  
\end{proposition}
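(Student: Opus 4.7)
My plan is to extract the density by computing the large-$n$ asymptotic of the point probability $\mathbb{P}\{L_1^{(n)} = \ell_1, \ldots, L_r^{(n)} = \ell_r\}$ under the Ewens distribution, and multiplying by the Jacobian $n^r$ for the rescaling $\ell_i \mapsto \ell_i/n$. Kingman's theorem (Proposition \ref{PDconvergence}) guarantees $(L_1^{(n)}/n, \ldots, L_r^{(n)}/n) \to (L_1, \ldots, L_r) \sim PD(\theta)$ in distribution, so once the discrete ``density'' $n^r \mathbb{P}\{L_i^{(n)} = \lfloor n x_i \rfloor\}$ is shown to converge pointwise to a bona fide probability density, a standard Scheff\'e-type argument identifies that function as $f_\theta^{(r)}$.

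The first step is to apply the Conditioning Relation \eqref{Conditioning Relation} to write, for distinct $\ell_1 > \ldots > \ell_r$ in $[n]$,
\[ \mathbb{P}\{L_1^{(n)} = \ell_1, \ldots, L_r^{(n)} = \ell_r\} = \frac{1}{\mathbb{P}\{T_{0n}=n\}} \sum_{k \ge 1} \mathbb{P}\{\mathcal{A}_k\}, \]
where $\mathcal{A}_k$ denotes the event $\{Z_{\ell_i} = 1 \text{ for } i < r,\ Z_{\ell_r}=k,\ Z_j=0 \text{ for } j \in (\ell_r,n] \setminus \{\ell_1,\ldots,\ell_{r-1}\},\ T_{0,\ell_r-1}=n-\sum_{i=1}^r \ell_i - (k-1)\ell_r\}$. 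Independence of the $Z_j$ factors $\mathbb{P}\{\mathcal{A}_k\}$ into a product of Poisson point probabilities times $\mathbb{P}\{T_{0,\ell_r-1}=\cdot\}$; the $k=1$ term dominates because for $k \ge 2$ one gains a vanishing factor of $(\theta/\ell_r)^{k-1}/k!$ while the corresponding $T$-probability stays comparable. Substituting $\ell_i = \lfloor nx_i \rfloor$ and letting $n \to \infty$, the Poisson factors contribute $\theta^r/(n^r x_1\cdots x_r) \cdot (1+o(1))$, and a harmonic-sum estimate yields
\[ \prod_{\ell_r < j \le n} e^{-\theta/j} = \exp\bigl(-\theta \log(n/\ell_r) + o(1)\bigr) \sim x_r^\theta. \]

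The key analytic input is the local limit theorem $\mathbb{P}\{T_{0m}=k\} = m^{-1} p_\theta(k/m)(1+o(1))$ uniformly for $k/m$ in a compact subset of $(0,\infty)$, a strengthening of $T_{0m}/m \Rightarrow X_\theta$ supplied by the Arratia--Barbour--Tavar\'e analysis \cite{a}. Applying this to both $\mathbb{P}\{T_{0,\ell_r-1} = n - \sum \ell_i\}$ and $\mathbb{P}\{T_{0n}=n\}$ produces
\[ n^r \mathbb{P}\{L_1^{(n)} = \ell_1, \ldots, L_r^{(n)} = \ell_r\} \to \frac{\theta^r x_r^{\theta-1}}{x_1\cdots x_r} \cdot \frac{p_\theta\bigl((1-x_1-\cdots-x_r)/x_r\bigr)}{p_\theta(1)}. \]
Inserting $p_\theta(1) = e^{-\gamma\theta}/\Gamma(\theta)$ (from the expression recalled in the text immediately before the proposition) recovers exactly $f_\theta^{(r)}$. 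The main obstacle is this local CLT for $T_{0n}$: the mere weak convergence $T_{0n}/n \Rightarrow X_\theta$ is insufficient, and one must exploit the additive-arithmetic structure of $\sum j Z_j$ (e.g., via Fourier or generating-function analysis of the Poisson-weighted sum) to promote weak convergence to density-scale point estimates.
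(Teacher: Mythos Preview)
Your approach is correct and matches the standard derivation. Note that the paper does not actually prove Proposition~\ref{PD}: it is quoted from \cite[(5.40)]{a}. However, the paper does carry out precisely this computation in the proof of the very next result, Lemma~\ref{L:uniformbound} (the uniform local limit theorem), via the same decomposition you describe in equations \eqref{An1}--\eqref{An3}.

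The one substantive difference is in how the local limit estimate for $T_{0,m}$ is obtained. You flag this as the main obstacle and suggest Fourier or generating-function methods to upgrade weak convergence to a density-scale estimate. The paper instead uses the size-biasing identity \eqref{sizebiasewens},
\[
(n-m)\,\mathbb{P}\{T_{0,m_r-1} = n-m\} = \theta\, \mathbb{P}\Bigl\{\tfrac{n-m}{m_r} - 1 \le \tfrac{T_{0,m_r-1}}{m_r} < \tfrac{n-m}{m_r}\Bigr\},
\]
which converts the point probability into an interval probability of length~$1$. Since weak convergence to a continuous limit implies uniform convergence of distribution functions, the right-hand side is handled directly by $T_{0,m}/m \Rightarrow X_\theta$ together with the identity $x p_\theta(x) = \theta \int_{x-1}^x p_\theta(u)\,du$. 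This sidesteps any need for characteristic-function analysis and gives uniformity essentially for free.
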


The following uniform local limit theorem for the large cycle distributions will play a fundamental role in our proofs.  
We follow the proof in the local limit theorem \cite[Thm. 5.10]{a}.  There, pointwise convergence for each choice of $x_i$ is established.  For our purposes, this needs to be strengthened to uniform convergence.      

\begin{lemma}\label{L:uniformbound}
Let $0 < \lambda < 1$ and $r \ge 1$.  For $1 \le i \le r$, let $\Delta$ be the set of integer tuples $(m_1,...,m_r)$ such that $\lambda n < m_r < m_{r-1} <...< m_1 \le n$ and $0 < m = m_1+...+ m_r < (1 - \lambda)n$.  Set $\displaystyle{x_i = \frac{m_i}{n}}$.  Then for each $\varepsilon > 0$ and for sufficiently large $n$, \[ \left|n^r \mathbb{P}\{L_i^{(n)} = m_i, 1 \le i \le r\} - f_\theta^{(r)}(x_1,...,x_r)\right| < \varepsilon \] for all $(m_1,...,m_r) \in \Delta$.

\end{lemma}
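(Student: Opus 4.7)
The strategy is to revisit the proof of the pointwise local limit theorem \cite[Thm.~5.10]{a} and verify that all error terms are uniform on $\Delta$. First, I would apply the Conditioning Relation to decompose the event $\{L_i^{(n)} = m_i,\ 1 \le i \le r\}$ in terms of the independent Poisson variables $Z_j$. Handling the dominant case $C_{m_r}^{(n)} = 1$ and bounding the $C_{m_r}^{(n)} \ge 2$ contributions by a factor of $O(m_r^{-1})$, one obtains
\[ \mathbb{P}\{L_i^{(n)} = m_i,\ 1 \le i \le r\} = \frac{\theta^r}{m_1 \cdots m_r} \exp\!\Bigl(-\theta \sum_{j=m_r}^n \tfrac{1}{j}\Bigr) \cdot \frac{\mathbb{P}\{T_{[1,\, m_r - 1]} = n - m\}}{\mathbb{P}\{T_{0n} = n\}} \cdot \bigl(1 + O(m_r^{-1})\bigr), \]
where $m = m_1 + \cdots + m_r$ and $T_{[1,N]} := \sum_{j=1}^N j Z_j$; this error is uniform on $\Delta$ because $m_r > \lambda n$.

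Next, by Mertens' estimate $\sum_{j=m_r}^n 1/j = -\log x_r + O(m_r^{-1})$, the Poisson exponential equals $x_r^\theta(1 + o(1))$ uniformly on $\Delta$. The key ingredient is then a uniform local limit theorem for $T_{[1,N]}$: for every compact $K \subset (0, \infty)$,
\[ \mathbb{P}\{T_{[1,N]} = k\} = \frac{1}{N}\, p_\theta(k/N)\bigl(1 + o(1)\bigr) \qquad (N \to \infty), \]
uniformly over $k$ with $k/N \in K$. On $\Delta$ we have $(n-m)/(m_r-1) = (1-\sum x_i)/x_r + O(n^{-1})$, which lies in the compact interval $[r\lambda/(1-\lambda),\,(1-r\lambda)/\lambda] \subset (0,\infty)$, so the uniform LLT applies both to the numerator and (with $k/N = 1$) to the denominator $\mathbb{P}\{T_{0n} = n\}$. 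Combining these estimates with $p_\theta(1) = e^{-\gamma\theta}/\Gamma(\theta)$ yields $n^r \mathbb{P}\{L_i^{(n)} = m_i\} = f_\theta^{(r)}(x_1,\ldots,x_r)(1+o(1))$ uniformly on $\Delta$. Since $f_\theta^{(r)}$ is bounded on the closure of the relevant parameter region, a multiplicative $o(1)$ translates to an additive $o(1)$, giving the claimed $\varepsilon$ bound for all $n$ sufficiently large.

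The main obstacle will be the uniform LLT for $T_{[1,N]}$. The pointwise version follows from Fourier inversion combined with convergence of the characteristic function of $T_{[1,N]}/N$ to that of $X_\theta$, determined by \eqref{Xtheta}. To upgrade to uniformity on $K$, one checks that (i) the characteristic function tail bounds and saddle-point-type error terms in the Fourier integral are uniform in the evaluation point $k/N$, and (ii) $p_\theta$ is uniformly continuous on $K$. Both hold routinely since $p_\theta$ is continuous on $(0,\infty)$ (even real-analytic away from the positive integers), so the necessary $L^\infty$ and continuity bounds on the spectral side are uniform on each compact subset.
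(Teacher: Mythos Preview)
Your proposal is correct in outline but takes a different route from the paper at the key step. Both arguments begin identically: split off the $C_{m_r}^{(n)}\ge 2$ case as $o(n^{-r})$, then use the Conditioning Relation to reduce to the ratio $\mathbb{P}\{T_{0,m_r-1}=n-m\}/\mathbb{P}\{T_{0n}=n\}$. Where you invoke a uniform local limit theorem for $T_{[1,N]}$ proved by Fourier inversion, the paper instead sidesteps the LLT entirely via the size-biasing identity \eqref{sizebiasewens}:
\[
(n-m)\,\mathbb{P}\{T_{0,m_r-1}=n-m\}=\theta\,\mathbb{P}\Bigl\{\tfrac{n-m}{m_r}-1\le \tfrac{T_{0,m_r-1}}{m_r}<\tfrac{n-m}{m_r}\Bigr\},
\]
which converts the point probability into an interval probability for $T_{0,m_r-1}/m_r$. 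Uniformity then falls out of the elementary fact that weak convergence of $T_{0,N}/N$ to the continuous limit $X_\theta$ forces uniform convergence of distribution functions on $\mathbb{R}$, combined with the identity $xp_\theta(x)=\theta\int_{x-1}^x p_\theta(u)\,du$.

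The paper's approach buys you a shorter, self-contained argument that uses only the distributional convergence $T_{0,N}/N\Rightarrow X_\theta$ as a black box; your Fourier route is more direct but requires you to actually carry out the characteristic-function tail estimates uniformly, which you have only sketched. Your claim that the relevant spectral bounds are ``routine'' is defensible, but the size-biasing trick makes all of that unnecessary.
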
  

\begin{proof}

Let $\displaystyle{A_n(C^{(n)}) = A_n(C^{(n)}; m_1,...,m_{r})}$ denote the event such that $C_{m_j}^{(n)}=1$ for $1 \le j \le r-1$ and $C_i^{(n)}=0$ for all other $m_r+1 \le i \le n$.  Then
\begin{equation} \label{An1}
\mathbb{P}\{L_i^{(n)} = m_i, 1 \le i \le r\} = \mathbb{P}\{A_n(C^{(n)}), C_{m_r}^{(n)} = 1\} + \sum_{ l \ge 2} \mathbb{P}\{ A_n(C^{(n)}), C_{m_r}^{(n)} = l\}
\end{equation}  
As shown in \cite[Thm. 5.10]{a}, the second term is $o(m_r^{-r}) = o(n^{-r})$.  
By the Conditioning Relation \eqref{Conditioning Relation}, the first term is \begin{equation}\label{An2} \mathbb{P}\{A_n(Z), Z_{m_r} = 1 \mid T_{0n} = n\} = \mathbb{P}\{A_n(Z)\} \mathbb{P}\{Z_{m_r} = 1\} \frac{\mathbb{P}\{T_{0, m_r-1} = n-m\}}{\mathbb{P}\{T_{0n} = n\}} \end{equation} which reduces to \begin{equation} \label{An3} \frac{\theta^r e^{-\theta(h(n+1)-h(m_r))}}{m_1...m_r} \frac{\mathbb{P}\{T_{0, m_r-1} = n-m\}}{\mathbb{P}\{T_{0n} = n\}}  \end{equation} 
where $\displaystyle{h(n) = 1 + \frac{1}{2}+...+\frac{1}{n-1}}$.

The size biasing equation \cite[(4.8)]{a} states that for any $B \subset [n]$, we have
\begin{equation} \label{sizebiasewens} m \mathbb{P}\{T_B = m\} = \theta \sum_{l \in B} \mathbb{P}\{T_B = m-l\}
\end{equation}  

Then by \eqref{sizebiasewens}, 
\begin{equation}
(n-m) \mathbb{P}\{T_{0, m_r-1} = n-m\} =
\theta \mathbb{P} \left\{\frac{n-m}{m_r}-1 \le \frac{T_{0, m_r-1}}{m_r} < \frac{n-m}{m_r} \right\}
\end{equation}
and we see that the local point probabilities can be expressed in terms of interval probabilities.  

If distribution functions $F_n$ converge weakly to a continuous distribution $F$, then the convergence is in fact uniform on the real line.  This applies to the convergence $\displaystyle{\frac{T_{0n}}{n} \buildrel d \over \to X_\theta}$.  By \cite[(4.23)]{a}, $\displaystyle{xp_\theta(x) = \theta \int_{x-1}^x p_\theta(u)du}$.  Thus, for any $\varepsilon > 0$ and for sufficiently large $n$, \[  \left|\frac{m_r}{n-m} \theta \mathbb{P} \bigg\{ \frac{n-m}{m_r}-1 \le \frac{T_{0, m_r-1}}{m_r} < \frac{n-m}{m_r} \bigg\} - p_\theta\left(\frac{1-x_1-...-x_r}{x_r}\right) \right| < \varepsilon \] 

The uniform bound follows from the fact that $\displaystyle{ \lim_{ n\to \infty} n\mathbb{P}\{ T_{0n} = n\} = p_\theta(1) = \frac{e^{-\gamma \theta} }{\Gamma(\theta)} }$.

\end{proof}

\section{Eigenvalue process for random permutation matrices at angles of finite irrationality measure} \label{S:k=1irrational}
  
In this section, we prove Theorem \ref{T:1dmain}.  First, we introduce some notation for the discrepancy of a sequence.

\begin{definition}\label{multidiscrepancy}
Let $J$ be the set of $d$-dimensional boxes of the form $\displaystyle{\prod_{i=1}^d [a_i, b_i] }$ where $0 \le a_i < b_i \le 1$ and let $\mathcal{P}_n$ be a multiset or sequence of $n$ elements of the $d$-dimensional torus $\mathbb{T}^d$. 
The multidimensional discrepancy is given by \[D(\mathcal{P}_n) := \sup_{B \in J} \bigg|\frac{A(B; n)}{n} - \prod_{i=1}^d (b_i - a_i) \bigg| \] where $A(B; n)$ counts the number of elements of $\mathcal{P}_n$ in the box $B$.  
\end{definition} 

\begin{definition}
If $\omega$ is an infinite sequence, let $D_{i,n}(\omega)$ be the discrepancy of the $(i+1)^\textrm{st}$ through $(i+n)^\textrm{th}$ terms of the sequence.  
\end{definition}

We will need the following discrepancy bound:  
\begin{proposition}[c.f. {\cite[Thm. 2.3.2]{kuipers}}] \label{P:1Ddisc}
Let $\alpha$ have irrationality measure $\mu$ and let $x_n = n \alpha + \beta$.  Then for every $\varepsilon > 0$,
\begin{equation} D_{i,n}(x) = O(n^{-\frac{1}{\mu-1} + \varepsilon})
\end{equation}     
\end{proposition}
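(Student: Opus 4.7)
The plan is to apply the Erdős-Turán inequality, reducing the discrepancy of the linear sequence $\{x_n\}$ to a sum of exponential sums, and then to leverage the geometric-series structure of those sums together with the classical Diophantine consequences of the finite irrationality measure of $\alpha$. To set things up, observe that $D_{i,n}(x)$ is the one-dimensional discrepancy of the points $\{(i+j)\alpha+\beta\}_{j=1}^n$ in $[0,1)$; a rigid rotation of the circle (by $-(i\alpha+\beta)$) changes this discrepancy by at most a bounded multiplicative factor, arising from the at most one interval that wraps around zero after the shift. Thus without loss of generality I may take $i=\beta=0$ and study the Kronecker sequence $\{\alpha\},\{2\alpha\},\ldots,\{n\alpha\}$.

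By the Erdős-Turán inequality, for every positive integer $H$,
\[
D_n \le C\left(\frac{1}{H}+\frac{1}{n}\sum_{h=1}^H \frac{1}{h}\left|\sum_{j=1}^n e^{2\pi i h j\alpha}\right|\right).
\]
Since the inner sum is geometric, $\left|\sum_{j=1}^n e^{2\pi i h j\alpha}\right|\le 1/(2\|h\alpha\|)$, so the problem reduces to estimating $\Sigma(H):=\sum_{h=1}^H 1/(h\|h\alpha\|)$. Writing the continued fraction expansion of $\alpha$ as $[a_0;a_1,a_2,\ldots]$ with convergents $p_k/q_k$, the finite irrationality measure $\mu(\alpha)=\mu$ combined with the identity $|q_k\alpha-p_k|=1/(a_{k+1}q_k+q_{k-1})$ gives $a_{k+1}=O(q_k^{\mu-2+\varepsilon})$ and $q_{k+1}=O(q_k^{\mu-1+\varepsilon})$ for every $\varepsilon>0$ and every sufficiently large $k$. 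Exploiting that the exceptionally small values of $\|h\alpha\|$ for $h\le H$ cluster near multiples of the convergent denominators $q_k\le H$, one obtains $\sum_{h=1}^H 1/\|h\alpha\|=O(H^{\mu-1+\varepsilon})$, and then partial summation gives $\Sigma(H)=O(H^{\mu-2+\varepsilon})$.

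Substituting back into the Erdős-Turán bound produces
\[
D_n \le C\left(\frac{1}{H}+\frac{H^{\mu-2+\varepsilon}}{n}\right),
\]
and choosing $H=n^{1/(\mu-1+\varepsilon)}$ equalizes the two terms, yielding the claimed $D_n=O(n^{-1/(\mu-1)+\varepsilon})$ after absorbing $\varepsilon$. The principal obstacle lies in the sharp estimate of $\Sigma(H)$: a naive application of the pointwise bound $\|h\alpha\|\ge h^{-(\mu-1+\varepsilon)}$ underneath the summation sign gives only $\Sigma(H)=O(H^{\mu-1+\varepsilon})$, which translates to a strictly weaker discrepancy bound $O(n^{-1/\mu+\varepsilon})$. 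To reach the sharp exponent, one must exploit that the exceptionally small values of $\|h\alpha\|$ are concentrated on a sparse set of $h$ near multiples of the convergent denominators, a structural fact accessible through the three-distance theorem or direct continued-fraction analysis in the spirit of Ostrowski and Niederreiter.
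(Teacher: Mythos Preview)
Your proposal is correct and follows exactly the standard route that the paper relies on: the paper does not prove this proposition but cites it from Kuipers--Niederreiter, and later (at the start of Section~\ref{S:logdisc}) remarks that the proof goes via the Erd\H{o}s--Tur\'an inequality combined with the exponential-sum bound $\sum_{h=1}^{m}\frac{1}{h\|h\alpha\|}\ll m^{\mu-2+\varepsilon}$, which it quotes as \cite[Lemma~2.3.3]{kuipers} in \eqref{expbound1}. Your argument reproduces precisely this: Erd\H{o}s--Tur\'an, the geometric-series bound on the Weyl sum, the key continued-fraction estimate $\Sigma(H)=O(H^{\mu-2+\varepsilon})$, and the balancing choice of $H$. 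One minor remark: the discrepancy over arbitrary subintervals of the torus is in fact exactly invariant under rotation, so your reduction to $i=\beta=0$ needs no multiplicative fudge factor.
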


\begin{definition}
For any real number $\alpha$, let $\psi_\alpha(j)$ be the minimum positive element of the set $\{q - j \alpha : q \in \mathbb{Z} \}$.

\end{definition}

The following lemma shows that in the limit, the eigenangles associated to the largest cycles are randomly situated in the window centered at $\alpha$.

\begin{lemma}\label{L:uni}
Let $\alpha \in \mathbb{R}$ be irrational and $(L_1,L_2,...)$ be a $PD(\theta)$ process.  Then for any positive integer $r$ and i.i.d. random variables $U_i$ chosen independently from the vector $(L_1,L_2,...)$ and distributed uniformly on $[0,1]$, we have the distributional convergence \[(L_1^{(n)}/n,...,L_r^{(n)}/n, \psi_\alpha(L_1^{(n)}),...,\psi_\alpha(L_r^{(n)})) \buildrel d \over \to (L_1,...,L_r,U_1,...,U_r)\]
\end{lemma}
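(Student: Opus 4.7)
The plan is to reduce the joint weak convergence to a family of exponential sum estimates, which can then be evaluated asymptotically using the uniform local limit theorem (Lemma 2.1) combined with the irrationality of $\alpha$. By Stone--Weierstrass applied to the torus coordinates, it suffices to verify that for every bounded continuous $\phi$ on the simplex $\{0<x_r<\cdots<x_1,\ \sum x_i<1\}$ and every $k=(k_1,\dots,k_r)\in\mathbb{Z}^r$,
\[
\mathbb{E}\!\left[\phi(L_1^{(n)}/n,\dots,L_r^{(n)}/n)\prod_{j=1}^r e^{2\pi i k_j \psi_\alpha(L_j^{(n)})}\right]\longrightarrow \mathbb{E}[\phi(L_1,\dots,L_r)]\prod_{j=1}^r\mathbb{E}\bigl[e^{2\pi i k_jU_j}\bigr].
\]
Because $e^{2\pi i k\psi_\alpha(m)} = e^{2\pi i k\lceil m\alpha\rceil}\,e^{-2\pi i k m\alpha} = e^{-2\pi i k m\alpha}$ for integer $k$, the phases on the left collapse to $e^{-2\pi i \alpha\langle k,L^{(n)}\rangle}$. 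When $k=\mathbf{0}$ both sides equal $\mathbb{E}[\phi(L_1,\dots,L_r)]$, which is Kingman's theorem (Proposition 1.1), so it remains to show that the left-hand side tends to $0$ whenever $k\neq\mathbf{0}$.

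Next, I would replace the joint law by its density. First approximate $\phi$ by some $\tilde\phi$ that is continuous and compactly supported in the open simplex; the tail difference is controlled via Kingman's theorem and Portmanteau, since $(L_1^{(n)}/n,\dots,L_r^{(n)}/n)$ is tight away from the boundary. Lemma 2.1 then gives $\mathbb{P}\{L_j^{(n)}=m_j,\ j\le r\} = n^{-r} f_\theta^{(r)}(m/n) + o(n^{-r})$ uniformly over the support of $\tilde\phi$. Setting $g:=\tilde\phi\cdot f_\theta^{(r)}$ (continuous and compactly supported), the remaining task is to show
\[
\frac{1}{n^r}\sum_{m \in \mathbb{Z}^r\cap[0,n]^r} g(m/n)\,e^{-2\pi i \alpha\langle k, m\rangle}\longrightarrow 0.
\]

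I would bound this exponential sum by a cubical partition argument. Partition the lattice box into sub-cubes $Q$ of side $M$, and on each $Q$ replace $g$ by its value at the center $c_Q$. Since $g$ is uniformly continuous, the replacement error per lattice point is at most the modulus of continuity $\omega_g(M/n):=\sup_{|x-y|\le M/n}|g(x)-g(y)|$, totalling $O(n^r\omega_g(M/n))$. The main term on each cube factors as $g(c_Q/n)\prod_{j=1}^r \sum_{m_j\in I_j} e^{-2\pi i \alpha k_j m_j}$. For each $j$ with $k_j=0$ the inner sum is $O(M)$, while for each $j$ with $k_j\ne 0$ it is a geometric progression of magnitude at most $|\sin(\pi\alpha k_j)|^{-1}$, finite precisely because $\alpha$ is irrational. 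Letting $s\ge 1$ be the number of nonzero coordinates of $k$, each cube contributes $O(M^{r-s})$ and there are $O((n/M)^r)$ cubes, so the total is $O(n^r/M^s)$. After dividing by $n^r$ the bound is $O(M^{-s})+\omega_g(M/n)$, which vanishes upon choosing e.g.\ $M=\sqrt{n}$.

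The main obstacle is cleanly handling the boundary of the simplex: Lemma 2.1 supplies its uniform approximation only on regions where $m_r>\lambda n$ and $\sum m_i<(1-\lambda)n$, so the reduction from a general bounded continuous $\phi$ to the compactly supported $\tilde\phi$ must be justified by tightness estimates near the boundary (and near the diagonals $x_i=x_{i+1}$, where the ordering constraint introduces a spurious discontinuity in $g$). Once these approximations are in place, the heart of the argument is the cancellation in the exponential sum, which uses only $\alpha\notin\mathbb{Q}$ and hence does not require any hypothesis on the irrationality measure.
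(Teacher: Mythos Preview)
Your proposal is correct and follows essentially the same skeleton as the paper's proof, but it is phrased in the Fourier dual language. Both arguments (i) cut the integer box into sub-boxes of intermediate mesh ($I_{m,b}$ in the paper, your side-$M$ cubes), (ii) invoke Lemma~2.1 to replace the point probabilities $\mathbb{P}\{L_i^{(n)}=m_i\}$ by the locally constant density $n^{-r}f_\theta^{(r)}$ on each sub-box, and (iii) exploit equidistribution of $j\mapsto j\alpha$ on each sub-box. The paper carries out step (iii) on the ``discrepancy side'' of Weyl's criterion, bounding the CDF $\mathbb{P}\bigl(\bigcap\{L_i^{(n)}/n<c_i\}\cap\{\psi_\alpha(L_i^{(n)})<d_i\}\bigr)$ above and below using $D_{(m-1)n/b,\,n/b}(\{j\alpha\})\to 0$; you carry it out on the ``exponential sum side'', using the identity $e^{2\pi i k\psi_\alpha(m)}=e^{-2\pi i km\alpha}$ and the geometric-series bound $\bigl|\sum_{m\in I}e^{-2\pi i\alpha k m}\bigr|\le |\sin(\pi\alpha k)|^{-1}$. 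Your reduction via characters is slightly slicker in that it avoids the two-sided sandwich on the CDF; the paper's version is slightly more self-contained in that it works directly with the events of interest and reuses the discrepancy machinery that is needed anyway for the later multivariate lemmas. The boundary and tie issues you flag are exactly the $\varepsilon(\lambda)$ and $\varepsilon^*(n)$ terms the paper tracks; once you restrict to $\tilde\phi$ supported in $\{\lambda<x_r<\cdots<x_1,\ \sum x_i<1-\lambda\}$ they disappear just as in the paper.
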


\begin{proof} 
Define the intervals 
\begin{equation}\label{bin} I_{m, b} = ( (m-1) n/b, m n/b ] \cap \mathbb{Z} 
\end{equation}  This defines a partition $[n] = \bigcup\limits_{m=1}^b I_{m,b}$.  For each $\lambda > 0$, $f_\theta^{(r)}(x_1,...,x_r)$ is uniformly continuous on the set of tuples \[\{(x_1,...,x_r) : (\lambda < x_r \le...\le x_1 < 1) \land (x_1+...+x_r < 1-\lambda) \}.\]  Then by Lemma \ref{L:uniformbound}, for $\displaystyle{\lambda b \le k_r \le ... \le k_1 }$ and $\displaystyle{k_1 +...+ k_r \le (1 - \lambda)b }$ and $m_i \in I_{k_i, b}$ distinct, we have the bound \begin{equation} \label{ub} \bigg|f_\theta^{(r)} \left(\frac{k_1}{b}, ...,\frac{k_r}{b}\right) - n^r \mathbb{P}\{L_i^{(n)} = m_i, 1 \le i \le r\} \bigg| < \varepsilon(n,b) \end{equation} for some error function $\varepsilon(n,b) \to 0$ as $n,b \to \infty$.  Let $\displaystyle{\varepsilon^{*}(n) = \mathbb{P}\bigg( \bigcup_{i=1}^r \{C_{L_i^{(n)}}^{(n)} > 1\} \bigg) }$, i.e. the probability that there is a duplicate cycle length among the $r$ largest cycles.  Clearly, $\varepsilon^*(n) \to 0$ as $n \to \infty$.  

Since $f_\theta^{(r)}(x_1,...,x_r)$ integrates to 1, for every $\lambda > 0$ we have \[\limsup_{n \to \infty} \mathbb{P}\{ L_{r}^{(n)} < \lambda n\} + \mathbb{P}\{L_1^{(n)} +...+L_r^{(n)} > (1- \lambda)n \} < \varepsilon(\lambda) \] for some error function $\varepsilon(\lambda)$ tending to 0 as $\lambda \to 0$.  Finally, note that $\psi_\alpha(j) \le x$ iff $\{j \alpha\} \ge 1-x$.  
Then for $c_i, d_i \in [0,1]$,
\begin{align*}
&\qquad \mathbb{P}\bigg(\bigcap_{i=1}^r \{L_i^{(n)}/n < c_i\} \cap \{\psi_\alpha(L_i^{(n)}) < d_i\} \bigg) \\
& \le \sum_{\substack{1 \le m_r < ...< m_1 \le n \cr m_i/n < c_i}} \mathbb{P}\bigg(\bigcap_{i=1}^r \{L_i^{(n)} = m_i\} \cap \{\psi_\alpha(m_i) < d_i\} \bigg) + \varepsilon^*(n) \\
& \begin{multlined} \le \sum_{\substack{\lambda b \le k_r \le...\le k_1 \le b \cr k_1 +...+ k_r \le (1 - \lambda)b \cr (k_i-1)/b < c_i}}  \bigg(f_\theta^{(r)} \left(\frac{k_1}{b}, ...,\frac{k_r}{b}\right) + \varepsilon(n,b) \bigg) \frac{1}{b^r} \prod_{i=1}^r \left(d_i + D_{\frac{(k_i-1) n}{b}, \frac{n}{b}}(\{j\alpha\}) \right) \\ + \mathbb{P}\{ L_{r}^{(n)} < \lambda n\} + \mathbb{P}\{L_1^{(n)} +...+L_r^{(n)} > (1- \lambda)n \} + \varepsilon^*(n)  
\end{multlined} 
\end{align*}
Taking $b = o(n)$ so that $n/b \to \infty$,  \[\limsup_{n \to \infty} \mathbb{P}\bigg(\bigcap_{i=1}^r \{L_i^{(n)}/n < c_i\} \cap \{\psi_\alpha(L_i^{(n)}) < d_i\} \bigg) \le  \int\limits_{\substack{\lambda < x_r <...< x_1 < 1 \cr x_1+...+x_r < 1- \lambda \cr x_i < c_i}} f_\theta^{(r)}(x_1,..,x_r) dx_i \prod_{i=1}^r d_i + \varepsilon(\lambda) \]
Similarly, \[\liminf_{n \to \infty} \mathbb{P}\bigg(\bigcap_{i=1}^r \{L_i^{(n)}/n < c_i\} \cap \{\psi_\alpha(L_i^{(n)}) < d_i\} \bigg) \ge  \int\limits_{\substack{\lambda < x_r <...< x_1 < 1 \cr x_1+...+x_r < 1- \lambda \cr x_i < c_i}} f_\theta^{(r)}(x_1,..,x_r) dx_i \prod_{i=1}^r d_i \]
Taking $\lambda \to 0$ completes the proof.
\end{proof}

Now we are ready to prove the eigenvalue point process convergence for the $k=1$ case of random permutation matrices at irrational $\alpha$ with finite irrationality measure.

\begin{proof}[Proof of Theorem \ref{T:1dmain}]
Let $r > 1$ be a positive integer.  
To isolate the contribution from the $r$ largest cycles, we define \begin{equation}\label{xinr} \xi_n^r = \sum_{i=1}^{r} \sum_{q \in \mathbb{Z}} \delta_{n (q/L_i^{(n)} -\alpha)} \end{equation}
and we let $\displaystyle{\eta_n^r = \mathfrak{E}_n^\alpha - \xi_n^r}$ denote the remaining measure.  Similarly, define \begin{equation}\label{xiinftyr} \xi_\infty^r = \sum_{i=1}^r \sum_{q \in \mathbb{Z}} \delta_{(U_i + q)/L_i} \end{equation} and set $\eta_\infty^r = \mathfrak{E}_\infty^* - \xi_\infty^r$.   

Let $f$ be a continuous function with support contained in some interval $(-T, T)$.  Then \[\int f d \xi_n^r =  \sum_{i=1}^r \sum_{q \in \mathbb{Z}} f \bigg(\frac{q - L_i^{(n)} \alpha}{L_i^{(n)}/n} \bigg) = \sum_{i=1}^r \sum_{q \in [-T+1, T+1]} f \bigg(\frac{q + \psi_\alpha(L_i^{(n)}) }{L_i^{(n)}/n} \bigg)  \]
and \[\int f d\xi_\infty^r = \sum_{i =1 }^r \sum_{q \in [-T+1, T+1]} f \left(\frac{U_i+q}{L_i} \right) \]

By Lemma \ref{L:uni} and the continuous mapping theorem, \[\int f d \xi_n^r \buildrel d \over \to \int f d\xi_\infty^r \]

Now we turn to the remainder measures $\eta_{n}^r$ and $\eta_\infty^r$.  Since $\mathbb{E}[\sum_i L_i] = \mathbb{E}[1] = 1$, by the Borel-Cantelli lemma $\displaystyle{\int f d \eta_\infty^r \buildrel a.s. \over \to 0}$.  Note that this just verifies that $\mathfrak{E}_\infty^*$ is in fact a point process.  

For $\eta_n^r$, we need to show that 
\begin{equation}\label{1Dbound} \lim_{r \to \infty} \limsup_{n \to \infty} \mathbb{P} \bigg\{ \int |f| d\eta_n^r  > 0 \bigg\} = 0
\end{equation}
  
Note that $L_i^{(n)} < n/r$ for $i > r$ with probability 1.  Thus, we have the inequality of measures
\[ \eta_n^r < \sum_{j=1}^{\lceil n/r \rceil} C_j^{(n)} \sum_{i \in \mathbb{Z}} \delta_{n (i/j-\alpha)} \]
Define the set \begin{equation}\label{J(n)} J(n) = \{j \in [n]: (\exists q \in \mathbb{Z})[\alpha - T/n < q/j < \alpha + T/n] \}\end{equation}  Specializing Watterson's factorial moment formula \cite{watterson} gives 
\begin{equation} \mathbb{E}[C_j^{(n)}] = \frac{\theta}{j} \prod_{i=1}^{j-1} \frac{n-i}{\theta+n-i-1}  \le \frac{\theta}{j} \frac{n}{n-j+\theta} 
\end{equation}
Then
\begin{equation*} \mathbb{P} \bigg\{ \int |f| d\eta_n^r  > 0 \bigg\} \le \mathbb{P}\bigg( \bigcup_{j \in J \cap [ \lceil n/r \rceil ]} \{C_j^{(n)} > 0\} \bigg) \le \sum_{j \in J \cap [ \lceil n/r \rceil]}  \mathbb{E}(C_j^{(n)}) \le \sum_{j \in J \cap [\lceil n/r \rceil]} \frac{2\theta}{j} \end{equation*}

Looking at each interval $I_{m,b}$ (defined in \eqref{bin}) separately, 
\begin{equation} \label{Jineq}
|J \cap I_{m,b}| \le \frac{n}{b} \Big(\min \left(\frac{2mT}{b}, 1\right) + D_{\frac{(m-1)n}{b},\frac{n}{b}}(\{j \alpha\}) \Big)  
\end{equation}   

Recall $\alpha$ has some finite irrationality measure $\mu(\alpha)$.  Now set $\displaystyle{b = n^{1 - 1/(\mu(\alpha)+1)}}$.  Then the first interval $I_{1,b}$ contains the integers $0 < j \le n^{1/(\mu(\alpha)+1)}$.  Setting $\varepsilon = 1/2$ (say), we have $\displaystyle{\left\lvert \alpha-\frac{p}{j}\right\rvert > \frac{C}{n^{1 - 1/(2\mu(\alpha)+2)}}}$ for some absolute constant $C$ and all $j \in I_{1,b}$.  
Since the length of the interval $\displaystyle{( \alpha - T/n, \alpha +  T/n) }$ is $O(1/n)$, the intersection $J \cap I_{1,b}$ is empty for sufficiently large $n$.  Then 
\begin{equation} \label{Jfrac}
\sum_{j \in J \cap [\lceil n/r \rceil]} \frac{1}{j} \le \sum_{m=2}^{\lceil b/r \rceil} \frac{b}{(m-1) n} \frac{n}{b}\Big(\min \left(\frac{2mT}{b}, 1 \right) + D_{\frac{(m-1)n}{b},\frac{n}{b}} \left(\{j \alpha\} \right) \Big) 
\end{equation}

By Proposition \ref{P:1Ddisc}, if $1/r < 1/(2T)$ we have 
\begin{equation} \limsup_{n \to \infty} \sum_{j \in J \cap [\lceil n/r \rceil]} \frac{1}{j} \le 4 T/r 
\end{equation} 
and \eqref{1Dbound} follows.
Recall we want to show \[\int f d \mathfrak{E}_n^\alpha \buildrel d \over \to \int f d \mathfrak{E}_\infty^*. \]
For ease of notation, set $X_n = \int f d \mathfrak{E}_n^\alpha$, $X_\infty = \int f d \mathfrak{E}_\infty^*$, and $X_n^r = \int f d \xi_n^r$, $X_\infty^r = \int f d\xi_\infty^r$.  Finally, let $\varepsilon_n^r = \int f d \eta_n^r$.  Then it is sufficient to show that $\displaystyle{\lim_{n \to \infty} \mathbb{E}[g(X_n)] = \mathbb{E}[g(X_\infty)]}$ for all continuous and compactly supported $g$.  

Since $X_n^r \buildrel d \over \to X_\infty^r$ and $X_\infty^r \buildrel d \over \to X_\infty$, we already have $\displaystyle{ \mathbb{E}[g(X_n^r)] \to \mathbb{E}[g(X_\infty^r)]}$ and $\mathbb{E}[g(X_\infty^r)] \to \mathbb{E}[g(X_\infty)]$ and therefore $\displaystyle{\lim_{n, r \to \infty} \mathbb{E}[g(X_n^r)] = \mathbb{E}[g(X_\infty)]}.$  

By \eqref{1Dbound}, 
\begin{equation}
\lim_{n \to \infty} \mathbb{E}[g(X_n)] =\lim_{n, r \to \infty} \mathbb{E}[g(X_n^r)] = \mathbb{E}[g(X_\infty)]  
\end{equation} 

\end{proof}

For $q \ge 3$, the $q$-point correlation function of the limiting point process $\mathfrak{E}_\infty^*$ is not absolutely continuous with respect to Lebesgue measure since $\mathfrak{E}_\infty^*$ almost surely has 3 points $x,y,z \in \mathbb{R}$ such that $z - y = y -x$.  However, the 2-point correlation function is absolutely continuous.  Najnudel and Nikeghbali \cite[Prop. 5.6]{najnudel} show that 
\begin{proposition}
The 2-point correlation function for the point process $\mathfrak{E}_\infty^*$ is given by $\rho_2(x,y) = \phi_\theta(x-y)$ where 
\begin{equation}
\phi_\theta(x) = \frac{\theta}{\theta+1} + \frac{\theta}{x^2} \sum_{a \in \mathbb{N}, a \le |x|} a \bigg(1 - \frac{a}{|x|} \bigg)^{\theta - 1} 
\end{equation}
\end{proposition}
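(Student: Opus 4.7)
The plan is to compute $\rho_2$ directly from the explicit representation (\ref{estar}) via its characterizing identity $\mathbb{E}\!\left[\sum_{x\neq y\in \mathfrak{E}_\infty^*} f(x) g(y)\right] = \int\!\!\int f(s)g(t)\rho_2(s,t)\,ds\,dt$ for smooth compactly supported $f, g$, splitting the sum according to whether the two points share the same index $i$ or come from different indices. By the analog of Remark \ref{GEMremark} for $\mathfrak{E}_\infty^*$ I would replace $(L_i)$ by a $GEM(\theta)$ sequence $V_i = W_i \prod_{j<i}(1-W_j)$ with $W_j \sim \mathrm{Beta}(1,\theta)$ i.i.d.\ and independent of the $U_i$'s; this leaves the distribution of the process unchanged. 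Simplicity of $\mathfrak{E}_\infty^*$ (needed so that $\rho_2$ is indeed the joint intensity of distinct pairs) follows because two points from the same cycle have different $q$'s and hence are distinct, while a coincidence across cycles forces an algebraic relation among the independent uniforms $U_i, U_j$, which is a measure-zero event.

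For the same-cycle contribution, condition on $V_i = v$; the points $\{(U_i+q)/v : q \in \mathbb{Z}\}$ form a uniform random translate of $v^{-1}\mathbb{Z}$. The substitution $s = (U_i+q)/v$ gives
\[\int_0^1 \sum_{q\neq q'} f\!\left(\tfrac{u+q}{v}\right) g\!\left(\tfrac{u+q'}{v}\right) du \;=\; v \int_\mathbb{R} \sum_{a \neq 0} f(s)\,g(s + a/v)\,ds.\]
Summing over $i$ and applying the classical size-biasing identity $\mathbb{E}\!\left[\sum_i V_i\,h(V_i)\right] = \int_0^1 \theta(1-v)^{\theta-1} h(v)\,dv$ (the size-biased pick from $PD(\theta)$ has density $\theta(1-v)^{\theta-1}$), the same-cycle joint intensity reduces to $\int_0^1 \theta(1-v)^{\theta-1}\sum_{a\neq 0} \delta(t-s-a/v)\,dv$. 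Changing variables $v = a/x$ with $x = t-s$ produces the Jacobian $a/x^2$ and restricts $a$ to positive integers with $a \leq |x|$ sharing the sign of $x$, yielding $\frac{\theta}{x^2}\sum_{a\in\mathbb{N},\,a\leq |x|} a(1-a/|x|)^{\theta-1}$.

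For the different-cycle contribution, independence of the pairs $(U_i,V_i)$ across $i$ lets me integrate $U_i$ and $U_j$ separately; each marginal gives a constant intensity equal to $V_i$ (respectively $V_j$), so the joint density of a pair with one point from each of cycles $i$ and $j$ is the constant $V_i V_j$. Hence
\[\mathbb{E}\!\left[\sum_{i\neq j} V_i V_j\right] = 1 - \mathbb{E}\!\left[\sum_i V_i^2\right].\]
Using $\mathbb{E}[W_i^2] = \frac{2}{(\theta+1)(\theta+2)}$ and $\mathbb{E}[(1-W_j)^2] = \frac{\theta}{\theta+2}$ together with independence gives $\mathbb{E}[V_i^2] = \frac{2}{(\theta+1)(\theta+2)}\!\left(\tfrac{\theta}{\theta+2}\right)^{i-1}$; summing this geometric series yields $\mathbb{E}\!\left[\sum_i V_i^2\right] = \frac{1}{\theta+1}$, so the different-cycle contribution is the constant $\theta/(\theta+1)$.

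Adding the two contributions gives $\phi_\theta(x-y)$. The main step that requires care is the same-cycle change of variables: it should be carried out as an identity of absolutely continuous measures obtained by pairing with smooth compactly supported test functions, rather than as a pointwise delta-function manipulation. Once that is set up, the remainder of the argument is standard Beta-integral bookkeeping, and a sanity check that $\phi_\theta(x) \to 1$ as $|x| \to \infty$ (matching the intensity-squared) comes out immediately from the Beta integral $\int_0^1 \theta u(1-u)^{\theta-1}\,du = 1/(\theta+1)$.
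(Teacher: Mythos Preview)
Your argument is correct. The paper itself does not prove this proposition; it simply quotes the result from Najnudel and Nikeghbali \cite[Prop.~5.6]{najnudel}. Your direct computation---splitting pairs into same-cycle and different-cycle, using the size-biased sampling identity $\mathbb{E}\big[\sum_i V_i\,h(V_i)\big]=\int_0^1 \theta(1-v)^{\theta-1}h(v)\,dv$ for the former, and the elementary $GEM(\theta)$ moment calculation $\mathbb{E}\big[\sum_i V_i^2\big]=\tfrac{1}{\theta+1}$ for the latter---is exactly the natural approach and recovers $\phi_\theta$ cleanly.

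One small wording issue: you write ``independence of the pairs $(U_i,V_i)$ across $i$'', but the $V_i$ themselves are \emph{not} independent in the $GEM$ construction (they share the factors $1-W_j$). What your argument actually uses, and what is true, is that \emph{conditional on the whole sequence $(V_i)_{i\ge 1}$} the per-cycle point sets are independent, because the $U_i$ are i.i.d.\ and independent of the $V$'s. That is enough to conclude that the cross-cycle joint intensity is $\mathbb{E}\big[\sum_{i\ne j}V_iV_j\big]$, so the computation goes through unchanged once the sentence is rephrased.
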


They also give a characterization of the eigenvalue gap $P_1^\theta(0,x)$ in the case $k=1$ as the solution to an integral equation \cite[Prop. 5.9]{najnudel}.  
\begin{proposition}
For all $x \in \mathbb{R}$, set \[H(x) := \mathbbm{1}(x > 0) x^{\theta - 1} P_1^\theta(0,x)\]
Then $H$ is integrable and satisfies 
\begin{equation}\label{integralequation} x H(x) = \theta \int_0^1 (1 - y)H(x-y)dy.
\end{equation}
\end{proposition}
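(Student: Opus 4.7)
The plan is to exploit the self-similarity of the $\mathrm{GEM}(\theta)$ stick-breaking construction together with the explicit gap-probability formula of Corollary \ref{C:eigengapformula}. For $k=1$ the product in \eqref{gapformula} simplifies dramatically because $g_1(\mathbf{e}) = \prod_m p_m^{e_{m1}}/p_m^{e_{m1}} \equiv 1$, so one obtains $F(x) := P_1^\theta(0,x) = \mathbb{E}\bigl[\prod_{i \ge 1}(1 - \min(x L_i, 1))\bigr]$. Since the product is a symmetric function of $(L_i)$, Remark \ref{GEMremark} lets us replace $(L_i)$ by a $\mathrm{GEM}(\theta)$-distributed sequence $(V_i)$.

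I would then use the recursive structure of $\mathrm{GEM}(\theta)$. Writing $V_1 = U_1$ and $V_{i+1} = (1 - U_1) V_i'$, where $(V_i')_{i \ge 1}$ is an independent copy of the $\mathrm{GEM}(\theta)$ process, the infinite product factors as
\[ \prod_{i \ge 1}\bigl(1 - \min(xV_i, 1)\bigr) = \bigl(1 - \min(xU_1, 1)\bigr) \prod_{i \ge 1}\bigl(1 - \min(x(1-U_1)V_i', 1)\bigr). \]
Conditioning on $U_1 = y$ (density $\theta(1-y)^{\theta-1}$ on $[0,1]$) and observing that the leading factor vanishes when $xy > 1$, the remaining expectation equals $F(x(1-y))$, yielding the functional equation
\[ F(x) = \int_0^{\min(1, 1/x)} (1 - xy)\, F(x(1-y))\, \theta(1-y)^{\theta-1}\, dy. \]

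Substituting $u = xy$ (so $1 - y = (x-u)/x$ and $dy = du/x$) and multiplying through by $x^{\theta-1}$ gives
\[ x \cdot x^{\theta-1} F(x) = \theta \int_0^{\min(x, 1)} (1-u)(x-u)^{\theta-1} F(x-u)\, du. \]
Recognizing $H(x) = x^{\theta-1} F(x)$ on both sides and using $H \equiv 0$ on $(-\infty, 0]$ to extend the upper limit from $\min(x,1)$ to $1$ (the integrand vanishes for $u > x$), the desired identity $xH(x) = \theta \int_0^1 (1-y) H(x-y)\,dy$ follows. Integrability is then immediate on $(0, 1]$ from the bound $H(x) \le x^{\theta-1}$, and at infinity from the fact that $F(x) = \mathbb{P}\bigl(\mathfrak{E}_\infty^* \cap (0, x) = \varnothing\bigr)$ decays faster than any polynomial; one can iterate the bound $F(x) \le \mathbb{E}\bigl[\prod_{i=1}^N (1-xV_i)_+\bigr]$, since each extra factor imposes an additional constraint of probability $O(1/x)$, giving enough decay to kill the weight $x^{\theta-1}$ for any $\theta > 0$.

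The main obstacle is the rigorous justification of the factorization and the exchange of expectation with the infinite product: one must verify almost-sure convergence of the product (which follows since $\sum V_i = 1$ a.s., so only finitely many factors differ appreciably from $1$) and apply a dominated-convergence argument to condition on $U_1$ cleanly, in particular checking that the boundary $xy = 1$ contributes nothing. A secondary technical point is quantifying the tail decay of $F$ required for integrability of $H$ at infinity when $\theta$ is large, though the iterated stick-breaking bound already yields decay faster than any inverse polynomial, which is more than sufficient.
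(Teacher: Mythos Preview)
The paper does not give its own proof of this proposition; it simply cites it as \cite[Prop.~5.9]{najnudel}. So there is nothing in the paper to compare your argument against, and your proposal stands as an independent derivation.

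Your approach is correct. The key idea---exploiting the self-similarity $V_{i+1}=(1-U_1)V_i'$ of the $\mathrm{GEM}(\theta)$ process to turn the gap-probability product into a renewal-type functional equation for $F$---is exactly the natural one, and your change of variables $u=xy$ cleanly converts the resulting identity into the stated equation for $H$. The observation that $g_1\equiv 1$ and the use of Remark~\ref{GEMremark} to pass from $PD(\theta)$ to $\mathrm{GEM}(\theta)$ are both legitimate.

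Two small points worth tightening. First, the integrability argument at infinity can be made fully rigorous by iterating the functional equation itself: from $F(x)=\mathbb{E}\bigl[(1-xU_1)_+\,F(x(1-U_1))\bigr]$ and the monotonicity of $F$ one gets $F(x)\le \mathbb{P}(U_1<1/x)\,F(x-1)\le (C_\theta/x)\,F(x-1)$ for $x>1$, and iterating $\lfloor x/2\rfloor$ times gives decay faster than any polynomial, which dominates $x^{\theta-1}$. Second, the exchange of expectation with the infinite product is justified simply by bounded convergence: each factor lies in $[0,1]$, and the finite partial products converge almost surely because $\sum_i V_i=1$ implies only finitely many $V_i$ exceed $1/x$.
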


If $y_2 - y_1 \le 1$, we can use \eqref{gapformula} to obtain a simple power series expression for $P_1^\theta(y_1,y_2)$ in the special case $\theta = 1$.  We have  
\[ P_1^1(y_1,y_2) = \mathbb{E}\bigg[ \prod_{i=1}^\infty \Big(1 + (y_1-y_2)L_i \Big) \bigg] = \sum_{m=0}^\infty (y_1-y_2)^m \sum_{i_1 < ... < i_m} \mathbb{E}[L_{i_1}...L_{i_m}] \] 

Griffiths \cite{griffiths} computed the mixed moments of the random variables $L_i$:  \begin{equation}\label{griffiths} \mathbb{E}[L_1^{j_1}...L_r^{j_r}] = \frac{\theta^r \Gamma(\theta)}{\Gamma(\theta+j)} \int\limits_{y_1 >... >y_r > 0} y_1^{j_1 - 1}...y_r^{j_r - 1} \exp \bigg(- \sum_{l=1}^r y_l - \theta E_1(y_r)\bigg) dy_1...dy_r \end{equation}
where $j = j_1 + ...+j_r$ and $\displaystyle{E_1(s) = \int_1^\infty \frac{e^{-sx}}{x}dx}$.
Taking $\theta = 1$ in \eqref{griffiths} and using symmetry in the domain, 

\begin{equation*}
\begin{multlined}
\mathbb{E}[L_{i_1}...L_{i_m}] = \frac{1}{m!} \int_0^\infty \int_{x_m}^\infty...\int_{x_2}^\infty \frac{E_1(x_1)^{i_1 - 1}}{(i_1 - 1)!} \frac{(E_1(x_2) - E_1(x_1))^{i_2 - i_1 - 1}}{(i_2 - i_1 - 1)!}... \\ ...\frac{(E_1(x_m) - E_1(x_{m-1}))^{i_m - i_{m-1} - 1}}{(i_m - i_{m-1} - 1)!} e^{-x_1}...e^{-x_m} e^{-E_1(x_m)} dx_1 dx_2... dx_m 
\end{multlined}
\end{equation*} 

We have  
\begin{equation} \label{telescope}
\begin{multlined} 
\sum_{l_1,...,l_m = 0}^\infty \frac{E_1(x_1)^{l_1}}{l_1!} \frac{(E_1(x_2) - E_1(x_1))^{l_2}}{l_2!}...\frac{(E_1(x_m) - E_1(x_{m-1}))^{l_m}}{l_m!} \\
 = e^{E_1(x_1)} e^{E_1(x_2) - E_1(x_1)} ... e^{E_1(x_m) - E_1(x_{m-1})} = e^{E_1(x_m)} 
\end{multlined}
\end{equation}

Thus, \begin{equation}\label{summoments} \sum_{ i_1 < ... < i_m } \mathbb{E}[L_{i_1}...L_{i_m}] = \frac{1}{m!} \int_0^\infty \int_{x_m}^\infty...\int_{x_2}^\infty e^{-x_1}...e^{-x_m} dx_1...dx_m = \frac{1}{(m!)^2} \end{equation}
and therefore

\begin{equation} P_1^1(y_1,y_2) = \sum_{m=0}^\infty \frac{(y_1-y_2)^m}{(m!)^2} 
\end{equation} 
for $y_2 - y_1 \le 1$.

\begin{remark}
For $y_2 - y_1 \le 1$, the gap probability $P_1^1(y_1,y_2) = J_0(2 \sqrt{y_2-y_1})$ where for real $\beta$, \[ J_\beta(x) = \sum_{m=0}^\infty \frac{(-1)^m}{m! \Gamma(m+\beta+1)} \left( \frac{x}{2}\right)^{2m + \beta}\] are the Bessel functions of the first kind.  
\end{remark}
\begin{remark}
One can check that $\displaystyle{\sum_{m=0}^\infty \frac{(-x)^m}{(m!)^2} }  $ does indeed satisfy the integral equation \eqref{integralequation} for $0 < x < 1$.  
\end{remark}

\section{Eigenvalue process for random permutation matrices at rational angles}

Equation \eqref{e0} states the limiting process $\mathfrak{E}_\infty^0$ when zooming in at the origin.  In general, it is not difficult to obtain the limiting process $\displaystyle{\lim_{n \to \infty} \mathfrak{E}_n^\alpha}$ at any rational angle $\alpha = s/t$.  As when zooming in at 0, $\displaystyle{\lim_{n \to \infty} \mathfrak{E}_n^\alpha}$ will have an infinite dirac mass at 0 since there will be an eigenangle at $\alpha$ corresponding to every $j$-cycle with $j \in t \mathbb{Z}$ and the number of such cycles is approximately $(\log n)/t$ for large $n$.  This follows from the following result by Arratia, Barbour, Tavare \cite{arratiatavare2} (for a method involving generating functions see \cite{shepp}).  

\begin{proposition}[Arratia,Barbour,Tavare] \label{P:Poissontotal}
Let $Z_i$ be independent Poisson distributed random variables with parameter $\theta/i$.  Then as $n \to \infty$ and for $b = o(n)$, \[(C_1^{(n)},...,C_b^{(n)}) \buildrel TV \over \to (Z_1,...,Z_b) \] where the convergence is in total variation.    
\end{proposition}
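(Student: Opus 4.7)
My plan is to derive an exact expression for the total variation distance via the Conditioning Relation, and then show it tends to zero using a uniform local limit theorem for the truncated partial sums $T_{bn}$. The principal obstacle will be the uniform control of the local point probabilities $\mathbb{P}\{T_{bn} = n - k\}$ as both $k$ and $b$ vary with $n$.

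First, summing the Ewens Sampling Formula over $(c_{b+1}, \ldots, c_n)$ and using the independence of the $Z_i$, one obtains for every $(c_1, \ldots, c_b) \in \mathbb{Z}_{\ge 0}^b$ with $s := \sum_{i=1}^b i c_i \le n$ the identity
\begin{equation*}
\mathbb{P}\{C_i^{(n)} = c_i, 1 \le i \le b\} = \mathbb{P}\{Z_i = c_i, 1 \le i \le b\} \cdot \frac{\mathbb{P}\{T_{bn} = n - s\}}{\mathbb{P}\{T_{0n} = n\}}.
\end{equation*}
Setting $S := \sum_{i=1}^b i Z_i$ and $R_n := \mathbb{P}\{T_{bn} = n - S\} \mathbbm{1}\{S \le n\} / \mathbb{P}\{T_{0n} = n\}$, one has $\mathbb{E}[R_n] = 1$, whence
\begin{equation*}
d_{TV}\bigl((C_1^{(n)}, \ldots, C_b^{(n)}), (Z_1, \ldots, Z_b)\bigr) = \tfrac{1}{2}\mathbb{E}|R_n - 1| = \mathbb{E}[(1 - R_n)_+].
\end{equation*}
Since $(1 - R_n)_+ \le 1$, by dominated convergence it suffices to show $R_n \to 1$ in probability.

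The key ingredient is then the uniform local limit theorem: for every fixed $\delta \in (0, 1)$, as $n \to \infty$ with $b = o(n)$,
\begin{equation*}
\sup_{0 \le k \le (1-\delta)n} \left| n \mathbb{P}\{T_{bn} = n-k\} - p_\theta(1 - k/n) \right| \to 0.
\end{equation*}
This will follow by combining three ingredients: (i) the size-biasing identity \eqref{sizebiasewens} applied with $B = \{b+1, \ldots, n\}$, which yields $(n-k)\mathbb{P}\{T_{bn} = n - k\} = \theta \mathbb{P}\{T_{bn} \le n - k - b - 1\}$; (ii) the distributional convergence $T_{bn}/n \to X_\theta$, which follows from $T_{0n}/n \to X_\theta$ by Slutsky since $\mathbb{E}[T_{0b}] = \theta b = o(n)$, and which upgrades to uniform convergence of distribution functions by Polya's theorem since $X_\theta$ has continuous distribution; and (iii) the relation $x p_\theta(x) = \theta \mathbb{P}\{X_\theta < x\}$ for $x \in (0, 1]$, which is (4.23) of \cite{a}. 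Specializing to $k = 0$, $b = 0$ recovers $n \mathbb{P}\{T_{0n} = n\} \to p_\theta(1) = e^{-\gamma \theta}/\Gamma(\theta)$.

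To conclude, since $\mathbb{E}[S] = \sum_{i=1}^b i \cdot \theta/i = \theta b = o(n)$, Markov's inequality gives $\mathbb{P}\{S > \delta n\} \to 0$ for every $\delta > 0$. On the complementary event $\{S \le \delta n\}$, the uniform local limit implies $R_n = p_\theta(1 - S/n)/p_\theta(1) + o(1)$. Since $p_\theta$ is continuous at $1$ and $S/n \to 0$ in probability, a standard diagonal argument (choosing $\delta$ first small enough that $p_\theta(1 - s)/p_\theta(1)$ stays within $\varepsilon$ of $1$ for $|s| \le \delta$, then $n$ large) yields $R_n \to 1$ in probability, completing the proof.
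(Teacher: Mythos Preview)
The paper does not supply a proof of this proposition; it is stated with attribution to Arratia, Barbour, and Tavar\'e and cited to \cite{arratiatavare2} as a known result. Your argument is correct and is essentially the standard one: the likelihood-ratio representation of the total variation distance via the Conditioning Relation, followed by a uniform local limit theorem for $T_{bn}$ obtained from size-biasing. Indeed, your step (i)--(iii) is the same mechanism the paper itself uses in the proof of Lemma~\ref{L:uniformbound} (there for $T_{0,m_r-1}$ rather than $T_{bn}$), so your proof fits naturally with the tools already developed in the paper. One small point worth making explicit: the identity $(n-k)\mathbb{P}\{T_{bn}=n-k\}=\theta\,\mathbb{P}\{T_{bn}\le n-k-b-1\}$ in (i) requires $n-k\ge b+1$, which is guaranteed for $k\le(1-\delta)n$ and $b=o(n)$ once $n$ is large.
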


Let $f$ be continuous with support contained in the interval $(-T, T)$ and let $J(n)$ be defined as in \eqref{J(n)}.  Then we have the set equality \begin{equation}\label{J set} J \cap [n/(t T)] = t\mathbb{Z} \cap [n/(tT)]. \end{equation}  Also, if $j \in t\mathbb{Z}$ and $j < n/ (t T)$, then the interval $(\alpha - T/n, \alpha+ T/n)$ contains exactly one eigenangle corresponding to the $j$-cycle (namely $\alpha$).  To isolate the mass at 0, we define  $\xi_n^{tT}$ and $\eta_n^{tT}$ as in \eqref{xinr}.  Then
\begin{equation} \int f d \xi_n^{tT} = \sum_{1 \le i \le tT} \sum_{q \in \mathbb{Z}} f(n(q/L_i^{(n)} - \alpha) ) \end{equation} 
and if $f(0) \ge 0$, since $L_{r}^{(n)} < n/(tT)$ for $r > tT$, we have 
\begin{equation}
f(0) \Big(\sum_{\substack{1 \le j \le n/(tT) \\ t \mid j}} C_j^{(n)} - tT \Big) \le \int f d\eta_n^{tT} \le f(0)  \sum_{\substack{1 \le j \le n/(tT) \\ t \mid j}} C_j^{(n)}
\end{equation} 
Let $U_1,U_2,...$ be i.i.d. random variables distributed uniformly on the set $\{1/t, 2/t,...,(t-1)/t, 1\}$ and define the process 
\begin{equation}
\mathfrak{E}_\infty^t = \sum_{i=1}^\infty \sum_{q \in \mathbb{Z}} \delta_{(U_i + q)/L_i}
\end{equation} 
Define $\xi_\infty^{tT}$ and $\eta_\infty^{tT}$ as in equation \eqref{xiinftyr} with $\mathfrak{E}_\infty^t$ in place of $\mathfrak{E}_\infty^*$ and with our new definition of the variables $U_i$.  Then
\begin{equation} \int f d \xi_\infty^{tT} = \sum_{1 \le i \le tT} \sum_{q \in \mathbb{Z}} f((U_i+q)/L_i)  \end{equation} 
and since $L_r < 1/(tT)$ for $r > tT$, we have 
\begin{equation}
\int f d\eta_\infty^{tT} = f(0)  \sum_{i > tT} \mathbbm{1}(U_i = 1)
\end{equation} 
which is infinite almost surely if $f(0) > 0$.

Let $\overline{\mathbb{R}} = \mathbb{R} \cup \{\pm \infty\}$ denote the set of extended real numbers. 
Then if $f(0) > 0$, $\int f d\mathfrak{E}_n^\alpha$ converges weakly to the $\overline{\mathbb{R}}$-valued random variable $\int f d\mathfrak{E}_\infty^t$, which is infinite almost surely.

Now we can assume $f(0) = 0$.  By partitioning $[n]$ into intervals of size $t$, it is easy to see that Lemma \ref{L:uni} holds for $\alpha = s/t$ and $U_i$ as defined in this section instead of uniform on $[0,1]$.  This proves 
\begin{theorem} \label{T:k=1rational}
Let $\displaystyle{\alpha = \frac{s}{t}}$ be a rational number in reduced form.  Then $\mathfrak{E}_n^\alpha \to \mathfrak{E}_\infty^t$ weakly in the sense that we have the weak convergence $\displaystyle{\int f d \mathfrak{E}_n^\alpha \buildrel d \over \to \int f d \mathfrak{E}_\infty^t}$ of $\overline{\mathbb{R}}$-valued random variables for all continuous, compactly supported functions $f$. 
\end{theorem}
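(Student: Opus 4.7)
The plan mirrors the proof of Theorem \ref{T:1dmain}: decompose $\mathfrak{E}_n^\alpha = \xi_n^{tT} + \eta_n^{tT}$ using the $tT$ longest cycles (with $T$ bounding the support of $f$), and handle the cases $f(0) = 0$ and $f(0) > 0$ separately because of the infinite $\delta_0$ mass in $\mathfrak{E}_\infty^t$.

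First assume $f(0) = 0$. The remainder contributes \emph{exactly} zero for $n$ large: a $j$-cycle with $j \le n/(tT)$ produces an eigenangle within $T/n$ of $\alpha = s/t$ only if $|it - js|/(jt) < T/n$, i.e.\ $|it - js| < jtT/n \le 1$, which forces $it = js$ and hence $i/j = \alpha$, contributing $f(0) = 0$. The analogous computation applies to $\eta_\infty^{tT}$: for $i > tT$ one has $L_i < 1/(tT)$, and $(q + U_i)/L_i \in (-T, T)$ is compatible only with $U_i = 1$, $q = -1$, again giving argument $0$. Hence $\int f\,d\mathfrak{E}_n^\alpha = \int f\,d\xi_n^{tT}$ for large $n$ and $\int f\,d\mathfrak{E}_\infty^t = \int f\,d\xi_\infty^{tT}$, so the result reduces to $\int f\,d\xi_n^{tT} \buildrel d \over \to \int f\,d\xi_\infty^{tT}$. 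By the continuous mapping theorem this follows from joint convergence of $(L_i^{(n)}/n,\, \psi_\alpha(L_i^{(n)}))_{i \le tT}$ to $(L_i, U_i)_{i \le tT}$ with $U_i$ uniform on $\{1/t, \ldots, 1\}$ --- the rational analog of Lemma \ref{L:uni}.

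To prove the rational analog of Lemma \ref{L:uni}, I partition $[n]$ into contiguous blocks of $t$ integers. Since $\gcd(s, t) = 1$, within each such block $\psi_\alpha(j)$ visits each of the values $1/t, 2/t, \ldots, 1$ exactly once. This exact discrete equidistribution at scale $t$ replaces the Weyl discrepancy input of Proposition \ref{P:1Ddisc}, and the rest of the proof of Lemma \ref{L:uni} --- conditioning via the uniform local limit estimate Lemma \ref{L:uniformbound}, summing over the partition $I_{m,b}$ with $n/b$ chosen to be a multiple of $t$ tending to infinity, and exploiting uniform continuity of $f_\theta^{(r)}$ --- transcribes with the discrepancy terms simply absent.

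For $f(0) > 0$, split $f = g + (f - g)$ with $g \ge 0$ continuous, compactly supported near $0$, and $g(0) = f(0)$. The first case handles $\int (f-g)\,d\mathfrak{E}_n^\alpha$, which converges in distribution to a finite limit. For the bump, nonnegativity gives $\int g\,d\mathfrak{E}_n^\alpha \ge g(0) \sum_{j \le n,\, t \mid j} C_j^{(n)}$, and by Proposition \ref{P:Poissontotal}, for each fixed $B$ the truncation $\sum_{j \le B,\, t \mid j} C_j^{(n)}$ converges in total variation to $\sum_{j \le B,\, t \mid j} Z_j$, a Poisson variable with mean $(\theta/t) \sum_{k \le \lfloor B/t \rfloor} 1/k$ which diverges as $B \to \infty$. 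A $\limsup_n$-then-$B \to \infty$ argument then yields $\int f\,d\mathfrak{E}_n^\alpha \to +\infty$ in probability, which is the claimed weak convergence as $\overline{\mathbb{R}}$-valued random variables. The main technical step is the rational version of Lemma \ref{L:uni}, but the substitution of exact length-$t$ periodicity for Diophantine equidistribution actually makes the argument cleaner than its irrational counterpart rather than harder.
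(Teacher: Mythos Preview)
Your proposal is correct and follows essentially the same route as the paper: the decomposition $\mathfrak{E}_n^\alpha = \xi_n^{tT} + \eta_n^{tT}$, the observation that for $j < n/(tT)$ the only eigenangle in the window is $\alpha$ itself (giving the set equality $J \cap [n/(tT)] = t\mathbb{Z} \cap [n/(tT)]$), and the rational analog of Lemma~\ref{L:uni} via partitioning $[n]$ into blocks of length $t$ are exactly what the paper does. Your handling of the $f(0)>0$ case by splitting off a nonnegative bump $g$ and invoking Proposition~\ref{P:Poissontotal} is a mild repackaging of the paper's direct two-sided bound $f(0)\big(\sum_{t\mid j,\,j\le n/(tT)} C_j^{(n)} - tT\big) \le \int f\,d\eta_n^{tT} \le f(0)\sum_{t\mid j,\,j\le n/(tT)} C_j^{(n)}$, but the content is the same; just note that the case $f(0)<0$ follows by applying your argument to $-f$.
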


\section{Factorization classes and Well-distribution} \label{S:Factorization classes}

To set the stage for the multidimensional case, we now define factorization classes and establish well-distribution for polynomial sequences of a given factorization class in the torus. 
The notation $\bigm| \bigm| $ is shorthand for ``exactly divides,'' i.e. a prime power $p^e \bigm| \bigm| a$ iff $p^e \bigm| a$ and $p^{e+1} \nmid a$. 
   
\begin{definition}\label{factorization}
\tolerance=10000\hbadness=10000

Let $\displaystyle{p_1 <... < p_l}$ be the first $l$ primes and let $\displaystyle{(e_{1},...,e_{l})}$ be an $l$-tuple of non-negative integers.  The factorization class $\mathscr{P}_{e_1,...,e_l}$ is defined to be the set $\displaystyle{ \{m \in \mathbb{N} : p_j^{e_{j}} \bigm| \bigm| m, 1 \le j \le l \} }$.  For a matrix $\mathbf{A} = [\mathbf{a}_1,..., \mathbf{a}_r] \in \mathbb{N}^{l \times r}$, let $\mathscr{P}_\mathbf{A} = \prod\limits_{i=1}^r \mathscr{P}_{\mathbf{a}_i}$.  Then $\mathscr{P}_\mathbf{A}$ will also be called a factorization class. 

\end{definition}

The following simple lemma gives the asymptotic density of each factorization class.

\begin{lemma} \label{L:chinese}
Let $I_n$ be an interval of $n$ consecutive positive integers and let $(e_1,...,e_l)$ be a vector of non-negative integers.  Then as $n \to \infty$, the fraction of integers $a \in I_n$ in the factorization class $\displaystyle{\mathscr{P}_{e_1,...,e_l}}$ approaches $\prod\limits_{m=1}^l (1 - 1/p_m)(1/p_m)^{e_m}$.
\end{lemma}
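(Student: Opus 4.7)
The plan is to reduce the factorization class condition to a union of congruence conditions modulo a fixed modulus and then apply an elementary counting argument.

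First, I would observe that the single-prime condition $p_m^{e_m} \| a$ can be encoded by a residue condition modulo $p_m^{e_m+1}$: namely, $a$ must be congruent to one of the $p_m - 1$ residues in the set $R_m = \{k p_m^{e_m} : 1 \le k \le p_m - 1\} \subseteq \mathbb{Z}/p_m^{e_m+1}\mathbb{Z}$ (these are precisely the residues divisible by $p_m^{e_m}$ but not $p_m^{e_m+1}$). Thus the density of $\mathscr{P}_{e_1,\ldots,e_l}$ read off from one prime alone is $(p_m - 1)/p_m^{e_m+1} = (1 - 1/p_m)(1/p_m)^{e_m}$.

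Next, since the prime powers $p_1^{e_1+1}, \ldots, p_l^{e_l+1}$ are pairwise coprime, the Chinese Remainder Theorem gives an isomorphism
\begin{equation*}
\mathbb{Z}/M\mathbb{Z} \cong \prod_{m=1}^l \mathbb{Z}/p_m^{e_m+1}\mathbb{Z}, \qquad M := \prod_{m=1}^l p_m^{e_m+1}.
\end{equation*}
Under this identification, the condition $a \in \mathscr{P}_{e_1,\ldots,e_l}$ is equivalent to $a \bmod M$ lying in the subset $R := R_1 \times \cdots \times R_l$, which has cardinality $|R| = \prod_{m=1}^l (p_m - 1)$.

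Finally, for any interval $I_n$ of $n$ consecutive integers, the count of elements of $I_n$ lying in a fixed union of residue classes modulo $M$ of total size $|R|$ equals $n |R|/M + O(|R|)$, where the implicit constant is absolute (indeed, at most $|R|$). Dividing by $n$ and letting $n \to \infty$ with $M$ and $|R|$ fixed, the fraction tends to
\begin{equation*}
\frac{|R|}{M} = \prod_{m=1}^l \frac{p_m - 1}{p_m^{e_m+1}} = \prod_{m=1}^l \left(1 - \frac{1}{p_m}\right) \left(\frac{1}{p_m}\right)^{e_m},
\end{equation*}
which is the desired density. There is no real obstacle here: the only point requiring care is verifying that the single-prime residue set $R_m$ has exactly $p_m - 1$ elements, which is immediate from the definition of $\|$.
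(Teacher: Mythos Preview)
Your proof is correct and follows essentially the same approach as the paper: set $M = \prod_{m=1}^l p_m^{e_m+1}$, apply the Chinese Remainder Theorem to $\mathbb{Z}/M\mathbb{Z}$, and count residues. The paper's own proof is a two-line sketch of exactly this argument, so you have simply filled in the details.
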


\begin{proof}
Let $N = \prod\limits_{m=1}^l p_m^{e_m+1}$ and apply the Chinese Remainder theorem to $\mathbb{Z}/N\mathbb{Z}$.  Then consider multiples $aN$ as $a \to \infty$.  
\end{proof}

We will need the following notion of equidistribution and well-distribution for multi-indexed multidimensional sequences.

\begin{definition}
Let $(\mathbf{s}_{j_1,...,j_r})$ for $(j_1,...,j_r) \in \mathbb{N}^r$ be a multi-indexed sequence of elements in the $d$-dimensional torus $\mathbb{T}^d$.  Let $\displaystyle{ \mathcal{P}_N^{t_1,...,t_r} = \{ \mathbf{s}_{j_1,...,j_r}: t_i \le j_i \le t_i + N \} }$ as a multiset.  Then we say that $(\mathbf{s}_{j_1,...,j_r})$ is equidistributed if $\displaystyle{ \lim_{N \to \infty} D(\mathcal{P}_N^{0,...,0}) = 0}$.  The sequence $(\mathbf{s}_{j_1,...,j_r})$ is well-distributed if $\displaystyle{ \lim_{N \to \infty} D(\mathcal{P}_N^{t_1,...,t_r}) = 0}$ uniformly over the tuples $(t_1,...,t_r) \in \mathbb{N}^r$.

\end{definition}

Before stating the main result Lemma \ref{L:welltorus} of this section, we define the following sets.

\begin{definition}\label{discfac}
Let the intervals $I_{m,b}$ be as in \eqref{bin}, let $\mathbf{A} \in \mathbb{N}^{l \times r}$ be a matrix of non-negative integers, and let $\boldsymbol\alpha = (\alpha_{i_1,...,i_k})_{1 \le i_1 < ... < i_k \le r}$ be a vector of irrational numbers.  Define the multiset \begin{equation} \mathcal{P}_{(k_i), b}^{\mathscr{P}_\mathbf{A}, \boldsymbol \alpha} = \Big \{ ( m_{i_1}...m_{i_k} \alpha_{i_1,...,i_k}  )_{1 \le i_1 < ...< i_k \le r} : (m_1,...,m_r) \in \mathscr{P}_\mathbf{A} \cap \prod_{i=1}^r I_{k_i, b} \Big\}  \end{equation}
of tuples in $\displaystyle{\mathbb{T}^{ \binom{r}{k} }}$. 
\end{definition}

\begin{lemma} \label{L:welltorus}
The discrepancies $\displaystyle{D \big(\mathcal{P}_{(k_i), b}^{\mathscr{P}_\mathbf{A}, \boldsymbol \alpha} \big) \to 0}$  as $n/b \to \infty$ uniformly over the tuples $(k_1,...,k_r)$.
\end{lemma}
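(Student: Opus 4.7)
The plan is to invoke Weyl's criterion for well-distribution in $\mathbb{T}^{\binom{r}{k}}$: it suffices to show that for every non-zero integer vector $\mathbf{h} = (h_{i_1,\ldots,i_k})_{1 \le i_1 < \ldots < i_k \le r}$, the normalized exponential sum
\[
S(\mathbf{h}) := \frac{1}{|\mathscr{P}_\mathbf{A} \cap \prod_i I_{k_i,b}|} \sum_{(m_1,\ldots,m_r) \in \mathscr{P}_\mathbf{A} \cap \prod_i I_{k_i,b}} e^{2\pi i F(\mathbf{m})}
\]
tends to $0$ as $n/b \to \infty$ uniformly in $(k_1,\ldots,k_r)$, where $F(\mathbf{m}) = \sum_{1 \le i_1 < \ldots < i_k \le r} h_{i_1,\ldots,i_k} m_{i_1}\cdots m_{i_k} \alpha_{i_1,\ldots,i_k}$.

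First I would eliminate the factorization-class restriction by inclusion-exclusion: for each column $\mathbf{a}_i = (a_{1i},\ldots,a_{li})$ of $\mathbf{A}$, the identity
\[
\mathbbm{1}(m \in \mathscr{P}_{\mathbf{a}_i}) = \prod_{j=1}^l \bigl(\mathbbm{1}(p_j^{a_{ji}} \mid m) - \mathbbm{1}(p_j^{a_{ji}+1} \mid m)\bigr)
\]
expands into a signed sum of $2^l$ indicators of arithmetic progressions $d \mid m$. Multiplying over the $r$ coordinates writes $S(\mathbf{h})$ as a finite signed combination of normalized sums of the form $\sum_{m_i \in I_{k_i,b},\, d_i \mid m_i} e^{2\pi i F(\mathbf{m})}$ for various divisor tuples $(d_1,\ldots,d_r)$. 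Substituting $m_i = d_i t_i$, each such sum runs over $(t_1,\ldots,t_r)$ in a product of shifted intervals of length $\sim (n/b)/d_i$, and the exponential becomes $e^{2\pi i P(\mathbf{t})}$ where
\[
P(\mathbf{t}) = \sum_{1 \le i_1 < \ldots < i_k \le r} h_{i_1,\ldots,i_k}\, \alpha_{i_1,\ldots,i_k}\, d_{i_1}\cdots d_{i_k}\, t_{i_1}\cdots t_{i_k}
\]
is a multilinear form of degree $k$ whose coefficient on the monomial $t_{i_1}\cdots t_{i_k}$ is irrational whenever $h_{i_1,\ldots,i_k}\neq 0$.

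Next I would apply the multivariable Weyl equidistribution theorem: any polynomial sequence in several variables whose coefficient on some non-constant monomial is irrational is well-distributed mod $1$ on a product of long intervals, uniformly in the shifts. The proof is by induction on the degree $k$ via van der Corput's inequality: choose $i^{*}$ such that $h_{i_1,\ldots,i_k}\neq 0$ for some tuple containing $i^{*}$, and apply van der Corput differencing in the variable $t_{i^{*}}$. The partial difference
\[
P(\mathbf{t} + h\, e_{i^{*}}) - P(\mathbf{t}) = h\, d_{i^{*}} \sum_{\substack{J \subset \{1,\ldots,r\}\setminus\{i^{*}\} \\ |J| = k-1}} h_{J \cup \{i^{*}\}}\, \alpha_{J \cup \{i^{*}\}}\, \prod_{j \in J} d_{j}\, t_{j}
\]
is a $(k-1)$-linear form in the remaining variables with at least one irrational coefficient, so the inductive hypothesis applies. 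The base case $k=1$ reduces to a geometric-series bound depending only on the distance from the irrational coefficient to the nearest integer, independent of the shift. Combining with the asymptotic $|\mathscr{P}_\mathbf{A} \cap \prod_i I_{k_i,b}| = \Theta((n/b)^r)$ (from Lemma \ref{L:chinese}) and summing over the finitely many inclusion-exclusion terms yields $S(\mathbf{h}) \to 0$ uniformly in $(k_i)$.

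The main obstacle is uniformity in the shifts $(k_i)$ throughout the van der Corput induction. This works out because the coefficients of $P$, and hence of each partial difference, on a given monomial do not depend on the shift of the variables; the inductive bounds therefore apply with the same rate regardless of where the intervals $I_{k_i,b}$ are positioned in $[n]$. Some care is also needed to select the differencing variable at each step to preserve the existence of an irrational coefficient on a maximal-degree monomial of the reduced form, but this is guaranteed by always choosing an index $i^{*}$ that appears as a subscript of a nonvanishing coefficient of the current multilinear form.
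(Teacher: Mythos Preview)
Your argument is correct and follows the same overall strategy as the paper (Weyl criterion, reduction to polynomial well-distribution via van der Corput), but your decomposition of the factorization class is different. You eliminate the constraint $m_i \in \mathscr{P}_{\mathbf{a}_i}$ by inclusion--exclusion, writing $\mathbbm{1}(m \in \mathscr{P}_{\mathbf{a}_i}) = \prod_{j=1}^l(\mathbbm{1}(p_j^{a_{ji}}\mid m) - \mathbbm{1}(p_j^{a_{ji}+1}\mid m))$ and expanding into $2^{lr}$ signed sums over divisibility classes. The paper instead observes directly that $\mathscr{P}_{\mathbf{a}_i}$ is a \emph{disjoint union} of arithmetic progressions: setting $N_i = \prod_{m=1}^l p_m^{\mathbf{A}_{mi}+1}$, every element of $\mathscr{P}_{\mathbf{a}_i}$ is of the form $s_i + tN_i$ for some residue $s_i \in \mathscr{P}_{\mathbf{a}_i}\cap[N_i]$. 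This partitions $\mathscr{P}_\mathbf{A}\cap\prod_i I_{k_i,b}$ into finitely many lattice boxes without any signs, and on each box the paper applies the packaged Lemma~\ref{L:wellt} (well-distribution of polynomial maps in $\mathbb{T}^d$) rather than re-running the van der Corput induction. The linear-independence hypothesis of Lemma~\ref{L:wellt} is checked exactly as you do: the top-degree monomial $t_{i_1}\cdots t_{i_k}$ appears only in the $(i_1,\ldots,i_k)$-th component, with an irrational coefficient. Your approach works fine but carries the extra bookkeeping of signed terms and the need to compare each sub-box size against $|\mathscr{P}_\mathbf{A}\cap\prod_i I_{k_i,b}|$ via Lemma~\ref{L:chinese}; the paper's direct partition avoids both.
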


To prove this, we use a result about well-distribution in the torus stated in Lemma \ref{L:wellt} below.  We briefly review the essential ingredients needed, which are given as exercises in Tao \cite[pp. 12-13]{tao}.  To simplify notation involving exponential sums, we set $e(x) := e^{2 \pi i x}$.   

\begin{proposition}[Multi-dimensional Weyl Criterion for well-distribution]
The sequence $(\mathbf{s}_{j_1,...,j_m})$ is well-distributed in $\mathbb{T}^d$ if and only if for each non-zero $\mathbf{h} \in \mathbb{Z}^d$, 
\[\lim_{N \to \infty} \frac{1}{N^m} \sum_{(j_1,...,j_m) \in B } e(\mathbf{h} \cdot \mathbf{s}_{j_1,...,j_m}) = 0\] uniformly over all cubes $B$ of side length $N$ with vertices in $\mathbb{N}^m$.  
\end{proposition}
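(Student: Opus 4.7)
The plan is to prove both directions via the standard Fourier/approximation argument used for the classical Weyl criterion, taking care at every step that the estimates remain uniform over the starting tuple $(t_1,\dots,t_m) \in \mathbb{N}^m$.

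For the ``only if'' direction, I would first promote the well-distribution hypothesis from indicators of boxes to arbitrary continuous functions: given continuous $f$ on $\mathbb{T}^d$ and $\varepsilon > 0$, partition $\mathbb{T}^d$ into finitely many small sub-boxes on which the oscillation of $f$ is at most $\varepsilon$, then sandwich $f$ between step functions that are constant on each sub-box. The well-distribution bound applied to each indicator gives
\[ \biggl| \frac{1}{N^m} \sum_{(j_1,\dots,j_m) \in B} f(\mathbf{s}_{j_1,\dots,j_m}) - \int_{\mathbb{T}^d} f \, d\lambda \biggr| \to 0 \]
uniformly over cubes $B$ of side length $N$, since the approximation error depends only on the modulus of continuity of $f$ and not on $(t_1,\dots,t_m)$. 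Applying this to $f(\mathbf{x}) = e(\mathbf{h} \cdot \mathbf{x})$, which integrates to $0$ for $\mathbf{h} \ne \mathbf{0}$, gives the exponential sum criterion.

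For the converse, the argument proceeds in three stages. First, by linearity, the hypothesis extends to uniform convergence of the Weyl averages of any trigonometric polynomial $P$ to its integral. Second, since trigonometric polynomials are dense in $C(\mathbb{T}^d)$ in the sup norm (Stone--Weierstrass), an $\varepsilon$-approximation $P \approx f$ transfers this uniform convergence to every continuous $f$; crucially, the transfer introduces an error of at most $\|f - P\|_\infty$ uniformly in the starting cube, so uniformity is preserved. Third, to recover box discrepancy, given a box $B_0 = \prod_{i=1}^d [a_i, b_i]$ I would sandwich $\mathbbm{1}_{B_0}$ between continuous functions $f^- \le \mathbbm{1}_{B_0} \le f^+$ with $\int (f^+ - f^-) d\lambda < \varepsilon$; stage two then pins $N^{-m} A(B_0; N)$ within $\varepsilon$ of $\prod_{i=1}^d (b_i - a_i)$ uniformly for all $N$ large and all starting cubes. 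To turn this into a supremum over \emph{all} boxes, note that boxes are parameterized by $2d$ reals in $[0,1]$ and the counting function is monotone in each endpoint, so a finite grid of boxes (of mesh $\varepsilon$) dominates the discrepancy up to an $O(\varepsilon)$ error via a standard Koksma-type sandwiching.

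The main obstacle --- and the only substantive departure from the classical single-indexed, equidistribution version of the Weyl criterion --- is ensuring that uniformity over starting cubes propagates through each approximation step. This works out because every error bound I invoke depends only on sup-norm or oscillation data of the approximating function $P$ or $f$, which is intrinsic and decoupled from the sequence $(\mathbf{s}_{j_1,\dots,j_m})$. Consequently the uniform (in $B$) vanishing of the exponential sums feeds forward cleanly to uniform vanishing of the box discrepancy, yielding well-distribution in the sense of Definition above.
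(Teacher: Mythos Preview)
Your proof is correct and follows the standard Fourier-analytic route to the Weyl criterion, with the necessary bookkeeping to carry uniformity over the starting cube through each approximation stage. Note, however, that the paper does not actually supply a proof of this proposition: it is stated as background material and attributed to Tao \cite[pp.~12--13]{tao} as an exercise, alongside the multidimensional van der Corput lemma. So there is no ``paper's own proof'' to compare against; your argument is exactly the kind of standard verification one would write out if asked to fill in that exercise.
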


To apply the Weyl Criterion, we need a uniform version of the multidimensional van der Corput Lemma.  

\begin{lemma}[Multidimensional van der Corput Lemma] \label{L:vanderCorput} Fix $k \in \mathbb{Z}^d$.  Let x: $\mathbb{N}^m \mapsto \mathbb{T}^d$ be such that for every $h$ outside of a hyperplane of $\mathbb{R}^m$, the difference sequence $\partial_h x: n \mapsto x(n+h) - x(n)$ satisfies $\displaystyle{ \frac{1}{N^m} \Big|\sum_{n \in B} e(k \cdot \partial_h x(n)) \Big| \to 0}$ uniformly over all cubes $B$ of side length $N$ with vertices lying in $\mathbb{N}^m$.  Then $\displaystyle{ \frac{1}{N^m} \Big|\sum_{n \in B} e(k \cdot x(n)) \Big| \to 0}$ uniformly over all $B$ as well.  
\end{lemma}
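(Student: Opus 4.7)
The plan is to apply the classical Cauchy--Schwarz/van der Corput trick simultaneously in all $m$ coordinates, reducing the desired exponential sum bound for $x$ to bounds for the difference sequences $\partial_{\mathbf{r}} x$, at which point the hypothesis is directly applicable. Throughout I write $a_n := e(k \cdot x(n))$, so $|a_n| = 1$, and $S := \sum_{n \in B} a_n$ for a cube $B \subset \mathbb{N}^m$ of side $N$; the goal is $|S|/N^m \to 0$ uniformly in the base-point of $B$.

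The first step is an averaging-shift reduction. For any $H \ge 1$ and any $h \in [0,H)^m \cap \mathbb{N}^m$, the translated cube $B + h$ differs from $B$ by at most $O(HN^{m-1})$ lattice points, so
\[
S \;=\; \frac{1}{H^m} \sum_{h \in [0,H)^m} \sum_{n \in B+h} a_n + O(HN^{m-1}) \;=\; \sum_{n \in B} b_n + O(HN^{m-1}),
\]
where $b_n := H^{-m} \sum_{h \in [0,H)^m} a_{n+h}$. A single Cauchy--Schwarz over $n \in B$ then gives $|S|^2 \le N^m \sum_{n \in B} |b_n|^2 + O(HN^{2m-1})$, the cross term being absorbed into the main term.

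Next, I expand $|b_n|^2 = b_n \overline{b_n}$ and interchange the sums; setting $\mathbf{r} := h - h'$ and $n' := n + h'$,
\[
\sum_{n \in B} |b_n|^2 \;=\; \frac{1}{H^{2m}} \sum_{\mathbf{r} \in (-H,H)^m} \Bigl(\prod_{i=1}^m (H - |r_i|)\Bigr) \sum_{n' \in B_{\mathbf{r}}} e\bigl(k \cdot \partial_{\mathbf{r}} x(n')\bigr) + O(H^{-1} N^{m-1}),
\]
where each $B_{\mathbf{r}}$ is a translate of a cube of side at most $N$ (the $O(H^{-1}N^{m-1})$ term absorbs all of the cube-translation boundary mismatches). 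I partition the $\mathbf{r}$'s three ways: the diagonal $\mathbf{r} = 0$ gives the trivial bound $N^m$ with multiplicity $H^m$, contributing $N^m/H^m$ to $\sum_n|b_n|^2$; nonzero lattice points $\mathbf{r}$ lying on the exceptional hyperplane number at most $O(H^{m-1})$ (an $(m-1)$-dimensional slice of $(-H,H)^m$) and contribute at most $O(N^m/H)$; and the remaining $\mathbf{r}$ lie off the hyperplane, at which point the hypothesis applies and delivers a contribution of $o_H(N^m)$ as $N \to \infty$ for each fixed $H$.

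Assembling, $N^{-2m}|S|^2 \le O(1/H) + O(1/H^m) + o_H(1) + O(H/N)$; letting $N \to \infty$ first (so $o_H(1) \to 0$ and $H/N \to 0$) and then $H \to \infty$ yields $|S|/N^m \to 0$, with uniformity in $B$ inherited from the uniformity built into the hypothesis. The principal obstacle is making the $o_H(N^m)$ saving uniform over the $O(H^m)$ non-exceptional directions $\mathbf{r}$ simultaneously, which is handled either by diagonalizing $H = H(N) \to \infty$ sufficiently slowly or by a straightforward triangle-inequality extraction of a common rate; a secondary concern is carefully tracking the cube-translation boundary errors so that they do not swamp the savings. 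The observation that the exceptional set is a single \emph{fixed} hyperplane in $\mathbb{R}^m$ is essential, since it bounds the number of bad lattice $\mathbf{r}$'s by $O(H^{m-1})$ uniformly in $H$.
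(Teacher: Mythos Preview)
Your argument is the standard van der Corput averaging-plus-Cauchy--Schwarz proof, which is exactly what the paper is invoking: the paper does not write out the details but simply cites Tao's exercise 1.1.12 and remarks that the uniformity condition passes through unchanged. Your write-up carries out that exercise correctly, including the key observations that for each fixed $H$ only finitely many shift vectors $\mathbf{r}$ occur (so a common $o(1)$ rate can be extracted) and that the exceptional hyperplane contributes only $O(H^{m-1})$ bad directions; the minor imprecisions (the symbol $B_{\mathbf{r}}$ really stands for a family of translated cubes rather than a single one, and the boundary error $O(H^{-1}N^{m-1})$ should be $O(HN^{m-1})$) are harmless to the final $O(1/H)+o_H(1)+O(H/N)$ bound.
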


\begin{proof} 
This is given as exercise 1.1.12 (without the condition on uniformity) in \cite{tao}.  The argument remains unaffected by adding this condition in both the hypothesis and conclusion. 
\end{proof}
 
Using Lemma \ref{L:vanderCorput}, we get

\begin{lemma}\label{L:wellt}
Let \[\mathbf{P}(n_1,...,n_m) = \sum_{\substack{s_1,...,s_m \ge 0 \\ s_1+...+s_m \le s}} \boldsymbol\alpha_{s_1,...,s_m} n_1^{s_1}...n_m^{s_m}\]
be a polynomial map from $\mathbb{N}^m$ to $\mathbb{T}^d$ of degree $s$, where $\boldsymbol \alpha_{s_1,...,s_m} \in \mathbb{T}^d$ are coefficients.  Suppose there does not exist a non-zero $\mathbf{k} \in \mathbb{Z}^d$ such that $\mathbf{k} \cdot \boldsymbol \alpha_{s_1,...,s_m} = 0$ for all $(s_1,...,s_m) \neq 0$.  Then $\mathbf{P}$ is well-distributed in the torus. 
\end{lemma}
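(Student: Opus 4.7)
The plan is to reduce to a scalar exponential-sum estimate via the Multidimensional Weyl Criterion and then induct on the degree $s$ using Lemma \ref{L:vanderCorput}. By the Weyl Criterion, it suffices to prove that for every nonzero $\mathbf{k} \in \mathbb{Z}^d$,
\[\frac{1}{N^m}\Big|\sum_{\mathbf{n} \in B} e(\mathbf{k}\cdot\mathbf{P}(\mathbf{n}))\Big| \longrightarrow 0\]
uniformly over cubes $B \subset \mathbb{N}^m$ of side length $N$. Fix such a $\mathbf{k}$ and consider the scalar polynomial $Q(\mathbf{n}) := \mathbf{k}\cdot\mathbf{P}(\mathbf{n})$ with coefficients $c_{\mathbf{t}} := \mathbf{k}\cdot\boldsymbol\alpha_{\mathbf{t}} \in \mathbb{T}$ (where $\mathbf{t}$ is the multi-index $(s_1,\dots,s_m)$).

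A preliminary strengthening of the hypothesis: at least one nonconstant $c_{\mathbf{t}}$ must be \emph{irrational} in $\mathbb{T}$, not merely nonzero. Indeed, if every nonconstant $c_{\mathbf{t}}$ were rational with a common denominator $D$, then the integer vector $D\mathbf{k} \neq 0$ would satisfy $(D\mathbf{k})\cdot\boldsymbol\alpha_{\mathbf{t}} = 0$ in $\mathbb{T}$ for every $\mathbf{t} \neq 0$, contradicting the lemma's assumption. The problem is thus reduced to the scalar claim: any polynomial $Q\colon \mathbb{N}^m \to \mathbb{T}$ possessing an irrational nonconstant coefficient is well-distributed. I induct on $s$.

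For the base case $s=1$, write $Q(\mathbf{n}) = c_0 + \sum_j c_j n_j$ with some $c_{j_0}$ irrational. The exponential sum factorizes into one-dimensional sums $\frac{1}{N}\sum_{n_j} e(c_j n_j)$; the factor at $j_0$ is bounded by $\frac{1}{N|1 - e(c_{j_0})|} = O(1/N)$ by geometric summation (uniformly in translation), while the other factors are bounded by $1$. For the inductive step, I apply Lemma \ref{L:vanderCorput} with $d = 1$, $k = 1$: it suffices to show that for $h$ outside a hyperplane of $\mathbb{R}^m$, the difference polynomial $\partial_h Q(\mathbf{n}) := Q(\mathbf{n}+h) - Q(\mathbf{n})$, which has degree $\leq s-1$, retains an irrational nonconstant coefficient, so that the inductive hypothesis applies to it. Let $s'$ be the largest degree at which $Q$ has an irrational coefficient $c_{\mathbf{t}_0}$, with $|\mathbf{t}_0| = s'$, and fix $j_0$ with $(\mathbf{t}_0)_{j_0} \geq 1$. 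The coefficient of $\mathbf{n}^{\mathbf{t}_0 - \mathbf{e}_{j_0}}$ in $\partial_h Q$ is a polynomial in $h$ whose $h_{j_0}$-linear part from the degree-$s'$ piece of $Q$ contributes the irrational value $(\mathbf{t}_0)_{j_0}\,c_{\mathbf{t}_0}$; contributions from higher-degree parts of $Q$ (degree $>s'$) are rational by maximality of $s'$, while lower-degree parts contribute only to coefficients of $\partial_h Q$ at degree $<s'-1$.

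The main obstacle is the algebraic bookkeeping in this last point: verifying that the set of ``bad'' $h$'s where all nonconstant coefficients of $\partial_h Q$ become rational is contained in a single hyperplane of $\mathbb{R}^m$, so that Lemma \ref{L:vanderCorput} applies. Writing the relevant coefficient of $\partial_h Q$ as an affine form in $h$ modulo a rational polynomial, with $h_{j_0}$-coefficient the irrational $(\mathbf{t}_0)_{j_0}\,c_{\mathbf{t}_0}$, its rationality locus is precisely the zero-set of a nontrivial $\mathbb{Q}$-affine equation on $h$, hence a hyperplane. This resolves the inductive step and completes the proof.
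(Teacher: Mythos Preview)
Your approach is exactly the one the paper sketches: reduce to a scalar sum via the Weyl Criterion, then induct on the degree using Lemma~\ref{L:vanderCorput}, with the base case handled by geometric summation. The preliminary observation that some nonconstant coefficient $c_{\mathbf t}$ must be irrational (else $D\mathbf k$ annihilates all nonconstant $\boldsymbol\alpha_{\mathbf t}$) is a nice sharpening that the paper leaves implicit.

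There is, however, one case your inductive step does not cover. You pick $s'$ to be the top degree at which $Q$ has an irrational coefficient and then examine the coefficient of $\mathbf n^{\mathbf t_0-\mathbf e_{j_0}}$ in $\partial_h Q$. When $s'\ge 2$ this monomial is nonconstant and your argument goes through. But if $s'=1<s$ (e.g.\ $Q(n)=\alpha n+\tfrac12 n^2$ with $\alpha$ irrational), then $\mathbf t_0-\mathbf e_{j_0}=0$, so you are only exhibiting an irrational \emph{constant} term of $\partial_h Q$. In fact, in this regime every nonconstant coefficient of $\partial_h Q$ is rational for every $h$, so the inductive hypothesis cannot be invoked and the van der Corput step stalls.

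The patch is short: when $s'=1$, write $Q=L+R$ with $L$ linear (some coefficient irrational) and $R$ of degree $\ge 2$ with all coefficients in $\tfrac1D\mathbb Z$ for some $D$. Since $R(\mathbf n)\bmod 1$ depends only on $\mathbf n\bmod D$, split the cube $B$ into residue classes modulo $D$; on each class $e(R)$ is constant and the remaining sum is a translate of the linear base case, which you have already handled uniformly via geometric series. Summing over the $D^m$ classes gives the required $o(N^m)$ bound. A smaller cosmetic point: the rationality locus of your linear form $\sum_j a_j h_j$ is a proper $\mathbb Q$-linear subspace of $\mathbb Z^m$ (possibly of codimension $>1$), hence \emph{contained in} a hyperplane rather than equal to one---but containment is all Lemma~\ref{L:vanderCorput} requires.
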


\begin{proof}
This is given as exercise 1.1.13 in \cite{tao}, but in terms of equidistribution rather than well-distribution.  The idea of the proof is to use Weyl's criterion for well distribution and proceed by induction on the degree of the polynomial.  The inductive step uses Lemma \ref{L:vanderCorput}.  The reason that we can upgrade from equidistribution to well-distribution is that in the base case, the sequence $\{n \alpha\}$ is not only equidistributed, but well-distributed (because of the summation formula for geometric series).  
\end{proof}

\begin{proof}[Proof of Lemma \ref{L:welltorus}]
For $\displaystyle{1 \le i \le r}$, set $N_i = \prod\limits_{m=1}^l p_m^{\mathbf{A}_{mi}+1} $ and let $s_i$ be an element of $\mathscr{P}_{\mathbf{a}_i}\cap [N_i]$.   Define the sequences $s^i(t) = s_i + t N_i$ where $t$ runs over the non-negative integers.  Note that $s^i(t) \in \mathscr{P}_{\mathbf{a}_i}$ for all $t$.  Thus, we can partition each factorization class $\mathscr{P}_{\mathbf{a}_i}$ into congruence classes modulo $N_i$.  Then it suffices to show that the multidimensional polynomial sequence
\[\mathbf{P}(t_1,...,t_r) := \bigg( \alpha_{i_1,...,i_k} \prod_{j=1}^k s^{{i_j}}(t_{i_j}) \bigg)_{1 \le i_1 < ... < i_k \le r} \]
is well-distributed.  This follows from Lemma \ref{L:wellt} since the polynomials $\prod\limits_{j=1}^k s^{{i_j}}(t_{i_j})$ are linearly independent.  
\end{proof}

\section{Eigenvalue process for $\rho_{n,k}$ at angles of finite irrationality measure} \label{S:main theorem}

In this section, we prove Theorem \ref{T:generalmain} assuming the main technical result Theorem \ref{T:technical} we will prove in the following sections.  

\begin{remark}
Let $Y_{n,k}$ be the scaled eigenvalue fluctuation statistics of $\rho_{n,k}$ studied in \cite{tsou}.  In that case, the non-negligible contributions as $n \to \infty$ come from $\sigma_k$-cycles containing tuples $(t_1,...,t_k)$ such that $t_1,...,t_k$ are all in the same $\sigma$-cycle.  For the eigenvalue point process on the other hand, we will show that the non-negligible contributions to $\mathfrak{E}_{n,k}^\alpha$ all come from $\sigma_k$-cycles containing tuples $(t_1,...,t_k)$ such that $t_1,...,t_k$ are all in different $\sigma$-cycles.  
\end{remark}

\begin{definition}
Let $p_m$ be the $m^{\textrm{th}}$ prime and $r$ be a positive integer.  For each positive integer $a$, let $e_m(a)$ satisfy $p_m^{e_m(a)} \bigm| \bigm| a$.  Then let $\mathbf{E}_l(a_1,...,a_r)$ denote the $l \times r$ matrix whose $mi^{\textrm{th}}$ entry is $e_m(a_i)$.
\end{definition}

The following lemma shows that the $\lcm$ of $k$-subsets of the largest $r$ cycle lengths can be expressed in terms of the first $l$ primes with a small error.
\begin{lemma} \label{L:firstl}
With the notation defined above, \[\limsup_{n \to \infty} \mathbb{P} \bigg( \bigcup_{1 \le i_1 < ...< i_k \le r} \Big\{ \frac{L_{i_1}^{(n)}...L_{i_k}^{(n)}}{\lcm(L_{i_1}^{(n)},...,L_{i_k}^{(n)})}  \neq g_k(\mathbf{E}_l(L_{i_1}^{(n)}, ...,L_{i_k}^{(n)})) \Big \} \bigg) \le \binom{r}{ 2}\frac{1}{p_l}\] 
\end{lemma}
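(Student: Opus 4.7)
The plan is to reduce the event to a statement about shared prime factors among the top-$r$ cycle lengths, and then estimate via the uniform local limit theorem (Lemma \ref{L:uniformbound}), with care taken when summing over infinitely many primes. Writing $L_i^{(n)} = \prod_m p_m^{e_m(L_i^{(n)})}$ in terms of prime factorizations, one has
\[
\frac{L_{i_1}^{(n)} \cdots L_{i_k}^{(n)}}{\lcm(L_{i_1}^{(n)},\ldots,L_{i_k}^{(n)})} = \prod_{m=1}^\infty p_m^{\sum_u e_m(L_{i_u}^{(n)}) - \max_u e_m(L_{i_u}^{(n)})},
\]
while $g_k(\mathbf{E}_l(L_{i_1}^{(n)},\ldots,L_{i_k}^{(n)}))$ is the same product truncated at $m = l$. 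These agree exactly when, for every $m > l$, at most one of the $L_{i_u}^{(n)}$ is divisible by $p_m$. Therefore the union over all $k$-subsets of $[r]$ is precisely the event that some pair $(L_i^{(n)}, L_j^{(n)})$ with $1 \le i < j \le r$ shares a prime factor greater than $p_l$, and a union bound gives
\[
\mathbb{P}(\textrm{event}) \le \sum_{1 \le i < j \le r} \sum_{p > p_l} \mathbb{P}\bigl(p \mid L_i^{(n)},\ p \mid L_j^{(n)}\bigr).
\]

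Next I would introduce the bulk event $E_\lambda = \{L_r^{(n)} \ge \lambda n\} \cap \{L_1^{(n)} + \cdots + L_r^{(n)} \le (1-\lambda)n\}$ for small $\lambda > 0$ and split $\mathbb{P}(\textrm{event}) \le \mathbb{P}(\textrm{event} \cap E_\lambda) + \mathbb{P}(E_\lambda^c)$. By Proposition \ref{PDconvergence}, $\limsup_n \mathbb{P}(E_\lambda^c) = \mathbb{P}(L_r \le \lambda) + \mathbb{P}(L_1 + \cdots + L_r \ge 1-\lambda) =: \varepsilon(\lambda)$, which vanishes as $\lambda \to 0$. On the bulk, for each fixed prime $p$, Lemma \ref{L:uniformbound} provides $\mathbb{P}(L_1^{(n)} = m_1,\ldots,L_r^{(n)} = m_r) = (f_\theta^{(r)}(m/n) + o(1))/n^r$ uniformly in $(m_1,\ldots,m_r)$ in the bulk. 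Summing over $m_i$ and $m_j$ that are multiples of $p$ and recognizing a Riemann sum gives
\[
\limsup_n \mathbb{P}\bigl(p \mid L_i^{(n)},\ p \mid L_j^{(n)};\ E_\lambda\bigr) \le \frac{1}{p^2} \int_{E_\lambda'} f_\theta^{(r)}(x)\,dx \le \frac{1}{p^2},
\]
where $E_\lambda'$ is the continuous analog of $E_\lambda$ and $f_\theta^{(r)}$ being a density gives the last inequality.

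The principal obstacle is interchanging $\limsup_n$ with the sum over primes: naively summing the pointwise error from Lemma \ref{L:uniformbound} would pick up a $\pi(n)$ factor and blow up. To circumvent this, I would produce a uniform-in-$n$ dominating bound
\[
\mathbb{P}\bigl(p \mid L_i^{(n)},\ p \mid L_j^{(n)};\ E_\lambda\bigr) \le \mathbb{E}\Bigl[\binom{N_p^{(\lambda)}}{2}\Bigr] \le \frac{C_{\lambda,\theta}(\log(1/\lambda))^2}{p^2},
\]
where $N_p^{(\lambda)} = \sum_{j:\, p \mid j,\ \lambda n \le j \le n} C_j^{(n)}$. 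This follows from the second factorial moment estimate $\mathbb{E}[C_{j_1}^{(n)} C_{j_2}^{(n)}] \lesssim \theta^2/(j_1 j_2)$ (a standard consequence of the Conditioning Relation), with the double sum over multiples of $p$ in $[\lambda n, n]$ crudely bounded by $\sum_{k_1, k_2 \ge \lambda n/p}^{\le n/p} 1/(k_1 k_2 p^2) \le (\log(1/\lambda))^2/p^2$. Since $\sum_{p > p_l} 1/p^2 \le \int_{p_l}^\infty t^{-2}\,dt = 1/p_l$, reverse Fatou then yields $\limsup_n \sum_{p > p_l} \mathbb{P}(\cdots;\ E_\lambda) \le 1/p_l$ per pair. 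Summing over the $\binom{r}{2}$ pairs and letting $\lambda \to 0$ produces the desired bound $\binom{r}{2}/p_l$.
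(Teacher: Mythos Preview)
Your reduction to shared prime factors and the overall strategy (union bound over primes, pointwise limit via the local limit theorem, domination, reverse Fatou) is correct and gives the stated bound. It is, however, organized differently from the paper. The paper partitions $[n]^r$ into boxes $\prod_i I_{k_i,b}$, computes for each box the fraction $\delta_{k_1,\dots,k_r}(l,n)$ of tuples sharing a prime $>p_l$ (summing over all such primes at once, with edge corrections of order $b/(p_m n)$ and $b^2/n^2$), and then multiplies by the approximate density $f_\theta^{(r)}(k/b)$ from Lemma~\ref{L:uniformbound} and sums over boxes. Because the prime sum is taken inside each box, the interchange with $n\to\infty$ is automatic and no separate domination argument is needed. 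Your route trades that for a cleaner pointwise limit plus an explicit domination step; both yield the exact constant $\binom{r}{2}/p_l$.

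One caveat on your domination step: the claimed uniform estimate $\mathbb{E}[C_{j_1}^{(n)}C_{j_2}^{(n)}]\le C_\theta/(j_1 j_2)$ is standard only when $j_1+j_2$ stays bounded away from $n$. For $\theta<1$ and $j_1+j_2$ close to $n$, the ratio $\mathbb{P}(T_{0n}=n-j_1-j_2)/\mathbb{P}(T_{0n}=n)$ picks up a factor $\asymp n^{1-\theta}$, so the bound as stated fails. This is easily repaired: since you have already invoked Lemma~\ref{L:uniformbound} on the bulk $E_\lambda$, you can get the dominating bound directly from it, namely
\[
\mathbb{P}\bigl(p\mid L_i^{(n)},\,p\mid L_j^{(n)};\,E_\lambda\bigr)\ \le\ \frac{\sup_{E_\lambda'}f_\theta^{(r)}+o(1)}{n^r}\cdot\bigl(\lfloor n/p\rfloor+1\bigr)^2 n^{r-2}\ \le\ \frac{C_{\lambda,\theta}}{p^2}
\]
for all $p\le n$ (and the probability is $0$ for $p>n$), which is summable and makes reverse Fatou go through without any factorial-moment detour.
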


\begin{proof}
 
Let $I_{m,b}$ be as in \eqref{bin}.  Define $\varepsilon(n,b), \varepsilon^*(n)$ and $\varepsilon(\lambda)$ as in Lemma \ref{L:uni}.  Set $b = o(n)$.  For $1 \le k_1, k_2 \le b$, let $\delta_{k_1,k_2}(l, n)$ be the fraction of tuples $(m_1,m_2) \in I_{k_1, b} \times I_{k_2,b}$ such that $p \bigm| m_1$ and $p \bigm| m_2$ for some prime $p > p_l$.  Let $\displaystyle{\delta_{k_1,...,k_r}(l,n) = \sum_{1 \le i < j \le r} \delta_{k_i,k_j}(l,n)}$.  Then  \[\limsup_{n \to \infty} \delta_{k_1,...,k_r}(l, n) \le \binom{r}{ 2} \limsup_{n \to \infty} \sum_{m = l+1}^n \frac{1}{p_m^2} + \frac{3b}{p_m n} + \frac{3b^2}{n^2} \le \binom{r}{ 2}\frac{1}{p_l} .\]  Therefore,
\begin{align*}
&\qquad \mathbb{P} \bigg( \bigcup_{1 \le i_1 < ...< i_k \le r} \Big\{ \frac{L_{i_1}^{(n)}...L_{i_k}^{(n)}}{\lcm(L_{i_1}^{(n)},...,L_{i_k}^{(n)})}  \neq g_k(\mathbf{E}_l(L_{i_1}^{(n)}, ...,L_{i_k}^{(n)})) \Big \} \bigg) \\
& \begin{aligned} \le \sum_{1 \le m_r < ...< m_1 \le n} \mathbb{P} \bigg(\bigcap_{i=1}^r \{L_i^{(n)} = m_i\} \bigcap \bigcup_{1 \le i_1 < ...< i_k \le r} \bigg\{ \frac{m_{i_1}...m_{i_k}}{\lcm(m_{i_1},...,m_{i_k})} \neq g_k(\mathbf{E}_l(m_{i_1}, ...,m_{i_k}))  \bigg\} \bigg) & \\ + \varepsilon^*(n)& \end{aligned} \\
&\begin{multlined} \le \sum_{\substack{\lambda b \le k_r \le...\le k_1 \le b  \cr k_1 +...+ k_r \le (1 - \lambda )b } }  \bigg(f_\theta^{(r)} \left(\frac{k_1}{b}, ...,\frac{k_r}{b}\right) + \varepsilon(n,b) \bigg) \frac{1}{b^r} \delta_{k_1,...,k_r}(l,n) \\ \shoveright{ + \mathbb{P}\{ L_{r}^{(n)} < \lambda n\} + \mathbb{P}\{L_1^{(n)} +...+ L_{r}^{(n)} > (1- \lambda) n\} + \varepsilon^*(n) }
\end{multlined}
\end{align*}
The result follows on taking $n \to \infty$ and $\lambda \to 0$.
\end{proof}

We establish the following generalization of Lemma \ref{L:uni}.

\begin{lemma}\label{L:unigeneral}
Let $\alpha \in \mathbb{R}$ be irrational and fix positive integers $r$ and $l$ with $r \ge k$.  For $\displaystyle{1 \le i_1 < ... < i_k \le r}$, let $U_{i_1,...,i_k}$ be i.i.d. random variables distributed uniformly on $[0,1]$ and independent of the $PD(\theta)$ process $(L_1,L_2,...)$.  For $1 \le m \le l$ and $1 \le i \le r$, let the random variables $X_{mi}$ from \eqref{Xmi} be independent of both $U_{i_1,...,i_k}$ and $(L_1,L_2,...)$.
Then we have the distributional convergence 
\begin{multline*} \bigg(\textbf{E}_l(L_{1}^{(n)},...,L_{r}^{(n)}), L_1^{(n)}/n,...,L_r^{(n)}/n, \Big(\psi_\alpha \Big(\frac{L_{i_1}^{(n)}...L_{i_k}^{(n)}}{ g_k(\textbf{E}_l(L_{i_1}^{(n)}, ...,L_{i_k}^{(n)}))} \Big) \Big)_{1 \le i_1 < ...< i_k \le r} \bigg) \\ \buildrel d \over \to \left(\textbf{X}_l^{1,...,r}, L_1,...,L_r, (U_{i_1,...,i_k})_{1 \le i_1 < ... < i_k \le r} \right) 
\end{multline*}
\end{lemma}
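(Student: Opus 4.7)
My approach generalizes the proof of Lemma \ref{L:uni} by combining three ingredients: the uniform local limit theorem (Lemma \ref{L:uniformbound}) controlling the normalized cycle lengths, Lemma \ref{L:chinese} controlling the prime factorization structure, and Lemma \ref{L:welltorus} controlling the multi-dimensional angle distribution. The goal is to show joint CDF convergence. Fix thresholds $\mathbf{A} \in \mathbb{N}^{l \times r}$, $c_1,\ldots,c_r \in [0,1]$, and $d_{i_1,\ldots,i_k} \in [0,1]$ for $1 \le i_1 < \cdots < i_k \le r$, and consider the probability
\[
\mathbb{P}\bigg(\bigcap_{i=1}^r \{L_i^{(n)}/n < c_i,\ \mathbf{E}_l(L_i^{(n)}) = \mathbf{a}_i\} \cap \bigcap_{i_1<\cdots<i_k} \{\psi_\alpha( L_{i_1}^{(n)}\cdots L_{i_k}^{(n)}/g_k(\mathbf{E}_l(L_{i_1}^{(n)},\ldots,L_{i_k}^{(n)}))) < d_{i_1,\ldots,i_k}\}\bigg).
\]

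Partition $[n]$ into the intervals $I_{k,b}$ from \eqref{bin} with $b = o(n)$ and $n/b \to \infty$, and decompose by the interval indices $(k_1,\ldots,k_r)$ containing each $L_i^{(n)}$. Dropping events with duplicate cycle lengths (probability $\varepsilon^*(n) \to 0$), the above probability becomes a sum over tuples $(m_1,\ldots,m_r) \in \mathscr{P}_{\mathbf{A}} \cap \prod_i I_{k_i,b}$ satisfying the angle constraints. By Lemma \ref{L:uniformbound}, $\mathbb{P}\{L_i^{(n)} = m_i,\ 1 \le i \le r\} = n^{-r}(f_\theta^{(r)}(k_1/b,\ldots,k_r/b) + \varepsilon(n,b))$ uniformly for $(m_1,\ldots,m_r)$ in such a cube away from the boundary region cut off by a small $\lambda > 0$. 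Within each cube, I want to count the tuples in $\mathscr{P}_{\mathbf{A}}$ with the specified angle constraints. Lemma \ref{L:chinese} gives that the density of $\mathscr{P}_{\mathbf{A}}$ in $\prod_i I_{k_i,b}$ tends to $\prod_{m,i}(1-1/p_m)(1/p_m)^{\mathbf{A}_{mi}} = \prod_{m,i} \mathbb{P}\{X_{mi} = \mathbf{A}_{mi}\}$, and Lemma \ref{L:welltorus} applied with $\alpha_{i_1,\ldots,i_k} = \alpha/g_k(\mathbf{a}_{i_1},\ldots,\mathbf{a}_{i_k})$ (which remains irrational since $g_k$ returns a positive integer) gives that the angle constraints are satisfied by a further $\prod_{i_1<\cdots<i_k} d_{i_1,\ldots,i_k} + o(1)$ fraction, uniformly in $(k_1,\ldots,k_r)$.

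Putting these together, the sum over cubes becomes a Riemann sum of $f_\theta^{(r)}$ which converges to the appropriate integral, multiplied by the factorization-class density and the product of the angle thresholds. Sending $\lambda \to 0$ as in Lemma \ref{L:uni} removes the boundary cutoff, and the resulting limit matches exactly the CDF of the claimed limiting distribution $(\mathbf{X}_l^{1,\ldots,r}, L_1,\ldots,L_r, (U_{i_1,\ldots,i_k}))$, with the independence between the three groups manifest in the factored form. By the upper-lower bound sandwich argument from Lemma \ref{L:uni}, this yields weak convergence.

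The main obstacle will be justifying the simultaneous application of Lemma \ref{L:chinese} and Lemma \ref{L:welltorus} to count tuples satisfying \emph{both} the factorization class and angle conditions. The right way to handle this is to further partition $\mathscr{P}_{\mathbf{a}_i} \cap I_{k_i,b}$ into arithmetic progressions modulo $N_i = \prod_{m=1}^l p_m^{\mathbf{A}_{mi}+1}$ (representatives $s_i \in \mathscr{P}_{\mathbf{a}_i} \cap [N_i]$) exactly as in the proof of Lemma \ref{L:welltorus}. On each such sub-cube, the angle sequence becomes the polynomial map $(t_1,\ldots,t_r) \mapsto (\alpha_{i_1,\ldots,i_k} \prod_j (s_{i_j} + t_{i_j} N_{i_j}))_{i_1<\cdots<i_k}$, which is well-distributed in $\mathbb{T}^{\binom{r}{k}}$ by Lemma \ref{L:wellt} (the products $\prod_j(s_{i_j} + t_{i_j} N_{i_j})$ are linearly independent as polynomials in $t_1,\ldots,t_r$). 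Summing the per-sub-cube count against the density $\prod_{m,i}(1-1/p_m)(1/p_m)^{\mathbf{A}_{mi}}$ gives the desired joint asymptotic in each cube $\prod I_{k_i,b}$, and all error terms can be made uniform in $(k_1,\ldots,k_r)$, which is exactly why Lemma \ref{L:welltorus} was stated in the well-distribution form rather than merely equidistribution.
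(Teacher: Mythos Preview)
Your proposal is correct and follows essentially the same approach as the paper's proof: fix thresholds $(\mathbf{A}, c_i, d_{i_1,\ldots,i_k})$, partition into the cubes $\prod_i I_{k_i,b}$, apply Lemma~\ref{L:uniformbound} for the point probabilities, Lemma~\ref{L:chinese} for the density $\gamma_{(k_i),b}^{\mathscr{P}_{\mathbf{A}}}$ of the factorization class, and Lemma~\ref{L:welltorus} with $\boldsymbol\alpha = (\alpha/g_k(\mathbf{a}_{i_1},\ldots,\mathbf{a}_{i_k}))$ for the uniform discrepancy bound, then sandwich and send $\lambda \to 0$. Your ``main obstacle'' paragraph is unnecessary, since Lemma~\ref{L:welltorus} is already stated for the factorization-class-restricted multiset $\mathcal{P}_{(k_i),b}^{\mathscr{P}_{\mathbf{A}},\boldsymbol\alpha}$ and its proof already carries out exactly the arithmetic-progression decomposition you describe; you can simply invoke it.
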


\begin{proof} 
Let the intervals $I_{m,b}$ be as in \eqref{bin}.  Define $ \varepsilon(n,b)$, $\varepsilon^*(n)$ and $\varepsilon(\lambda)$ as in the proof of Lemma \ref{L:uni}.  Recall $\psi_\alpha(j) \le x$ iff $\{j \alpha \} \ge 1-x$.  

For $\displaystyle{1 \le i_1 < ... < i_k \le r}$ and $1 \le i \le r$, let $c_{i}, d_{i_1,...,i_k} \in [0,1]$ be real numbers.  Let $\textbf{A} = [\textbf{a}_1,...,\textbf{a}_r] \in \mathbb{N}^{l \times r}$.  Then $\textbf{E}_l(m_1,...,m_r) = \textbf{A}$ iff $(m_1,...,m_r) \in \mathscr{P}_{\textbf{A}}$.  Define the fraction \begin{equation}\label{fraction} \gamma_{(k_i), b}^{\mathscr{P}_{\textbf{A}} } := \prod_{i=1}^r \frac{ |I_{k_i,b} \cap \mathscr{P}_{\textbf{a}_i} |}{ |I_{k_i,b}|}\end{equation}  Finally, let $\displaystyle{\boldsymbol \alpha = \Big(\frac{\alpha}{ g_k(\textbf{a}_{i_1},...,\textbf{a}_{i_k})} \Big)_{1 \le i_1 < ...< i_k \le r} }$.  Then

\begin{align} \label{unigeneralexpression}
&\begin{multlined} \mathbb{P}\bigg(\bigcap_{i=1}^r \{ L_{i}^{(n)}/n < c_{i} \} \cap \Big\{ \textbf{E}_l(L_{1}^{(n)},...,L_{r}^{(n)}) = \textbf{A} \Big\} \cap \\ \bigcap_{1 \le i_1 < ...< i_k \le r}  \Big \{\psi_\alpha\Big(\frac{L_{i_1}^{(n)}...L_{i_k}^{(n)}}{ g_k(\textbf{E}_l(L_{i_1}^{(n)}, ...,L_{i_k}^{(n)}))} \Big) < d_{i_1,...,i_k} \Big\}  \bigg)
\end{multlined} \\
&\begin{multlined}
\le \sum_{\substack{1 \le m_r < ...< m_1 \le n \cr m_i/n < c_i}} \mathbb{P} \bigg(\bigcap_{i=1}^r \{L_i^{(n)} = m_i\} \cap \big\{ \textbf{E}_l(m_{1},...,m_{r}) = \textbf{A} \big\} \cap \\ \bigcap_{1 \le i_1 < ...< i_k \le r}  \Big\{\psi_\alpha\Big(\frac{m_{i_1}...m_{i_k}}{g_k(\textbf{a}_{i_1},...,\textbf{a}_{i_k}) } \Big) < d_{i_1,...,i_k} \Big\} \bigg) + \varepsilon^*(n) \nonumber
\end{multlined} \\
& \begin{aligned} \le \sum_{\substack{\lambda b \le k_r \le...\le k_1 \le b \cr k_1+...+k_r \le (1- \lambda)b \cr (k_i-1)/b < c_i}}  \bigg(f_\theta^{(r)} \left(\frac{k_1}{b}, ...,\frac{k_r}{b}\right) + \varepsilon(n,b) \bigg) \frac{1}{b^r} \gamma_{(k_i), b}^{\mathscr{P}_{\textbf{A}} } \bigg( \prod_{1 \le i_1 < ...< i_k \le r}  d_{i_1,...,i_k} + D \big(\mathcal{P}_{(k_i), b}^{\mathscr{P}_\mathbf{A}, \boldsymbol \alpha} \big) \bigg) & \\ + \mathbb{P}\{ L_{r}^{(n)} < \lambda n\} + \mathbb{P}\{L_1^{(n)} +...+ L_{r}^{(n)} > (1- \lambda) n\} + \varepsilon^*(n) \nonumber &
\end{aligned}
\end{align}
Then by Lemma \ref{L:chinese}, $\displaystyle{\gamma_{(k_i), b}^{\mathscr{P}_{\textbf{A}} } \to \prod_{i=1}^r \prod_{m=1}^l (1 - 1/p_m) (1/p_m)^{\textbf{A}_{m i}} }$ uniformly as $n/b \to \infty$ and by Lemma \ref{L:welltorus}, $\displaystyle{ D \big(\mathcal{P}_{(k_i), b}^{\mathscr{P}_\mathbf{A}, \boldsymbol \alpha} \big) \to 0}$ uniformly as $n/b \to \infty$.   
Taking $b = o(n)$ so that $n/b \to \infty$, we can bound \eqref{unigeneralexpression} in the limit by 
\begin{equation*}
\int\limits_{\substack{\lambda < x_r <...< x_1 < 1 \cr x_1+...+x_r < 1- \lambda \cr x_i < c_i}} f_\theta^{(r)}(x_1,...,x_r) dx_i \prod_{i=1}^r \prod_{m=1}^l (1 - 1/p_m) (1/p_m)^{\textbf{A}_{m i}}\prod_{1 \le i_1 < ... < i_k \le r} d_{i_1,...,i_k} + \varepsilon(\lambda)
\end{equation*}
We obtain the analogous expression (without the $\varepsilon(\lambda)$ term) for the lower bound. 
 
Taking $\lambda \to 0$, the limiting probability is 
\[\mathbb{P}\{ L_i < c_i, 1 \le i \le r \} \mathbb{P}\{X^{1,...,r}_l = \textbf{A}\} \prod_{1 \le i_1 < ... < i_k \le r} \mathbb{P}\{U_{i_1,...,i_k} < d_{i_1,...,i_k} \}  \]
\end{proof}
\begin{corollary} \label{C:firstr}

Recall the notation from Lemma \ref{L:unigeneral} and as usual, allow the multi-dim\-en\-sion\-al indices $(i_1,...,i_k)$ to run over the range $\displaystyle{1 \le i_1 < ...< i_k \le r}$.  Then we have the distributional convergence
\begin{multline*}
 \bigg( \bigg(\frac{L_{i_1}^{(n)}...L_{i_k}^{(n)}}{\lcm(L_{i_1}^{(n)},...,L_{i_k}^{(n)})} \bigg), (L_{i_1}^{(n)}...L_{i_k}^{(n)}/n^k),(\psi_\alpha(\lcm(L_{i_1}^{(n)},...,L_{i_k}^{(n)}) ) ) \bigg) \\
\buildrel d \over \to \left( (g_{k}(\textbf{X}_\infty^{i_1,...,i_k})),  (L_{i_1}...L_{i_k}), (U_{i_1,...,i_k})\right)
\end{multline*}
\end{corollary}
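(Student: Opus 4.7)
The plan is to deduce Corollary \ref{C:firstr} by combining the joint convergence from Lemma \ref{L:unigeneral} with the approximation bound from Lemma \ref{L:firstl}, and then sending the prime cutoff $l \to \infty$. The key point is that the only way $\frac{L_{i_1}^{(n)}\cdots L_{i_k}^{(n)}}{\lcm(L_{i_1}^{(n)},\ldots,L_{i_k}^{(n)})}$ can differ from its truncated analog $g_k(\mathbf{E}_l(L_{i_1}^{(n)},\ldots,L_{i_k}^{(n)}))$ is through a common prime factor exceeding $p_l$, and such coincidences become asymptotically rare as $l$ grows.

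For a fixed $l$, I would first apply Lemma \ref{L:unigeneral} together with the continuous mapping theorem to the elementwise map $\mathbf{A} \mapsto g_k(\mathbf{A})$, obtaining
\[
\Bigl(g_k(\mathbf{E}_l(L_{i_1}^{(n)},\ldots,L_{i_k}^{(n)})),\; L_{i_1}^{(n)}\cdots L_{i_k}^{(n)}/n^k,\; \psi_\alpha\bigl(L_{i_1}^{(n)}\cdots L_{i_k}^{(n)}/g_k(\mathbf{E}_l)\bigr)\Bigr) \buildrel d \over \to \Bigl(g_k(\mathbf{X}_l^{i_1,\ldots,i_k}),\; L_{i_1}\cdots L_{i_k},\; U_{i_1,\ldots,i_k}\Bigr)
\]
jointly over tuples $1 \le i_1 < \cdots < i_k \le r$. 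Next, Lemma \ref{L:firstl} supplies an event $E_{l,n}$ with $\limsup_{n \to \infty} \mathbb{P}(E_{l,n}) \le \binom{r}{2}/p_l$ such that, on $E_{l,n}^c$, the pathwise equality $\lcm(L_{i_1}^{(n)},\ldots,L_{i_k}^{(n)}) = L_{i_1}^{(n)}\cdots L_{i_k}^{(n)}/g_k(\mathbf{E}_l)$ holds for every tuple; consequently the random vector appearing in the corollary coincides with the one above. On the limit side, the Borel--Cantelli argument already noted in the paper ensures $g_k(\mathbf{X}_\infty^{i_1,\ldots,i_k}) < \infty$ a.s., so $g_k(\mathbf{X}_l^{i_1,\ldots,i_k}) \to g_k(\mathbf{X}_\infty^{i_1,\ldots,i_k})$ almost surely as $l \to \infty$. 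A standard $\varepsilon/3$ argument using bounded continuous test functions (equivalently, a bound in the L\'evy--Prokhorov metric) then yields the desired distributional convergence upon sending first $n \to \infty$ and then $l \to \infty$.

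The main subtlety is that $\psi_\alpha$ is not continuous, so one cannot replace the integer $L_{i_1}^{(n)}\cdots L_{i_k}^{(n)}/g_k(\mathbf{E}_l)$ by $\lcm(L_{i_1}^{(n)},\ldots,L_{i_k}^{(n)})$ by a small-perturbation estimate. The resolution is that on the good event $E_{l,n}^c$ these two positive integers are \emph{exactly} equal, so no regularity of $\psi_\alpha$ is needed and the entire error is absorbed into the bound $\limsup_n \mathbb{P}(E_{l,n}) \le \binom{r}{2}/p_l \to 0$.
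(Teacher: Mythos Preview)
Your proposal is correct and follows essentially the same route as the paper: apply Lemma~\ref{L:unigeneral} together with the continuous mapping theorem to obtain the $l$-truncated joint convergence, invoke Lemma~\ref{L:firstl} to control the event on which the $\lcm$-based quantities differ from their $g_k(\mathbf{E}_l)$ analogs, use the almost-sure convergence $g_k(\mathbf{X}_l^{i_1,\ldots,i_k}) \to g_k(\mathbf{X}_\infty^{i_1,\ldots,i_k})$ on the limit side, and then let $l \to \infty$. Your explicit discussion of the $\psi_\alpha$ subtlety (resolved by exact equality on the good event rather than any continuity estimate) and the $\varepsilon/3$ packaging simply makes precise what the paper's terse ``taking $l \to \infty$'' leaves implicit.
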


\begin{proof}
By Lemma \ref{L:unigeneral} and the continuous mapping theorem, 
\begin{multline*}
\bigg( (g_k(\textbf{E}_l(L_{i_1}^{(n)},...,L_{i_k}^{(n)}))) , (L_{i_1}^{(n)}...L_{i_k}^{(n)}/n^k), \psi_\alpha\Big(\frac{L_{i_1}^{(n)}...L_{i_k}^{(n)}}{ g_k(\textbf{E}_l(L_{i_1}^{(n)}, ...,L_{i_k}^{(n)}))} \Big) \bigg) \\ \buildrel d \over \to \bigg( (g_{k}(\textbf{X}_l^{i_1,...,i_k})), (L_{i_1}...L_{i_k}), (U_{i_1}...U_{i_k}) \bigg) 
\end{multline*}

By Lemma \ref{L:firstl}, there exists an error function $\varepsilon(l) \to 0$ as $l \to \infty$ such that \[\limsup_{n \to \infty} \mathbb{P} \bigg\{ \bigg(\frac{L_{i_1}^{(n)}...L_{i_k}^{(n)}}{\lcm(L_{i_1}^{(n)},...,L_{i_k}^{(n)})} \bigg) \neq (g_k(\textbf{E}_l(L_{i_1}^{(n)}, ...,L_{i_k}^{(n)}))) \bigg\} < \varepsilon(l)\]  and \[ \mathbb{P} \{ (g_{k}(\textbf{X}_l^{i_1,...,i_k})) \neq (g_{k}(\textbf{X}_\infty^{i_1,...,i_k})) \} < \varepsilon(l). \]  

Taking $l \to \infty$, we obtain the desired convergence.
\end{proof} 

\begin{proof}[Proof of Theorem \ref{T:generalmain}]
Let $r$ be a positive integer.  To isolate the contribution from the $r$ largest cycles, define
\begin{equation}\label{e1} \xi_{n,k}^{r} = k! \sum_{1 \le i_1 <...< i_k \le r} \frac{L_{i_1}^{(n)}...L_{i_k}^{(n)}}{\lcm(L_{i_1}^{(n)},...,L_{i_k}^{(n)})} \sum_{q \in \mathbb{Z}} \delta_{n^k (q/\lcm (L_{i_1}^{(n)},...,L_{i_k}^{(n)}) - \alpha )} \end{equation} 
and let $\displaystyle{\eta_{n,k}^r = \mathfrak{E}_{n,k}^\alpha - \xi_{n,k}^r}$ denote the remaining measure.  Similarly, define
\begin{equation}\label{e2} \xi_{\infty, k}^{r} = k! \sum_{1 \le i_1 <...< i_k \le r} g_{k}(\textbf{X}_\infty^{i_1,...,i_k}) \sum_{q \in \mathbb{Z}} \delta_{(q + U_{i_1,...,i_k}) g_{k}(\textbf{X}_\infty^{i_1,...,i_k})  /(L_{i_1}...L_{i_k}) } \end{equation}
and set $\eta_{\infty,k}^r = \mathfrak{E}_{\infty,k}^* - \xi_{\infty,k}^r$.   

Let $f$ be a continuous function with support contained in some interval $(-T, T)$.  Then 
\begin{align*}
\int f d \xi_{n,k}^r &=  k! \sum_{1 \le i_1 < ...< i_k \le r} \frac{L_{i_1}^{(n)}...L_{i_k}^{(n)}}{\lcm(L_{i_1}^{(n)},...,L_{i_k}^{(n)})} \sum_{q \in \mathbb{Z}} f \bigg(\frac{q - \lcm(L_{i_1}^{(n)},...,L_{i_k}^{(n)}) \alpha} {\lcm(L_{i_1}^{(n)},...,L_{i_k}^{(n)})/n^k} \bigg) \\
&= k! \sum_{1 \le i_1 < ...< i_k \le r} \frac{L_{i_1}^{(n)}...L_{i_k}^{(n)}}{\lcm(L_{i_1}^{(n)},...,L_{i_k}^{(n)})} \sum_{q \in [-T+1, T+1]} f \bigg(\frac{q + \psi_\alpha(\lcm(L_{i_1}^{(n)},...,L_{i_k}^{(n)}) ) }{\lcm(L_{i_1}^{(n)},...,L_{i_k}^{(n)})/n^k} \bigg) 
\end{align*}
 and \[\int f d\xi_{\infty,k}^r = k! \sum_{1 \le i_1 < ...< i_k \le r } g_{k}(\textbf{X}_\infty^{i_1,...,i_k}) \sum_{q \in [-T+1, T+1]} f \bigg( (q + U_{i_1,...,i_k})\frac{g_{k}(\textbf{X}_\infty^{i_1,...,i_k}) }{L_{i_1}...L_{i_k}} \bigg) \]

By Corollary \ref{C:firstr} and the continuous mapping theorem, 
\begin{equation} \int f d \xi_{n,k}^r \buildrel d \over \to \int f d\xi_{\infty,k}^r \end{equation}

Now we turn to the remainder measures $\eta_{n,k}^r$ and $\eta_{\infty,k}^r$.  Since $\mathbb{E}[(\sum_i L_i)^k] = \mathbb{E}[1^k] = 1$, by the Borel-Cantelli lemma $\displaystyle{\int f d \eta_{\infty,k}^r \buildrel a.s. \over \to 0}$ as $r \to \infty$.  Note that this just verifies that $\mathfrak{E}_{\infty,k}^*$ is in fact a point process.

Showing that $\displaystyle{\lim_{r \to \infty} \limsup_{n \to \infty} \mathbb{P} \bigg\{ \int |f| d\eta_{n,k}^r  > 0 \bigg\} = 0}$ is considerably more involved than in the $k=1$ case.  We split the point process $\eta_{n,k}^r$ into two parts: 
\begin{enumerate}
\item points coming from $\sigma_k$-cycles containing tuples $(t_1,...,t_k)$ such that $t_1,...,t_k$ are all in different $\sigma$-cycles, not all of which are chosen from the $r$ largest cycles.
\item points coming from $\sigma_k$-cycles containing tuples $(t_1,...,t_k)$ such that for some $i \neq j$, $t_i$ and $t_j$ are in the same $\sigma$-cycle.  
\end{enumerate}

More formally, we make the following:
\begin{definition}
For $0 \le c \le s \le k$ and each $\sigma \in \mathfrak{S}_n$, define $H_{n,s}^{r, c}(\sigma)$ to be the set of all integers $\displaystyle{\lcm(L_{i_1}^{(n)}(\sigma),..., L_{i_s}^{(n)}(\sigma))}$ such that $L_{i_u}^{(n)}(\sigma)$ are all distinct where $1 \le i_1 < ...< i_c \le r < i_{c+1} <... < i_s$. 
\end{definition}
Define the corresponding point process \begin{equation} \label{nu} \nu_{n,s}^{r,c} = \sum_{j \in H_{n,s}^{r,c}} \sum_{q \in \mathbb{Z}} \delta_{n^k(q/j - \alpha)} \end{equation}
It is easy to check that 
\begin{equation} \eta_{n,k}^{r,\star} = \Big(\sum_{c=0}^{k-1} \nu_{n,k}^{r,c} + \sum_{s=1}^{k-1} \sum_{c=0}^{s} \nu_{n,s}^{r,c} \Big)^\star \end{equation}
where $\xi^{\star}$ denotes the corresponding simple point process of a point process $\xi$. 
Then
\begin{equation}\label{nubound} \mathbb{P} \bigg\{ \int |f| d\eta_{n,k}^{r,\star} > 0 \bigg\} \le \sum_{c=0}^{k-1} \mathbb{P} \bigg\{ \int |f| d\nu_{n,k}^{r,c,\star} > 0 \bigg\} + \sum_{s=1}^{k-1} \sum_{c=0}^s \mathbb{P} \bigg\{ \int |f| d\nu_{n,s}^{r,c,\star} > 0 \bigg\} 
\end{equation}

If $c=s$, then $\displaystyle{H_{n, s}^{r,s} \subset \{ \lcm(L_{i_1}^{(n)},..., L_{i_s}^{(n)}) : 1 \le i_1 < ...< i_s \le r \} }$.  Note that $\nu_{n,s}^{r, s,\star}$ is the simple point process $\xi_{n,s}^{r,\star}$ dilated by a factor of $n^{k-s}$.  In other words, \begin{equation} \label{scale} \int |f(x)| d \nu_{n,s}^{r, s, \star}(x) \le \int |f(x n^{k-s})| d \xi_{n,s}^{r, \star}(x) \end{equation}  

The support of $f(x n^{k-s})$ is contained in $(-T/n^{k-s}, T/n^{k-s})$.  Let $g_\varepsilon$ be a continuous function with support contained in a small interval $(\varepsilon, \varepsilon)$.  The mean intensity of $\mathfrak{E}_{\infty, s }^*$ is some finite constant $\lambda_s$.  We have \[\limsup_{n \to \infty} \mathbb{P} \bigg\{ \int |g_\varepsilon| d \xi_{n,s}^{r, \star} > 0 \bigg\} < 2\varepsilon \lambda_s \]

Thus, for $s < k$ and for each $r$, $\displaystyle{ \lim_{n \to \infty} \mathbb{P} \bigg\{ \int |f| d \nu_{n,s}^{r, s, \star} > 0 \bigg\} = 0}$ by \eqref{scale}. 

Define the set \begin{equation}\label{J_k} J_k(n) := \{j \in [n^k]: (\exists q \in \mathbb{Z})[\alpha - T/n^k < q/j < \alpha + T/n^k] \} \end{equation}  Then by Theorem \ref{T:technical} below, for $\displaystyle{ 1 \le s \le k}$ and $\displaystyle{0 \le c < s}$,
\begin{equation} \label{usetechnical}
\lim_{r \to \infty} \limsup_{n \to \infty} \mathbb{P} \bigg\{ \int |f| d\nu_{n,s}^{r,c,\star} > 0 \bigg\} \le \lim_{r \to \infty} \limsup_{n \to \infty} \mathbb{P} \left\{ H_{n,s}^{r,c} \cap J_k(n) \neq \varnothing \right\} = 0 \end{equation}

By \eqref{nubound}, \[\lim_{r \to \infty} \limsup_{n \to \infty} \mathbb{P} \bigg\{ \int |f| d\eta_{n,k}^{r,\star} > 0 \bigg\} = 0. \]  Clearly, $\mathbb{P}\{ \int |f| d\eta_{n,k}^{r} > 0 \} = \mathbb{P}\{ \int |f| d\eta_{n,k}^{r,\star} > 0 \}$.
The rest of the proof follows the proof of Theorem \ref{T:1dmain} identically.
\end{proof}

\begin{remark} \label{otherreps}
For $\alpha$ of finite irrationality measure, let $^\text{sub}\mathfrak{E}_{n,k}^\alpha$ and $^{\text{irrep}}\mathfrak{E}_{n,k}^\alpha$ be the corresponding versions of the scaled eigenvalue point processes $\mathfrak{E}_{n,k}^\alpha$ for the $k$-subset permutation representations and $S^{(n-k,1^k)}$ irreducible representations of the symmetric group $\mathfrak{S}_n$ (see \cite{tsou} for details).  Then it is easy to see that with $\xi_{n,k}^r$ and $\eta_{n,k}^{r}$ as defined above, \[^\text{sub}\mathfrak{E}_{n,k}^\alpha = \xi_{n,k}^r/k! + ^\text{sub}\eta_{n,k}^r\] and \[^\text{irrep}\mathfrak{E}_{n,k}^\alpha = \xi_{n,k}^r/k! + ^\text{irrep}\eta_{n,k}^r \] where the supports of $^\text{sub}\eta_{n,k}^{r}$ and $^\text{irrep}\eta_{n,k}^{r}$ are both contained in the support of $\eta_{n,k}^{r}$ almost surely.  Thus, both $^\text{sub}\mathfrak{E}_{n,k}^\alpha$ and $^{\text{irrep}}\mathfrak{E}_{n,k}^\alpha$ converge weakly to  $\frac{1}{k!}\mathfrak{E}_{\infty,k}^*$.  
\end{remark}

\section{Bound on small cycle contribution}\label{S:boundsmallcycle}

We devote this section to the task of establishing the technical result Theorem \ref{T:technical} used in \eqref{usetechnical} for the proof of Theorem \ref{T:generalmain}.  We will use a key discrepancy bound proved in Theorem \ref{T:maindisc} of the next section.  

\begin{theorem} \label{T:technical}
For $1 \le s \le k$ and $0 \le c < s$, 
\begin{equation}
\lim_{r \to \infty} \limsup_{n \to \infty} \mathbb{P} \left\{ H_{n,s}^{r,c} \cap J_k(n) \neq \varnothing \right\} = 0
\end{equation}
\end{theorem}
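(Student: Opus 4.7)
The plan is to bound $\mathbb{P}\{H_{n,s}^{r,c} \cap J_k(n) \neq \varnothing\}$ by Markov's inequality applied to the count $N_{n,s}^{r,c}$ of admissible index tuples $(i_1, \ldots, i_s)$ (with $1 \le i_1 < \cdots < i_c \le r < i_{c+1} < \cdots < i_s$ and $L_{i_u}^{(n)}$ all distinct) whose $\lcm$ lies in $J_k(n)$, and then to show $\mathbb{E}[N_{n,s}^{r,c}] \to 0$ in the iterated limit $\lim_{r \to \infty}\limsup_{n \to \infty}$. The hypothesis $c < s$ guarantees that at least one cycle involved has rank index strictly greater than $r$ and hence cycle length at most $L_{r+1}^{(n)}$; combined with the concentration estimates of Lemma \ref{L:uniformbound}, this allows me to restrict to configurations with $L_{i_{c+1}}^{(n)}, \ldots, L_{i_s}^{(n)} \le n/r$ up to events of vanishing probability.

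First, I would invoke the Conditioning Relation \eqref{Conditioning Relation} to replace cycle counts $C_j^{(n)}$ by independent Poisson variables $Z_j$ of mean $\theta/j$ conditioned on $T_{0n} = n$. Using $n\,\mathbb{P}\{T_{0n}=n\} \to p_\theta(1) > 0$ together with a size-biasing estimate analogous to \eqref{sizebiasewens}, I can bound the probability that distinct values $m_1,\ldots,m_s$ all occur as cycle lengths by a universal multiple of $\prod_i \theta/m_i$, provided $\sum_i m_i \le (1-\lambda)n$ for some fixed $\lambda > 0$. After excising the events $\{L_r^{(n)} < \lambda n\}$ and $\{L_1^{(n)}+\cdots+L_r^{(n)} > (1-\lambda)n\}$, whose probabilities vanish uniformly in $n$ as $\lambda \to 0$, the estimation of $\mathbb{E}[N_{n,s}^{r,c}]$ reduces to a deterministic weighted count of the form
\begin{equation*}
\sum_{\substack{m_1 > \cdots > m_c \\ m_c > n/r}} \; \sum_{\substack{m_{c+1}, \ldots, m_s \le n/r \\ \text{all } m_i \text{ distinct} \\ \lcm(m_1, \ldots, m_s) \in J_k(n)}} \; \prod_{i=1}^{s} \frac{1}{m_i}.
\end{equation*}

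Next, I would fix the large cycle lengths $m_1, \ldots, m_c$ and set $a = \lcm(m_1, \ldots, m_c)$; the task then becomes bounding, for each fixed $a$, the inner sum over small $m_{c+1}, \ldots, m_s \le n/r$ with $\lcm(a, m_{c+1}, \ldots, m_s) \in J_k(n)$. This is where the key discrepancy estimate Theorem \ref{T:maindisc} enters: partitioning the small cycle lengths by factorization class $\mathscr{P}_\mathbf{A}$ as in Section \ref{S:Factorization classes} linearizes the $\lcm$ within each class, writing it as a fixed prefactor depending on $a$ and $\mathbf{A}$ times a monomial in the $m_i$. Well-distribution of these monomial sequences on the torus, established in Lemma \ref{L:welltorus}, combined with the finite irrationality measure of $\alpha$, then bounds the density of tuples whose $\lcm$ hits the thin set $J_k(n)$. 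Weighting by $\prod 1/m_i$ (which contributes only a logarithmic factor in $n$) and summing over factorization classes, whose relative densities are controlled by the Euler product of Lemma \ref{L:chinese}, yields a bound that tends to zero as $r \to \infty$.

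The chief obstacle is the nonlinear dependence of $\lcm(a, m_{c+1}, \ldots, m_s)$ on the $\gcd$'s of the small $m_i$ with $a$ and with one another; stratifying by factorization class is what restores a usable multiplicative structure, but one must then reassemble the infinitely many factorization-class contributions with a uniform error. A secondary difficulty is that the small cycle lengths may be as large as $n/r$, which is not negligible compared with $n$ for fixed $r$, so uniform discrepancy bounds (rather than merely equidistribution) must hold across the entire range $[1, n/r]$, together with sharp control of the logarithmic weight $\prod 1/m_i$. Proving these uniform bounds is the task of Section \ref{S:logdisc} and is encapsulated in Theorem \ref{T:maindisc}; once available, iterating the limit $r \to \infty$ after $n \to \infty$ closes the argument.
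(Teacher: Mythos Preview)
Your proposal is correct and follows essentially the same route as the paper: reduce via the Conditioning Relation and Poisson comparison (the paper's Lemmas \ref{L:Poisson lemma}--\ref{L:log2s}) to a deterministic weighted sum, stratify by factorization class so that the $\lcm$ becomes a monomial divided by $g_s$, and then invoke the quantitative discrepancy bound Theorem \ref{T:maindisc} to show the fraction of tuples landing in $J_k(n)$ is negligible. The only notable difference in execution is that the paper does not fix $a=\lcm(m_1,\dots,m_c)$ and sum over the small indices; instead it stratifies the large cycle lengths $m_{i_u}$ by factorization class as well and treats them as additional variables in the discrepancy multiset \eqref{compdisc}, which matches the formulation of Theorem~\ref{T:maindisc} directly and sidesteps the need for uniformity in an unstratified prefactor $a$.
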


In the sequel, we fix some choice of $c$,$s$, and $k$ (according to the restrictions of the theorem) for the sake of concreteness.  Recall the definition of factorization classes from Definition \ref{factorization}.  The following notation will be useful:
\begin{definition}\label{Q10}
\tolerance=10000\hbadness=10000

For $n > 10$, let $l = \lfloor (\log n)^{2s} \rfloor$.  Define the set of $l$-tuples 
\begin{equation} \label{Qn} Q_n = \{ (e_{1},...,e_{l}) \in \mathbb{N}^l : (0 \le e_{j} < 2 s \log \log n) \land \big(|\{j : e_{j} > 0\} | < (\log \log n)^2 \big) \} \end{equation}  We also define the (disjoint) union \begin{equation} \mathscr{Y}_n = \bigcup_{(e_1,...,e_l) \in Q_n} \mathscr{P}_{e_1,...,e_l} \end{equation} 
\end{definition} 

\begin{remark}
The choice $l = \lfloor (\log n)^{2s} \rfloor$ is motivated by Lemma \ref{L:log2s} below.  All three parameters $\lfloor (\log n)^{2s} \rfloor$, $2s \log \log n$, and $(\log \log n)^2$ are chosen so that we can obtain the discrepancy bound in Theorem \ref{T:maindisc}.
\end{remark} 

The first major task is to bound the probability $\displaystyle{\mathbb{P} \left\{ H_{n,s}^{r,c} \cap J_k(n) \neq \varnothing \right\}}$ by a sum involving factorization classes in $\mathscr{Y}_n$ and to replace the $\lcm$ in the definition of $H_{n,s}^{r,c}$ with a product involving the functions $g_s$ from \eqref{gk}, which will be constant over each factorization class.  This is essential so that we can apply the equidistribution theory of polynomials.  

To help streamline the presentation of inequalities, we will use the following conventions:

\begin{definition} \label{D:inequality}
For two non-negative functions $f(n)$ and $g(n)$, we say that $$f \ll g$$ if $f \le Cg$ for some absolute constant $C$ that does not depend on $n$.  We say that $$f \lesssim g$$ if $f \le Cg + h$ for some absolute constant $C$ and function $h(n)$ such that $\lim\limits_{n \to \infty} h(n) = 0$.
\end{definition} 
In the following, we allow the indices of $j_v$ to run over $c+1 \le v \le s$ instead of $1 \le v \le s-c$ for notational convenience.  For $i \ge 1$, let $Z_i$ be independent Poisson random variables with mean $\theta/i$.  We will let $\mathbf{1}(E)$ denote the indicator function of the event $E$, i.e. it takes the value $1$ if $E$ holds and $0$ otherwise.
To state the next theorem, we introduce the functions
\begin{equation} 
f_{j_{c+A+1},...,j_s}^{n, u_{c+A+1},...,u_s}(x_1,...,x_r) := \frac{e^{\gamma \theta} \theta^r \Gamma(\theta) x_r^{\theta-1} }{x_1...x_r} p_\theta \bigg( \Big(1- \sum \limits_{i=1}^r x_i - \sum \limits_{v=c+A+1}^s u_v \frac{j_v}{n} \Big) \Big/ x_r \bigg)
\end{equation}
for positive integers $u_v$ and $j_v$ with domain \[D := \bigg\{(x_1,...,x_r): (0 < x_r <....< x_1 < 1)\land \bigg(x_1+...+x_r <  1 - \sum_{v=c+A+1}^s u_v \frac{j_v}{n} \bigg) \bigg\} \]
This definition is motivated by the form of the density $f_\theta^{(r)}(x_1,...,x_r)$ in \eqref{PDthetadensity}.  By scaling linearly, one can check that  
\begin{equation} \int\limits_{D} f_{j_v}^{n, u_v}(x_1,...,x_r)dx_1...dx_r = \bigg(1 - \sum \limits_{v=c+A+1}^s u_v j_v/n \bigg)^{\theta-1} 
\end{equation} 

\begin{theorem} \label{T:Big}
Let $Q_n$ be as defined in \eqref{Qn}.  Fix $0 < \lambda < 1$ and $0 < \varepsilon < 1$.  Then as $n \to \infty$,

\begin{equation} \label{big equation}
\begin{multlined}
\mathbb{P} \left\{ H_{n,s}^{r,c} \cap J_k(n) \neq \varnothing \right\} \lesssim \sum_{\substack{\mathbf{e}_1,...,\mathbf{e}_s \in Q_n \cr 0 \le A \le s-c \cr 1 \le i_1 < ...< i_c \le r}} \sum_{\substack{ 1 \le j_{c+1}<...<j_s \le \lceil n/r \rceil  \cr j_{c+A} \le n^\varepsilon < j_{c+A+1} \cr j_v \in \mathscr{P}_{\mathbf{e}_v} }} \sum_{\substack{\lambda n < m_r < ...< m_1 \le n \cr \sum m_i + \frac{4}{\varepsilon} \sum j_v < (1-\lambda)n \cr  m_{i_u} \in \mathscr{P}_{\mathbf{e}_u} } }  \\ \frac{1}{n^r} \prod_{ v=c+1}^{s} \frac{1}{j_v} \mathbf{1} \bigg( \frac{\prod_{v} j_v \prod_u m_{i_u}}{ g_s(\mathbf{e}_1,...,\mathbf{e}_s)}  \in J_k \bigg) \sum\limits_{1 \le u_v \le \frac{4}{\varepsilon}} f_{j_{c+A+1},...,j_s}^{n, u_{c+A+1},...,u_s}\Big(\frac{m_1}{n},...,\frac{m_r}{n} \Big) + \varepsilon(\lambda)
\end{multlined} 
\end{equation} 
where the error term $\varepsilon(\lambda) \to 0$ as $\lambda \to 0$ and the implied absolute constant does not depend on $r$.  
\end{theorem}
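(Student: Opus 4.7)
The plan is to reduce $\mathbb{P}\{H_{n,s}^{r,c}\cap J_k(n)\neq\varnothing\}$ to the displayed sum through four successive moves. First, I would decompose the event by a union bound over all valid cycle configurations: choose the large-cycle indices $i_1<\dots<i_c\le r$, the top-$r$ cycle lengths $m_1>\dots>m_r$, and the distinct small-cycle lengths $j_{c+1}<\dots<j_s\le\lceil n/r\rceil$ coming from the remaining indices $i_{c+1}<\dots<i_s>r$. The event requires $L_i^{(n)}=m_i$ for all $i\le r$, $C_{j_v}^{(n)}\ge 1$ for each $v>c$, distinctness of all $s$ lengths, and $\lcm(m_{i_1},\dots,m_{i_c},j_{c+1},\dots,j_s)\in J_k(n)$. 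By union bound,
\begin{equation*}
\mathbb{P}\{H_{n,s}^{r,c}\cap J_k\neq\varnothing\}\le\sum_{i_1<\dots<i_c\le r}\sum_{j_{c+1}<\dots<j_s}\sum_{m_1>\dots>m_r}\mathbf{1}[\lcm\in J_k(n)]\cdot\mathbb{P}\{L_i^{(n)}=m_i,\,C_{j_v}^{(n)}\ge 1\}.
\end{equation*}

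Second, I would invoke the Conditioning Relation \eqref{Conditioning Relation} to pass to independent Poisson random variables $Z_j$ with mean $\theta/j$ conditioned on $T_{0n}=n$. The event $L_i^{(n)}=m_i$ for $i\le r$ translates to $Z_{m_i}=1$ together with $Z_j=0$ for all $j>m_r$ not in $\{m_i\}$. To handle the small cycles I would split at the threshold $n^\varepsilon$: for each $v$ with $j_v\le n^\varepsilon$ (indexed $c+1\le v\le c+A$) the bound $\mathbb{P}\{Z_{j_v}\ge 1\}\le\theta/j_v$ yields the required $1/j_v$ factor; for each $v$ with $j_v>n^\varepsilon$ I would enumerate the multiplicity $u_v=Z_{j_v}\ge 1$ explicitly. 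The bound $u_v\le 4/\varepsilon$ then emerges because the total cycle mass is $n$, so multiplicities with $u_vj_v>4n^\varepsilon/\varepsilon$ force the constraint $\sum m_i+\tfrac{4}{\varepsilon}\sum j_v<(1-\lambda)n$ to be violated and can be absorbed in the error. Each factor $\mathbb{P}\{Z_{j_v}=u_v\}\le(\theta/j_v)^{u_v}/u_v!\le\theta/j_v$ again supplies the required $1/j_v$.

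Third, after fixing the multiplicities $u_v$, the residual mass constraint becomes $\sum_{j}jZ_j=n-\sum_im_i-\sum_{v>c+A}u_vj_v$ over $j\le m_r$ avoiding the chosen $j_v$ and $m_i$. Applying Lemma \ref{L:uniformbound} with this shifted target produces the density factor $\frac{1}{n^r}f_{j_{c+A+1},\dots,j_s}^{n,u_{c+A+1},\dots,u_s}(m_1/n,\dots,m_r/n)$ uniformly over $(m_1,\dots,m_r)$ subject to $\lambda n<m_r$ and the mass-room condition. The events that these uniformity hypotheses fail --- namely $L_r^{(n)}<\lambda n$, $L_1^{(n)}+\dots+L_r^{(n)}>(1-\lambda)n$, or duplication among the top $r$ cycle lengths --- contribute collectively at most $\varepsilon(\lambda)\to 0$, exactly as in the proof of Lemma \ref{L:uni}.

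Finally, I would restrict each cycle length $m_{i_u}$ and $j_v$ to the factorization-class union $\mathscr{Y}_n=\bigcup_{\mathbf{e}\in Q_n}\mathscr{P}_{\mathbf{e}}$, parametrizing the sum by the class vectors $\mathbf{e}_1,\dots,\mathbf{e}_s\in Q_n$ associated to these $s$ cycle lengths. Within such a class, the identity $\lcm(m_{i_1},\dots,m_{i_c},j_{c+1},\dots,j_s)=\prod_vj_v\prod_um_{i_u}/g_s(\mathbf{e}_1,\dots,\mathbf{e}_s)$ holds and reproduces the indicator inside the displayed sum. The complement $\mathscr{Y}_n^c$ consists of integers either possessing a prime-power factor $p_m^e$ with $e\ge 2s\log\log n$ for some $m\le l=\lfloor(\log n)^{2s}\rfloor$, or having more than $(\log\log n)^2$ distinct prime divisors among the first $l$ primes, and standard sieve/Mertens estimates bound $\sum_{j\le n,\,j\notin\mathscr{Y}_n}1/j=o(1)$, absorbing this contribution into $\varepsilon(\lambda)$. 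The main obstacle will be carrying out this last sieve estimate quantitatively with the specific parameter choices $l=\lfloor(\log n)^{2s}\rfloor$, exponent cutoff $2s\log\log n$, and support cutoff $(\log\log n)^2$: these thresholds are calibrated so that the excluded mass in the factorization sum is $o(1)$ while simultaneously leaving the downstream discrepancy estimate of Theorem \ref{T:maindisc} effective, and keeping these competing constraints in balance through the bookkeeping is the delicate part of the argument.
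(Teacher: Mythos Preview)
Your overall strategy matches the paper's proof, but there are two genuine gaps in the execution.

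First, your justification for the truncation $u_v \le 4/\varepsilon$ is incorrect. You claim it follows from a mass constraint, but the total cycle mass being $n$ only forces $u_v j_v \le n$, which for $j_v$ as small as $n^\varepsilon$ gives merely $u_v \le n^{1-\varepsilon}$---far weaker than $4/\varepsilon$. The argument you sketch is circular: the constraint $\sum m_i + \tfrac{4}{\varepsilon}\sum j_v < (1-\lambda)n$ in the target already has $4/\varepsilon$ baked in precisely because the multiplicity bound is known independently. The paper instead invokes Lemma~\ref{L:numcyclebound}, the probabilistic estimate $\sum_{j > n^\varepsilon} \mathbb{P}\{C_j^{(n)} > \lceil 2/\varepsilon\rceil\} \ll 1/n$, so the event that some $C_{j_v}^{(n)}$ exceeds $4/\varepsilon$ goes into the $o(1)$ term of the $\lesssim$ relation rather than into $\varepsilon(\lambda)$.

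Second, and more seriously, the identity
\[
\lcm(m_{i_1},\dots,m_{i_c},j_{c+1},\dots,j_s) = \frac{\prod_v j_v \prod_u m_{i_u}}{g_s(\mathbf{e}_1,\dots,\mathbf{e}_s)}
\]
does \emph{not} follow from each cycle length lying in its factorization class $\mathscr{P}_{\mathbf{e}_v}$. Class membership only fixes the exponents of the first $l = \lfloor (\log n)^{2s}\rfloor$ primes; it says nothing about primes $p_m$ with $m > l$. If two of the $s$ cycle lengths share such a large prime factor, the true $\lcm$ is strictly smaller than the displayed quotient and the indicator replacement fails. The paper addresses this with a separate probabilistic step (Lemmas~\ref{L:c2} and~\ref{L:log2s}), showing that the event $\{\gcd(j_u,j_v) \ne g_2(\mathbf{E}_l(j_u,j_v))\text{ for some pair}\}$ contributes $o(1)$ because $\sum_{m>l} 1/p_m^2 \ll 1/p_l = (\log n)^{-2s}$ beats the $(\log n)^{s-1}$ coming from the remaining harmonic sums. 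Your step~4 conflates the restriction to $\mathscr{Y}_n$ (controlling the first $l$ primes of each length individually) with the pairwise large-prime coprimality condition $\mathscr{X}_n^s$; without the latter the passage from $\lcm$ to $\prod/g_s$ is unjustified.
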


Lemmas \ref{L:Poisson lemma} through \ref{L:log2s} below establish the bounds needed to prove Theorem \ref{T:Big}.  

\begin{lemma} \label{L:Poisson lemma} Fix $0 < \lambda < 1$ and a positive integer $m$.  For $1\le j_1 < ... < j_m \le n$, let $c_{j_v}$ be non-negative integers such that $\sum\limits_{v=1}^m j_v c_{j_v} < (1 - \lambda)n$.  Then 
\begin{equation}\label{Poisson approx} \mathbb{P} \bigg( \bigcap_{v=1}^m \{C_{j_v}^{(n)} = c_{j_v} \} \bigg) \ll \mathbb{P}\bigg( \bigcap_{v=1}^m \{Z_{j_v} = c_{j_v} \} \bigg)
\end{equation}
where the absolute constant does not depend on $j_v$ or $c_{j_v}$.  

\end{lemma}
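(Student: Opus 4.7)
The plan is to apply the Conditioning Relation \eqref{Conditioning Relation}, which identifies the joint law of $(C_1^{(n)},\ldots,C_n^{(n)})$ with that of $(Z_1,\ldots,Z_n)$ conditioned on $T_{0n}=n$. Writing $J=\{j_1,\ldots,j_m\}$, $B=[n]\setminus J$, and $M=n-\sum_{v=1}^m j_v c_{j_v}$, I would sum out the unspecified coordinates in the Conditioning Relation and then use independence of the $Z_j$ to obtain the exact identity
\begin{equation*}
\mathbb{P}\Big(\bigcap_{v=1}^m \{C_{j_v}^{(n)}=c_{j_v}\}\Big) = \mathbb{P}\Big(\bigcap_{v=1}^m \{Z_{j_v}=c_{j_v}\}\Big) \cdot \frac{\mathbb{P}\{T_B=M\}}{\mathbb{P}\{T_{0n}=n\}}.
\end{equation*}
Thus the lemma reduces to bounding the ratio $\mathbb{P}\{T_B=M\}/\mathbb{P}\{T_{0n}=n\}$ by a constant depending only on $\theta$ and $\lambda$.

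For the numerator, I would apply the size-biasing identity \eqref{sizebiasewens} to $T_B$, giving $M\,\mathbb{P}\{T_B=M\}=\theta\sum_{l\in B}\mathbb{P}\{T_B=M-l\}$. The indices $M-l$ for $l\in B$ are pairwise distinct, and only the nonnegative ones contribute, so the right-hand side is at most $\theta\cdot\mathbb{P}\{T_B\le M\}\le\theta$. Hence $\mathbb{P}\{T_B=M\}\le \theta/M\le \theta/(\lambda n)$, using the hypothesis $\sum_v j_v c_{j_v}<(1-\lambda)n$. For the denominator, I would invoke the limit $n\,\mathbb{P}\{T_{0n}=n\}\to p_\theta(1)=e^{-\gamma\theta}/\Gamma(\theta)$ established at the end of the proof of Lemma \ref{L:uniformbound}, which yields $\mathbb{P}\{T_{0n}=n\}\ge c_\theta/n$ for some $c_\theta>0$ and all sufficiently large $n$. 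Combining the two bounds, the ratio is at most $\theta/(\lambda c_\theta)$ uniformly in $(j_v)$ and $(c_{j_v})$; the finitely many small $n$ contribute only a finite set of finite ratios, which are absorbed into the implied constant.

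There is no genuine obstacle here; the entire estimate rests on the one-line size-biasing bound. The only point worth flagging is the observation that the values $\{M-l:l\in B\}$ are pairwise distinct, which is what lets one cap the size-biased sum by $1$ rather than $|B|$ and so makes the resulting estimate independent of $m$, the $j_v$, and the $c_{j_v}$.
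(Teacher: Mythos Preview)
Your proof is correct and follows the same skeleton as the paper's argument: both start from the Conditioning Relation to obtain the exact identity with the ratio $\mathbb{P}\{T_B=M\}/\mathbb{P}\{T_{0n}=n\}$, and both handle the denominator via the limit $n\,\mathbb{P}\{T_{0n}=n\}\to p_\theta(1)$.

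The difference lies in how the numerator is controlled. The paper compares $T_B$ to $T_{0n}$ pointwise: from $\mathbb{P}\{T_B=0\}\le (3m)^\theta\,\mathbb{P}\{T_{0n}=0\}$ it argues inductively via the two size-biasing identities \eqref{tibet}--\eqref{tibet2} that $\mathbb{P}\{T_B=l\}\le (3m)^\theta\,\mathbb{P}\{T_{0n}=l\}$ for all $l$, and then invokes the uniform asymptotic \eqref{T0n} for $\mathbb{P}\{T_{0n}=l\}$ over $\lambda n<l\le n$. You instead bound $\mathbb{P}\{T_B=M\}$ directly: one application of size-biasing plus the distinctness of the shifts $\{M-l:l\in B\}$ gives $M\,\mathbb{P}\{T_B=M\}\le\theta$, hence $\mathbb{P}\{T_B=M\}\le\theta/(\lambda n)$. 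Your route is shorter, avoids the induction and the intermediate comparison to $T_{0n}$, and yields an implied constant depending only on $\theta$ and $\lambda$ rather than $(3m)^\theta$. The paper's inequality \eqref{size inequality} is reused downstream (in Lemma~\ref{L:Ugly Uniform}), which may explain why the author chose that route here; for the present lemma in isolation your argument is the cleaner one.
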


\begin{proof}

Let $B = [n] \setminus \{j_{1},...,j_m\}$. 
Recall from Definition \ref{T_{0n}} that $T_B := \sum\limits_{j \in B} j Z_j$.  

By the size biasing equation \eqref{sizebiasewens}, we have the pair of equations 
\begin{align}
i \mathbb{P}\{T_B = i\} &= \label{tibet} \theta \sum_{l \in B} \mathbb{P}\{T_B = i-l\} \\ 
 i \mathbb{P}\{T_{0 n} = i\} &= \label{tibet2} \theta \sum_{l =1}^{i} \mathbb{P}\{T_{0 n} = i-l\}
\end{align}
Since \[\mathbb{P}\{T_B = 0\} \le \prod_{i=m+1}^n \frac{1}{e^{\theta/i}} \le (3m)^\theta \mathbb{P}\{T_{0 n} = 0\} , \] we get 
\begin{equation}\label{size inequality}
\mathbb{P}\{T_B = l\} \le (3m)^\theta \mathbb{P}\{ T_{0 n} = l \} \end{equation} for all $l \ge 0$ by comparing \eqref{tibet} and \eqref{tibet2} inductively.  By using \eqref{tibet2}, we have (see e.g. \cite[(4.10)]{a})
 \begin{equation}\label{T0nl} \mathbb{P}\{T_{0 n} = l\} = \exp(-\theta h(n+1)) \frac{\theta (\theta+1)...(\theta+l-1)}{l!} \end{equation} for $0 < l \le n$ where $h(n+1)$ as before is the $n$th harmonic number.  Then \begin{equation} \label{T0n} n\mathbb{P}\{T_{0 n} = l\} \to \frac{e^{-\gamma \theta} x^{\theta-1}}{\Gamma(\theta)} \end{equation} uniformly over $\lambda n < l \le n$ and $l/n \to x \in [\lambda, 1]$.  Hence, by the Conditioning Relation \eqref{Conditioning Relation},
\begin{align*}
& \quad \; \mathbb{P} \bigg( \bigcap_{v=1}^m \{C_{j_v}^{(n)} = c_{j_v} \} \bigg)
= \mathbb{P} \bigg( \bigcap_{v=1}^m \{ Z_{j_v} = c_{j_v} \} \biggm| T_{0 n} = n \bigg) \\
&=  \mathbb{P} \bigg(\bigcap_{v=1}^m \{Z_{j_v} = c_{j_v} \} \bigg) \frac{\mathbb{P}\{T_B = n - \sum j_v c_{j_v} \} }{\mathbb{P}\{T_{0 n} = n \}}
\le (3m)^\theta \mathbb{P} \bigg(\bigcap_{v=1}^m \{Z_{j_v} = c_{j_v} \} \bigg) \frac{\mathbb{P} \{T_{0 n} = n - \sum j_v c_{j_v} \} }{\mathbb{P}\{T_{0 n} = n\} } \\
& \ll \mathbb{P} \bigg(\bigcap_{v=1}^m \{Z_{j_v} = c_{j_v} \} \bigg)
\end{align*}   

\end{proof}

\begin{lemma} \label{L:numcyclebound} Fix $\varepsilon > 0$.  Then 
\begin{equation} \sum_{j > n^\varepsilon} \mathbb{P}  \{ C_j^{(n)} > \lceil 2/\varepsilon \rceil \} \ll \frac{1}{n}
\end{equation}
\end{lemma}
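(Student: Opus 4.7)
The plan is to apply Markov's inequality to the factorial moment $\binom{C_j^{(n)}}{M+1}$ with $M := \lceil 2/\varepsilon\rceil$. Since $\binom{\,\cdot\,}{M+1}$ is a non-negative integer-valued monotone function,
\[
\mathbb{P}\{C_j^{(n)} > M\} = \mathbb{P}\!\left\{\binom{C_j^{(n)}}{M+1} \ge 1\right\} \le \mathbb{E}\!\binom{C_j^{(n)}}{M+1}.
\]
The choice $M = \lceil 2/\varepsilon\rceil$ is calibrated so that $\varepsilon M \ge 2$, which will force the tail sum $\sum_{j > n^\varepsilon} j^{-(M+1)}$ to decay at least like $n^{-2}$.

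Next I would invoke Watterson's factorial moment formula \cite{watterson} (already used earlier in the paper to obtain \eqref{Jineq}):
\[
\mathbb{E}\!\binom{C_j^{(n)}}{m} = \mathbf{1}(jm \le n)\,\frac{(\theta/j)^m}{m!}\prod_{i=0}^{jm-1}\frac{n-i}{\theta+n-1-i}.
\]
The hard part will be to bound the Gamma-type product $R_{j,m}(n) := \prod_{i=0}^{jm-1}(n-i)/(\theta+n-1-i)$ uniformly in $j$ with $jm \le n$. I claim $R_{j,m}(n) \ll_\theta n^{\max(0,\,1-\theta)}$. For $\theta \ge 1$, each factor is $\le 1$ and the bound is immediate. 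For $0 < \theta < 1$, I would rewrite
\[
R_{j,m}(n) = \frac{\Gamma(n+1)\,\Gamma(\theta+n-jm)}{\Gamma(\theta+n)\,\Gamma(n-jm+1)},
\]
bound the second Gamma ratio by $\Gamma(\theta)$ (using $(\theta+k)/(k+1) \le 1$ for all $k \ge 0$) and estimate the first by $\Gamma(n+1)/\Gamma(\theta+n) \ll n^{1-\theta}$.

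Combining the two ingredients,
\[
\sum_{j > n^\varepsilon}\mathbb{P}\{C_j^{(n)} > M\} \ll n^{\max(0,\,1-\theta)}\sum_{j > n^\varepsilon}\frac{1}{j^{M+1}} \ll n^{\max(0,\,1-\theta)-\varepsilon M} \le n^{-1 - \min(\theta,\,1)},
\]
which is $\ll 1/n$ for any $\theta > 0$, as desired. An alternative avoiding Watterson would be to apply Lemma \ref{L:Poisson lemma} with $m=1$ term-by-term to compare $\mathbb{P}\{C_j^{(n)} = c\}$ with $\mathbb{P}\{Z_j = c\}$ for each $c$ with $jc \le n/2$, and handle the sparse tail $jc > n/2$ by a separate crude bound; the conclusion is the same, but the Watterson route is more compact.
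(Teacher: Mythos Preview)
Your proof is correct. The paper's own argument is very close in spirit: it uses the Conditioning Relation together with \eqref{size inequality} and \eqref{T0nl} to obtain the pointwise bound $\mathbb{P}\{C_j^{(n)}=l\}\ll n^{\max(1-\theta,0)}(\theta/j)^l$ uniformly in $j,l$, and then sums over $l>\lceil 2/\varepsilon\rceil$. You instead package the same analytic content through Watterson's factorial-moment formula and Markov's inequality. The core step is identical in both routes: bounding the Gamma ratio $\Gamma(n+1)\Gamma(\theta+n-jm)/\big(\Gamma(\theta+n)\Gamma(n-jm+1)\big)$ by $n^{\max(0,1-\theta)}$ uniformly in $j,m$ with $jm\le n$, which you carry out cleanly. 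Your alternative suggestion via Lemma~\ref{L:Poisson lemma} is in fact closer to what the paper does, but the factorial-moment route is just as short and arguably more transparent. One minor remark: your reference to ``\eqref{Jineq}'' for the earlier appearance of Watterson's formula is a slip; the paper invokes it just before \eqref{Jineq} in the proof of Theorem~\ref{T:1dmain}, not at that display itself.
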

\begin{proof}
By using the Conditioning Relation along with \eqref{size inequality} and \eqref{T0nl}, we see that 
\begin{equation} \mathbb{P} \{ C_j^{(n)} = l \} \ll n^{\max(1-\theta, 0)} (\theta/j)^l \end{equation} uniformly over $j$ and $l$.  Let $a$ be a positive integer.  Then \begin{equation} \mathbb{P} \{ C_j^{(n)} > a \} = \sum_{l=a+1}^\infty \mathbb{P} \{ C_j^{(n)} = l\} \ll  
\frac{n^{\max(1-\theta, 0)}}{j^{a+1}} \end{equation}  Setting $a= \lceil 2/ \varepsilon \rceil$ completes the proof. 
\end{proof}

\begin{lemma} \label{L:Ugly Uniform}
Fix real numbers $\displaystyle{0 < \lambda < 1}$ and $\displaystyle{0 < \varepsilon < 1}$ and positive integers $u_{c+A+1},...,u_s$ where $A$ is an integer such that $\displaystyle{0 \le A \le s-c}$.  Let $j_{c+1},...,j_s$ be integers such that $1 \le j_{c+1}<...< j_{c+A} \le n^\varepsilon < j_{c+A+1} < ...< j_s \le \lceil n/r \rceil$.  Let $\Delta_{j_{c+A+1},...,j_s}^{u_{c+A+1},...,u_s}$ be the set of tuples $(m_1,...,m_r)$ such that $m = m_1+...+m_r < (1-\lambda)n - \sum\limits_{v=c+A+1}^s u_v j_v$ and $\max(j_s,\lambda n) < m_r <...< m_1 \le n$.  Set $x_i=m_i/n$.  Then for all $(m_1,...,m_r) \in \Delta_{j_{c+A+1},...,j_s}^{u_{c+A+1},...,u_s}$,  
\begin{equation*}
\begin{multlined}
n^r \mathbb{P}\bigg(\bigcap_{v = c+1}^{c+A} \{C_{j_{v}}^{(n)} > 0 \} \cap \bigcap_{v = c+A+1}^s \{C_{j_{v}}^{(n)} = u_v \} \cap \bigcap_{i=1}^r \{L_{i}^{(n)} = m_i\} \bigg) \\
\ll \mathbb{P}\bigg(\bigcap_{v=c+1}^{s} \{Z_{j_{v}} > 0\} \bigg) f_{j_{c+A+1},...,j_s}^{n, u_{c+A+1},...,u_s}(x_1,...,x_r) + n \bigg(\frac{\theta}{\lambda}\bigg)^r \frac{\theta^{n^{(1-\varepsilon)/2}}}{\lceil n^{(1-\varepsilon)/2} \rceil !}
\end{multlined}
\end{equation*}
where the absolute constant does not depend on $r$ or the choice of integers $j_v$ or $m_i$.  
\end{lemma}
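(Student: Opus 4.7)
The plan is to convert to independent Poissons via the Conditioning Relation \eqref{Conditioning Relation}, factor by independence, and then handle the residual probability with a uniform local limit theorem in the bulk and a Poisson large-deviation bound in the tail. By \eqref{Conditioning Relation} the target equals $\mathbb{P}(E\mid T_{0n}=n)$ for the obvious event $E$ on the $Z_j$'s. Mirroring the proof of Lemma \ref{L:uniformbound}, I split according to whether $C_{m_r}^{(n)}=1$ or $\ge 2$; the latter branch contributes $o(n^{-r})$ uniformly and is absorbed into the stated error. On the dominant branch, I pin $Z_{m_i}=1$ for $i=1,\ldots,r$, $Z_j=0$ for $j>m_r$ with $j\notin\{m_i\}$, and $Z_{j_v}=u_v$ for $v>c+A$, and expand $\bigcap_{v\le c+A}\{Z_{j_v}>0\}$ into a sum over $a_{c+1},\ldots,a_{c+A}\ge 1$; by independence this reduces to a product of elementary Poisson probabilities times $\mathbb{P}(T_B=N-\sum_{v\le c+A} a_v j_v)$, where $B:=[m_r-1]\setminus\{j_{c+1},\ldots,j_s\}$ and $N:=n-m-\sum_{v>c+A} u_v j_v$, divided by $\mathbb{P}(T_{0n}=n)$.

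Two uniform estimates now drive the main piece. The size-biasing induction of Lemma \ref{L:Poisson lemma}, applied with $[m_r-1]$ in place of $[n]$, gives $\mathbb{P}(T_B=N')\ll\mathbb{P}(T_{0,m_r-1}=N')$ with the absolute constant depending only on $s-c$; a verbatim adaptation of Lemma \ref{L:uniformbound}'s argument gives the uniform local limit $m_r\mathbb{P}(T_{0,m_r-1}=N')\to p_\theta(N'/m_r)$ for $N'/m_r$ in a compact subset of $(0,\infty)$. I then split the $(a_v)$-sum at $M:=\lceil n^{(1-\varepsilon)/2}\rceil$. In the main sub-sum $\sum a_v\le M$, since $j_v\le n^\varepsilon$ for $v\le c+A$ the shift $\sum a_v j_v\le Mn^\varepsilon=o(n)$ stays inside the uniform-continuity window for $p_\theta$, so I may replace the argument by $(1-\sum x_i-\sum_{v>c+A} u_v j_v/n)/x_r$ at cost $o(1)$. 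Dropping the truncation $\sum a_v\le M$ for an upper bound, the independence identity $\sum_{a_v\ge 1}\prod_{v}\mathbb{P}(Z_{j_v}=a_v)=\prod_{v\le c+A}\mathbb{P}(Z_{j_v}>0)$ together with the crude bound $\mathbb{P}(Z_{j_v}=u_v)\le\mathbb{P}(Z_{j_v}>0)$ for $v>c+A$ and the same leading-order bookkeeping that produced $f_\theta^{(r)}$ in Lemma \ref{L:uniformbound} assembles the claimed main term $\mathbb{P}(\bigcap_{v=c+1}^s\{Z_{j_v}>0\})\,f_{j_{c+A+1},\ldots,j_s}^{n,u_{c+A+1},\ldots,u_s}(x_1,\ldots,x_r)$.

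For the tail $\sum a_v>M$, I combine $\sup_{N'}\mathbb{P}(T_B=N')\ll 1/(\lambda n)$ (from the same uniform local limit) with the Poisson large-deviation estimate $\mathbb{P}(\sum_{v\le c+A} Z_{j_v}>M)\ll\theta^M/M!$; the sum is Poisson of mean at most $s\theta$, and since $s$ is a fixed constant the base is $\theta$ after absorbing $s^M$ into the super-polynomially decaying factorial. Multiplying through by $\prod_i\mathbb{P}(Z_{m_i}=1)\le(\theta/(\lambda n))^r$ and the crude $1/\mathbb{P}(T_{0n}=n)\ll n$, then by the overall prefactor $n^r$, yields exactly the stated error $n(\theta/\lambda)^r\theta^{n^{(1-\varepsilon)/2}}/\lceil n^{(1-\varepsilon)/2}\rceil!$. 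The main obstacle throughout is maintaining uniformity in the free parameters $(m_i)$, $(j_v)$, $(u_v)$ simultaneously -- this is what forces the calibration $M=n^{(1-\varepsilon)/2}$, which keeps $Mn^\varepsilon=o(n)$ inside the uniform-continuity regime for $p_\theta$ while $\theta^M/M!$ still decays faster than any polynomial in $n$.
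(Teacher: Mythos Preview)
Your approach is essentially the paper's: convert via the Conditioning Relation, peel off $Z_{m_i}=1$ and the large-index $Z_{j_v}=u_v$, bound $\mathbb{P}(T_B=\cdot)\ll\mathbb{P}(T_{0,m_r-1}=\cdot)$ by size-biasing, apply the uniform local limit from Lemma~\ref{L:uniformbound}, and split the small-index $Z_{j_v}$ at the threshold $n^{(1-\varepsilon)/2}$ to separate the main term from the Poisson tail. The only slip is in the tail bookkeeping: the claim $\sup_{N'}\mathbb{P}(T_B=N')\ll 1/(\lambda n)$ fails for small $N'$ when $\theta<1$, and in any case your arithmetic does not use it consistently (with that bound you would get $(\theta/\lambda)^{r}\lambda^{-1}\theta^{M}/M!$, not $n(\theta/\lambda)^{r}\theta^{M}/M!$); the paper instead bounds the numerator on the bad event trivially by $\mathbb{P}\big(\bigcup_{v}\{Z_{j_v}>n^{(1-\varepsilon)/2}\}\big)$ and divides by $\mathbb{P}(T_{0n}=n)\asymp 1/n$, which yields the stated error directly.
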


\begin{proof}
Following analogous calculations to the steps in \eqref{An1}, \eqref{An2} and \eqref{An3}, we see that

\begin{equation*}
\mathbb{P}\bigg( \bigcap_{v = c+1}^{c+A} \{C_{j_{v}}^{(n)} > 0 \} \cap \bigcap_{v = c+A+1}^s \{C_{j_{v}}^{(n)} = u_v \} \cap \bigcap_{i=1}^r \{L_{i}^{(n)} = m_i\} \bigg)
= \frac{\theta^r e^{-\theta (h(n+1)-h(m_r))}}{m_1...m_r} \Delta_n 
\end{equation*}
where 
\begin{equation}
\Delta_n = \frac{\mathbb{P}\Big( \{T_{0, m_r-1} = n-m \} \cap \bigcap\limits_{v = c+1}^{c+A} \{Z_{j_{v}} > 0\} \cap \bigcap\limits_{v = c+A+1}^s \{Z_{j_{v}} = u_v \} \Big)}{\mathbb{P}\{T_{0n} = n\}} \label{ugly}
\end{equation}
Let $\displaystyle{B = [m_r - 1] \setminus \{j_{c+1},...,j_s\}}$.  If $Z_{j_v}$ is less than $n^{(1-\varepsilon)/2}$ for $c+1 \le v \le c+A$, then for sufficiently large $n$, $T_{0, m_r-1} = n-m$ implies that $T_B = n - m - \sum\limits_{v=c+A+1}^s u_v j_v - o(n) \ge \lambda n/2 $ (say).  The proof of Lemma \ref{L:uniformbound} shows that the point probabilities $n \mathbb{P} \{T_{0, m_r-1} = l\}$ and $n \mathbb{P} \{T_{0, n} = l\}$ converge uniformly for $l \ge \lambda n/2$.  Then since $\mathbb{P} \{T_B = l \} \le (3m)^\theta \mathbb{P} \{T_{0, m_r-1} = l \}$ by \eqref{size inequality},
\begin{equation*}
\Delta_n \ll \mathbb{P} \bigg( \bigcap_{v = c+1}^{s} \{Z_{j_{v}} > 0 \} \bigg)p_\theta \bigg( \Big(1- \sum \limits_{i=1}^r x_i - \sum \limits_{v=c+A+1}^s u_v\frac{j_v}{n} \Big) \Big/ x_r \bigg) + n \mathbb{P} \bigg( \bigcup_{v = c+1}^{c+A} \{ Z_{j_v} > n^{(1-\varepsilon)/2} \} \bigg)
\end{equation*}

The result follows since $\displaystyle{\mathbb{P}\{Z_i > n^{\delta}\} \ll \frac{\theta^{n^\delta}}{\lceil n^{\delta} \rceil !} }$ for any $\delta > 0$.  

\end{proof}

We define the following two sets:
\begin{align}
\mathscr{B}_1 &= \label{b1def} \{q \in \mathbb{N}: (\exists m,e \in \mathbb{N})[ (m \le (\log n)^{2s}) \land (e > 2 s \log \log n) \land (p_m^e \bigm| q) ]  \}  \\
\mathscr{B}_2 &= \label{b2def} \{q \in \mathbb{N}: \left|\{j : e_{j}(q) > 0\}_{1 \le j \le (\log n)^{2s}} \right| > (\log \log n)^2 \}
\end{align}
where recall $e_j(q)$ is the exponent of the $p_j^\textrm{th}$ prime in the prime factorization of $q$.  Then $\mathbb{N} \setminus \mathscr{Y}_n = \mathscr{B}_1 \cup \mathscr{B}_2$.  The next two lemmas show that as $n \to \infty$, the contribution to the probability for $j_v \notin \mathscr{Y}_n$ is negligible.  

\begin{lemma}\label{L:b1}
\tolerance=10000\hbadness=10000
Let $\mathscr{B}_1$ be as defined in \eqref{b1def}.  Then for $0 < \lambda < 1$,
\[\lim_{n \to \infty} \sum_{\substack{ 1 \le j_{c+1},...,j_{s} \le n \cr  j_{c+1}\neq... \neq j_{s} \cr j_s \in \mathscr{B}_1 } } \mathbb{P}\bigg( \bigcap_{v=c+1}^s \{C_{j_v}^{(n)} > 0\} \cap \bigg \{\sum_{v=c+1}^s j_v C_{j_v}^{(n)} < (1 - \lambda)n \bigg\} \bigg) = 0  \]
where $ j_{c+1} \neq... \neq j_{s}$ is shorthand for $j_{c+1},...,j_s$ all distinct.
\end{lemma}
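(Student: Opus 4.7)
The plan is to apply the Poisson comparison of Lemma~\ref{L:Poisson lemma} to reduce the permutation probability to a product of factors $\theta/j_v$, and then exploit the arithmetic definition of $\mathscr{B}_1$ to show that restricting $j_s$ to $\mathscr{B}_1$ introduces a negative power of $\log n$ which kills the remaining harmonic-type factors.

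First, I would sum the inequality of Lemma~\ref{L:Poisson lemma} over all admissible configurations $c_{j_v}\ge 1$ with $\sum_v j_v c_{j_v} < (1-\lambda)n$. Since the $j_v$ are distinct, the $Z_{j_v}$ are independent and the resulting sum telescopes to
\begin{equation*}
\mathbb{P}\bigg(\bigcap_{v=c+1}^s \{C_{j_v}^{(n)}>0\}\cap\Big\{\sum_v j_v C_{j_v}^{(n)} < (1-\lambda)n\Big\}\bigg) \ll \prod_{v=c+1}^s \big(1-e^{-\theta/j_v}\big) \le \prod_{v=c+1}^s \frac{\theta}{j_v},
\end{equation*}
with absolute constant depending only on $s-c$. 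Next, I would drop the distinctness constraint for an upper bound and decouple the indices $j_{c+1},\ldots,j_{s-1}$ from $j_s$:
\begin{equation*}
\sum_{\substack{j_{c+1},\ldots,j_s\le n\\ j_{c+1}\ne\cdots\ne j_s\\ j_s\in\mathscr{B}_1}} \prod_{v=c+1}^s \frac{1}{j_v} \;\le\; \bigg(\sum_{j=1}^n\frac{1}{j}\bigg)^{\!s-c-1} \cdot \sum_{\substack{j\le n\\ j\in\mathscr{B}_1}}\frac{1}{j} \;\ll\; (\log n)^{s-c-1}\sum_{\substack{j\le n\\ j\in\mathscr{B}_1}}\frac{1}{j}.
\end{equation*}

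Then I would estimate the tail sum using the definition \eqref{b1def}. Every $j\in\mathscr{B}_1$ is divisible by some $p_m^e$ with $m\le(\log n)^{2s}$ and $e>2s\log\log n$, so a union bound gives
\begin{equation*}
\sum_{\substack{j\le n\\ j\in\mathscr{B}_1}}\frac{1}{j} \;\le\; \sum_{m\le(\log n)^{2s}}\sum_{e>2s\log\log n}\frac{1}{p_m^e}\sum_{k\le n/p_m^e}\frac{1}{k} \;\ll\; (\log n)\sum_{e>2s\log\log n}\sum_{m\ge 1}\frac{1}{p_m^e}.
\end{equation*}
For each such $e$, the inner sum satisfies $\sum_{m\ge1}p_m^{-e}\le\zeta(e)-1\ll 2^{-e}$, so the geometric series in $e$ is bounded by $2^{-2s\log\log n}=(\log n)^{-2s\log 2}$. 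Combining everything yields an overall bound $O\big((\log n)^{s-c-2s\log 2}\big)$. Since $2\log 2>1$ and $c\ge 0$, the exponent $s-c-2s\log 2 \le s-2s\log 2 < 0$, so the bound tends to $0$.

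The estimates are all elementary once the Poisson comparison is in hand; the only subtle point — the main obstacle, such as it is — is verifying that Lemma~\ref{L:Poisson lemma}, which is stated for a single configuration, survives the summation over all admissible $c_{j_v}$ while still producing the independent-Poisson product on the right. This works because the constraint $\sum j_v c_{j_v}<(1-\lambda)n$ in the hypothesis of the lemma is already implied by the event we are bounding, and because the events $\{Z_{j_v}>0,\ v=c+1,\ldots,s\}$ decompose cleanly into disjoint atoms $\{Z_{j_v}=c_{j_v}\}$ over distinct $j_v$. After that, the proof is a routine combination of a harmonic sum and a prime-power convergence estimate.
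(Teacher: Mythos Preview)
Your proof is correct and follows essentially the same approach as the paper: apply Lemma~\ref{L:Poisson lemma} to replace the permutation cycle probabilities by independent Poisson tails $\theta/j_v$, decouple the free indices $j_{c+1},\ldots,j_{s-1}$ as harmonic sums, and then exploit the prime-power divisibility defining $\mathscr{B}_1$ to extract a factor $(\log n)^{-2s\log 2}$ that beats the $(\log n)^{s-c}$ coming from the harmonic sums. The only cosmetic difference is that the paper union-bounds over $m$ alone (using $p_m^{\lceil 2s\log\log n\rceil}\mid j_s$), whereas you additionally sum over the exponent $e$; this extra geometric sum is harmless but unnecessary, since divisibility by any $p_m^e$ with $e>2s\log\log n$ already implies divisibility by $p_m^{\lceil 2s\log\log n\rceil}$.
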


\begin{proof}

By Lemma \ref{L:Poisson lemma}, 
\begin{align*}
& \sum_{\substack{ 1 \le j_{c+1},...,j_{s} \le n \cr  j_{c+1}\neq... \neq j_{s} \cr j_s \in \mathscr{B}_1 } } \mathbb{P}\bigg( \bigcap_{v=c+1}^s \{C_{j_v}^{(n)} > 0\}  \cap \bigg \{\sum_{v=c+1}^s j_v C_{j_v}^{(n)} < (1 - \lambda)n \bigg\} \bigg) \\
& \le \sum_{m = 1 }^{(\log n)^{2s} } \sum_{\substack{ 1 \le j_{c+1},...,j_{s} \le n \cr  j_{c+1}\neq... \neq j_{s} \cr p_m^{\lceil 2 s \log \log n \rceil} \bigm | j_s } }  \mathbb{P}\bigg( \bigcap_{v=c+1}^s \{C_{j_v}^{(n)} > 0\} \cap \bigg \{\sum_{v=c+1}^s j_v C_{j_v}^{(n)} < (1 - \lambda)n \bigg\} \bigg) \\
& \ll \sum_{m = 1 }^{(\log n)^{2s} }\sum_{\substack{ 1 \le j_{c+1},...,j_{s-1} \le n } } \frac{1}{j_{c+1}...j_{s-1}} \sum_{i=1}^{ \frac{n}{ p_m^{ 2 s \log \log n }} } \frac{1}{i p_m^{2 s \log \log n} }  \\ 
& \ll (\log n)^s \sum_{m=1}^{(\log n)^{2s} } \frac{1}{ p_m^{2 s \log \log n}} \\
& \ll (\log n)^s \frac{1}{2^{2 s \log \log n - 1}} \to 0
\end{align*}
as $n \to \infty.$ 

\end{proof}

Now we turn to the second set $\mathscr{B}_2$.  In words, $\mathscr{B}_2$ is the set of integers with more than $(\log \log n)^2$ distinct prime factors among the first $(\log n)^{2s}$ primes.  Let $\omega(m)$ denote the number of distinct prime factors of an integer $m$.  The Erd\H{o}s-Kac theorem (see e.g. \cite{durrett} or \cite{tenenbaum}) tells us that if $x$ is a randomly chosen integer from $[n]$, then $\displaystyle{\frac{\omega(x) - \log \log x}{\sqrt{\log \log x}} \buildrel d \over \to \mathcal{N}(0,1)}$ as $n \to \infty$.  Thus, the number of prime factors of $m$ is concentrated about $\log \log m$ and one would expect $\omega(m) > (\log \log m)^2$ to be a rare event.   
We need a large deviation inequality to bound this probability.  

Let $\mathbb{P}_{a,b}$ be the uniform distribution on the integers in $(a, b]$ and let $\mu_m = \mathbb{E}[\omega(x)]$ where the expectation is taken under the measure $\mathbb{P}_{0,m}$.  Let $\delta_p(m) = 1$ if $m$ is divisible by $p$ and $= 0$ otherwise.  Then (see e.g. \cite[Chapt. 22]{hardy})
 \begin{equation} \mu_m = \mathbb{E}[\omega(x)] = \mathbb{E} \bigg[ \sum_{p \le m} \delta_p(x) \bigg] = \sum_{p \le m} \frac{\lfloor m/p \rfloor}{m} = \log \log m + O(1) \end{equation}   
Srinivasan \cite{srinivasan} has derived an explicit exponential Chernoff-type bound for large deviations of $\omega(m)$:  

\begin{proposition}[Srinivasan] \label{P:srinivasan}

For $m \ge 2$ and any $\delta > 0$, \begin{equation} \mathbb{P}_{0, m} \{ \omega(x) \ge \mu_m (1 + \delta)\} \le \bigg(\frac{e^{\delta}}{(1+ \delta)^{1 + \delta}}\bigg)^{\mu_m'} \end{equation}
where $\displaystyle{\mu_m' = \sum_{ p \le m} \frac{1}{p} = \log \log m + O(1)}$.

\end{proposition}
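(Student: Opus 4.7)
The plan is to use the exponential (Chernoff) method applied to the additive function $\omega(x) = \sum_{p \le m} \delta_p(x)$, where $\delta_p(x) = \mathbbm{1}[p \mid x]$. The main obstacle is that under $\mathbb{P}_{0,m}$ the indicators $\delta_p$ are not independent, so one cannot directly invoke the standard Chernoff bound for a sum of independent Bernoullis; one must bound the moment generating function of $\omega(x)$ in a way that nevertheless mimics the independent case with success parameters $1/p$.

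The key observation is a negative-correlation property of the indicators. For any set $S$ of distinct primes not exceeding $m$, with $P = \prod_{p \in S} p$, the product $\prod_{p \in S} \delta_p(x)$ equals $\delta_P(x)$ (since $x$ is divisible by every $p \in S$ iff $P \mid x$), so
\begin{equation*}
\mathbb{E}_{0,m}\Big[\prod_{p \in S} \delta_p(x)\Big] = \frac{\lfloor m/P \rfloor}{m} \le \frac{1}{P} = \prod_{p \in S} \frac{1}{p}.
\end{equation*}
Using the identity $e^{s \delta_p} = 1 + (e^s - 1)\delta_p$ (valid since $\delta_p \in \{0,1\}$), expanding $\prod_{p \le m}(1 + (e^s - 1)\delta_p)$ into a sum over subsets of primes, and combining with $1+y \le e^y$ yields the moment generating function bound
\begin{equation*}
M(s) := \mathbb{E}_{0,m}\bigl[e^{s\omega(x)}\bigr] \le \prod_{p \le m}\Big(1 + \frac{e^s - 1}{p}\Big) \le \exp\bigl((e^s - 1)\mu_m'\bigr).
\end{equation*}

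I would finish by applying Markov's inequality $\mathbb{P}\{\omega(x) \ge t\} \le e^{-st} M(s)$ and optimizing the free parameter $s > 0$. The canonical Chernoff choice $s = \log(1+\delta)$ combined with the threshold $t = \mu_m(1+\delta)$ produces a bound of the form $\exp\bigl(\mu_m'\delta - \mu_m(1+\delta)\log(1+\delta)\bigr)$, and a sharpening of either the threshold choice or the MGF estimate (so as to track the true mean $\mu_m$ in place of $\mu_m'$) is required to produce exactly the stated form with $\mu_m'$ in the exponent and $\mu_m$ in the threshold. This last bookkeeping is where I expect the main technical subtlety of Srinivasan's argument to lie; the discrepancy is bounded in absolute terms because $\mu_m' - \mu_m = O(1)$ by Mertens' theorem, so it is manageable. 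The heart of the argument, however, is the divisibility identity $\prod_{p \in S}\delta_p = \delta_{\prod p}$, which says that the dependent prime-divisibility Bernoullis behave, under the MGF, no worse than independent Bernoulli($1/p$) variables, and it is precisely this step that replaces the usual appeal to independence in the standard Chernoff argument.
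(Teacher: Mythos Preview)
The paper does not supply its own proof of this proposition: it is quoted from Srinivasan \cite{srinivasan} and used as a black box in Lemma~\ref{L:b2}. So there is no in-paper argument to compare against; your reconstruction is being measured against the external source.

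Your approach---Chernoff's method combined with the negative-correlation inequality $\mathbb{E}\big[\prod_{p\in S}\delta_p\big]=\lfloor m/\prod_{p\in S}p\rfloor/m\le \prod_{p\in S}1/p$ to dominate the MGF of $\omega$ by that of an independent Bernoulli($1/p$) sum---is exactly Srinivasan's method, and your MGF bound $M(s)\le\exp\big((e^s-1)\mu_m'\big)$ is correct. The issue you flag at the end is real, though, and worth stating more sharply: since $\mu_m=\sum_{p\le m}\lfloor m/p\rfloor/m\le\sum_{p\le m}1/p=\mu_m'$, the Chernoff optimisation with threshold $t=\mu_m(1+\delta)$ yields
\[
\exp\!\big(\delta\mu_m'-\mu_m(1+\delta)\log(1+\delta)\big),
\]
which is \emph{larger} than the displayed bound $\exp\!\big(\mu_m'(\delta-(1+\delta)\log(1+\delta))\big)$, not smaller. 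So the precise inequality as written, with $\mu_m$ in the threshold and $\mu_m'$ in the exponent, does not follow from your argument, and no amount of ``bookkeeping'' with $\mu_m'-\mu_m=O(1)$ will reverse that inequality. The clean statement that your argument actually proves has $\mu_m'$ in \emph{both} places; this is the form one finds in Srinivasan, and it is also all the paper needs, since in the application (Lemma~\ref{L:b2}) only the asymptotic $\mu_m,\mu_m'=\log\log m+O(1)$ is used. The discrepancy is most likely a transcription slip in the paper's statement rather than a gap in the method.
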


Note that this is a bound for each $m$ rather than an asymptotic bound.  For more information on Erd\H{o}s-Kac large deviation results, see for instance \cite{zhu, radziwill, hildebrand, pomerance}.

\begin{definition}
The $n^{\textrm{th}}$ primorial $p_n \#$ is the product of the first $n$ primes, i.e. \begin{equation*} p_n \# := \prod_{m=1}^n p_m\end{equation*}
\end{definition}

\begin{remark}
By the Prime Number Theorem, we have the well-known asymptotic \begin{equation}\label{PNT} p_n \# = \exp( (1 + o(1))n \log n) \end{equation}

\end{remark}

\begin{lemma}\label{L:b2}
\tolerance=10000\hbadness=10000
Let $\mathscr{B}_2$ be as defined in \eqref{b2def}.  Then for $0 < \lambda < 1$,
\[\lim_{n \to \infty} \sum_{\substack{ 1 \le j_{c+1},...,j_{s} \le n \cr  j_{c+1}\neq... \neq j_{s} \cr j_s \in \mathscr{B}_2 } } \mathbb{P}\bigg( \bigcap_{v=c+1}^s \{C_{j_v}^{(n)} > 0\} \cap \bigg \{\sum_{v=c+1}^s j_v C_{j_v}^{(n)} < (1 - \lambda)n \bigg\} \bigg) = 0  \]

\end{lemma}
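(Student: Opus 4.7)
The plan is to mimic the proof of Lemma \ref{L:b1}: reduce to independent Poisson variables via Lemma \ref{L:Poisson lemma}, then bound the one constrained sum $\sum_{j \in \mathscr{B}_2 \cap [n]} 1/j$ using Srinivasan's large deviation estimate (Proposition \ref{P:srinivasan}) in place of the crude prime-power enumeration used for $\mathscr{B}_1$. Concretely, first apply Lemma \ref{L:Poisson lemma} to dominate the joint probability by $C\prod_{v=c+1}^{s} \mathbb{P}(Z_{j_v} > 0) \le C\theta^{s-c} \prod_v (1/j_v)$, using $1 - e^{-\theta/j} \le \theta/j$. Dropping the distinctness constraint (which only enlarges the bound) and summing the $s-c-1$ free indices $j_{c+1},\dots,j_{s-1}$ over $[n]$ contributes a factor of at most $(\theta H_n)^{s-c-1} \ll (\log n)^{s-c-1}$, so the lemma reduces to showing that
\[
(\log n)^{s-c-1} \sum_{j \in \mathscr{B}_2 \cap [n]} \frac{1}{j} \longrightarrow 0.
\]

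To control this sum, set $\Psi(m) := |\mathscr{B}_2 \cap [m]|$. An integer in $\mathscr{B}_2$ must have at least $(\log\log n)^2$ distinct prime divisors, so $\Psi(m) = 0$ for all $m < M := p_{\lceil (\log\log n)^2 \rceil}\#$; by the Prime Number Theorem $\log M \sim 2(\log\log n)^2 \log\log\log n$. For $M \le m \le n$, I would apply Proposition \ref{P:srinivasan} with the choice $1+\delta = (\log\log n)^2/\mu_m$. Since $\mu_m = \log\log m + O(1) \le \log\log n + O(1)$, this gives $1+\delta \ge \tfrac{1}{2}\log\log n$ for $n$ large, so in particular $1+\delta \ge 1$. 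Using the elementary estimate $(1+\delta)\log(1+\delta) - \delta \ge (1+\delta)(\log(1+\delta) - 1)$ together with $\mu'_m \asymp \mu_m$, one obtains
\[
\frac{\Psi(m)}{m} \le \exp\!\bigl(-\tfrac{1}{2}(\log\log n)^2 \log\log\log n\bigr) =: \varepsilon_n
\]
uniformly for $m \in [M, n]$.

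Abel summation then yields $\sum_{j \in \mathscr{B}_2 \cap [n]} 1/j \le \Psi(n)/n + \sum_{m=M}^{n-1} \Psi(m)/(m(m+1)) \le \varepsilon_n(1 + \log n)$, so multiplying by $(\log n)^{s-c-1}$ produces an upper bound of order $\exp\!\bigl((s-c)\log\log n - \tfrac{1}{2}(\log\log n)^2 \log\log\log n\bigr)$, which tends to $0$. The main (if modest) obstacle is arranging the Chernoff parameter so that Srinivasan's bound is applicable uniformly for all $m \in [M, n]$; the cutoff $M$ is essential because for smaller $m$ the set $\mathscr{B}_2 \cap [m]$ is empty anyway, which removes precisely the region where $\mu_m$ would be too small for the Chernoff inequality to provide a useful estimate.
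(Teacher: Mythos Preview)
Your proposal is correct and follows essentially the same route as the paper: reduce via Lemma~\ref{L:Poisson lemma} to bounding $(\log n)^{O(1)}\sum_{j\in\mathscr{B}_2\cap[n]}1/j$, then control this sum using Srinivasan's Chernoff-type estimate together with the primorial cutoff below which $\mathscr{B}_2$ is empty. The only cosmetic differences are that the paper uses a dyadic decomposition of $[c_n,n]$ where you use Abel summation, and the paper fixes $\delta=\log\log n$ in Proposition~\ref{P:srinivasan} whereas you tune $1+\delta=(\log\log n)^2/\mu_m$; both choices yield a bound that beats any fixed power of $\log n$.
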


\begin{proof}

By \eqref{PNT}, \[c_n := \prod_{m=1}^{(\log \log n)^2} p_m = \exp((1 + o(1)) 2 (\log \log n)^2 \log \log \log n) \]

For each positive integer $m$, define the interval $I_m = (2^{m-1} c_n, 2^{m} c_n]$.  Let $I_M$ be the interval that contains the integer $n$.  Then $M \le 2\log n$.

We set $\delta = \log \log n$ in Proposition \ref{P:srinivasan}.  
For $c_n < m \le n$ and sufficiently large $n$,
\[\mathbb{P}_{0,m} \big\{ \omega(x) \ge (\log \log n)^2 \big\} \le \frac{1}{(\log \log n)^{(\log \log n)(\log \log \log n)}} \]
and therefore for $m \le M$, \[\mathbb{P}_{I_m} \big\{ \omega(x) \ge (\log \log n)^2 \big\} \le \frac{2}{(\log \log n)^{(\log \log n)(\log \log \log n)}} \le \frac{1}{(\log n)^{2s}} \]
for sufficiently large $n$.

Then \[\sum_{\substack{1 \le j \le n \cr \omega(j)> (\log \log n)^2}} \frac{1}{j} = \sum_{\substack{c_n \le j \le n \cr \omega(j)> (\log \log n)^2}} \frac{1}{j} \le \sum_{m=1}^{M} \sum_{\substack{j \in I_m \cr \omega(j)> (\log \log n)^2}} \frac{1}{j} \le 2 \log n \frac{1}{(\log n)^{2s}}. \]

The result then follows from Lemma \ref{L:Poisson lemma}. 

\end{proof}
The following is immediate. 

\begin{corollary}\label{C:c1}
\tolerance=10000\hbadness=10000
If $c > 0$, let $1 \le i_1 < ...< i_c \le r$.  Then \[\lim_{n \to \infty} \mathbb{P} \Big( \bigcup_{1 \le u \le c} \{L_{i_u}^{(n)} \notin \mathscr{Y}_n \} \Big) = 0 .\]

\end{corollary}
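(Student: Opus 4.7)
The plan is to reduce the corollary to density estimates on $\mathscr{B}_1 \cup \mathscr{B}_2$ inside $[n]$, which are essentially already contained in Lemmas \ref{L:b1} and \ref{L:b2}. By the union bound over $1 \le u \le c$, it suffices to show $\mathbb{P}\{L_{i_u}^{(n)} \notin \mathscr{Y}_n\} \to 0$ for each fixed $u$. Since $\mathbb{N} \setminus \mathscr{Y}_n = \mathscr{B}_1 \cup \mathscr{B}_2$ and $L_{i_u}^{(n)}$ is necessarily the length of some cycle of $\sigma$, one has the inclusion
\[\{L_{i_u}^{(n)} \notin \mathscr{Y}_n\} \subset \{\exists\, j \in \mathscr{B}_1 \cup \mathscr{B}_2 \text{ with } C_j^{(n)} \ge 1\}.\]

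Fix $\lambda \in (0, 1/2)$. I would split the right-hand event at $j = (1-\lambda)n$. Since $2(1-\lambda)n > n$, at most one cycle of length exceeding $(1-\lambda)n$ can occur, so the tail is bounded by $\mathbb{P}\{L_1^{(n)} > (1-\lambda)n\}$, which by Proposition \ref{PDconvergence} converges to $\mathbb{P}\{L_1 > 1-\lambda\}$. For the bulk, Watterson's bound $\mathbb{E}[C_j^{(n)}] \le \frac{\theta}{j}\cdot\frac{n}{n-j+\theta} \le \frac{\theta}{\lambda j}$ (valid for $j \le (1-\lambda)n$) combined with Markov's inequality and a union bound gives
\[\mathbb{P}\big\{\exists\, j \in (\mathscr{B}_1 \cup \mathscr{B}_2) \cap [1,(1-\lambda)n],\; C_j^{(n)} \ge 1\big\} \;\le\; \frac{\theta}{\lambda} \sum_{\substack{j \le n\\ j \in \mathscr{B}_1 \cup \mathscr{B}_2}} \frac{1}{j}.\]
The same counting carried out inside the proofs of Lemmas \ref{L:b1} and \ref{L:b2} shows that this harmonic sum is $o(1)$: for $\mathscr{B}_1$ one has $\sum_{j \in \mathscr{B}_1,\, j \le n} 1/j \ll (\log n) \sum_m 1/p_m^{2s\log\log n} = O((\log n)^{1 - 2s\log 2})$, which vanishes because $2s\log 2 > 1$ for every $s \ge 1$, and for $\mathscr{B}_2$ Srinivasan's large deviation bound (Proposition \ref{P:srinivasan}) makes the corresponding sum even smaller.

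Combining, $\limsup_{n \to \infty} \mathbb{P}\{L_{i_u}^{(n)} \notin \mathscr{Y}_n\} \le \mathbb{P}\{L_1 > 1-\lambda\}$ for every $\lambda \in (0, 1/2)$; sending $\lambda \to 0$ and using $L_1 < 1$ almost surely finishes the proof. There is no real obstacle here: the substantive content -- namely the density bounds on $\mathscr{B}_1$ and $\mathscr{B}_2$ -- has already been carried out in the two preceding lemmas, which is why the author labels this corollary as immediate.
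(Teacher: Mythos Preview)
Your argument is correct and is essentially what the paper has in mind when it calls the corollary ``immediate'': you extract from the proofs of Lemmas~\ref{L:b1} and~\ref{L:b2} the harmonic-sum estimates $\sum_{j\in\mathscr{B}_1,\,j\le n}1/j=o(1)$ and $\sum_{j\in\mathscr{B}_2,\,j\le n}1/j=o(1)$, combine them with the first-moment bound on $C_j^{(n)}$, and handle the region $j>(1-\lambda)n$ separately via the Poisson--Dirichlet limit. The only slight addition over the paper's one-line justification is your explicit treatment of the tail $j>(1-\lambda)n$; in the paper this tail is absorbed elsewhere (into the $\varepsilon(\lambda)$ term in Theorem~\ref{T:Big}), but isolating it here is perfectly fine.
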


\begin{definition}
Let $l = \lfloor (\log n)^{2s} \rfloor$. For each $t \in \mathbb{N}$, we define the set of tuples \begin{equation}\label{scriptX} \mathscr{X}_n^t = \big\{(m_1,...,m_t) \in \mathbb{N}^t:  \gcd(m_i, m_j) = g_2(\mathbf{E}_l(m_i,m_j)) \textrm{ for } 1 \le i < j \le t \big\} \end{equation} 
\end{definition}

Lemmas \ref{L:b1} and \ref{L:b2} and Corollary \ref{C:c1} provided bounds on the probability of avoiding the ``good set'' $\mathscr{Y}_n$. 
The next two lemmas deal with the ``good sets'' $\mathscr{X}_n^t$ from \eqref{scriptX}.  In words, $\mathscr{X}_n^t$ consists of the tuples $(m_1,...,m_t)$ of integers that are pairwise relatively prime with respect to primes $p_m$ such that $m > (\log n)^{2s}$. 

\begin{lemma}\label{L:c2}
If $c > 0$, let $\displaystyle{1 \le i_1 < ...<i_c \le r}$.  Then
\[\lim_{n \to \infty} \mathbb{P}\{ (L_{i_1}^{(n)},...,L_{i_c}^{(n)}) \notin \mathscr{X}_n^c \} = 0.  \]
\end{lemma}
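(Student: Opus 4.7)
The plan is to follow the template used in Lemma \ref{L:firstl}, with the key difference that here $l = \lfloor (\log n)^{2s} \rfloor \to \infty$ as $n \to \infty$, so the analog of the bound $\binom{r}{2}/p_l$ produced there will itself tend to zero rather than only tending to zero after a separate limit $l \to \infty$. First, note that $\mathscr{X}_n^1 = \mathbb{N}$ (there are no pair conditions to verify), so the statement is vacuous for $c = 1$ and we may assume $c \ge 2$. The event $(L_{i_1}^{(n)}, \ldots, L_{i_c}^{(n)}) \notin \mathscr{X}_n^c$ is the union over the $\binom{c}{2}$ pairs $u < v$ of the events $B_{u,v}$ that some prime $p > p_l$ divides both $L_{i_u}^{(n)}$ and $L_{i_v}^{(n)}$, so by a union bound it suffices to show $\mathbb{P}(B_{u,v}) \to 0$ for each fixed pair.

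To estimate $\mathbb{P}(B_{u,v})$ for a given pair, I would mimic the argument in Lemma \ref{L:firstl}. Set $r := \max(i_u, i_v)$ and invoke Lemma \ref{L:uniformbound} to convert the joint point probabilities of $(L_1^{(n)}, \ldots, L_r^{(n)})$ into a Riemann sum against $f_\theta^{(r)}$. Partition $[n]$ into blocks $I_{k,b}$ of length $n/b$ as in \eqref{bin}, for a parameter $b = b(n)$ to be chosen, and let $\delta^{\mathrm{large}}_{k_u,k_v}(l,n)$ denote the fraction of pairs $(m, m') \in I_{k_u,b} \times I_{k_v,b}$ sharing a common prime factor greater than $p_l$. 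After truncating via $\lambda$ exactly as in Lemma \ref{L:uniformbound} and absorbing the errors $\varepsilon(n,b)$ and $\varepsilon^*(n)$, one obtains
\[
\mathbb{P}(B_{u,v}) \;\le\; \max_{k_u,k_v}\, \delta^{\mathrm{large}}_{k_u,k_v}(l,n) \;+\; \varepsilon(\lambda) + o_{n\to\infty}(1).
\]

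The core estimate is therefore the pair fraction. Since any interval of length $n/b$ contains at most $n/(bp) + 1$ multiples of a prime $p$,
\[
\delta^{\mathrm{large}}_{k_u,k_v}(l,n) \;\le\; \sum_{p_l < p \le n} \Big(\tfrac{1}{p} + \tfrac{b}{n}\Big)^{\!2} \;\ll\; \frac{1}{p_l} + \frac{b \log\log n}{n} + \frac{b^2}{n \log n},
\]
using the tail bound $\sum_{p > p_l} p^{-2} \ll 1/p_l$, Mertens' estimate $\sum_{p \le n} 1/p \ll \log\log n$, and $\pi(n) \ll n/\log n$. Choosing $b := \lceil n^{1/2}\rceil$, all three terms are $o(1)$: the first because $p_l \sim l \log l \to \infty$ by the prime number theorem and $l = \lfloor (\log n)^{2s}\rfloor \to \infty$, the second because $\log\log n / n^{1/2} \to 0$, and the third because $1/\log n \to 0$. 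Sending first $n \to \infty$ and then $\lambda \to 0$ gives $\mathbb{P}(B_{u,v}) \to 0$, and the lemma follows.

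The main technical obstacle, such as it is, is the calibration of the auxiliary parameter $b$: one must balance the "edge term" $b\log\log n / n$ (coming from rounding when counting multiples of a prime in a short block) against both the tail contribution $1/p_l$ and the Poisson/PNT factor $b^2/(n\log n)$. Because $p_l$ grows like $(\log n)^{2s+o(1)}$ while the edge errors depend polynomially on $n^{-1}$, there is a wide range of admissible $b$ (anything between $n^{1/2}$ and, say, $n/(\log n)^3$ works), so this calibration is routine. Beyond that, the proof is a direct adaptation of the bookkeeping carried out in Lemma \ref{L:firstl}.
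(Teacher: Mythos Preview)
Your proof is correct and follows essentially the same route as the paper, just in far more detail. The paper's own proof is the single line ``This follows from Lemma \ref{L:firstl}'': specializing that lemma to $k=2$ (so that $\frac{m_1 m_2}{\lcm(m_1,m_2)}=\gcd(m_1,m_2)$) gives $\limsup_n \mathbb{P}\{(L_{i_1}^{(n)},\dots,L_{i_c}^{(n)})\notin \mathscr{X}_{l_0}^c\}\le \binom{r}{2}/p_{l_0}$ for each fixed $l_0$, and since $l(n)\to\infty$ and the event is monotone in $l$, the conclusion follows. Your write-up simply unpacks this by redoing the estimate from the proof of Lemma \ref{L:firstl} with the $n$-dependent $l$, which is fine. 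One small notational slip: you write ``set $r:=\max(i_u,i_v)$,'' but $r$ is already fixed in the hypothesis; you should work directly with that $r$.
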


\begin{proof}
This follows from Lemma \ref{L:firstl}.   
\end{proof}

\begin{lemma}\label{L:log2s}
If $c > 0$, let $\displaystyle{1 \le i_1 < ...<i_c \le r}$.  If $s > 1$, let $l = \lfloor (\log n)^{2s} \rfloor$.  Define the sets (where appropriate)
\begin{align*}
\mathscr{A}_1 &= \bigcup_{\substack{1 \le u \le c \\ c+1 \le v \le s}} \{ \gcd(L_{i_u}^{(n)}, j_v) \neq g_2(\textbf{E}_l(L_{i_u}^{(n)}, j_v)) \} \\
\mathscr{A}_2 &= \bigcup_{c+1 \le u < v \le s} \{ \gcd(j_u, j_v) \neq g_2(\textbf{E}_l(j_u,j_v)) \} 
\end{align*}
Then for $0 < \lambda < 1$,
\begin{equation} \label{log2s}
\begin{multlined} \lim_{n \to \infty} \sum_{\substack{ 1 \le j_{c+1},...,j_s \le n \cr j_{c+1}\neq... \neq j_{s} } } \mathbb{P}\bigg( \bigcap_{v=c+1}^s \{C_{j_v}^{(n)} > 0\} \cap \bigcap_{i=1}^r \{C_{L_i^{(n)}}^{(n)} = 1\} \\  \cap \bigg\{\sum_{i=1}^r L_i^{(n)} + \sum_{v=c+1}^s j_v C_{j_v}^{(n)} < (1 - \lambda)n \bigg\} \cap (\mathscr{A}_1 \cup \mathscr{A}_2) \bigg) = 0 \end{multlined}
\end{equation}
\end{lemma}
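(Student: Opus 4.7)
My plan is to decompose $\mathscr{A}_1 \cup \mathscr{A}_2$ by a union bound over primes $p > p_l$ and pairs of ``colliding'' indices, writing
\[
\mathscr{A}_2 \subset \bigcup_{p > p_l}\ \bigcup_{c+1 \le u < v \le s} \{p \mid j_u,\, p \mid j_v\}, \qquad
\mathscr{A}_1 \subset \bigcup_{p > p_l}\ \bigcup_{\substack{1 \le u \le c \\ c+1 \le v \le s}} \{p \mid L_{i_u}^{(n)},\, p \mid j_v\},
\]
(since any prime divisor of the $\gcd$ not accounted for by $g_2(\mathbf{E}_l(\cdot))$ must lie beyond $p_l$). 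I would then bound each resulting contribution by $(\log n)^{s-c}/p^2$ and sum over primes via $\sum_{p > p_l} 1/p^2 \ll 1/p_l$. By the Prime Number Theorem, $p_l \sim 2s(\log n)^{2s}\log\log n$, so the total bound is of order $(\log n)^{s-c}/((\log n)^{2s}\log\log n) \to 0$; the case $s=1$ is vacuous, as both $\mathscr{A}_1$ and $\mathscr{A}_2$ are then empty.

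For the $\mathscr{A}_2$ contribution I would simply discard the $L_i^{(n)}$ conditions (an upper bound) and apply Lemma~\ref{L:Poisson lemma} to replace the joint probability $\mathbb{P}(\bigcap_v \{C_{j_v}^{(n)} > 0\})$ by a constant multiple of $\prod_v \mathbb{P}(Z_{j_v}>0) \le \prod_v \theta/j_v$. Splitting the sum over $j_{c+1},\dots,j_s$ into the two constrained indices $u,v$ (each contributing $\sum_{j\le n,\,p\mid j}1/j \ll (\log n)/p$) and the $s-c-2$ remaining free indices (each contributing $\log n$) yields the claimed $(\log n)^{s-c}/p^2$.

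For the $\mathscr{A}_1$ contribution (nonempty only if $c \ge 1$), the $L$-conditions must be retained since the divisibility involves $L_{i_u}^{(n)}$. Here I would invoke Lemma~\ref{L:Ugly Uniform} with $A = s-c$ (all cycle-count events of ``$>0$'' type) to couple the joint probability to
\[
n^{-r} f_\theta^{(r)}(m_1/n,\dots,m_r/n)\,\mathbb{P}\bigl(\bigcap_v \{Z_{j_v} > 0\}\bigr) + (\text{super-polynomially small tail}).
\]
The Riemann sum $\sum_{m_{i_u}\in(\lambda n, n]:\, p\mid m_{i_u}} n^{-1}f_\theta^{(r)}(\cdots)$ is of order $(1/p)\int f_\theta^{(r)} \ll 1/p$ (the density is bounded on the interior region $\Delta$ of Lemma~\ref{L:Ugly Uniform}), the constrained $j_v$-sum contributes $(\log n)/p$, and the remaining $s-c-1$ free $j$-sums contribute $(\log n)^{s-c-1}$, giving the same $(\log n)^{s-c}/p^2$ bound per prime. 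Boundary configurations with $(m_1,\dots,m_r) \notin \Delta$ are absorbed into an $\varepsilon(\lambda) \to 0$ error exactly as in the proof of Lemma~\ref{L:uni}.

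The main obstacle is the $\mathscr{A}_1$ estimate: one needs the coupling from Lemma~\ref{L:Ugly Uniform} to hold with an \emph{absolute} implicit constant, uniformly over the joint range of the $m_i$'s and the $j_v$'s, so that the polynomial loss $(\log n)^{s-c}$ accumulated from the $s-c$ harmonic sums over $j$'s does not overwhelm the $1/p_l$ savings coming from the very large threshold $p_l$. Once the uniformity is in hand, the PNT bound $p_l \sim 2s(\log n)^{2s}\log\log n$ defeats $(\log n)^{s-c}$ with room to spare and the required convergence follows.
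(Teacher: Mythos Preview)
Your overall strategy---union bound over primes $p>p_l$ and pairs of indices, then harvesting a factor $1/p^2$ from two harmonic sums to beat the $(\log n)^{s-c}$ loss---matches the paper's. For $\mathscr{A}_2$ your argument is essentially the paper's argument verbatim.

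For $\mathscr{A}_1$ the paper takes a simpler route than you do. Rather than retaining the $L$-conditions and coupling through Lemma~\ref{L:Ugly Uniform}, it observes that on the event $\bigcap_i \{C_{L_i^{(n)}}^{(n)}=1\}$ the random length $L_{i_u}^{(n)}$ is itself a cycle length with exact count one. Hence
\[
\{p_m \mid L_{i_u}^{(n)}\}\cap\{C_{L_{i_u}^{(n)}}^{(n)}=1\}\ \subset\ \bigcup_{\substack{j_{s+1}\le n\\ p_m\mid j_{s+1}}}\{C_{j_{s+1}}^{(n)}=1\},
\]
after which all remaining $L$-constraints are dropped, the pair $(j_v,L_{i_u}^{(n)})$ is relabelled $(j_s,j_{s+1})$, and Lemma~\ref{L:Poisson lemma} is applied directly to the $(s-c+1)$-tuple of cycle indices. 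Two of those indices are constrained to multiples of $p_m$ and the rest are free, yielding $(\log n)^{s-c+1}/p_m^2$ per prime and $\sum_{m>l}1/p_m^2\ll 1/p_l$ overall. This sidesteps the density $f_\theta^{(r)}$ and the Riemann-sum step entirely.

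Your route via Lemma~\ref{L:Ugly Uniform} can be made to work, but note two points. First, as stated, Lemma~\ref{L:Ugly Uniform} with $A=s-c$ assumes $j_s\le n^\varepsilon$, which fails here; you would need to argue directly from the conditioning relation, using that the size constraint forces $T_B>\lambda n$ so that the local limit bound $\mathbb{P}(T_B=\ell)\ll 1/n$ holds uniformly on the relevant range. Second, having done that, the conclusion is the same bound the paper obtains in one line---so the paper's trick of promoting $L_{i_u}^{(n)}$ to an extra $j$-variable buys a cleaner proof that needs only Lemma~\ref{L:Poisson lemma}.
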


\begin{proof}

First, consider the set $\mathscr{A}_1$.  By Lemma \ref{L:Poisson lemma}, the expression in \eqref{log2s} with $\mathscr{A}_1 \cup \mathscr{A}_2$ replaced by $\mathscr{A}_1$ can be bounded by
\begin{align*}
& \begin{multlined} \ll \sum_{\substack{1 \le j_{c+1},...,j_{s-1} \le n \cr  j_{c+1}\neq... \neq j_{s-1} \cr m > (\log n)^{2s} } } 
\sum_{\substack{ p_m \mid j_s \cr p_m \mid j_{s+1} }} \mathbb{P} \bigg( \bigcap_{v=c+1}^{s} \{C_{j_v}^{(n)} > 0\} \cap \{C_{j_{s+1}}^{(n)} = 1\} \cap \bigg\{ \sum_{v=c+1}^{s+1} j_v C_{j_v}^{(n)} < (1 - \lambda)n \bigg\} \bigg) \end{multlined} \\
& \ll \sum_{\substack{1 \le j_{c+1},...,j_{s-1} \le n \cr  j_{c+1}\neq... \neq j_{s-1} } } \frac{1}{j_{c+1}...j_{s-1}} \sum_{m > (\log n)^{2s}} 
 \sum_{i=1}^{ n/p_m} \sum_{j=1}^{ n/p_m} \frac{1}{p_m i} \frac{1}{p_m j}  \\
& \ll (\log n)^{s+1}  \sum_{m > (\log n)^{2s}} \frac{1}{p_m^2} \ll (\log n)^{s+1} \frac{1}{(\log n)^{2s}} \to 0
\end{align*}  
as $n \to \infty$.  The bound for $\mathscr{A}_2$ is immediate.  
\end{proof}

Now we are ready to put the ingredients together to obtain the bound in \eqref{big equation}.

\begin{proof}[Proof of Theorem \ref{T:Big}]
Let $l = \lfloor (\log n)^{2s} \rfloor$.  If $c=0$, the appropriate summations and events in the expressions below are ignored.
By Corollary \ref{C:c1} and Lemma \ref{L:c2},
\begin{align}
& \nonumber \qquad \mathbb{P} \left\{ H_{n,s}^{r,c} \cap J_k(n) \neq \varnothing \right\} \\
&\begin{multlined} < \nonumber \mathbb{P} \bigg( \bigcup_{\substack{i_1 <... < i_c \le r \cr r < i_{c+1} <... <i_s}} \{\lcm(L_{i_1}^{(n)},...,L_{i_s}^{(n)}) \in J_k \} \cap \bigcap_{1 \le u < v \le s}  \{ L_{i_u}^{(n)} \neq L_{i_v}^{(n)} \} \\ \cap \{L_r^{(n)} > \lambda n\} \cap \{L_1^{(n)} +...+ L_{4(r+s)/\varepsilon}^{(n)} < (1- \lambda) n\} \bigg) + \varepsilon(\lambda) \end{multlined}  \\
& \begin{multlined} \lesssim \nonumber \sum_{\mathbf{e}_1,...,\mathbf{e}_c \in Q_n } \mathbb{P} \bigg( \bigcup_{\substack{i_1 <... < i_c \le r \cr r < i_{c+1} <... <i_s}} \{\lcm(L_{i_1}^{(n)},...,L_{i_s}^{(n)}) \in J_k \} \cap \bigcap_{u=1}^c \{L_{i_u}^{(n)} \in \mathscr{P}_{\mathbf{e}_u}\} \cap  \bigcap_{1 \le u < v \le s}  \{ L_{i_u}^{(n)} \neq L_{i_v}^{(n)} \}  \\  \cap \{(L_{i_1}^{(n)},...,L_{i_c}^{(n)}) \in \mathscr{X}_n^c \} \cap \{L_r^{(n)} > \lambda n\} \cap \{L_1^{(n)} +...+ L_{4(r+s)/\varepsilon}^{(n)} < (1- \lambda) n\} \bigg) + \varepsilon(\lambda) \end{multlined} \label{eqnbig}
\end{align} 
where $\varepsilon(\lambda) \to 0$ as $\lambda \to 0$.  Applying Lemma \ref{L:numcyclebound}, this is bounded by 
\begin{equation*}
\begin{multlined} \lesssim 
 \sum_{\substack{ 1 \le i_1 < ...< i_c \le r \cr \mathbf{e}_1,...,\mathbf{e}_c \in Q_n \cr 0 \le A \le s-c} } \sum_{\substack{ 1 \le j_{c+1}<...<j_s \le \lceil n/r \rceil  \cr j_{c+A} \le n^\varepsilon < j_{c+A+1} }} \mathbb{P}\bigg( \bigcap_{ v=c+1}^{s} \{C_{j_v}^{(n)} > 0\} \cap \bigcap_{ v=c+A+1}^s \{C_{j_v}^{(n)} < 4/ \varepsilon\} \cap \bigcap_{i=1}^r \{C_{L_i^{(n)}}^{(n)} = 1\} \\ \cap \bigcap_{u=1}^c \{L_{i_u}^{(n)} \in \mathscr{P}_{\mathbf{e}_u}\} \cap \{(L_{i_1}^{(n)},...,L_{i_c}^{(n)}) \in \mathscr{X}_n^c \} \cap \{\lcm( L_{i_1}^{(n)},...,L_{i_c}^{(n)}, j_{c+1},...,j_s) \in J_k\} \\ \cap \{L_r^{(n)} > \max(j_s, \lambda n) \} \cap  \bigg\{\sum_{i=1}^r L_i^{(n)} + \frac{4}{\varepsilon} \sum_{v=c+1}^s j_v < (1- \lambda) n \bigg\} \bigg) + \varepsilon(\lambda) \end{multlined}
\end{equation*}

Then applying Lemmas \ref{L:b1}, \ref{L:b2} and \ref{L:log2s}, this expression is bounded by 
\begin{equation*}
\begin{multlined}
\lesssim \sum_{\substack{ 1 \le i_1 < ...< i_c \le r \cr \mathbf{e}_1,...,\mathbf{e}_s \in Q_n \cr 0 \le A \le s-c} } \sum_{\substack{ 1 \le j_{c+1}<...<j_s \le \lceil n/r \rceil  \cr j_{c+A} \le n^\varepsilon < j_{c+A+1} \cr j_v \in \mathscr{P}_{\mathbf{e}_v} }}  \mathbb{P} \bigg( \bigcap_{ v=c+1}^s \{C_{j_v}^{(n)} > 0\}  \cap \bigcap_{ v=c+A+1}^s \{C_{j_v}^{(n)} < 4/ \varepsilon\}  \cap \bigcap_{u=1}^c \{L_{i_u}^{(n)} \in \mathscr{P}_{\mathbf{e}_u}\} \\ \cap \bigg\{\frac{\prod_{v} j_v \prod_u L_{i_u}^{(n)}}{ g_s(\mathbf{e}_1,...,\mathbf{e}_s)}  \in J_k \bigg\} \cap \{L_r^{(n)} > \max(j_s, \lambda n) \} \cap \bigg\{\sum_{i=1}^r L_i^{(n)} + \frac{4}{\varepsilon} \sum_{v=c+1}^s j_v < (1- \lambda)n \bigg\} \bigg) + \varepsilon(\lambda)
\end{multlined} 
\end{equation*}

Finally, applying Lemma \ref{L:Ugly Uniform} completes the proof. 
 
\end{proof}
Recall that in the proof of the one-dimensional $k=1$ case, the contribution from the remainder measure $\eta_n^r$ was bounded by a sum of the form $\sum\limits_{j=1}^{n/r} \frac{1}{j} \mathbf{1}(j \in J_1)$.  This sum was in turn dealt with in \eqref{Jfrac} by partitioning the range of summation into intervals $I_{m,b}$  defined in \eqref{bin} and using equidistribution to estimate the number of $j$ in $|J_1 \cap I_{m,b}|$.  
To bound the sum in \eqref{big equation}, we similarly proceed by partitioning the range of summation into boxes and estimate for each tuple $(\mathbf{e}_1,...,\mathbf{e}_s)$ of vectors in $Q_n$ the count of the number of points $(j_v, m_{i_u})$ in each box such that $\displaystyle{j_v \in \mathscr{P}_{\mathbf{e}_v}}$ and  $\displaystyle{m_{i_u} \in \mathscr{P}_{\mathbf{e}_u}}$ and $\frac{\prod_{v} j_v \prod_u m_{i_u}}{ g_s(\mathbf{e}_1,...,\mathbf{e}_s)}  \in J_k$.  The definition of the intervals used in this multidimensional partition are somewhat more involved than in the $k=1$ case.  For the $m_i$ indices, we will use the intervals $I_{m,b}$ where we set $b = \log n$ (any choice of $b$ such that $b$, $n/b \to \infty$ will suffice).  For the $j_v$ indices, we define new intervals $T_m$ as follows: 
\begin{definition}
Let $\mu = \mu(\alpha)$ be the irrationality measure of $\alpha$.  Set $\varepsilon = 1/(100 k^2 \mu^3)$ and $\delta = 10 \varepsilon k \mu^2$.  
Let $M'$ be the smallest integer that satisfies the inequality $2^{M'} n^{\varepsilon} > n/r$ and let $M'' = \lfloor M' - \log \log n \rfloor$.  Then for $1 \le m \le M''$, define $T_m = [2^{m-1} n^\varepsilon, 2^{m} n^{\varepsilon}]$.  For $m > M''$, define $T_m$ to be consecutive intervals of integers all of length $2^{M''} n^{\varepsilon}$ starting with $T_{M''+1} = [2^{M''} n^\varepsilon, 2^{M''+1} n^{\varepsilon}]$.  Define $M$ to be the smallest integer that satisfies the inequality $2^{M} n^{\varepsilon} > n^{\delta}$ and define $M^*$ to be the integer such that $T_{M^*}$ contains $\lceil n/r \rceil$.  For $1 \le m \le M^*$, write $T_m = (F_m, F_{m+1}]$ where $F_m \in \mathbb{Z}$. 
\end{definition}
\begin{remark} The seemingly arbitrary definition of the parameters $\varepsilon$ and $\delta$ and the intervals $T_m$ is motivated by the need for them to satisfy several properties.  Most crucially, they need to be defined so that a satisfactory discrepancy bound can be obtained uniformly over all the boxes in the partition (proved in the next section).  
The parameters $\varepsilon$ and $\delta$ are chosen so that the multi-dimensional Weyl inequality in Lemma \ref{L:WeylBound} can be applied to obtain a non-trivial bound.  For this, we need $\delta > 2 \varepsilon k \mu$ while being small enough so that the interval $1 \le j \le n^{k \delta}$ does not contain any $j \in J_k$ for sufficiently large $n$.  Also, we need that $|T_m| \ge n^\delta$ for $m > M$.  We see that these two properties are indeed satisfied.  The first follows from the definition of irrationality measure of $\alpha$ and the second is a result of the geometric progression in the interval lengths up to $M''$.  We taper the interval lengths for $m > M''$ so that all the $T_m$ have cardinality at most $\displaystyle{2^{M''} n^{\varepsilon} \le n/\sqrt{\log n} = o(n)}$ so that \eqref{equicontinuous} below is satisfied. 
  
\end{remark}
   
\begin{proof}[Proof of Theorem \ref{T:technical}]
By Theorem \ref{T:Big}, $\mathbb{P} \left\{ H_{n,s}^{r,c} \cap J_k(n) \neq \varnothing \right\}$ is bounded by a sum $\sum\limits_{0 \le A \le s-c} S_A$ where each $S_A$ is a sum over ``small" indices $j_v$ for $c+1 \le v \le c+A$ which are all bounded by $n^\varepsilon$.  Splitting further, we can write this as a sum $\sum\limits_{\substack{A,B,C \ge 0 \cr s-c = A+B+C}} S_{A,B,C}$ where each term $S_{A,B,C}$ is a sum over the following ranges:  for $c+1 \le v \le c+A$ we have $1 \le j_v \le n^{\varepsilon}$; for $c+A+1 \le v \le c+A+B$ we have $j_v \in T_{l_v}$ where $1 \le l_v < M$; for $c+A+B+1 \le v \le s$ we have $j_v \in T_{l_v}$ where $l_v \ge M$. 

Note that the ranges of the variables $A$,$B$,$C$, and $u_{c+A+1},...,u_s$ depend only on the dimension $k$ and irrationality measure $\mu$.  Therefore, it suffices to consider a fixed choice of these parameters.  If $C+c = 0$, then $\frac{\prod_{v} j_v \prod_u m_{i_u}}{ g_s(\mathbf{e}_1,...,\mathbf{e}_s)} \le n^{k \delta}$ and hence $\frac{\prod_{v} j_v \prod_u m_{i_u}}{ g_s(\mathbf{e}_1,...,\mathbf{e}_s)} \notin J_k$ for sufficiently large $n$.  Thus, we may assume that $C+c \ge 1$.  

Let $\Delta_{j_{c+A+1},...,j_s}^{u_{c+A+1},...,u_s}$ be as defined in Lemma \ref{L:Ugly Uniform}.  Since $|T_{l_v}| = o(n)$, the collection of functions $f_{j_{c+A+1},...,j_s}^{n, u_{c+A+1},...,u_s}(x_1,...,x_r)$ satisfy 
\begin{equation}\label{equicontinuous} \left|f_{F_{l_{c+A+1}},...,F_{l_s}}^{n, u_{c+A+1},...,u_s}\Big(\frac{k_1}{b},...,\frac{k_r}{b} \Big) - f_{j_{c+A+1},...,j_s}^{n, u_{c+A+1},...,u_s}\Big(\frac{m_1}{n},...,\frac{m_r}{n} \Big)\right| < \varepsilon(n) 
\end{equation} for $j_v \in T_{l_v}$, $(m_1,...,m_r) \in \Delta_{j_{c+A+1},...,j_s}^{u_{c+A+1},...,u_s}$ and $m_i \in I_{k_i, b}$, and for some error function $\varepsilon(n) \to 0$ as $n \to \infty$.

Fix a choice of $\displaystyle{1 \le i_1 < ...< i_c \le r}$.  We have (where the condition $j_v \in T_{l_v}$ is over the range $c+A+1 \le v \le s$ and the condition $j_v \in \mathscr{P}_{\mathbf{e}_v}$ is over the range $c+1 \le v \le s$)
\begin{equation}
\begin{multlined}
\sum_{\substack{\mathbf{e}_1,...,\mathbf{e}_s \in Q_n}} \sum_{\substack{ 1 \le j_{c+1}<...<j_s \le \lceil n/r \rceil  \cr j_{c+A} \le n^\varepsilon < j_{c+A+1} \cr j_v \in \mathscr{P}_{\mathbf{e}_v} }} \sum_{\substack{\lambda n < m_r < ...< m_1 \le n \cr \sum m_i + \frac{4}{\varepsilon} \sum j_v < (1-\lambda)n \cr  m_{i_u} \in \mathscr{P}_{\mathbf{e}_u} } }  \\ \frac{1}{n^r} \prod_{ v=c+1}^{s} \frac{1}{j_v} \mathbf{1} \bigg( \frac{\prod_{v} j_v \prod_u m_{i_u}}{ g_s(\mathbf{e}_1,...,\mathbf{e}_s)}  \in J_k \bigg) f_{j_{c+A+1},...,j_s}^{n, u_{c+A+1},...,u_s}\Big(\frac{m_1}{n},..., \frac{m_r}{n} \Big)
\end{multlined} 
\end{equation} 

\begin{align}
& \label{wonder} \begin{multlined}
\ll \sum_{\substack{\mathbf{e}_1,...,\mathbf{e}_s \in Q_n \cr 1 \le l_{c+A+1} \le...\le l_{c+A+B} < \cr M \le l_{c+A+B+1} \le ...\le l_s \le M^*}} \sum_{\substack{ 1 \le j_{c+1}<...<j_s \le \lceil n/r \rceil \cr j_{c+1},...,j_{c+A} \le n^\varepsilon \cr j_v \in T_{l_v} \cr j_v \in \mathscr{P}_{\mathbf{e}_v} }}  \sum_{\substack{\lambda b \le k_r \le ...\le k_1 \le b \cr \sum \frac{k_i}{b} + \frac{4}{\varepsilon} \sum \frac{j_v}{n} < 1-\lambda }} \sum_{\substack{m_i \in I_{k_i, b} \cr   m_{i_u} \in \mathscr{P}_{\mathbf{e}_u} }}  \\  \frac{1}{n^r} \prod_{ v=c+1}^{s} \frac{1}{j_v} \mathbf{1} \bigg( \frac{\prod_{v} j_v \prod_u m_{i_u}}{ g_s(\mathbf{e}_1,...,\mathbf{e}_s)}  \in J_k \bigg) f_{j_{c+A+1},...,j_s}^{n, u_{c+A+1},...,u_s}\Big(\frac{m_1}{n},..., \frac{m_r}{n} \Big)
\end{multlined}
\end{align}

Now for each choice of $l_{c+A+1},...,l_s$ and $k_1,...,k_r$, define the fraction 
\begin{equation} \gamma^{\mathscr{P}_{\mathbf{e}_{1},...,\mathbf{e}_{s}}}_{(k_i), (l_v), b} := \prod_{v=c+A+1}^s \frac{ |T_{l_v} \cap \mathscr{P}_{\mathbf{e}_v} |}{ |T_{l_v}|} \prod_{u=1}^c \frac{|I_{k_{i_u}, b} \cap \mathscr{P}_{\mathbf{e}_u} |}{ |I_{k_{i_u}, b}|}  \end{equation}

and the multiset
\begin{equation} \label{compdisc}
\begin{multlined}\mathcal{P}^{\mathscr{P}_{\mathbf{e}_1,...,\mathbf{e}_s}}_{(k_i), (l_v), (j_w) } := \bigg \{\prod_{u=1}^c m_{i_u} \prod\limits_{v=c+A+1}^s j_{v} \frac{ \prod_{w=c+1}^{c+A} j_w}{g_s(\mathbf{e}_1,....,\mathbf{e}_s)} \alpha \in \mathbb{T}: \\ \bigwedge_{u=1}^c \{m_{i_u} \in (I_{k_{i_u}, b} \cap \mathscr{P}_{\mathbf{e}_u}) \} \land \bigwedge_{v=c+A+1}^s \{j_v \in (T_{l_v} \cap \mathscr{P}_{\mathbf{e}_v}) \} \bigg\} 
\end{multlined}
\end{equation}  
Recall that $j \in J_k$ if and only if $\displaystyle{ \{j \alpha \} \in [0, jT/n^k) \cup (1 - jT/n^k, 1] }$ and therefore we are interested in the fraction of points $\{j \alpha \}$ in $\mathcal{P}^{\mathscr{P}_{\mathbf{e}_1,...,\mathbf{e}_s}}_{(k_i), (l_v), (j_w) }$ that satisfy that property.  By definition, this fraction is bounded by 
\begin{equation} \frac{2T}{n^{k-s}} \prod_{u=1}^c \frac{k_{i_u}}{b}\prod_{w=c+1}^{c+A} \frac{j_w}{n} \prod_{v=c+A+1}^{s} \frac{ F_{l_v+1}}{n} 
\end{equation}
up to the discrepancy $D\big(\mathcal{P}^{\mathscr{P}_{\mathbf{e}_1,...,\mathbf{e}_s}}_{(k_i), (l_v), (j_w) } \big)$. 
Then the expression \eqref{wonder} is bounded by
\begin{equation} \label{wonder2}
\begin{multlined}
\ll \sum_{\substack{\mathbf{e}_1,...,\mathbf{e}_s \in Q_n \cr 1 \le l_{c+A+1} \le...\le l_{c+A+B} < \cr M \le l_{c+A+B+1} \le ...\le l_s \le M^* }} \sum_{\substack{1 \le j_{c+1},...,j_{c+A} \le n^{\varepsilon} \cr j_w \in \mathscr{P}_{\mathbf{e}_w}}} \sum_{\substack{\lambda b \le k_r \le ...\le k_1 \le b \cr \sum \frac{k_i}{b} + \frac{4}{\varepsilon} \sum \frac{F_{l_v}}{n} < 1 - \lambda} }  \\ \gamma^{\mathscr{P}_{\mathbf{e}_{1},...,\mathbf{e}_{s}}}_{(k_i), (l_v), b} \prod_{v=c+A+1}^{s} |T_{l_v}| \bigg(\frac{2T}{n^{k-s}} \prod_{u=1}^c \frac{k_{i_u}}{b}\prod_{w=c+1}^{c+A} \frac{j_w}{n} \prod_{v=c+A+1}^{s} \frac{ F_{l_v+1}}{n} +  D\big(\mathcal{P}^{\mathscr{P}_{\mathbf{e}_1,...,\mathbf{e}_s}}_{(k_i), (l_v), (j_w) }\big)  \bigg) \\ \frac{1}{b^r}  \prod_{w=c+1}^{c+A} \frac{1}{j_w} \prod_{v = c+A+1}^s \frac{1}{F_{l_v} } \Big(f_{F_{l_{c+A+1}},...,F_{l_s}}^{n, u_{c+A+1},...,u_s}\Big(\frac{k_1}{b},..., \frac{k_r}{b} \Big) + \varepsilon(n) \Big)
\end{multlined}
\end{equation} 

We split \eqref{wonder2} into two terms:
\begin{equation}
\begin{multlined}
A_1 :=  \sum_{\substack{\mathbf{e}_1,...,\mathbf{e}_s \in Q_n \cr 1 \le l_{c+A+1} \le...\le l_{c+A+B} < \cr M \le l_{c+A+B+1} \le ...\le l_s \le M^* }} \sum_{\substack{1 \le j_{c+1},...,j_{c+A} \le n^{\varepsilon} \cr j_w \in \mathscr{P}_{\mathbf{e}_w}}} \sum_{\substack{\lambda b \le k_r \le ...\le k_1 \le b \cr \sum \frac{k_i}{b} + \frac{4}{\varepsilon} \sum \frac{F_{l_v}}{n} < 1 - \lambda} }  \\  \gamma^{\mathscr{P}_{\mathbf{e}_{1},...,\mathbf{e}_{s}}}_{(k_i), (l_v), b} \prod_{v=c+A+1}^{s} |T_{l_v}| \bigg(\frac{2T}{n^{k-s}} \prod_{u=1}^c \frac{k_{i_u}}{b}\prod_{w=c+1}^{c+A} \frac{j_w}{n} \prod_{v=c+A+1}^{s} \frac{ F_{l_v+1}}{n}  \bigg) \\ \frac{1}{b^r}  \prod_{w=c+1}^{c+A} \frac{1}{j_w} \prod_{v = c+A+1}^s \frac{1}{F_{l_v} } \Big(f_{F_{l_{c+A+1}},...,F_{l_s}}^{n, u_{c+A+1},...,u_s}\Big(\frac{k_1}{b},..., \frac{k_r}{b} \Big) + \varepsilon(n) \Big)
\end{multlined}
\end{equation}
and
\begin{equation}
\begin{multlined} 
A_2 :=  \sum_{\substack{\mathbf{e}_1,...,\mathbf{e}_s \in Q_n \cr 1 \le l_{c+A+1} \le...\le l_{c+A+B} < \cr M \le l_{c+A+B+1} \le ...\le l_s \le M^* }} \sum_{\substack{1 \le j_{c+1},...,j_{c+A} \le n^{\varepsilon} \cr j_w \in \mathscr{P}_{\mathbf{e}_w}}} \sum_{\substack{\lambda b \le k_r \le ...\le k_1 \le b \cr \sum \frac{k_i}{b} + \frac{4}{\varepsilon} \sum \frac{F_{l_v}}{n} < 1 - \lambda} } \\  \gamma^{\mathscr{P}_{\mathbf{e}_{1},...,\mathbf{e}_{s}}}_{(k_i), (l_v), b} \bigg(\prod_{v=c+A+1}^{s} |T_{l_v}| \bigg)  D\big(\mathcal{P}^{\mathscr{P}_{\mathbf{e}_1,...,\mathbf{e}_s}}_{(k_i), (l_v), (j_w) } \big) \\ \frac{1}{b^r} \prod_{w=c+1}^{c+A} \frac{1}{j_w} \prod_{v = c+A+1}^s \frac{1}{F_{l_v} } \Big(f_{F_{l_{c+A+1}},...,F_{l_s}}^{n, u_{c+A+1},...,u_s}\Big(\frac{k_1}{b},..., \frac{k_r}{b} \Big) + \varepsilon(n) \Big)
\end{multlined}
\end{equation}
For the first term $A_1$, note that $\frac{F_{{l_v}+1}}{F_{l_v}} \le 2$ and that $\prod\limits_{u=1}^c \frac{k_{i_u}}{b} \le \frac{1}{i_1...i_c}$.  Then summing over $\mathbf{e}_i$, $k_i$, $j_w$ and $l_v$, we see that 
\begin{equation}
A_1 \ll \frac{1}{n^{k-s}} \frac{1}{i_1...i_c} \frac{1}{r^{s-c}} 
\end{equation}
where of course the implicit constant does not depend on $r$.  (Here, if $c=0$ then the $\frac{1}{i_1...i_c}$ term is omitted.)   

For the second term $A_2$, note that each index $l_v$ has a maximum range going from 1 to $M^* \ll \log n$.  Then using the bound on $D\big(\mathcal{P}^{\mathscr{P}_{\mathbf{e}_1,...,\mathbf{e}_s}}_{(k_i), (l_v), (j_w) } \big)$ proved in Theorem \ref{T:maindisc} below, we have
\begin{equation}
A_2 \ll \Big(\frac{100 s \log \log \log n}{\log \log n} \Big)^{\log \log n} (\log n)^s \to 0
\end{equation}

Putting this all together, the RHS of \eqref{big equation} is bounded by
\begin{equation} \ll \sum_{1 \le i_1 < ...< i_c \le r} \frac{1}{i_1...i_c} \frac{1}{r^{s-c}} + \varepsilon(\lambda) \ll \frac{(\log r)^c}{r^{s-c}} + \varepsilon(\lambda) 
\end{equation} 
Taking $\lambda \to 0$ completes the proof. 
\end{proof}

\section{Bound for Discrepancy} \label{S:logdisc}

It remains to obtain a bound for the discrepancy $\displaystyle{D\big(\mathcal{P}^{\mathscr{P}_{\mathbf{e}_1,...,\mathbf{e}_s}}_{(k_i), (l_v), (j_w) } \big) }$ of the set defined in \eqref{compdisc}.  The following bound is not intended to be tight.

\begin{theorem}\label{T:maindisc}
For all choices of $A, B,$ and $C$ (such that $C+c \ge 1$), the discrepancy satisfies the asymptotic bound given by 
\begin{equation}
D\big(\mathcal{P}^{\mathscr{P}_{\mathbf{e}_1,...,\mathbf{e}_s}}_{(k_i), (l_v), (j_w) } \big) \ll \Big( \frac{100 s \log \log \log n}{\log \log n} \Big)^{\log \log n}
\end{equation}
uniformly over all $j_w \in \mathscr{P}_{\mathbf{e}_w}$ and factorization classes $\mathscr{P}_{\mathbf{e}_1,...,\mathbf{e}_s}$ and boxes given by the subscripts $k_i$ and $l_v$. 
\end{theorem}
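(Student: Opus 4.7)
The plan is to reduce this one-dimensional discrepancy bound to an exponential-sum estimate via the Erd\H{o}s--Tur\'an inequality, parameterize the factorization classes as unions of arithmetic progressions (as in Lemma \ref{L:welltorus}), and then apply a uniform multidimensional Weyl-type bound to the resulting polynomial phase sum. Writing $N$ for the cardinality of $\mathcal{P}^{\mathscr{P}_{\mathbf{e}_1,\ldots,\mathbf{e}_s}}_{(k_i),(l_v),(j_w)}$ and $\beta := \alpha\prod_{w=c+1}^{c+A} j_w / g_s(\mathbf{e}_1,\ldots,\mathbf{e}_s)$, Erd\H{o}s--Tur\'an gives
\begin{equation*}
D\bigl(\mathcal{P}\bigr) \;\ll\; \frac{1}{H} + \sum_{h=1}^{H}\frac{1}{h}\,\Biggl|\frac{1}{N}\sum_{\substack{m_{i_u}\in I_{k_{i_u},b}\cap\mathscr{P}_{\mathbf{e}_u}\\ j_v\in T_{l_v}\cap\mathscr{P}_{\mathbf{e}_v}}} e\!\left(h\beta\prod_u m_{i_u}\prod_v j_v\right)\Biggr|,
\end{equation*}
so the problem becomes a uniform bound on the inner sum, for $h$ up to a threshold $H$ that we will take to be a suitable power of $\log n$.

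Next, by the Chinese Remainder Theorem, each factorization class $\mathscr{P}_{\mathbf{e}_i}$ decomposes into at most $\prod_m p_m^{\mathbf{e}_{mi}}$ arithmetic progressions modulo $N_i:=\prod_m p_m^{\mathbf{e}_{mi}+1}$. Parameterize each $m_{i_u}$ and $j_v$ as a residue class plus $t\,N_i$. After summing over residues, the exponential sum becomes (a bounded sum of) polynomial phase sums
\begin{equation*}
\sum_{t_1,\ldots,t_{s-A}} e\bigl(h\beta\,R(t_1,\ldots,t_{s-A})\bigr),
\end{equation*}
where $R$ is a product of $s-A$ linear forms in distinct variables, hence a polynomial of multidegree $(1,1,\ldots,1)$ whose leading coefficient is $h\beta\prod_i N_i^{(\cdot)}$ times an integer. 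Applying the multidimensional van der Corput lemma (Lemma \ref{L:vanderCorput}) in each variable reduces the problem, after $s-A$ differencing steps, to bounding one-dimensional geometric sums of the form $\sum_t e(h\beta'' t)$ whose denominator coefficient has controlled size, and which are estimated using the finite irrationality measure $\mu(\alpha)$ of $\alpha$ via Proposition \ref{P:1Ddisc}. The tailor-made choices $\varepsilon=1/(100k^2\mu^3)$ and $\delta=10\varepsilon k\mu^2$ guarantee that the quantity $h\beta$ is far enough from rational on the relevant scales, i.e.\ that $h\beta$ has an effective Diophantine denominator $\gg n^{\delta-2\varepsilon k\mu}$, which is what Lemma \ref{L:WeylBound} (to be stated) will quantify.

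The main obstacle is uniformity in the parameters. The moduli $N_i$ produced by CRT can be as large as $\exp(O((\log\log n)^3))$ because $Q_n$ allows up to $(\log\log n)^2$ distinct primes in the factorization, each with exponent up to $2s\log\log n$; and the intervals $T_{l_v}$ and $I_{k_{i_u},b}$ vary widely in length (from $n^\varepsilon$ up to $o(n)$). We therefore need an exponential-sum estimate whose savings beat the residue-class count $\prod_i N_i$, and this is where the shape of the final bound $\bigl(100s\log\log\log n/\log\log n\bigr)^{\log\log n}$ emerges: each of the at most $s(\log\log n)^2$ contributing prime powers produces a loss of size $\ll (\log\log\log n/\log\log n)$ per Weyl-differencing factor, and the $\log\log n$-th power in the bound reflects the number of effective ``bad'' primes one must process. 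Once this uniform exponential-sum bound is established, optimizing over the cutoff $H$ in the Erd\H{o}s--Tur\'an inequality yields the claimed estimate.
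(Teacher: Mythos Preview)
Your overall architecture---Erd\H{o}s--Tur\'an, then a polynomial Weyl bound exploiting the finite irrationality measure of $\alpha$---matches the paper. But there is a real gap in the passage from the factorization class to boxes, and your explanation of where the bound $\bigl(100s\log\log\log n/\log\log n\bigr)^{\log\log n}$ comes from is not correct.

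You propose to split each $\mathscr{P}_{\mathbf{e}_i}$ by CRT into arithmetic progressions modulo $N_i=\prod_m p_m^{\mathbf{e}_{mi}+1}$ and sum the Weyl bound over these. The problem is the count: with $l=\lfloor(\log n)^{2s}\rfloor$ primes in play, the number of residue classes in $\mathscr{P}_{\mathbf{e}_i}\cap[N_i]$ is $\prod_{m\le l}(p_m-1)$ (even for primes with $e_m=0$ you must sieve out the multiples of $p_m$), which is super-polynomial in $n$. No Weyl saving of the form $n^{-\varepsilon 2^{1-d}}$ will beat that combinatorial loss, so the sum over progressions blows up and the argument fails as stated. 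Your claim that the headline bound ``emerges'' from losses per prime in the Weyl differencing is not what is happening; Weyl differencing here gives a clean power-of-$n$ saving and contributes nothing resembling $(\log\log\log n/\log\log n)^{\log\log n}$.

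What the paper does instead is the key missing idea: rather than enumerating progressions, it writes the indicator of each $\mathscr{P}_{\mathbf{e}_i}$ by inclusion--exclusion over divisors $p_{m_1}\cdots p_{m_j}$ and then \emph{truncates} that alternating sum at level $q=2\lfloor\log\log n\rfloor$ using Bonferroni's inequalities. The truncation error is controlled by
\[
\sum_{m_1<\cdots<m_q\le l}\frac{1}{p_{m_1}\cdots p_{m_q}}\le\Big(\frac{e}{q}\Big)^q\Big(\sum_{m\le l}\frac{1}{p_m}\Big)^q\ll\Big(\frac{10s\log\log\log n}{\log\log n}\Big)^{\log\log n},
\]
and \emph{this} is exactly where the bound in the theorem comes from. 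After truncation, the surviving inclusion--exclusion sum has only $o(n^{\varepsilon'})$ terms, each an exponential sum over a genuine integer box; Parsell's multidimensional Weyl inequality (Lemma~\ref{L:WeylBound}) then gives a saving of a negative power of $n$ on each term, which is completely dominated by the truncation error. So your Erd\H{o}s--Tur\'an/Weyl skeleton is right, but you need the Bonferroni truncation rather than a full CRT decomposition to make the combinatorics go through.
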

Recall that in the one-dimensional case, the discrepancy asymptotic from Proposition \ref{P:1Ddisc} played an important role in bounding the sum in \eqref{Jfrac}.  One can prove Proposition \ref{P:1Ddisc} by analyzing the corresponding exponential sums and using the Erd\H{o}s-Turan inequality, a quantitative version of Weyl's Criterion.  We will also use this inequality to prove Theorem \ref{T:maindisc}.     
\begin{proposition}[Erd\H{o}s-Turan inequality] \label{P:erdosturan}  Let $\omega = \omega_1, \omega_2,...$ be a sequence of real numbers.  Then for arbitrary natural numbers $m$ and $n$, 
\begin{equation}
D_{i,n}(\omega) \le C \bigg(\frac{1}{m} + \frac{1}{n} \sum_{h=1}^m \frac{1}{h} \Big|\sum_{j=i+1}^{i+n} e(h \omega_j) \Big| \bigg)
\end{equation} 
for some absolute constant $C$. 
\end{proposition}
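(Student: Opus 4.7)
The plan is to follow the classical harmonic-analytic approach, reducing the discrepancy problem to a question about exponential (Weyl) sums via trigonometric majorants and minorants of interval indicator functions. Throughout, write $e(x) = e^{2\pi i x}$ and, after reducing mod $1$, let $A([\alpha,\beta);i,n) := \#\{i < j \le i+n : \{\omega_j\} \in [\alpha,\beta)\}$, so that
\[
D_{i,n}(\omega) \;=\; \sup_{0 \le \alpha < \beta \le 1} \Bigl| \tfrac{1}{n} A([\alpha,\beta);i,n) - (\beta-\alpha) \Bigr|.
\]
The goal is to bound the right-hand side using only Weyl sums of length $n$ and bounded frequency $h \le m$.

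First, I would import (and, if desired, construct from the Fejér kernel) the Beurling--Selberg / Vaaler trigonometric majorants and minorants: for every interval $I = [\alpha,\beta) \subset [0,1)$ and every positive integer $m$ there exist trigonometric polynomials $T_m^{\pm}$ of degree at most $m$ such that $T_m^{-}(x) \le \chi_I(x) \le T_m^{+}(x)$ for all $x \in \mathbb{T}$, with
\[
\int_0^1 T_m^{\pm}(x)\,dx \;=\; (\beta-\alpha) \;\pm\; \frac{1}{m+1},
\]
and with Fourier coefficients $\widehat{T_m^{\pm}}(h)$ obeying $|\widehat{T_m^{\pm}}(0)| \le (\beta-\alpha) + 1/(m+1) \le 2$ and $|\widehat{T_m^{\pm}}(h)| \le C_0/|h|$ for $1 \le |h| \le m$, with $C_0$ absolute. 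This construction is standard (e.g.\ Montgomery, \emph{Ten Lectures on the Interface}, Ch.~1, or Kuipers--Niederreiter, Ch.~2); I would either cite it directly or sketch the Selberg construction using $\bigl(\sin(\pi(m+1)x)/(\pi(m+1)x)\bigr)^2$.

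Next, summing the sandwich inequality over $j = i+1, \ldots, i+n$ and dividing by $n$ gives
\[
\frac{1}{n}\sum_{j=i+1}^{i+n} T_m^{-}(\omega_j) \;\le\; \frac{A([\alpha,\beta);i,n)}{n} \;\le\; \frac{1}{n}\sum_{j=i+1}^{i+n} T_m^{+}(\omega_j).
\]
Expanding $T_m^{\pm}(x) = \sum_{|h| \le m} \widehat{T_m^{\pm}}(h)\, e(hx)$ and separating the $h=0$ term, the extremes become
\[
(\beta-\alpha) \;\pm\; \frac{1}{m+1} \;+\; \sum_{0 < |h| \le m} \widehat{T_m^{\pm}}(h)\cdot \frac{1}{n}\sum_{j=i+1}^{i+n} e(h\omega_j).
\]
Subtracting $(\beta-\alpha)$, using the Fourier-coefficient bound, and exploiting the symmetry $\bigl|\sum_j e(-h\omega_j)\bigr| = \bigl|\sum_j e(h\omega_j)\bigr|$ to fold the sum over $0 < |h| \le m$ onto $1 \le h \le m$, yields
\[
\Bigl| \tfrac{1}{n} A([\alpha,\beta);i,n) - (\beta-\alpha) \Bigr| \;\le\; \frac{1}{m+1} \;+\; \frac{2C_0}{n}\sum_{h=1}^{m} \frac{1}{h}\, \Bigl|\sum_{j=i+1}^{i+n} e(h\omega_j)\Bigr|.
\]
Taking the supremum over intervals $[\alpha,\beta)$ --- crucially, the right-hand side no longer depends on $\alpha,\beta$ --- gives exactly the claimed inequality with $C = \max(1, 2C_0)$.

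The main obstacle is obtaining the trigonometric majorants and minorants with the sharp $1/|h|$ decay on their Fourier coefficients; the ``soft'' majorant obtained from a truncated Fourier series of $\chi_I$ itself has no pointwise control, so one genuinely needs the extremal construction (Beurling--Selberg, or equivalently the argument using a squared Fejér kernel with a carefully chosen shift). Once that technical input is in hand, the rest of the argument is a one-line Fourier expansion and a geometric-series estimate, as laid out above.
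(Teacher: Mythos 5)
The paper does not prove this proposition; it cites the Erd\H{o}s--Tur\'an inequality as a classical result (it appears without proof, alongside the reference to Kuipers--Niederreiter used earlier in the paper for Proposition~\ref{P:1Ddisc}). Your argument via the Beurling--Selberg / Vaaler extremal majorants and minorants is a correct and complete account of the standard modern proof: the sandwich $T_m^{-}\le\chi_I\le T_m^{+}$ by degree-$m$ trigonometric polynomials with $\int T_m^{\pm}=(\beta-\alpha)\pm\tfrac{1}{m+1}$ and $|\widehat{T_m^{\pm}}(h)|\ll 1/|h|$, followed by summing over the window $j=i+1,\dots,i+n$, isolating the $h=0$ term, and folding $0<|h|\le m$ onto $1\le h\le m$ by conjugate symmetry, does yield the claimed bound uniformly in the interval and hence in the discrepancy. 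Two small remarks: the Fourier-coefficient bound you want is more precisely $|\widehat{T_m^{\pm}}(h)|\le \tfrac{1}{m+1}+\tfrac{1}{\pi|h|}\le C_0/|h|$ for $1\le|h|\le m$ (so $C_0=1+1/\pi$ works), and since $\tfrac{1}{m+1}\le\tfrac{1}{m}$ the discrepancy between your error term and the stated $1/m$ is harmless. The original Erd\H{o}s--Tur\'an argument reached the same conclusion with a softer Fej\'er-kernel smoothing in place of the sharp extremal construction; your route is cleaner and gives better constants, at the cost of needing the Vaaler machinery as input.
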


Set $\displaystyle{t = \frac{ \prod_{w=c+1}^{c+A} j_w}{g_s(\mathbf{e}_1,....,\mathbf{e}_s)} }$.  To apply the Erd\H{o}s-Turan inequality, we wish to obtain bounds on the (normalized) exponential sum 
\begin{equation} \label{weylsum}
\frac{1}{\big|\mathcal{P}^{\mathscr{P}_{\mathbf{e}_1,...,\mathbf{e}_s}}_{(k_i), (l_v), (j_w) }\big|} \sum_{\substack{m_{i_u} \in I_{k_{i_u}, b} \cap \mathscr{P}_{\mathbf{e}_u} \cr j_v \in T_{l_v} \cap \mathscr{P}_{\mathbf{e}_v}}} e \bigg(ht \prod_{u=1}^c m_{i_u} \prod_{v=c+A+1}^s j_{v} \alpha \bigg)
\end{equation} 
for each $h \in \mathbb{N}$.
\begin{remark}
Throughout this section, we will use $\varepsilon'$ to denote an arbitrary positive number in an asymptotic or bound.  For example, writing $f(n) = o(n^{\varepsilon'})$ means that $f(n) = o(n^{\varepsilon'})$ for all $\varepsilon' > 0$.  
\end{remark}
 
For each $1 \le j \le s$, write $\displaystyle{\mathbf{e}_j = (e_{1j},...,e_{lj}) }$.  Let $\displaystyle{q_j = \prod_{i=1}^l p_i^{e_{i j} } }$.  Then from the definition of $Q_n$ in Definition \ref{Q10}, \begin{equation} \label{qj} q_j \ll (4s (\log n)^{2s} (\log \log n))^{2s (\log \log n)^3} = o(n^{\varepsilon'}) \end{equation} 

First, we establish a lower bound on the cardinality of the multiset $\displaystyle{\big|\mathcal{P}^{\mathscr{P}_{\mathbf{e}_1,...,\mathbf{e}_s}}_{(k_i), (l_v), (j_w) }\big| }$.

\begin{lemma} \label{L:cardinalityP}
\begin{equation}
\big|\mathcal{P}^{\mathscr{P}_{\mathbf{e}_1,...,\mathbf{e}_s}}_{(k_i), (l_v), (j_w) }\big| \gg \frac{1}{n^{\varepsilon'}} \prod_{v=c+A+1}^s  |T_{l_v}| \prod_{u=1}^c |I_{k_{i_u}, b}|   
\end{equation}  
\end{lemma}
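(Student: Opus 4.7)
The multiset is parameterized by independent choices of $m_{i_u} \in I_{k_{i_u}, b} \cap \mathscr{P}_{\mathbf{e}_u}$ for $1 \le u \le c$ and $j_v \in T_{l_v} \cap \mathscr{P}_{\mathbf{e}_v}$ for $c+A+1 \le v \le s$, with all other factors in the expression fixed, so its cardinality factors as
\begin{equation*}
\bigl|\mathcal{P}^{\mathscr{P}_{\mathbf{e}_1,...,\mathbf{e}_s}}_{(k_i), (l_v), (j_w)}\bigr| = \prod_{u=1}^c |I_{k_{i_u}, b} \cap \mathscr{P}_{\mathbf{e}_u}| \prod_{v=c+A+1}^s |T_{l_v} \cap \mathscr{P}_{\mathbf{e}_v}|.
\end{equation*}
Since both $|T_{l_v}|$ and $|I_{k_{i_u}, b}| = n/\log n$ are at least $n^\varepsilon$ and there are at most $s \le k$ factors in the product, it suffices to show that, for any $\delta > 0$, every $\mathbf{e} \in Q_n$, and every interval $I$ with $|I| \ge n^\varepsilon$, one has $|I \cap \mathscr{P}_{\mathbf{e}}| \gg |I| n^{-\delta}$ for $n$ large.

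The key reduction is to convert the factorization-class condition into a pure coprimality sieve. Let $S = \{m \le l : e_m > 0\}$ and $J = \prod_{m \in S} p_m^{e_m}$. An integer $j$ belongs to $\mathscr{P}_{\mathbf{e}}$ if and only if $j = Jj'$ with $\gcd(j', p_l\#) = 1$; as $j$ ranges over $I$, the corresponding $j'$ ranges over an interval of length $|I|/J + O(1)$. The definition of $Q_n$ in \eqref{Qn} guarantees $|S| < (\log \log n)^2$ and $e_m \le 2s \log \log n$, while the prime number theorem applied to $l = \lfloor (\log n)^{2s} \rfloor$ yields $\log p_l = (2s + o(1)) \log \log n$. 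Combining,
\begin{equation*}
\log J \le \sum_{m \in S} e_m \log p_m \ll (\log \log n)^2 \cdot (\log \log n) \cdot \log p_l \ll (\log \log n)^4,
\end{equation*}
so $J = n^{o(1)}$, and in particular $|I|/J \ge n^{\varepsilon/2}$ for sufficiently large $n$.

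It remains to bound below the count of integers in an interval of length $\ge n^{\varepsilon/2}$ that are coprime to $p_l \#$. Since $\log(|I|/J) / \log p_l \gg \log n / \log \log n \to \infty$, the fundamental lemma of the sieve (sieving level $z = p_l$) applies and gives
\begin{equation*}
\#\{j' \in I/J : \gcd(j', p_l\#) = 1 \} \ge (1+o(1)) \cdot (|I|/J) \cdot V(p_l),
\end{equation*}
where $V(p_l) = \prod_{p \le p_l}(1 - 1/p) \sim e^{-\gamma}/(2s \log \log n)$ by Mertens' theorem. Assembling these estimates,
\begin{equation*}
|I \cap \mathscr{P}_{\mathbf{e}}| \gg \frac{|I|}{J \log \log n} \gg \frac{|I|}{n^{o(1)}} \gg |I| n^{-\delta}
\end{equation*}
for any fixed $\delta > 0$ and all sufficiently large $n$.

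The main obstacle is the sieve step: the elementary Legendre sieve would incur an error of order $2^l = 2^{(\log n)^{2s}}$, utterly swamping the main term $(|I|/J) V(p_l)$. It is precisely the careful calibration in Definition \ref{Q10} (the choices $l = \lfloor (\log n)^{2s} \rfloor$, $e_m \le 2s \log \log n$, and $|S| < (\log \log n)^2$) that keeps $\log(|I|/J)/\log p_l$ of order $\log n / \log \log n$, placing the problem squarely in the range where the Brun-type truncation of the fundamental lemma delivers the required lower bound.
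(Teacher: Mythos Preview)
Your proof is correct and follows essentially the same approach as the paper's: both factor the cardinality as a product over the index sets, rewrite membership in $\mathscr{P}_{\mathbf{e}}$ as divisibility by $J = \prod p_m^{e_m}$ together with coprimality to $p_l\#$, invoke the fundamental lemma of sieve theory and Mertens' theorem to get a density $\gg 1/\log\log n$ on the rescaled interval, and absorb the factor $J$ (the paper's $q_j$) into $n^{\varepsilon'}$ via the $Q_n$ bounds. The paper's argument is slightly terser but the content is identical.
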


\begin{proof}
By definition,
\begin{equation}
\big|\mathcal{P}^{\mathscr{P}_{\mathbf{e}_1,...,\mathbf{e}_s}}_{(k_i), (l_v), (j_w) }\big| = \prod_{v=c+A+1}^s |T_{l_v} \cap \mathscr{P}_{\mathbf{e}_v}| \prod_{u=1}^c |I_{k_{i_u}, b} \cap \mathscr{P}_{\mathbf{e}_u}| 
\end{equation}
Note that $|T_{l_v} \cap \mathscr{P}_{\mathbf{e}_v}|$ is the number of integers in the interval $\frac{F_{l_v}}{q_v} \le x_v \le \frac{F_{l_v+1}}{q_v}$ that are not divisible by the primorial $\prod\limits_{m =1}^{\lfloor (\log n)^{2s} \rfloor} p_m$.  Counting the number of such integers corresponds to ``sieving out'' by primes $p_m$ with $ m < (\log n)^{2s}$.  By the ``fundamental lemma of sieve theory'' (c.f. \cite[p. 60]{tenenbaum}), if $w = y^{1/u}$ then the fraction of integers in $[x, x+y]$ that are not divisible by any prime less than $w$ is $\sim \prod\limits_{p < w} \Big(1 - \frac{1}{p}\Big)(1 + O(u^{-u}))$.  As $u \to \infty$, we get the expected asymptotic which by Mertens's 3rd theorem (c.f. \cite[p. 17]{tenenbaum}) is $\displaystyle{ \frac{e^{-\gamma} + o(1)}{\log w}}$ where $w = (\log n)^{2s}$.  For our purposes, any finite $u$ suffices and we see that 
 
\begin{equation}\label{Tlvinequality} \frac{|T_{l_v} \cap \mathscr{P}_{\mathbf{e}_v}|}{|T_{l_v}|/q_v } \gg \frac{1}{\log \log n}
\end{equation}

By the same reasoning, \begin{equation}\label{Ikuinequality} \frac{|I_{k_{i_u}, b} \cap \mathscr{P}_{\mathbf{e}_u}| }{|I_{k_{i_u}, b}|/q_u } \gg \frac{1}{\log \log n}
\end{equation}
The bound then follows from \eqref{qj}.

\end{proof}

Set $G_m = (m-1)n/\log n$.  Then $I_{m,b} = (G_m, G_{m+1}]$.  Let $\displaystyle{\sum\nolimits_j^u}$ formally denote summation over the range
\begin{equation}
\sum_{m_1 <...< m_j < (\log n)^{2s}} \sum_{\substack{\frac{G_{k_{i_u}}}{q_u} \le x_u \le \frac{G_{k_{i_u}+1}}{q_u}  \cr p_{m_1}...p_{m_j} \mid x_u}}
\end{equation}
and let $\displaystyle{\sum\nolimits_j^v}$ formally denote summation over the range
\begin{equation}
\sum_{m_1 <...< m_j < (\log n)^{2s}} \sum_{\substack{\frac{F_{l_v}}{q_v} \le x_v \le \frac{F_{l_v+1}}{q_v} \cr p_{m_1}...p_{m_j} \mid x_v}}
\end{equation}
By the inclusion-exclusion principle, \eqref{weylsum} can be rewritten as an alternating sum over rectangular lattices:  
\begin{equation} \label{weylinex}
\begin{multlined} 
\frac{1}{\big|\mathcal{P}^{\mathscr{P}_{\mathbf{e}_1,...,\mathbf{e}_s}}_{(k_i), (l_v), (j_w) }\big|}  \prod_{u=1}^{c} \bigg( \sum_{j=0}^{(\log n)^{2s}} (-1)^j \sum\nolimits_j^u \bigg) \prod_{v=c+A+1}^{s} \bigg( \sum_{j=0}^{(\log n)^{2s}} (-1)^j \sum\nolimits_j^v \bigg) \\ e \bigg( ht \prod_{u=1}^c x_{u} q_{u} \prod_{v=c+A+1}^s x_{v} q_{v} \alpha \bigg)
\end{multlined}
\end{equation}
where here the symbols $\prod\limits_{u=1}^c$ and $\prod\limits_{v=c+A+1}^s j_{v}$ are to be interpreted as a formal product of the summation operators.  By Bonferroni's inequalities, we can truncate the alternating sums over $j$ at some even index $q$ to obtain upper bounds on the absolute value of \eqref{weylinex}.  To be precise, for each even index $q$ there exist multisets $A_u^q$ for $1 \le u \le c$ such that we have the equality of summation ranges
\begin{equation}
\sum_{j=0}^{q} (-1)^j \sum\nolimits_j^u = \sum_{j=0}^{q-1} (-1)^j \sum\nolimits_j^u + \sum_{x_u \in A_u^q}
\end{equation}
where 
\begin{equation}
\sum_{x_u \in A_u^q} 1 \le \sum\nolimits_q^u 1 \le \sum_{1 \le m_1 < ... < m_q < (\log n)^{2s}} \bigg(\frac{|I_{k_{i_u}, b}|/q_u}{p_{m_1}...p_{m_q}} + O(1) \bigg)
\end{equation}
Similarly, there exist multisets $A_v^q$ for $c+A+1 \le v \le s$ with the analogous properties.  For all $q$ and $r$, we have the inequality 

\begin{equation}
\sum_{1 \le m_1 < ... < m_q \le r} \frac{1}{p_{m_1}...p_{m_q}} \le \Big(\frac{e}{q} \Big)^q \bigg(\sum_{i=1}^r \frac{1}{p_{m_i}} \bigg)^q 
\end{equation}
For $q = 2 \lfloor \log \log n \rfloor$ and $r = (\log n)^{2s}$, this gives a bound of $\ll \Big( \frac{10 s \log \log \log n}{\log \log n} \Big)^{\log \log n}$.    Thus, truncating at the index $q = 2 \lfloor \log \log n \rfloor$, we can bound \eqref{weylinex} by    
\begin{equation} \label{weyltruncated}
\begin{multlined} 
\ll \frac{1}{\big|\mathcal{P}^{\mathscr{P}_{\mathbf{e}_1,...,\mathbf{e}_s}}_{(k_i), (l_v), (j_w) }\big|}  \prod_{u=1}^{c} \bigg( \sum_{j=0}^{2 \lfloor \log \log n \rfloor - 1} (-1)^j \sum\nolimits_j^u \bigg) \prod_{v=c+A+1}^{s} \bigg( \sum_{j=0}^{2 \lfloor \log \log n \rfloor - 1} (-1)^j \sum\nolimits_j^v \bigg) \\ e \bigg( ht \prod_{u=1}^c x_{u} q_{u} \prod_{v=c+A+1}^s x_{v} q_{v} \alpha \bigg) + E 
\end{multlined}
\end{equation}
where 
\begin{equation}
E = \Big( \frac{10 s \log \log \log n}{\log \log n} \Big)^{\log \log n} \bigg(\sum_v \frac{|T_{l_v}|}{q_v |T_{l_v} \cap \mathscr{P}_{\mathbf{e}_v}|}+ \sum_u \frac{|I_{k_{i_u}, b}|}{q_u |I_{k_{i_u}, b} \cap \mathscr{P}_{\mathbf{e}_u}|}\bigg)
\end{equation}
By \eqref{Tlvinequality} and \eqref{Ikuinequality}, 
\begin{equation} E \ll \log \log n \Big( \frac{10 s \log \log \log n}{\log \log n} \Big)^{\log \log n} \ll \Big( \frac{20 s \log \log \log n}{\log \log n} \Big)^{\log \log n}
\end{equation}  

Expanding out the alternating inclusion-exclusion sum, we see that the exponential sum in \eqref{weyltruncated} can be written as the sum of 
\begin{equation}\label{numterms} 
\ll \bigg(\sum_{j=0}^{2 \lfloor \log \log n \rfloor - 1} \binom{\lfloor (\log n)^{2s} \rfloor}{ j} \bigg)^s \ll (2 \log \log n)^{s} (\log n)^{4s^2 (\log \log n)} = o(n^{\varepsilon'}) 
\end{equation}
terms where each term is an exponential sum of the form
\begin{equation} \label{genericweyl}
S_{\mathbf{D}} = \sum_{(x_v, x_u) \in \mathbf{D} } e \bigg(h z \alpha \prod_{u=1}^c x_{u} \prod_{v=c+A+1}^s x_{v} \bigg) 
\end{equation}
where the domain of summation is given by  
\begin{equation} \label{domainD} 
\mathbf{D} := \prod_{v=c+A+1}^s \bigg[\frac{F_{l_v}}{r_v}, \frac{F_{l_v+1}}{ r_v} \bigg] \prod_{u=1}^c \bigg[\frac{G_{k_{i_u}}}{r_u}, \frac{G_{k_{i_u}+1}}{r_u} \bigg] 
\end{equation} 
and $\displaystyle{ z = t \prod_{u=1}^c q_u \prod_{v=c+A+1}^s q_v}$ is an integer.  
Here, by \eqref{qj} we see that $r_u = o(n^{\varepsilon'})$ and $r_v = o(n^{\varepsilon'})$ for $1 \le u \le c$ and $c+A+1 \le v \le s$.  Also, $z = o(n^{\varepsilon A + \varepsilon'})$ since $t = O(n^{\varepsilon A})$.  The domain of summation
has cardinality satisfying 
\begin{equation} \label{cardinalityD} 
\frac{1}{n^{\varepsilon'}} \prod_{v=c+A+1}^s  |T_{l_v}| \prod_{u=1}^c |I_{k_{i_u}, b}| \ll |\mathbf{D}| \le \prod_{v=c+A+1}^s  |T_{l_v}| \prod_{u=1}^c |I_{k_{i_u}, b}| 
\end{equation} 

Note that compared to \eqref{weylsum}, the sum in \eqref{genericweyl} is much simpler since the domain of summation is just a box of integer lattice points.  We will use a multivariate version of Weyl's inequality stated in Lemma \ref{L:WeylBound} to bound this sum.  In \cite[Lemma 2.1]{parsell2012}, Parsell gives a bound for the exponential sum when the range of summation is a box with equal side lengths.  The proof goes through unchanged to give the inequality in Lemma \ref{L:ParsellLemma} below for boxes with varying side lengths.  The basic idea is to apply coordinate-wise Weyl differencing (Cauchy's inequality) to the polynomial until we are left with a linear polynomial which can be summed directly as a geometric series.  Note that the advantage of Weyl's method is its simplicity.  It only gives a savings in the exponent that is exponentially small (but which is sufficient for our purposes).  Much stronger bounds on multidimensional exponential sums with polynomial savings in the exponent can be attained by following Vinogradov's mean value method (cf. \cite{parsell2005}).  Most recently, adapting Wooley's method of efficient congruencing in the 1-D case (see \cite{wooley2012}), near-optimal bounds have been obtained (see \cite{parsell2013}).  

Let $\mathbf{N} := \prod\limits_{i=1}^d [1, N_i]$ be a $d$-dimensional box of integer lattice points and let $j_1,...,j_d$ be non-negative integers such that $j_1+...+j_d = j$.  Let \begin{equation} \label{multivariableP}
P(n_1,...,n_d) = \alpha_{j_1,...,j_d} n_1^{j_1}...n_d^{j_d} + \sum_{\substack{l_1,...,l_d \ge 0 \\ l_1+...+l_d \le j-1}} \alpha_{l_1,...,l_d} n_1^{l_1}...n_d^{l_d}
\end{equation}
be a multivariable polynomial of degree $j$.  (The assumption of a unique highest degree term is simply for convenience.) 

\begin{lemma}[Parsell] \label{L:ParsellLemma}
Let $1 \le m \le j-1$ and $i_1,...,i_d$ be non-negative integers such that $i_1+...+i_d = m$.  Then 
\begin{equation*}
\begin{multlined}
\Big|\sum_{\mathbf{n} \in \mathbf{N}} e(P(\mathbf{n}))\Big|^{2^m} \ll \Big(\prod_{l=1}^d N_l \Big)^{2^m-1} \Big(\prod_{l=1}^d N_l^{i_l} \Big)^{-1} \sum_{\substack{h_i \in (-N_i, N_i) \cr 1 \le i \le m}} \Big|\sum_{\mathbf{n} \in B_{\mathbf{h}}(\mathbf{N})} e \Big(h_1...h_m P_{i_1,...,i_d}(\mathbf{n}, \mathbf{h}) \Big) \Big|
\end{multlined}
\end{equation*}  
where $B_{\mathbf{h}}(\mathbf{N})$ is a box contained in $\mathbf{N}$ for each $\mathbf{h}=(h_1,...,h_m)$ and $P_{i_1,...,i_d}(\mathbf{n}, \mathbf{h})$ is a polynomial with leading term 
\begin{equation*}
\alpha_{j_1,...,j_d} \frac{j_1! \cdot \cdot \cdot j_d!}{(j_1-i_1)! \cdot \cdot \cdot (j_d-i_d)!}n_1^{j_1-i_1} \cdot \cdot \cdot n_d^{j_d-i_d}
\end{equation*}
\end{lemma}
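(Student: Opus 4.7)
The plan is to prove the inequality by induction on $m$, each round consisting of a Cauchy--Schwarz in the ``passive'' coordinates followed by one single 1D Weyl differencing in a chosen ``active'' coordinate $t_k$. Across the $m$ rounds one arranges for coordinate $l$ to be active exactly $i_l$ times, so that the leading $n_l$-exponent drops from $j_l$ to $j_l-i_l$ by the end. For the base case $m=1$, pick $t$ with $i_{t}=1$; split $\mathbf{N}=[1,N_t]\times\mathbf{N}'$ and apply Cauchy--Schwarz in $\mathbf{n}'\in\mathbf{N}'$ to get
\[
\Big|\sum_{\mathbf{n}\in\mathbf{N}} e(P(\mathbf{n}))\Big|^{2} \le \Big(\prod_{l\neq t}N_l\Big)\sum_{\mathbf{n}'\in\mathbf{N}'}\Big|\sum_{n_t=1}^{N_t}e(P(\mathbf{n}))\Big|^{2}.
\]
The inner squared sum becomes $\sum_{|h_1|<N_t}\sum_{n_t\in I_{h_1}}e(P(\mathbf{n}+h_1\mathbf{e}_t)-P(\mathbf{n}))$ by the standard 1D Weyl identity with $I_{h_1}=[1,N_t]\cap[1-h_1,N_t-h_1]$; interchanging sums produces a sum over the axis-aligned sub-box $B_{h_1}(\mathbf{N})=I_{h_1}\times\mathbf{N}'$. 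Factoring $h_1$ out of the phase and using $(n+h)^{j}-n^{j}=jhn^{j-1}+O(h^{2}n^{j-2})$ yields the required leading monomial $\alpha_{j_1,\dots,j_d}\,j_t\,n_t^{j_t-1}\prod_{l\neq t}n_l^{j_l}$ for $P_{\mathbf{e}_t}(\mathbf{n},h_1)$.

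For the inductive step, assume the bound for $(m-1,\,(i_1,\dots,i_{t_m}-1,\dots,i_d))$ with some $t_m$ satisfying $i_{t_m}\ge 1$. Square that bound, apply Cauchy--Schwarz on the outer $(m-1)$-fold $\mathbf{h}$-sum, and perform one further round of Cauchy--Schwarz plus 1D Weyl differencing in direction $t_m$ on each resulting inner squared exponential sum. This doubles the left-hand-side exponent from $2^{m-1}$ to $2^m$, produces the $m$-fold $\mathbf{h}$-sum, and nests the box $B_{\mathbf{h}}(\mathbf{N})$ inside its predecessor. The difference operators $\Delta_h^{(l)}$ commute on leading monomials and satisfy $\Delta_h^{(l)}(n_l^{a})=ahn_l^{a-1}+\cdots$, so iterating through all $m$ rounds yields the leading monomial $\alpha_{j_1,\dots,j_d}\prod_l \frac{j_l!}{(j_l-i_l)!}\,h_1\cdots h_m\,n_1^{j_1-i_1}\cdots n_d^{j_d-i_d}$, with the lower-order corrections absorbed into $P_{i_1,\dots,i_d}(\mathbf{n},\mathbf{h})$. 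For the prefactor, the Cauchy step at round $k$ contributes $\prod_{l\neq t_k}N_l$ raised to the running exponent $2^{m-k}$; summing
\[
\sum_{k=1}^{m}2^{m-k}\mathbf{1}(l\neq t_k) \;=\; (2^m-1)-\sum_{k:\,t_k=l}2^{m-k},
\]
and absorbing the subtracted sum into an $N_l^{i_l}$ denominator via $2^{m-k}\ge 1$ and $|\{k:t_k=l\}|=i_l$ gives the stated $\bigl(\prod_l N_l\bigr)^{2^m-1}/\prod_l N_l^{i_l}$.

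The main obstacle is not the analysis itself but the simultaneous bookkeeping of (i) the leading-monomial coefficient through $m$ iterated differencings, and (ii) the precise accumulation of the $N_l$-factors across the $m$ rounds of Cauchy--Schwarz. The argument is essentially Parsell's proof \cite{parsell2012} for equilateral boxes; since each differencing restricts a single coordinate to an interval, the domains $B_{\mathbf{h}}(\mathbf{N})$ remain axis-aligned sub-boxes of $\mathbf{N}$ throughout, and the only modification from the cube case is that the passive-slice cardinality at round $k$ is $\prod_{l\neq t_k}N_l$ rather than a fixed power of a single $N$.
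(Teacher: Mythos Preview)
The paper does not actually prove this lemma; it simply cites Parsell \cite[Lemma~2.1]{parsell2012} and remarks that the equal-side-length argument carries over unchanged to boxes with varying side lengths. Your sketch reconstructs precisely that argument --- iterated Cauchy--Schwarz plus coordinate-wise Weyl differencing --- and is the correct approach.

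There is, however, a gap in your prefactor bookkeeping. At each inductive round $k\ge 2$ you apply Cauchy--Schwarz \emph{twice}: once on the existing $(k-1)$-fold $\mathbf{h}$-sum (contributing a factor $\prod_{i=1}^{k-1}N_{t_i}$), and once on the passive spatial coordinates (contributing $\prod_{l\neq t_k}N_l$). You explicitly mention the first step in your inductive paragraph, but your displayed exponent $\sum_{k=1}^{m}2^{m-k}\mathbf{1}(l\neq t_k)$ counts only the second, so the prefactor you derive is too \emph{small}. The subsequent ``absorbing'' step via $\sum_{k:t_k=l}2^{m-k}\ge i_l$ then runs in the wrong direction: it shows your under-counted prefactor is bounded above by the target, which says nothing about the true one.

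The fix is straightforward and in fact cleaner than your inequality suggests. Including both factors at each round, the recursion reads
\[
A_{k}=A_{k-1}^{2}\cdot\Bigl(\prod_{i<k}N_{t_i}\Bigr)\cdot\Bigl(\prod_{l\neq t_k}N_l\Bigr),\qquad A_1=\prod_{l\neq t_1}N_l,
\]
and one checks by induction that this solves \emph{exactly} to
\[
A_m=\Bigl(\prod_{l=1}^{d} N_l\Bigr)^{2^m-1}\Big/\prod_{k=1}^{m}N_{t_k}
=\Bigl(\prod_{l=1}^{d} N_l\Bigr)^{2^m-1}\Big/\prod_{l=1}^{d} N_l^{i_l},
\]
with no slack to absorb. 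The leading-monomial tracking and the nested-box structure you describe are correct as written.
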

To apply Lemma \ref{L:ParsellLemma}, we will choose $i_1,...,i_d$ so that $P_{i_1,...,i_d}(\mathbf{n}, \mathbf{h})$ is a linear polynomial in $\mathbf{n}$.  Then we can evaluate the inner sum over $\mathbf{n} \in B_{\mathbf{h}}(\mathbf{N})$ via the geometric series estimate
\begin{equation} \label{geometricseriesestimate}
\sum_{n=1}^N e(\lambda n) \ll \min(N, ||\lambda||^{-1})
\end{equation}
for any non-integer $\lambda$ where $||\cdot||$ denotes distance to the nearest integer.  Since our goal is to obtain bounds that are uniform over polynomials with leading coefficient $hz \alpha$, we will need the following result to address the outer sum over $\mathbf{h}$. 

\begin{lemma} \label{L:FractionalPart}
Let $p, q$ be integers such that $|\alpha - p/q| \le q^{-2}$ where $(p,q) = 1$ and $q > 0$.  For any positive integers $M$, $N$, and $m$, we have \begin{equation}
\sum_{i=1}^M \min \{N, ||i m \alpha||^{-1} \} \ll \left(\frac{M}{q} + 1 \right)\left(mN + q\log q \right) 
\end{equation}

\end{lemma}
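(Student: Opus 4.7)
The plan is to adapt the standard proof of bounds of the form $\sum_{u=1}^U \min\{V, \|u\alpha\|^{-1}\} \ll (U/q+1)(V+q\log q)$ (as in Vaughan's \emph{The Hardy-Littlewood Method}, Lemma 2.2) to accommodate the multiplier $m$. First I would dispose of the case $m > q$: here one has $\sum_{i=1}^M \min\{N, \|im\alpha\|^{-1}\} \le MN$, and since $m/q \ge 1$ we have $(M/q+1)(mN+q\log q) \ge (M/q)(mN) \ge MN$, so the bound is immediate.

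For $m \le q$, I would partition $i \in [1,M]$ into $\lceil M/q\rceil \le M/q + 1$ consecutive blocks of length $q$. Such an $i$-block corresponds, via the substitution $j = im$, to a range of $j$-values of length $qm$ that contains exactly $q$ multiples of $m$. The goal becomes showing that each block contributes $O(mN + q\log q)$ to the sum $\sum_j \min\{N, \|j\alpha\|^{-1}\}$ restricted to those multiples of $m$.

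For the per-block bound, write $g = \gcd(m,q)$, $m = g m_0$, $q = g q_0$ with $\gcd(m_0, q_0) = 1$. Since $\gcd(p,q) = 1$, the fraction $mp/q = m_0 p/q_0$ is in lowest terms, so as $j$ ranges over $q$ consecutive multiples of $m$, the values $\{jp/q\}$ run through the set $\{0, 1/q_0, \dots, (q_0-1)/q_0\}$ with each residue attained exactly $g$ times. Combining $|\alpha - p/q| \le 1/q^2$ with $j \le qm$ gives $\bigl|\|j\alpha\| - \|jp/q\|\bigr| \le m/q$, so each $\|j\alpha\|$ lies within $m/q$ of one of the $q_0$ cluster centers $\|k/q_0\| = \min(k,\, q_0-k)\, g/q$ for $k = 0, 1, \dots, q_0-1$.

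I would then split the per-block sum by these clusters. Clusters with $\min(k, q_0-k) \le 2m_0$, i.e.\ those whose centers lie within $2m/q$ of $0$, number $O(m_0)$ and contain $g$ elements each, for a total of $O(m)$ elements; each contributes at most $N$, yielding $O(mN)$. For the remaining clusters, the perturbation at most halves the distance to the nearest integer, so $\|j\alpha\| \ge \min(k, q_0-k)\, g/(2q)$ and each element contributes at most $2q_0/\min(k, q_0-k)$; summing $g$ elements per cluster over the admissible range of $k$ gives a harmonic sum of size $O(g q_0 \log q_0) = O(q\log q)$. Combined with the $O(M/q+1)$ blocks this yields the stated estimate. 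The main subtlety is that the factor $m$ appears only on $N$ and not also on $q\log q$: a direct substitution $j = im$ followed by the unrestricted one-variable lemma would produce $(M/q+1)(mN + mq\log q)$, weaker by a factor of $m$ in the log term. Tracking $\gcd(m,q)$ in the cluster analysis is what recovers the sharper dependence.
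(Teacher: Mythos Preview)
Your approach is essentially the paper's: both partition $[1,M]$ into $O(M/q+1)$ blocks of length $q$, observe that within a block the residues $imp \bmod q$ run through the multiples of $g=\gcd(m,q)$ with multiplicity $g$, isolate the $O(m)$ indices whose fractional part lands within $O(m/q)$ of an integer (contributing $O(mN)$), and bound the remaining terms by a harmonic sum of size $O(q\log q)$. Your remark about why the naive substitution $j=im$ loses a factor of $m$ on the logarithmic term is exactly the point.

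One slip to fix: the bound $\bigl|\|j\alpha\|-\|jp/q\|\bigr|\le m/q$ uses $|j|\le qm$, but that only holds for the first block; in later blocks $j=im$ can be as large as $Mm$. The standard remedy (which the paper uses) is to write $(i_0+i)m\alpha = i_0 m\alpha + imp/q + u$ with $|u|\le m/q$ since $0\le i<q$, so the cluster centers are the shifted points $\|(b+rg)/q\|$ where $b$ is the nearest integer to $qi_0 m\alpha$. The rest of your cluster analysis then goes through verbatim with this shift in place.
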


\begin{proof}
We divide the sum over $M$ terms into $O(M/q + 1)$ blocks of size $q$ (plus one possibly partial block) where each block has the form 
\begin{equation}\label{qblock}
\sum_{i=0}^{q-1} \min(N, ||(i_0 + i)m \alpha||^{-1})
\end{equation}
and $i_0$ is the first number in the block.  Since $|\alpha - p/q| \le q^{-2}$, we have $(i_0 + i)m \alpha = i_0 m \alpha + i m p /q + u$ where $|u| \le m/q$.    
As $i$ runs over the set $[q]$, $i m p$ runs over the residues $r\gcd(m,q) \mod q$ for $1 \le r \le q/ \gcd(m,q)$ with multiplicity $\gcd(m,q)$.  Let $b$ be the nearest integer to $q i_0 m \alpha$.  Then $||(i_0+i)m \alpha|| = ||(b + r \gcd(m,q))/q + u||$ where $|u| \le 3m/q$.  Note that there are $O(m)$ integers $i$ such that $||(i_0 + i)m \alpha|| \le ||m/q||$.  Therefore, \eqref{qblock} is bounded by
\begin{equation}
\ll mN + \sum_{r=1}^{q} \frac{q}{r} \ll mN + q \log q
\end{equation}
which proves the lemma.
 
\end{proof}

\begin{lemma} \label{L:WeylBound}
Let $p, q$ be integers such that $|\alpha - p/q| \le q^{-2}$ where $(p,q) = 1$ and $q > 0$.  Let $P(\mathbf{n}) = m \alpha (n_1 - a_1)...(n_d - a_d)$ for some integers $m$ and $a_1,...,a_d$ where $m \ge 1$ and $d \ge 2$.  Assume that $N_1 \ge ... \ge N_d$.  Then 
\begin{equation*}
\begin{multlined}
\Big|\sum_{\mathbf{n} \in \mathbf{N}} e(P(\mathbf{n}))\Big| \ll \Big(\prod_{l=1}^d N_l \Big)^{1+\varepsilon'} \bigg(\frac{m}{q} + \frac{1}{N_d} + \frac{m}{\prod_{l=1}^{d-1} N_l} + \frac{q}{\prod_{l=1}^d N_l} \bigg)^{2^{1-d}}
\end{multlined}
\end{equation*} 
where the absolute constant does not depend on $a_1,...,a_d$.  
\end{lemma}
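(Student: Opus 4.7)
My plan is to prove Lemma \ref{L:WeylBound} by a classical Weyl argument: apply Lemma \ref{L:ParsellLemma} ($d-1$ multi-dimensional Weyl differencings) to reduce the $d$-variable polynomial exponential sum to a single linear exponential sum, estimate that via the geometric-series bound \eqref{geometricseriesestimate}, and bound the resulting outer sum over the difference parameters $\mathbf{h}$ by an iterated application of Lemma \ref{L:FractionalPart}.

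Concretely, I apply Lemma \ref{L:ParsellLemma} with total differencing count $m=d-1$ and exponent tuple $(i_1,\ldots,i_d)=(1,\ldots,1,0)$, which is permissible since the degree is $j=d$ and $m\le j-1$. Because $P(\mathbf{n})=m\alpha(n_1-a_1)\cdots(n_d-a_d)$ has unique leading monomial $m\alpha\,n_1\cdots n_d$ with each variable appearing to the first power, successive differencing in $n_1,\ldots,n_{d-1}$ reduces $P$ to the linear polynomial $m\alpha(n_d-a_d)$ in $n_d$ alone, so Parsell's inequality gives
\begin{equation*}
|S|^{2^{d-1}} \ll \Big(\prod_{l=1}^d N_l\Big)^{2^{d-1}-1}\Big(\prod_{l=1}^{d-1} N_l\Big)^{-1}\sum_{\substack{|h_i|<N_i\\ 1\le i\le d-1}}\Big|\sum_{n_d\in B_{\mathbf{h}}}e\bigl(h_1\cdots h_{d-1}\,m\alpha(n_d-a_d)\bigr)\Big|.
\end{equation*}
Factoring the constant phase in $n_d$ and applying \eqref{geometricseriesestimate}, each inner sum is at most $\min(N_d,\|h_1\cdots h_{d-1}m\alpha\|^{-1})$; in particular the estimate is uniform in the shifts $a_l$.

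I then split the outer sum by whether any $h_i=0$. By inclusion–exclusion, the number of tuples with some $h_i=0$ is $\ll\prod_{l<d}N_l/N_{d-1}$ (using $N_{d-1}\le N_i$ for $i<d$); each such tuple contributes trivially at most $N_d$, giving total $\ll N_d\prod_{l<d}N_l/N_{d-1}$, which will account for the $1/N_d$ term of the target bound. For tuples with all $h_i\ne 0$, I fix $h_2,\ldots,h_{d-1}$ and apply Lemma \ref{L:FractionalPart} to the $h_1$-sum with effective multiplier $h_2\cdots h_{d-1}\,m$, obtaining
\begin{equation*}
\sum_{0<|h_1|<N_1}\min\bigl(N_d,\|h_1(h_2\cdots h_{d-1}m)\alpha\|^{-1}\bigr)\ll\Big(\tfrac{N_1}{q}+1\Big)\bigl(|h_2\cdots h_{d-1}|mN_d+q\log q\bigr);
\end{equation*}
summing the remaining $|h_i|$'s directly (with $\sum_{|h|<N_i}|h|\ll N_i^2$) yields $\ll(N_1/q+1)(mN_d\prod_{i=2}^{d-1}N_i^2+q\log q\prod_{i=2}^{d-1}N_i)$. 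Combining with the Parsell prefactor, expanding, and taking the $2^{1-d}$-th root produces the four target terms $m/q$, $1/N_d$, $m/\prod_{l<d}N_l$, and $q/\prod_l N_l$ with absolute constants independent of the $a_l$.

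The main technical obstacle is the bookkeeping in the final step: after multiplying by $(\prod_l N_l)^{2^{d-1}-1}/\prod_{l<d}N_l$ and taking the root, one must verify that each of the several accumulated terms is dominated, up to the $(\prod_l N_l)^{\varepsilon'}$ slack, by a constant multiple of the corresponding target contribution. The $\log q$ factors accumulated from iterated Lemma \ref{L:FractionalPart} are absorbed via $\log q\ll(\prod_l N_l)^{\varepsilon'}$, and the hypothesis $N_1\ge\cdots\ge N_d$ is used both to bound the zero-$h_i$ count via $N_{d-1}$ and to absorb ratios of the form $N_i/N_j$ (arising from the mismatch between $\prod_{l<d}N_l$ and $N_1^2$ or $N_d^2$) into the $\varepsilon'$ slack.
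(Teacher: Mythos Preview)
Your overall strategy—apply Lemma \ref{L:ParsellLemma} with $(i_1,\dots,i_d)=(1,\dots,1,0)$, reduce to a linear sum in $n_d$, bound it by \eqref{geometricseriesestimate}, then control the $\mathbf h$-sum via Lemma \ref{L:FractionalPart}—matches the paper. The gap is in how you treat the outer sum over $\mathbf h$. You apply Lemma \ref{L:FractionalPart} only to the $h_1$-sum and then sum $|h_2\cdots h_{d-1}|$ trivially, picking up $\prod_{i=2}^{d-1}N_i^2$. After multiplying by the Parsell prefactor $(\prod_l N_l)^{2^{d-1}-1}/\prod_{l<d}N_l$, the term coming from the ``$+1$'' times $|h_2\cdots h_{d-1}|mN_d$ is
\[
\Big(\prod_l N_l\Big)^{2^{d-1}-1}\cdot \frac{mN_d\prod_{i=2}^{d-1}N_i}{N_1},
\]
whereas the target contribution from $m/\prod_{l<d}N_l$ is $(\prod_l N_l)^{2^{d-1}-1}\cdot mN_d$. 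The required inequality is therefore $\prod_{i=2}^{d-1}N_i/N_1\le (\prod_l N_l)^{\varepsilon'}$, and this is \emph{false} for $d\ge 4$: with $N_1=\cdots=N_d=N$ the left side is $N^{d-3}$, which is not $O(N^{d\varepsilon'})$ for small $\varepsilon'$. The ordering $N_1\ge\cdots\ge N_d$ only bounds ratios $N_i/N_j$ with $i\ge j$ by $1$; it does not control a product of several $N_i$'s against a single $N_1$. Your claim that such ratios absorb into the $\varepsilon'$ slack is where the argument breaks.

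The paper avoids this by not summing the $h_i$ separately. Instead, it groups the nonzero tuples $(h_1,\dots,h_{d-1})$ by the value of the product $i=h_1\cdots h_{d-1}$ and uses the divisor bound $\tau_{d-1}(i)\ll i^{\varepsilon'}$ for the multiplicity; this collapses the $(d-1)$-fold sum to a single sum $\sum_{1\le i\le\prod_{l<d}N_l}\min(N_d,\|im\alpha\|^{-1})$ at the cost of only $(\prod_l N_l)^{\varepsilon'}$. One application of Lemma \ref{L:FractionalPart} with $M=\prod_{l<d}N_l$ then gives exactly the four target terms. Your proof is fine for $d=2,3$; for $d\ge 4$ you need this divisor-function step (or an equivalent device) in place of the trivial summation over $h_2,\dots,h_{d-1}$.
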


\begin{proof}
Let $i_l = 1$ for $1 \le l \le d-1$ and $i_d = 0$.  Then by Lemma \ref{L:ParsellLemma},
\begin{equation*}
\begin{multlined}
\Big|\sum_{\mathbf{n} \in \mathbf{N}} e(P(\mathbf{n}))\Big|^{2^{d-1}} \ll \Big(\prod_{l=1}^d N_l \Big)^{2^{d-1}-1} \Big(\prod_{l=1}^{d-1} N_l \Big)^{-1} \sum_{\substack{h_i \in (-N_i, N_i) \cr 1 \le i \le d-1}} \Big| \sum_{\mathbf{n} \in B_{\mathbf{h}}(\mathbf{N})} e(h_1...h_{d-1} m \alpha n_d) \Big|
\end{multlined}
\end{equation*}  
Since the divisor function $\tau(n)$ satisfies $\tau(n) \ll n^{\varepsilon'}$, the number of ways to write an integer $1 \le n \le \prod\limits_{i=1}^{d-1} N_i$ as a product $h_1...h_{d-1}$ is $\ll N_1^{\varepsilon'}$.    Using this fact and \eqref{geometricseriesestimate} and separating out the terms where some $h_i = 0$, we have 
\begin{equation*}
\begin{multlined}
\Big|\sum_{\mathbf{n} \in \mathbf{N}} e(P(\mathbf{n}))\Big|^{2^{d-1}} \ll \Big(\prod_{l=1}^d N_l \Big)^{2^{d-1}} (N_{d-1})^{-1} + \Big(\prod_{l=1}^d N_l \Big)^{2^{d-1}-1+\varepsilon'} \sum_{\substack{1 \le i \le \prod\limits_{l=1}^{d-1} N_l}} \min(N_d, ||i m \alpha||^{-1})
\end{multlined}
\end{equation*}  
Finally, by Lemma \ref{L:FractionalPart} this is 
\begin{equation}
\ll  \Big(\prod_{l=1}^d N_l \Big)^{2^{d-1}+\varepsilon'} \bigg(\frac{1}{N_{d-1}} + \frac{m}{q} + \frac{1}{N_d} + \frac{m}{\prod_{l=1}^{d-1} N_l} + \frac{q}{\prod_{l=1}^d N_l} \bigg)
\end{equation}
Raising to the power $2^{1-d}$ completes the proof.
\end{proof}

We are now ready to obtain an estimate for $S_{\mathbf{D}}$ from \eqref{genericweyl} and apply the Erd\H{o}s-Turan inequality to prove the discrepancy bound.  
\begin{proof}[Proof of Theorem \ref{T:maindisc}]

Let $d = s-A = B+C+c$ be the degree of the polynomial in $S_{\mathbf{D}}$ with leading coefficient $h z \alpha$.  First assume $d > 1$. 
The successive continued fraction convergents of $\alpha$ satisfy 
\begin{equation}\frac{1}{2q_i q_{i+1}} \le \Big|\alpha - \frac{p_i}{q_i} \Big| \le \frac{1}{q_i q_{i+1}}
\end{equation}  By the definition of irrationality measure, $\displaystyle{\Big|\alpha - \frac{p}{q} \Big| > \frac{1}{q^{\mu+\epsilon'}}}$ for all integers $p$ and $q$ sufficiently large.  Therefore $q_{i+1} \le q_i^{\mu-1+\epsilon'}$ for $i$ sufficiently large.   

Assume that $1 \le h \le n^{\varepsilon}$.  Then $h z = o(n^{\varepsilon (A+2)})$.  Recall that $C + c \ge 1$, and  therefore $\prod\limits_{v=c+A+1}^s  |T_{l_v}| \prod\limits_{u=1}^c |I_{k_{i_u}, b}| \ge n^\delta$.  Choose $q$ to be a convergent such that $n^{\varepsilon (A+3)} \le q \le n^{\mu \varepsilon (A+3)}$.  By Lemma \ref{L:WeylBound},
\begin{equation}
S_{\mathbf{D}} \ll \prod\limits_{v=c+A+1}^s  |T_{l_v}| \prod\limits_{u=1}^c |I_{k_{i_u}, b}| n^{\varepsilon'} \bigg(\frac{n^{\varepsilon (A+2)}}{n^{\varepsilon(A+3)}} + \frac{n^{\varepsilon'}}{n^{\varepsilon} } + \frac{n^{\varepsilon (A+2) + \varepsilon'}}{n^\delta} + \frac{n^{\mu \varepsilon (A+3) + \varepsilon'}}{n^\delta} \bigg)^{2^{1-d}}
\end{equation}

Then using \eqref{cardinalityD}, we can bound \eqref{weyltruncated} by
\begin{equation}
n^{\varepsilon'} \bigg(\frac{n^{\varepsilon (A+2)}}{n^{\varepsilon(A+3)}} + \frac{n^{\varepsilon'}}{n^{\varepsilon} } + \frac{n^{\varepsilon (A+2) + \varepsilon'}}{n^\delta} + \frac{n^{\mu \varepsilon (A+3) + \varepsilon'}}{n^\delta} \bigg)^{2^{1-d}} + \Big( \frac{20 s \log \log \log n}{\log \log n} \Big)^{\log \log n}
\end{equation}
 
Choosing $m = n^{\varepsilon}$ in the Erd\H{o}s-Turan inequality (Proposition \ref{P:erdosturan}), we obtain the bound  
\begin{equation} 
D\big(\mathcal{P}^{\mathscr{P}_{\mathbf{e}_1,...,\mathbf{e}_s}}_{(k_i), (l_v), (j_w) } \big) \ll \frac{1}{n^{\varepsilon}} + \log n \Big( \frac{20 s \log \log \log n}{\log \log n} \Big)^{\log \log n}
\end{equation}
for $d > 1$.  
If the degree $d = 1$, either $c=1$ or $C=1$ since $C+c \ge 1$.  By \eqref{geometricseriesestimate}, we have $S_{\mathbf{D}} \ll ||hz \alpha||^{-1}$ where $z = o(n^{\varepsilon A + \varepsilon'})$.  Then 
\begin{equation} \label{finallinear}
\frac{n^{\varepsilon'} |S_{\mathbf{D}}|}{\big|\mathcal{P}^{\mathscr{P}_{\mathbf{e}_1,...,\mathbf{e}_s}}_{(k_i), (l_v), (j_w) }\big|} \ll \frac{n^{\varepsilon'}}{n^\delta} \frac{1}{||hz \alpha||}
\end{equation}

By \cite[Lemma 2.3.3]{kuipers}, for any integer $m$ 
\begin{equation} \label{expbound1} 
\sum_{h=1}^{m} \frac{1}{h z ||h z \alpha||} \le \sum_{h=1}^{mz} \frac{1}{h ||h \alpha||} \ll (mz)^{\mu - 2 + \varepsilon'}  
\end{equation} 

Then choosing $m = n^{\varepsilon}$ in the Erd\H{o}s-Turan inequality as before completes the proof. 

\end{proof}

\section{Eigenvalue process at rational angles} \label{S:generalkrational}

To obtain the limiting eigenvalue point process $\displaystyle{\lim_{n \to \infty} \mathfrak{E}_{n,k}^\alpha}$ at a rational angle $\alpha = s/t$, we need the following distributional convergence analogous to Lemma \ref{L:unigeneral} for irrational $\alpha$.  

\begin{lemma} \label{unigeneralrational}
Let $\alpha = s/t$ be a rational number in reduced form where $t = \prod\limits_{m=1}^M p_{m}^{e_m(t)}$.  Fix positive integers $r$ and $l$ such that $r \ge k$ and $l \ge M$.  For $\displaystyle{1 \le i_1 < ... < i_k \le r}$, define $\mathbb{Z}/t\mathbb{Z}$-valued exchangeable random variables 
\begin{equation}
V_{i_1,...,i_k}^l =  \frac{\prod_{m=1}^{M} p_m^{\mathbf{X}_{m i_1}+...+ \mathbf{X}_{m i_k}}}{g_k(\mathbf{X}_l^{i_1,...,i_k})} U_{i_1}...U_{i_k}
\end{equation} 
where $U_1,U_2,...$ are i.i.d. random variables distributed uniformly on the group of units $(\mathbb{Z}/t\mathbb{Z})^\times$.  Let $U_{i_1,...,i_k}^l = V_{i_1,...,i_k}^l/t \in \mathbb{Q}$ where we identify 
$\mathbb{Z}/t\mathbb{Z}$ with $[t] \subset \mathbb{Z}$.

Then we have the distributional convergence 
\begin{multline*} \Bigg(\textbf{E}_l(L_{1}^{(n)},...,L_{r}^{(n)}), L_1^{(n)}/n,...,L_r^{(n)}/n, \bigg(\psi_\alpha \Big(\frac{L_{i_1}^{(n)}...L_{i_k}^{(n)}}{ g_k(\textbf{E}_l(L_{i_1}^{(n)}, ...,L_{i_k}^{(n)}))} \Big) \bigg) \Bigg) \\ \buildrel d \over \to \left(\textbf{X}_l^{1,...,r}, L_1,...,L_r, (U_{i_1,...,i_k}^l) \right) 
\end{multline*}
\end{lemma}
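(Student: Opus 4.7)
The plan is to mirror the proof of Lemma \ref{L:unigeneral}, with the well-distribution input in the torus (Lemma \ref{L:welltorus}) replaced by an elementary equidistribution statement modulo the integer $t$. The outer scheme --- conditioning on $L_i^{(n)} \in I_{k_i, b} \cap \mathscr{P}_{\mathbf{a}_i}$ with $b = o(n)$ and $n/b \to \infty$, then summing over factorization classes and intervals, and invoking the uniform local limit theorem (Lemma \ref{L:uniformbound}) together with the asymptotic density of each factorization class (Lemma \ref{L:chinese}) --- carries over essentially unchanged.

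The key new ingredient is the following equidistribution fact. Writing $L_i^{(n)} = \prod_{m=1}^l p_m^{A_{m,i}} d_i$ with $d_i$ coprime to $p_1 \cdots p_l$, I claim that as $n/b \to \infty$ the joint conditional distribution of $(d_1 \bmod t, \ldots, d_r \bmod t)$ converges to $r$ i.i.d.\ uniform random variables on $(\mathbb{Z}/t\mathbb{Z})^\times$. Since $l \ge M$ already sieves out every prime factor of $t$, each $d_i$ is automatically coprime to $t$; equidistribution modulo $t$ within the sieve class of a growing arithmetic progression then reduces to a Chinese Remainder Theorem count, and independence across $i$ is immediate from the disjointness of the intervals $I_{k_i,b}$. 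This plays the role of Lemma \ref{L:welltorus} in the rational setting.

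Combining this with the identity
\[ \frac{\prod_j L_{i_j}^{(n)}}{g_k(\mathbf{E}_l(L_{i_1}^{(n)},\ldots,L_{i_k}^{(n)}))} = \prod_{m=1}^l p_m^{\max_j A_{m,i_j}} \cdot \prod_{j=1}^k d_{i_j} \]
and the observation that $\psi_\alpha(j) \cdot t \equiv -sj \pmod{t}$ (with representative in $\{1,\ldots,t\}$), the limiting joint distribution of $(\psi_\alpha(\cdot)\, t)_{i_1 < \ldots < i_k}$ takes the form of a deterministic prefactor depending only on $\mathbf{X}_l^{1,\ldots,r}$ and $s$, multiplied by the tuple-indexed product $\prod_j d_{i_j}$. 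To match this with the joint law of $V_{i_1,\ldots,i_k}^l$, one checks that the ratio of the two deterministic prefactors factors per-index as $\prod_j S_{i_j}$ for units $S_i = S_i(\mathbf{X}_l) \in (\mathbb{Z}/t\mathbb{Z})^\times$; this per-index factorization lets the difference be absorbed into the i.i.d.\ uniform family via a substitution of the form $U_i \mapsto S_i^{-1} U_i$, preserving uniformity and aligning the two joint distributions. The equidistribution step is a routine CRT count; the bookkeeping for this per-index factorization of the deterministic discrepancy, especially tracking how the sign and the factor $s$ get absorbed consistently across all tuples $(i_1,\ldots,i_k)$, is the main technical point.
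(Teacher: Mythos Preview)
Your approach is essentially the paper's: mirror Lemma~\ref{L:unigeneral}, replace the torus well-distribution input by an equidistribution-mod-$t$ statement via the Chinese Remainder Theorem, and absorb per-index unit factors into the i.i.d.\ uniform family. The paper organizes the CRT step by partitioning each $\mathscr{P}_{\mathbf{a}_i}$ into residue classes modulo $N_i = \prod_{m=1}^l p_m^{\mathbf{A}_{mi}+1+e_m(t)}$ (so that $\frac{\prod_j m_{i_j}}{g_k} \bmod t$ is constant on each class and one is reduced to a finite computation), which is exactly equivalent to your decomposition $L_i^{(n)} = \bigl(\prod_m p_m^{A_{m,i}}\bigr) d_i$ with $d_i$ asymptotically i.i.d.\ uniform on $(\mathbb{Z}/t\mathbb{Z})^\times$. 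Both arguments then absorb the per-index units $\prod_{m>M} p_m^{X_{m,i_j}}$ into the uniform variables.

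There is, however, a genuine gap in your handling of the global factor $(-s)$ arising from $\psi_\alpha$. After the per-index absorption, the residual discrepancy between $\psi_\alpha(\cdot)\,t$ and $V^l_{i_1,\dots,i_k}$ is the single unit $(-s) \in (\mathbb{Z}/t\mathbb{Z})^\times$, applied simultaneously to \emph{every} tuple. Your proposed substitution $U_i \mapsto S_i^{-1} U_i$ would require $\prod_{j} S_{i_j} = (-s)$ for every $k$-subset $\{i_1,\dots,i_k\} \subset \{1,\dots,r\}$; for $r>k$ this forces all $S_i$ equal and hence $(-s)$ to be a $k$-th power in $(\mathbb{Z}/t\mathbb{Z})^\times$, which fails in general. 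Concretely, take $t=3$, $s=1$, $k=2$, $r=3$ and condition on all $X_{m,i}=0$: the product over the three $2$-subsets of $U_{i_1}U_{i_2}$ is always $(U_1U_2U_3)^2 = 1$, whereas the corresponding product on the $\psi_\alpha$ side is always $(-s)^3(\tilde U_1\tilde U_2\tilde U_3)^2 = 2$, so the two joint laws have disjoint supports. The paper's proof does not explicitly carry out the ``apply $\psi_\alpha$'' step either, so the cleanest repair is to retain the factor $(-s)$ in the description of the limiting variables rather than claim it can be absorbed.
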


\begin{proof}

Following the proof of Lemma \ref{L:unigeneral}, fix some matrix $\mathbf{A} \in \mathbb{N}^{l \times r}$ and corresponding factorization class $\mathscr{P}_{\mathbf{A}}$.  The analog of computing the discrepancy $\displaystyle{ D(\mathcal{P}^{\mathscr{P}_{\mathbf{A}}, \boldsymbol \alpha}_{(k_i), b} ) }$ in Lemma \ref{L:welltorus} is finding the distribution of $\displaystyle{\bigg(\psi_\alpha \Big(\frac{m_{i_1}...m_{i_k}}{ g_k(\textbf{E}_l(m_{i_1}, ...,m_{i_k}))} \Big) \bigg) }$ such that $(m_1,...,m_r)$ are chosen uniformly from the set 
\begin{equation}
\{(m_1,...,m_r): (\mathbf{E}_l(m_1,...,m_r) = \mathbf{A}) \land \bigwedge_{i=1}^r (m_i \in I_{k_i, b} ) \} 
\end{equation}  
It is sufficient to find the distribution of $\displaystyle{\bigg(\frac{m_{i_1}...m_{i_k}}{ g_k(\textbf{E}_l(m_{i_1}, ...,m_{i_k}))} \bigg)}$ mod $t$ and then apply the function $\psi_\alpha$ .  

For $1 \le i \le r$, let $N_i = \prod\limits_{m=1}^l p_m^{\mathbf{A}_{mi}+1 + e_m(t)}$ and let $s_i$ be an element of $\mathscr{P}_{\mathbf{a}_i}\cap [N_i]$.   Define the sequences $s^i(q) = s_i + q N_i$ where $q$ runs over the non-negative integers.  Note that $s^i(q) \in \mathscr{P}_{\mathbf{a}_i}$ for all $q$.  Thus, we can partition each factorization class $\mathscr{P}_{\mathbf{a}_i}$ into congruence classes modulo $N_i$.  Also, for $\displaystyle{ 1 \le i_1 < ... < i_k \le r}$, all elements in $\displaystyle{ \bigg\{ \frac{s^{i_1}(q_{i_1})...s^{i_k}(q_{i_k})}{g_k(\textbf{a}_{i_1},...,\textbf{a}_{i_k}) }: q_{i_j} \in \mathbb{N} \bigg\} }$ lie in the same congruence class mod $t$.  Thus, it is sufficient to restrict our attention to $(m_1,...,m_r) \in [N_1] \times... \times [N_r]$ and compute the distribution of the multiset 
\begin{equation} V^{\mathbf{A}} := \bigg\{ \bigg(\frac{m_{i_1}...m_{i_k}}{g_k(\textbf{a}_{i_1},...,\textbf{a}_{i_k}) } \bigg) \in (\mathbb{Z}/t\mathbb{Z})^{ \binom{r}{ k}}: m_{i_j} \in \mathbb{Z}/N_{i_j}\mathbb{Z} \cap \mathscr{P}_{\mathbf{a}_{i_j}} \bigg\}
\end{equation}
Here, $V^{\mathbf{A}}$ is well-defined since there is a natural projection map $\mathbb{Z}/N_{i_j}\mathbb{Z} \to \mathbb{Z}/t\mathbb{Z}$.  By the Chinese remainder theorem, \[\mathbb{Z}/N_i\mathbb{Z} \cong \prod_{m=1}^l \mathbb{Z}/p_m^{\mathbf{A}_{mi}+1+e_m(t)} \mathbb{Z} \]  Then a little modular arithmetic shows that the $V^{\mathbf{A}}$ can be written as 
\begin{equation}
V^{\mathbf{A}}_{i_1,...,i_k} = \frac{\prod_{m=1}^M p_m^{\mathbf{A}_{m i_1}+...+ \mathbf{A}_{m i_k}}}{g_k(\textbf{a}_{i_1},...,\textbf{a}_{i_k}) }  U_{i_1}...U_{i_k}
\end{equation}
where $U_1,...,U_r$ are i.i.d. random variables distributed uniformly on $(\mathbb{Z}/t\mathbb{Z})^\times$.  Here, we use the fact that $p_l^{\mathbf{A}_{l i_j}}$ are invertible in $\mathbb{Z}/t\mathbb{Z}$ for $l>M$ and hence can be absorbed into the random variable $U_{i_j}$ by translation invariance.  The rest of the proof follows as for Lemma \ref{L:unigeneral}.
\end{proof}  

\begin{corollary} \label{firstrrational}

Recall the notation from Lemma \ref{unigeneralrational} and as usual, allow the multi-dimen\-sion\-al indices $(i_1,...,i_k)$ to run over the range $\displaystyle{1 \le i_1 < ...< i_k \le r}$.  Define $\mathbb{Z}/t\mathbb{Z}$-valued exchangeable random variables  \[V_{i_1,...,i_k} =  \frac{\prod_{m=1}^M p_m^{\mathbf{X}_{m i_1}+...+ \mathbf{X}_{m i_k}}}{g_k(\mathbf{X}_\infty^{i_1,...,i_k})} U_{i_1}...U_{i_k} \] 
where $U_1,U_2,...$ are i.i.d. random variables distributed uniformly on $(\mathbb{Z}/t\mathbb{Z})^\times$.  Let $U_{i_1,...,i_k} = V_{i_1,...,i_k}/t \in \mathbb{Q}$ where we identify $\mathbb{Z}/t\mathbb{Z}$ with $[t] \subset \mathbb{Z}$.  Then we have the distributional convergence 
\begin{multline*}
\bigg( \bigg(\frac{L_{i_1}^{(n)}...L_{i_k}^{(n)}}{\lcm(L_{i_1}^{(n)},...,L_{i_k}^{(n)})} \bigg), (L_{i_1}^{(n)}...L_{i_k}^{(n)}/n^k),(\psi_\alpha(\lcm(L_{i_1}^{(n)},...,L_{i_k}^{(n)}) ) ) \bigg) \\
\buildrel d \over \to \left( (g_{k}(\textbf{X}_\infty^{i_1,...,i_k})),  (L_{i_1}...L_{i_k}), (U_{i_1,...,i_k})\right)
\end{multline*}

\end{corollary}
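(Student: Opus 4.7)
The argument parallels the proof of Corollary \ref{C:firstr} virtually verbatim, with Lemma \ref{unigeneralrational} replacing Lemma \ref{L:unigeneral}. First, for fixed $l \ge M$, the continuous mapping theorem applied to Lemma \ref{unigeneralrational} yields the convergence
\[\Bigl(\bigl(g_k(\mathbf{E}_l(L_{i_1}^{(n)},\ldots,L_{i_k}^{(n)}))\bigr),\,(L_{i_1}^{(n)}\cdots L_{i_k}^{(n)}/n^k),\,\bigl(\psi_\alpha(L_{i_1}^{(n)}\cdots L_{i_k}^{(n)}/g_k(\mathbf{E}_l))\bigr)\Bigr) \buildrel d \over \to \bigl((g_k(\mathbf{X}_l^{i_1,\ldots,i_k})),\,(L_{i_1}\cdots L_{i_k}),\,(U^l_{i_1,\ldots,i_k})\bigr)\]
as $n \to \infty$, since $g_k$ is a fixed function on integer matrices and is thus trivially continuous on its discrete domain, while the $\psi_\alpha$-output is already present in the conclusion of Lemma \ref{unigeneralrational}.

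Next, Lemma \ref{L:firstl} lets me replace $g_k(\mathbf{E}_l(L_{i_1}^{(n)},\ldots,L_{i_k}^{(n)}))$ by the exact ratio $L_{i_1}^{(n)}\cdots L_{i_k}^{(n)}/\lcm(L_{i_1}^{(n)},\ldots,L_{i_k}^{(n)})$ and correspondingly $\psi_\alpha(L_{i_1}^{(n)}\cdots L_{i_k}^{(n)}/g_k(\mathbf{E}_l))$ by $\psi_\alpha(\lcm(L_{i_1}^{(n)},\ldots,L_{i_k}^{(n)}))$. On the good event, whose complement has $\limsup_{n\to\infty}$-probability at most $\binom{r}{2}/p_l$, the two triples coincide exactly since $\lcm\cdot g_k(\mathbf{E}_l) = L_{i_1}^{(n)}\cdots L_{i_k}^{(n)}$ whenever $g_k(\mathbf{E}_l)$ equals the true ratio.

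Third, I would verify that $(g_k(\mathbf{X}_l^{i_1,\ldots,i_k}),\,U^l_{i_1,\ldots,i_k})$ converges almost surely to $(g_k(\mathbf{X}_\infty^{i_1,\ldots,i_k}),\,U_{i_1,\ldots,i_k})$ as $l \to \infty$. Only primes $p_m$ for which at least two of $X_{mi_1},\ldots,X_{mi_k}$ are simultaneously positive contribute nontrivially to $g_k$; the event at each index $m$ has probability at most $\binom{k}{2}/p_m^2$, and $\sum_m 1/p_m^2 < \infty$, so by Borel--Cantelli the product almost surely stabilizes after finitely many nontrivial factors. Since the numerator in the definition of $V^l_{i_1,\ldots,i_k}$ involves only primes $p_m$ with $m \le M$ and is therefore constant in $l$ for $l \ge M$, the random variables $U^l_{i_1,\ldots,i_k}$ stabilize almost surely to $U_{i_1,\ldots,i_k}$ at the same time that $g_k(\mathbf{X}_l^{i_1,\ldots,i_k})$ does.

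Sending $n \to \infty$ first and then $l \to \infty$, and using that the error $\binom{r}{2}/p_l \to 0$ and that almost sure convergence implies convergence in distribution, yields the corollary. The main subtlety is Step 3: the target variable $U_{i_1,\ldots,i_k}$ is defined via a formal ratio modulo $t$ of an infinite product over primes, and one must check that this makes rigorous sense as the almost sure limit of its $l$-truncations. The summability of $1/p_m^2$ over primes resolves this cleanly via Borel--Cantelli, while the fact that primes beyond $p_M$ are coprime to $t$ ensures that the modular interpretation of the denominator in $V^l_{i_1,\ldots,i_k}$ is unproblematic.
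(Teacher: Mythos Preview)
Your proposal is correct and follows essentially the same approach as the paper, which simply cites the continuous mapping theorem and Lemma~\ref{L:firstl} (implicitly paralleling the proof of Corollary~\ref{C:firstr}). You are in fact more careful than the paper about the one new wrinkle in the rational case: the limiting variables $U^l_{i_1,\ldots,i_k}$ genuinely depend on $l$ through $g_k(\mathbf{X}_l^{i_1,\ldots,i_k})$, and your Borel--Cantelli argument that they stabilize almost surely (because the extra prime factors $p_m$ with $m>M$ are units mod $t$ and eventually contribute trivially to $g_k$) is exactly the missing detail the paper's one-line proof leaves implicit.
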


\begin{proof}
This follows from the continuous mapping theorem and Lemma \ref{L:firstl}.
\end{proof}
  
\begin{theorem}\label{T:rationalgeneral}

Let $\alpha = s/t$ be a rational number in reduced form where $t = \prod\limits_{m=1}^M p_{m}^{e_m(t)}$.  For $\displaystyle{i_1 < ... < i_k}$, let $U_{i_1,...,i_k}$ be as defined in Corollary \ref{firstrrational}.  Define the process \[\mathfrak{E}_{\infty, k}^t = k!
\sum_{i_1 <...< i_k} g_{k}(\textbf{X}_\infty^{i_1,...,i_k}) \sum_{q \in \mathbb{Z}} \delta_{(q + U_{i_1,...,i_k}) g_{k}(\textbf{X}_\infty^{i_1,...,i_k})  /(L_{i_1}...L_{i_k}) } \]  Then $\mathfrak{E}_{n, k}^\alpha \to \mathfrak{E}_{\infty,k}^t$ weakly in the sense that we have the weak convergence of $\overline{\mathbb{R}}$-valued random variables $\displaystyle{\int f d \mathfrak{E}_{n,k}^\alpha \buildrel d \over \to \int f d \mathfrak{E}_{\infty,k}^t}$ for all continuous, compactly supported functions $f$.
\end{theorem}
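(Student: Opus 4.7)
The plan is to mirror the proof of Theorem \ref{T:generalmain} very closely, using Corollary \ref{firstrrational} in place of Corollary \ref{C:firstr} for the large-cycle contribution, and replacing the delicate discrepancy machinery of Sections \ref{S:Factorization classes}--\ref{S:logdisc} by a trivial arithmetic observation. Specifically, if $p/j \neq s/t$ and $\lvert p/j - s/t\rvert < T/n^k$, then $|pt-sj|$ is a nonzero integer, forcing $\lvert p/j - s/t\rvert \ge 1/(jt)$ and hence $j > n^k/(tT)$. Thus for any cycle of length $j \le n^k/(tT)$ the only eigenangle in the window $(\alpha - T/n^k, \alpha + T/n^k)$ is $\alpha$ itself, which occurs exactly when $t\mid j$, and that eigenangle maps to $0$ under the $n^k$-zoom.

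As in \eqref{e1} and \eqref{e2}, write $\xi_{n,k}^r$ for the restriction to the top $r$ cycles, $\eta_{n,k}^r := \mathfrak{E}_{n,k}^\alpha - \xi_{n,k}^r$, and define $\xi_{\infty,k}^r, \eta_{\infty,k}^r := \mathfrak{E}_{\infty,k}^t - \xi_{\infty,k}^r$ analogously using the rational $U_{i_1,\ldots,i_k}$ from Corollary \ref{firstrrational}. Corollary \ref{firstrrational} combined with the continuous mapping theorem yields $\int f\,d\xi_{n,k}^r \buildrel d \over \to \int f\,d\xi_{\infty,k}^r$ for every continuous, compactly supported $f$.

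Case A ($f(0)=0$, support in $(-T,T)$): split $\eta_{n,k}^r$ into the sub-processes $\nu_{n,s}^{r,c}$ from \eqref{nu} with $0 \le c < s \le k$. Since at least one index is outside the top $r$, $\lcm(L_{i_1}^{(n)},\ldots,L_{i_s}^{(n)}) \le n^k/r$, which is less than $n^k/(tT)$ once $r > tT$. The arithmetic observation above then forces all corresponding eigenangles in the window to sit at $\alpha$, hence at $0$ after the zoom, where they contribute $0$ to $\int f\,d\eta_{n,k}^r$ since $f(0)=0$. Therefore $\int f\,d\eta_{n,k}^r=0$ with probability tending to $1$. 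For the limiting remainder, any point of $\eta_{\infty,k}^r$ away from $0$ requires $\prod_u L_{i_u} \ge g_k(\mathbf{X}_\infty^{i_1,\ldots,i_k})/(tT)$; by \eqref{summoments} the tail $\sum_{i_1<\cdots<i_k,\, i_k>r}\mathbb{E}[\prod_u L_{i_u}]$ tends to $0$, and $\eta_{\infty,k}^r$ is monotone decreasing in $r$, so $\int f\,d\eta_{\infty,k}^r \to 0$ a.s.\ as $r\to\infty$. The standard triangle-inequality argument at the end of the proof of Theorem \ref{T:1dmain} then delivers $\int f\,d\mathfrak{E}_{n,k}^\alpha \buildrel d \over \to \int f\,d\mathfrak{E}_{\infty,k}^t$.

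Case B ($f(0)\neq 0$; take $f(0)>0$, the opposite sign being symmetric): by continuity $f \ge f(0)/2$ on some $(-\delta,\delta)$, hence
\[
\int f\,d\mathfrak{E}_{n,k}^\alpha \ge \tfrac{f(0)}{2}\sum_{j:\, t\mid j} C_{j,k}^{(n)}.
\]
Each $\sigma$-cycle of length in $t\mathbb{Z}$ yields at least one $\sigma_k$-cycle of length divisible by $t$ (take $k$-tuples lying entirely inside that cycle), and Proposition \ref{P:Poissontotal} shows the count of $\sigma$-cycles with length in $t\mathbb{Z} \cap [1,\sqrt{n}]$ is asymptotically Poisson with mean $\sim (\theta/(2t))\log n$, which diverges; hence $\int f\,d\mathfrak{E}_{n,k}^\alpha \to +\infty$ in probability. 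On the limit side, unwinding the definition in Corollary \ref{firstrrational} gives $V_{i_1,\ldots,i_k} \equiv 0 \pmod t$ iff $\max_u X_{mi_u} \ge e_m(t)$ for every $m \le M$, an event of uniform positive probability $\prod_{m=1}^M\bigl(1-(1-p_m^{-e_m(t)})^k\bigr)$ that is independent across the disjoint tuples $(ki+1,\ldots,k(i+1))$. The second Borel--Cantelli lemma thus places infinitely many atoms of $\mathfrak{E}_{\infty,k}^t$ at $0$ a.s., giving $\int f\,d\mathfrak{E}_{\infty,k}^t = +\infty$ a.s., which yields weak convergence in $\overline{\mathbb{R}}$.

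The main subtlety, once Lemma \ref{unigeneralrational} is in hand, is not analytic but structural: one must verify that the discrete variables $V_{i_1,\ldots,i_k}$ vanish modulo $t$ in exactly the limiting pattern of the divisibility condition $t\mid \lcm(L_{i_1}^{(n)},\ldots,L_{i_k}^{(n)})$, so that the mass at $0$ in the finite and limit processes matches. This matching is forced by the specific form of $V$ in Corollary \ref{firstrrational}, after which the rational structure trivializes the small-cycle estimate and no analogue of the discrepancy bounds of Theorems \ref{T:technical}--\ref{T:maindisc} is required.
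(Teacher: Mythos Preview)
Your approach is correct and essentially identical to the paper's: both use the arithmetic observation $J_k \cap [n^k/(tT)] = t\mathbb{Z} \cap [n^k/(tT)]$ to show the remainder is supported at $0$, then invoke Corollary~\ref{firstrrational} for the top cycles (the paper simply fixes $r=\lceil tT\rceil$ rather than sending $r\to\infty$, which spares the separate a.s.\ analysis of $\eta_{\infty,k}^r$). One minor correction in Case~A: your decomposition of $\eta_{n,k}^r$ into the $\nu_{n,s}^{r,c}$ with $c<s$ omits the components $\nu_{n,s}^{r,s}$ for $s<k$, where all $s$ cycles lie among the top $r$ and ``at least one index outside the top $r$'' fails; these are covered instead by the simpler bound $\lcm \le n^s \le n^{k-1} < n^k/(tT)$ for large $n$.
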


\begin{proof}

Let $f$ have support contained in the interval $(-T, T)$ and define $J_k(n)$ as in \eqref{J_k}.  Just as in equation \eqref{J set}, we have the set equality \begin{equation} J_k \cap [n^k/(t T)] = t\mathbb{Z} \cap [n^k/(tT)]. \end{equation}  If $j \in t\mathbb{Z} \cap [n^k/(tT)]$, the interval $(\alpha - T/n^k, \alpha+ T/n^k)$ contains exactly one eigenangle corresponding to the $\sigma_k$-cycle of length $j$ (namely $\alpha$).

Write $\mathfrak{E}_{n,k}^\alpha = \xi_{n,k}^{tT} + \eta_{n,k}^{tT}$ where $\xi_{n,k}^{tT}$ and $\eta_{n,k}^{tT}$ are as defined in \eqref{e1}.  Then 
\begin{equation}
\int f d \xi_{n,k}^{tT} = k! \sum_{1 \le i_1 <...< i_k \le tT} \frac{L_{i_1}^{(n)}...L_{i_k}^{(n)}}{\lcm(L_{i_1}^{(n)},...,L_{i_k}^{(n)})} \sum_{q \in \mathbb{Z}} f(n^k (q/\lcm(L_{i_1}^{(n)},...,L_{i_k}^{(n)}) - \alpha ))
\end{equation} 
If $r > tT$, then $L_r^{(n)} < n/(tT)$ and therefore $\lcm(L_r^{(n)}, L_{i_1}^{(n)},...,L_{i_{k-1}}^{(n)}) < n^k/(tT)$.  Then if $f(0) \ge 0$,  
\begin{equation}
f(0) \bigg(\sum_{\substack{1 \le j \le n^k/(tT) \\ t|j}} C_{j,k}^{(n)} - \binom{\lceil tT \rceil }{k} \bigg) \le \int f d \eta_{n,k}^{tT} \le f(0) \sum_{\substack{1 \le j \le n^k/(tT) \\ t|j}} C_{j,k}^{(n)} 
\end{equation}
Let $\mathfrak{E}_{\infty,k}^t = \xi_{\infty, k}^{tT} + \eta_{\infty, k}^{tT}$ where $\xi_{\infty, k}^{tT}$ and $\eta_{\infty, k}^{tT}$ are defined as in \eqref{e2} with $\mathfrak{E}_{\infty,k}^t$ in place of $\mathfrak{E}_{\infty,k}^*$ and with the new definition of the variables $U_{i_1,...,i_k}$.  Then 
\begin{equation}
\int f d \mathfrak{\xi}_{\infty,k}^{tT} = k! \sum_{1 \le i_1 <...< i_k \le tT} g_{k}(\textbf{X}_\infty^{i_1,...,i_k}) \sum_{q \in \mathbb{Z}} f( (U_{i_1,...,i_k} + q)g_{k}(\textbf{X}_\infty^{i_1,...,i_k}) /(L_{i_1}...L_{i_k}) )
\end{equation}
and since $L_r < 1/(tT)$ for $r > tT$, we have 
\begin{equation}
\int f d \mathfrak{\eta}_{\infty,k}^{tT} = k! f(0) \sum_{\substack{1 \le i_1 <...< i_k \\ i_k > tT}} g_{k}(\textbf{X}_\infty^{i_1,...,i_k}) \mathbbm{1}(U_{i_1,...,i_k} = 1) 
\end{equation}
If $f(0) > 0$, then $\int f d \mathfrak{E}_{n,k}^\alpha$ converges weakly to the $\overline{\mathbb{R}}$-valued random variable $\int f d \mathfrak{E}_{\infty,k}^t$, which is infinite almost surely by the Borel-Cantelli lemma.     

Now we can assume $f(0) = 0$.  By Corollary \ref{firstrrational} and the continuous mapping theorem, \[ \int f d \xi_{n,k}^{tT} \buildrel d \over \to \int f d \xi_{\infty, k}^{tT} \] which proves the theorem.
\end{proof}

\begin{remark} 

At rational $\alpha = s/t$, the corresponding eigenvalue point processes for the $k$-subset and $S^{(n-k,1^k)}$ representations both converge weakly to $\frac{1}{k!} \mathfrak{E}_{\infty, k}^t$ by the reasoning in Remark \ref{otherreps}.  
\end{remark}

\section{Power series representation for gap probability}\label{S:power}

Corollary \ref{C:eigengapformula} gives a formula for the limiting eigenvalue gap probability $P_k^\theta(y_1,y_2)$ for irrational $\alpha$ with finite irrationality measure.  In this section, we give a procedure to compute the expectation explicitly when $y_2 - y_1 \le k^k$ and $\theta=1$.  To simplify the resulting combinatorics, it is convenient to borrow some terminology from graph theory.  The following definitions introduce basic notions about hypergraphs.  

\begin{definition}
A hypergraph is a generalization of a graph in which an edge can connect any number of vertices.  Formally, a hypergraph $G$ is a pair $G = (V, E)$ where $V$ is a set of vertices and $E$ is a multiset of non-empty subsets of $V$.  A subset of $V$ of size $j$ is called a $j$-edge.  Since $E$ is a multiset, multi-edges or multiple edges on the same set of vertices in hypergraphs are allowed.  A hypergraph isomorphism between $(V_1, E_1)$ and $(V_2, E_2)$ is a bijection between vertex sets $V_1$ and $V_2$ that respects the edge multisets $E_1$ and $E_2$.  Given a vertex subset $W = \{v_1,...,v_{|W|}\} \subseteq V$, we define the induced subhypergraph $\displaystyle{G^W = (W, \{ e \cap W: e \in E(G), e \cap W \neq \emptyset \}) }$.  Given two hypergraphs $G_1 = (V_1,E_1)$ and $G_2 = (V_2,E_2)$, the union is given by $G_1 \cup G_2 = (V_1 \cup V_2, E_1 \cup E_2)$.  (Note that $V_1 \cup V_2$ is a union of sets while $E_1 \cup E_2$ is a union of multisets).   

\end{definition}

\begin{definition}
A labeled hypergraph is a hypergraph whose vertices have been assigned distinct labels in $\mathbb{N}$.  We define (nonstandard terminology) a label isomorphism, $\phi: G_1 \to G_2$, between labeled hypergraphs $G_1$ and $G_2$ to be a hypergraph isomorphism that respects ordering of the vertices.  In other words, if we have two vertices $a,b \in V(G_1)$ such that $a < b$, then $\phi(a) < \phi(b)$.  This is denoted by $G_1 \cong^l G_2$.  
\end{definition}

\begin{definition}
The $j$-degree of a vertex of a hypergraph is the total number of $j$-edges incident to the vertex.  The unlabeled $j$-degree sequence is the non-decreasing sequence of its vertex $j$-degrees.  The labeled $j$-degree sequence is the list of vertex $j$-degrees by labelling.   
\end{definition}

\begin{definition}
Let $\mathscr{B}_j^{m}$ be the set of labelled hypergraphs whose edge set consists of $m$ distinct $j$-edges and which do not have isolated vertices, i.e. every vertex lies in some edge $e$. 
Let $\mathscr{A}_j^{m} \subset \mathscr{B}_j^m$ be the finite subset of labelled hypergraphs whose vertex label set is of the form $\{1, 2,...,r\}$ for some $r$.  
\end{definition}

We can think of the random variables $L_1, L_2,...$ as vertices of a labeled hypergraph $G$ and each product $\prod\limits_{u=1}^k L_{i_u}$ as a $k$-edge containing the vertices $L_{i_1},...,L_{i_k}$.  Then each term in the product expansion of \eqref{gapformula} corresponds to a unique labelled hypergraph $G \in \mathscr{B}_k^m$ for some $m$. 

For $G \in \mathscr{B}_k^{m}$ , let 
\begin{equation}\label{psygib} \psi(G) = \mathbb{E} \bigg[ \prod_{(i_1,...,i_k) \in E(G)} \frac{1}{g_k(\mathbf{X}_\infty^{i_1,...,i_k})} \bigg] \end{equation}

Note that if $\displaystyle{H \in \mathscr{B}_k^{m}}$ is isomorphic to $G$, then $\psi(H) = \psi(G)$ since $\psi(H)$ only depends on the isomorphism class of $H$ (doesn't care about the labelling of the vertices). 

For a hypergraph $\displaystyle{H = (V,E) \in \mathscr{B}_k^{m}}$ with labeled $k$-degree sequence $(d_1,...,d_{|V|})$, define 
\begin{equation} \label{LH} L^H := L_{1}^{d_1} L_{2}^{d_2}...L_{|V|}^{d_{|V|}}
\end{equation}   
With this notation, it is easy to see the following:
\begin{lemma} \label{L:powerseriesformula}
Let $y_2 - y_1 \le k^k$.  Write $P^\theta_k(y_1,y_2) = f(y_1 - y_2)$ where $\displaystyle{f(x) = \sum_{m=0}^\infty c_m x^m}$.  The coefficient for the $m^{\textrm{th}}$ term of the power series representation is given by \begin{equation} c_m = \sum_{H \in \mathscr{B}^m_k} \psi(H) \mathbb{E}[L^H] = \sum_{G \in \mathscr{A}_k^{m}} \psi(G) \sum_{H \cong^l G} \mathbb{E}[L^H] \end{equation}
\end{lemma}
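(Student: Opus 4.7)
The plan is to expand the infinite product in \eqref{gapformula} termwise as a power series in $x := y_2 - y_1$ and identify each resulting coefficient combinatorially. First, writing $w_e := \prod_{u=1}^k L_{i_u}/g_k(\mathbf{X}_\infty^{i_1,\ldots,i_k})$ for $e = (i_1,\ldots,i_k)$, the hypothesis $x \le k^k$ together with the bound $\prod_u L_{i_u} \le 1/k^k$ guarantees that each factor $1 - xw_e$ lies in $[0,1]$ a.s., while
\[\sum_e w_e \;\le\; \sum_{i_1<\cdots<i_k} \prod_u L_{i_u} \;\le\; \tfrac{1}{k!}\Bigl(\sum_i L_i\Bigr)^{\!k} \;=\; \tfrac{1}{k!} \quad\text{a.s.}\]
Consequently $\prod_e(1+xw_e) \le \exp(x/k!) < \infty$ a.s., and this dominates the absolute series $\sum_m x^m \sum_{|S|=m} \prod_{e \in S} w_e$ both pointwise and in expectation. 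A standard dominated-convergence / Fubini--Tonelli argument then justifies expanding the product over edges and interchanging the sum with $\mathbb{E}$, giving
\[P_k^\theta(y_1,y_2) \;=\; \sum_{m \ge 0} (y_1-y_2)^m \sum_{\{e_1,\ldots,e_m\}} \mathbb{E}\Bigl[\prod_{j=1}^m w_{e_j}\Bigr].\]

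Next I would identify unordered $m$-element sets of distinct $k$-edges on $\mathbb{N}$, together with their non-isolated vertex sets, with the labeled hypergraphs $H \in \mathscr{B}_k^m$. Using the independence of $(X_{mi})$ from $(L_i)$, the inner expectation $\mathbb{E}[\prod_j w_{e_j}]$ splits as a product of an $X$-expectation and an $L$-expectation. The $X$-factor is precisely $\psi(H)$ by \eqref{psygib}, while collecting each $L_i$ according to the labeled $k$-degree of vertex $i$ in $H$ reconstructs $\mathbb{E}[L^H]$ from \eqref{LH}. This yields the first equality $c_m = \sum_{H \in \mathscr{B}_k^m} \psi(H)\,\mathbb{E}[L^H]$.

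Finally, for the second equality I would partition $\mathscr{B}_k^m$ into label-isomorphism classes. Each class contains a unique representative $G \in \mathscr{A}_k^m$, obtained by relabelling the $|V(H)|$ vertices as $\{1,\ldots,|V(H)|\}$ in their induced order, and $\psi$ depends only on the unlabeled hypergraph isomorphism type, so $\psi(H) = \psi(G)$ throughout the class. Pulling $\psi(G)$ out of the inner sum produces the stated formula. The only non-routine step is the convergence/Fubini justification above; the remainder is combinatorial bookkeeping.
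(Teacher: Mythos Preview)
Your proposal is correct and follows exactly the approach the paper has in mind; indeed the paper's ``proof'' consists only of the phrase ``it is easy to see the following,'' leaving the product expansion, the independence splitting $\mathbb{E}[\prod_j w_{e_j}] = \psi(H)\,\mathbb{E}[L^H]$, and the grouping by label-isomorphism class all implicit. Your dominated-convergence/Fubini justification via $\sum_e w_e \le 1/k!$ a.s.\ is a detail the paper omits entirely, so if anything your argument is more complete.
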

Let us first compute $\displaystyle{\sum_{H \cong^l G} \mathbb{E}[L^H]}$ for each $G \in \mathscr{A}_k^m$.  We've essentially already seen how to calculate this sum in Section \ref{S:k=1irrational}.  Let $i_1,...,i_v$ be the vertices of $H$ with $k$-degrees $d_1,...,d_v$.  Set $d = d_1 +...+d_v$.  If $\theta = 1$, then from the moments formula \eqref{griffiths} we have 
\begin{multline}
\nonumber\mathbb{E}[L_{i_1}^{d_1} L_{i_2}^{d_2}...L_{i_v}^{d_v}]  = \frac{1}{d!} \int_0^\infty \int_{x_v}^\infty...\int_{x_2}^\infty \frac{E(x_1)^{i_1 - 1}}{(i_1 - 1)!} \frac{(E(x_2) - E(x_1))^{i_2 - i_1 - 1}}{(i_2 - i_1 - 1)!}... \\
...\frac{(E(x_v) - E(x_{v-1}))^{i_v - i_{v-1} - 1}}{(i_v - i_{v-1} - 1)!}  x_1^{d_1-1} e^{-x_1}...x_v^{d_v-1} e^{-x_v} e^{-E_1(x_v)} dx_1 dx_2... dx_v
\end{multline}

By \eqref{telescope},
\begin{equation}\label{LHformula} 
\begin{multlined}
\sum_{H \cong^l G} \mathbb{E}[L^H] = \sum_{i_1 <...<i_v } \mathbb{E}[L_{i_1}^{d_1} L_{i_2}^{d_2}...L_{i_v}^{d_v}] \\=\frac{1}{d!} \int_0^\infty \int_{x_v}^\infty...\int_{x_2}^\infty x_1^{d_1-1} e^{-x_1}...x_v^{d_v-1} e^{-x_v} dx_1...dx_v
\end{multlined}
\end{equation}
This is an elementary integral and evaluates to a rational number for any choice of $d_1,...,d_v$.  One can compute the integrals successively by making use of the identity 
\begin{equation}
\Gamma(s,x) = (s-1)! e^{-x} \sum_{j=0}^{s-1} \frac{x^j}{j!}
\end{equation}
for positive integers $s$ where $\Gamma(s,x) = \int_x^\infty t^{s-1}e^{-t}dt$ is the incomplete Gamma function.   

We now give a procedure to compute $\psi(G)$ for each $G \in \mathscr{A}_k^m$.  First, for an arbitrary hypergraph $G$ with vertex set $V =  \{v_{1},...,v_{|V|} \}$, define
\begin{equation} \label{SG} 
S_{G}^p = \sum_{e_{v_1},...,e_{v_{|V|}} \in \mathbb{N}} \prod_{i=1}^{|V|} \frac{1}{p^{e_{v_i}}} \prod_{(v_{i_1},...,v_{i_j}) \in E(G)} \frac{p^{\max(e_{v_{i_1}},...,e_{v_{i_j}})}}{ p^{e_{v_{i_1}}+...+ e_{v_{i_j}}}}   
\end{equation} 
Then for $G \in \mathscr{A}_k^m$, summing over $\mathbf{A} \in \mathbb{N}^{l \times |V|}$, we have 
\begin{align}
\psi(G) \nonumber=& \mathbb{E} \bigg[ \prod_{(i_1,...,i_k) \in E(G)} \frac{1}{g_k(\mathbf{X}_\infty^{i_1,...,i_k})} \bigg]\\
\nonumber=& \lim_{l \to \infty } \sum_{\mathbf{A}} \bigg( \prod_{m=1}^l \Big(1 - \frac{1}{p_m} \Big)^{|V|} \prod_{i=1}^{|V|} \frac{1}{p_m^{\mathbf{A}_{m i}}} \bigg) \prod_{(i_1,...,i_k) \in E(G)} \frac{1}{g_k(\mathbf{a}_{i_1},...,\mathbf{a}_{i_k})} \\
\nonumber=& \prod_{p} \Big(1 - \frac{1}{p}\Big)^{|V|} \sum_{e_1,...,e_{|V|} \in \mathbb{N}} \prod_{i=1}^{|V|} \frac{1}{p^{e_i}} \prod_{(i_1,...,i_k) \in E(G)} \frac{p^{\max(e_{i_1},...,e_{i_k})}}{ p^{e_{i_1} +...+e_{i_k}}} \\
=& \prod_{p} \Big(1 - \frac{1}{p}\Big)^{|V|} S_{G}^p
\end{align}

For any hypergraph $G$, we have the following recursive formula for computing $S_G^p$:

\begin{lemma} \label{L:recursive}
Let $G = (V, E)$ be a hypergraph and $U \subset V$ be a vertex subset.  Then
\large
\begin{equation}
S_{G}^p = \bigg(1 - \dfrac{1}{p^{|V| + \sum (j-1)|E_j^V|}}\bigg)^{-1} \sum_{U \subsetneq V} \dfrac{1}{p^{|U| + \sum (j-1) |E_j^U|}} S_{G^U}^p
\end{equation}
\normalsize
where $E_j^U$ is the $j$-edge set of the subhypergraph $G^U$ induced by the vertex subset $U \subset V$ (counting multiedges).  The base case is $S_{\varnothing} = 1$.  
\end{lemma}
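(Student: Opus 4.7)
The plan is to derive this recursion by stratifying the defining sum of $S_G^p$ according to which vertices carry the minimum exponent value. Concretely, for each tuple $(e_{v_1},\ldots,e_{v_{|V|}}) \in \mathbb{N}^{|V|}$, let $W = \{v \in V : e_v = 0\}$ be the subset of vertices attaining exponent zero, and set $U = V \setminus W$. Splitting the sum according to whether $W$ is empty, or equivalently whether all $e_v \geq 1$, gives the dichotomy I will exploit.

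First I would handle the case $W = \varnothing$ via the substitution $f_v = e_v - 1 \geq 0$ applied uniformly to every vertex. Each factor $p^{-e_v}$ acquires an extra $p^{-1}$, producing an overall $p^{-|V|}$. Each $j$-edge contributes $p^{\max(e)-\sum e} = p^{-(j-1)}\,p^{\max(f)-\sum f}$, since shifting all coordinates of the edge by $1$ raises the maximum by $1$ and the sum by $j$. The residual sum over $(f_v)$ is $S_G^p$ itself, so this block contributes $p^{-|V|-\sum_j (j-1)|E_j^V|}\,S_G^p$.

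Next I would handle the case $W \neq \varnothing$, stratifying further by $W$, or equivalently by $U = V \setminus W$, which ranges over proper subsets of $V$. For a fixed $U$: vertices in $W$ contribute $1$ to $\prod p^{-e_v}$; any edge of $G$ contained in $W$ has maximum and sum both equal to $0$ and so contributes $1$; and any edge $E \in E(G)$ meeting $U$ contributes $p^{\max(e|_{E \cap U}) - \sum(e|_{E \cap U})}$, which is exactly the factor attached to the edge $E \cap U$ in the induced subhypergraph $G^U$. Shifting $f_v = e_v - 1$ for $v \in U$ and repeating the accounting from the previous paragraph then yields $p^{-|U|-\sum_j (j-1)|E_j^U|}\,S_{G^U}^p$; the limiting case $U = \varnothing$ correctly picks up the all-zero tuple with $S_{G^\varnothing}^p = 1$.

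Combining the two cases produces the linear identity
\[
S_G^p \;=\; p^{-|V|-\sum_j (j-1)|E_j^V|}\,S_G^p \;+\; \sum_{U \subsetneq V} p^{-|U|-\sum_j (j-1)|E_j^U|}\,S_{G^U}^p,
\]
and solving for $S_G^p$ yields the stated formula (the coefficient $1 - p^{-|V|-\sum_j (j-1)|E_j^V|}$ is strictly positive whenever $V \neq \varnothing$, so the division is legitimate). The only point requiring real care is the bookkeeping that an edge $E$ of $G$ meeting $U$ contributes the same local factor as $E \cap U$ does in $G^U$, and that edges entirely inside $W$ drop out; once that observation is in place, the recursion is immediate.
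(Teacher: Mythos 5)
Your proof is correct and takes essentially the same approach as the paper: stratify the defining sum by which vertices attain the minimum exponent, then shift exponents down by one to re-expose $S_G^p$ or $S_{G^U}^p$ together with the factor $p^{-|U|-\sum(j-1)|E_j^U|}$. The only difference is presentational — the paper sums a geometric series over all values $c$ of the minimum to produce the $(1-p^{-(\cdot)})^{-1}$ prefactor, whereas you isolate the self-referential $W=\varnothing$ block and solve the resulting linear equation in $S_G^p$.
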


\begin{proof}

Partition the domain of summation $(e_{v_1},...,e_{v_{|V|}}) \in \mathbb{N}^{|V|}$ of $S_{G}^p$ into blocks $B_c$ such that $\min(e_{v_1},...,e_{v_{|V|}}) = c$.  In each block $B_c$, we ``factor out'' $p^c$ from each vertex, meaning we take out a total factor of 
\begin{equation} 
\frac{1}{p^{|V|c}} \prod_{(v_{i_1},...,v_{i_j}) \in E(G)} \frac{p^c}{ p^{jc}} 
\end{equation} 
from 
\begin{equation}
\sum_{(e_{v_1},...,e_{v_{|V|}}) \in B_c} \prod_{i=1}^{|V|} \frac{1}{p^{e_{v_i}}} \prod_{(v_{i_1},...,v_{i_j}) \in E(G)} \frac{p^{\max(e_{v_{i_1}},...,e_{v_{i_j}})}}{ p^{e_{v_{i_1}}+...+ e_{v_{i_j}}}}
\end{equation} 
Summing over all $c \in \mathbb{N}$ yields the $\bigg(1 - \dfrac{1}{p^{|V| + \sum (j-1)|E_j^V|}}\bigg)^{-1}$ term.  After this factoring, at least one vertex will be left with an exponent $e_{v_{i}} = 0$.  Thus, we are reduced to summing over all subhypergraphs $G^U$ with $U = (u_1,...,u_{|U|}) \subsetneq V$ and exponents $(e_{u_1},...,e_{u_{|U|}})$ such that $e_{u_i} > 0$.  In each subhypergraph $G^U$, we have room to factor out at least one factor of $p$ from each vertex.  Putting this all together yields the formula.
\end{proof}
With Lemma \ref{L:recursive}, we now have a procedure to compute $\psi(G) = \prod\limits_p \Big(1 - \frac{1}{p} \Big)^{|V|} S_G^p$ for all $G \in \mathscr{A}_k^m$.

The following examples tabulate $\displaystyle{S_{G}^p}$ for a few small hypergraphs $G$.  For ease of notation, we leave off the superscript $p$ and set $q = 1/p$.  Note that if the hypergraph $G$ is not connected, then this recursive formula should be applied on each component since if $G = G_1 \cup G_2$ where $G_1$ and $G_2$ are disjoint, then $S_G = S_{G_1} S_{G_2}$ and $\psi(G) = \psi(G_1)\psi(G_2)$.  

\begin{example}[Complete (nonhyper) graphs] \[S_{K_n} = \bigg(1 - \frac{1}{p^{n + \binom{n}{ 2} }}\bigg)^{-1} \sum_{W \subsetneq V} \frac{1}{p^{|W| + |E^W|}} S_{G^W} = \bigg(1 - \frac{1}{p^{n + \binom{n}{2} }}\bigg)^{-1} \bigg(1+ \sum_{i=1}^{n-1} \binom{n}{i} p^{-\frac{i(i+1)}{2} } S_{K_i} \bigg) \] 
\begin{align*}
S_{K_1} &= \frac{1}{1-q} \\
S_{K_2} &= \frac{1+q}{(1-q)(1-q^3)} \\
S_{K_3} &= \frac{1+2q+2q^3+q^4}{(1-q)(1-q^3)(1-q^6)} \\
S_{K_4} &= \frac{1+3q+5q^3+3q^4+3q^6+5q^7+3q^9 + q^{10}}{(1-q)(1-q^3)(1-q^6)(1-q^{10})} 
\end{align*}

\end{example}

\begin{example}[Hypergraphs $G_k \in \mathscr{B}_k^{1}$ ] 
\[ S_{G_k} = \bigg(1 - \frac{1}{p^{2k-1}}\bigg)^{-1} \bigg(1+ \sum_{i=1}^{k-1} \binom{k}{i} \frac{1}{p^{2i -1}} S_{G_i} \bigg) \] \begin{align*}
S_{G_1} &= \frac{1}{1-q} \\
S_{G_2} &= \frac{1+q}{(1-q)(1-q^3)} \\
S_{G_3} &= \frac{1+2q+2q^3+q^4}{(1-q)(1-q^3)(1-q^5)} \\
S_{G_4} &= \frac{1+3q+5q^3+3q^4+3q^5+5q^6+3q^8+q^9}{(1-q)(1-q^3)(1-q^5)(1-q^7)} 
\end{align*}
\end{example} 

\begin{example}[$H_1 \cup H_2$ where $H_1 \in \mathscr{B}_{h_1}^{1}$ and $H_2 \in \mathscr{B}_{h_2}^{1}$]  \label{2together}
Any such hypergraph $G = H_1 \cup H_2$ is characterized by three numbers: $h_1, h_2,$ and $h_3 := |V(H_1) \cap V(H_2)|$.  We denote the corresponding sum $S_G$ by $\displaystyle{S_{h_1, h_2, h_3}}$.  Note that if the vertex sets $V(H_1)$ and $V(H_2)$ are disjoint, $\displaystyle{S_{h_1,h_2,0} = S_{H_1} S_{H_2}}$.  
\begin{align*}
S_{1, 1, 0} &= (S_{G_1})^2 = \frac{1}{(1-q)^2} \\
S_{2, 1, 0} &= (S_{G_2})(S_{G_1}) = \frac{1+q}{(1-q)^2(1-q^3)} \\
S_{2, 1, 1} &= S_{G_2} = \frac{1+q}{(1-q)(1-q^3)} \\
S_{2, 2, 0} &= (S_{G_2})^2 = \frac{(1+q)^2}{(1-q)^2(1-q^3)^2} \\
S_{2, 2, 1} &= \frac{1}{1-q^5}(1+3q S_{G_1}+2 q^3 S_{G_2} + q^2 (S_{G_1})^2) \\
&= \frac{1}{1-q^5} \bigg(1+3\frac{q}{1-q} + 2 q^3 \frac{1+q}{(1-q)(1-q^3)} + \frac{q^2}{(1-q)^2} \bigg) \\
S_{2,2,2} &= \frac{1}{1-q^4}(1+ 2q S_{G_1}) = \frac{1-q^2}{(1-q)^2(1-q^4)} \\
\end{align*}
This process can be continued as far as desired.
\end{example}

Out of curiosity, we record the first few terms of the power series $P^1_2(y_1,y_2)$ when $k=2$.


\begin{corollary}
For $y_2 - y_1 \le 2^2 = 4$, we have 
\begin{equation}
P^1_2(y_1,y_2) \approx 1 - 0.18269(y_2-y_1) + 0.01448(y_2-y_1)^2 + O((y_2-y_1)^3) 
\end{equation}
\end{corollary}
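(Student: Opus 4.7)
The plan is to apply Lemma \ref{L:powerseriesformula} to compute the first three Taylor coefficients $c_0, c_1, c_2$ of the power series $f$ for which $P^1_2(y_1,y_2) = f(y_1 - y_2)$, giving $P^1_2(y_1,y_2) = 1 - c_1(y_2 - y_1) + c_2(y_2-y_1)^2 + O((y_2-y_1)^3)$. Trivially $c_0 = 1$. For $c_1$, the only element of $\mathscr{A}^1_2$ is the single 2-edge $G_2$ on $\{1,2\}$ with labelled degree sequence $(1,1)$. Equation \eqref{summoments} gives $\sum_{i_1 < i_2}\mathbb{E}[L_{i_1}L_{i_2}] = 1/4$, and Lemma \ref{L:recursive} applied to $G_2$ yields $S^p_{G_2} = (1+1/p)/[(1-1/p)(1-1/p^3)]$, so that $\psi(G_2) = \prod_p (1-1/p)^2 S^p_{G_2} = \prod_p (1-1/p^2)/(1-1/p^3) = \zeta(3)/\zeta(2) = 6\zeta(3)/\pi^2$. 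Hence $c_1 = 3\zeta(3)/(2\pi^2) \approx 0.18269$.

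For $c_2$, I enumerate $\mathscr{A}^2_2$. Since edges of hypergraphs in $\mathscr{B}^m_j$ must be distinct, no multi-edge case arises, and the labelled hypergraphs split into two unlabelled isomorphism types: (i) two disjoint 2-edges on $\{1,2,3,4\}$, which admit three labellings $\{\{1,2\},\{3,4\}\}$, $\{\{1,3\},\{2,4\}\}$, $\{\{1,4\},\{2,3\}\}$, each of labelled degree sequence $(1,1,1,1)$ and with $\psi = \psi(G_2)^2$ by independence of disjoint columns of $\mathbf{X}_\infty$; and (ii) two 2-edges sharing one vertex on $\{1,2,3\}$, which admit three labellings with respective degree sequences $(2,1,1), (1,2,1), (1,1,2)$ and a common $\psi$-value, call it $\psi_\star$. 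For type (i), equation \eqref{summoments} gives $\sum_{i_1<\cdots<i_4} \mathbb{E}[L_{i_1}\cdots L_{i_4}] = 1/(4!)^2 = 1/576$, so the total type (i) contribution is $3\psi(G_2)^2/576 = 3\zeta(3)^2/(16\pi^4)$. For type (ii), I evaluate the iterated integral \eqref{LHformula} for each of the three degree sequences via successive elementary antidifferentiation, obtaining rational numbers that sum to $1/48$; the total type (ii) contribution is thus $\psi_\star/48$.

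To evaluate $\psi_\star = \prod_p (1-1/p)^3 S^p_{2,2,1}$, I use the explicit rational expression for $S^p_{2,2,1}$ recorded in Example \ref{2together}. Expanding shows $(1-1/p)^3 S^p_{2,2,1} = 1 - 2/p^2 + O(1/p^3)$, so the product converges absolutely and is computed numerically to the required precision by taking Euler factors up to a modest prime cutoff. Combining, $c_2 = 3\zeta(3)^2/(16\pi^4) + \psi_\star/48 \approx 0.01448$, and substitution into $P^1_2(y_1,y_2) = 1 - c_1(y_2-y_1) + c_2(y_2-y_1)^2 + O((y_2-y_1)^3)$ yields the stated approximation.

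The main technical obstacle is that, unlike $\psi(G_2)$, the constant $\psi_\star$ does not simplify to classical zeta values: the shared-vertex structure couples the Euler factors across primes and there is no telescoping. However, since those factors are $1 + O(1/p^2)$, the product converges rapidly, making numerical evaluation to four decimal places a routine matter.
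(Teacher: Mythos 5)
Your proof is correct and follows essentially the same approach as the paper: apply Lemma~\ref{L:powerseriesformula}, enumerate the six labelled hypergraphs of $\mathscr{A}_2^1 \cup \mathscr{A}_2^2$, compute $\psi(G)$ via the $S_G^p$ products from Lemma~\ref{L:recursive}/Example~\ref{2together} (with $\psi(G_2) = \zeta(3)/\zeta(2)$ and the shared-vertex $\psi_\star$ evaluated numerically from its Euler product), and compute $\sum_{H \cong^l G}\mathbb{E}[L^H]$ from the Griffiths moment integral~\eqref{LHformula}. The only cosmetic difference is that you aggregate the three degree-sequence expectations $11/864 + 5/864 + 2/864$ into $1/48$ before multiplying by $\psi_\star$, whereas the paper lists them individually; the resulting $c_1$ and $c_2$ are identical.
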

\begin{proof}

Applying Lemma \ref{L:powerseriesformula}, we first compute $\psi(G)$ for $G \in \mathscr{A}_2^{1}$ and $G \in \mathscr{A}_2^{2}$.  For the linear term of the power series, there is only one graph $G_2 \in \mathscr{A}_2^{1}$ and \[\psi(G_2) = \prod_{p} (1-q)^2 \frac{1+q}{(1-q)(1-q)^3} = \prod_{p} \frac{1-q^2}{1-q^3} = \frac{\zeta(3)}{\zeta(2)} \]

For the quadratic term, there are 2 nonisomorphic graphs $H_1$ and $H_2$ (ignoring labelling) in $\mathscr{A}_2^{2}$.  These give 
\begin{align*}
\psi(H_1) = \prod_p (1-q)^3 S_{2,2,1} & = \prod_p (1-q)^3  \frac{1}{1-q^5} \bigg(1+3\frac{q}{1-q} +2 q^3 \frac{1+q}{(1-q)(1-q^3)} + \frac{q^2}{(1-q)^2} \bigg) \\
& = \prod_p \frac{(1-q)^2(1+2q+q^2+2q^3+q^4)}{(1-q^3)(1-q^5)} \approx 0.561356  
\end{align*}
\hspace{2 mm} $\displaystyle{ \psi(H_2) = (\psi(G_2))^2 = \left(\frac{\zeta(3)}{\zeta(2)}\right)^2 \approx 0.534015 }$ \vspace{12pt}

Now let us compute $\displaystyle{ \sum_{H \cong^l G } \mathbb{E}[L^H]}$ for $G$ in $\mathscr{A}_2^1$ or $\mathscr{A}_2^2$ using \eqref{LHformula}. 

For the linear term, \[\sum_{H \cong^l G_2 } \mathbb{E}[L^H] = \sum_{i_1 < i_2} \mathbb{E}[L_{i_1} L_{i_2}] = \left(\frac{1}{2!}\right)^2 \]

For the quadratic term, 
\begin{align*}
\sum_{i_1 < i_2 < i_3} \mathbb{E}[L_{i_1}^2 L_{i_2} L_{i_3}] &= \frac{11}{864} \\ 
\sum_{i_1 < i_2 < i_3} \mathbb{E}[L_{i_1} L_{i_2}^2 L_{i_3}] &= \frac{5}{864} \\
\sum_{i_1 < i_2 < i_3} \mathbb{E}[L_{i_1} L_{i_2} L_{i_3}^2] &= \frac{1}{432} \\
\sum_{i_1 < i_2 < i_3 < i_4} \mathbb{E}[L_{i_1} L_{i_2} L_{i_3} L_{i_4}] &= \left(\frac{1}{4!}\right)^2 
\end{align*}

Putting this together, the coefficient in front of the $(y_1-y_2)$ term is $\displaystyle{\frac{\zeta(3)}{4 \zeta(2)} \approx 0.18269}$. 

The coefficient in front of the $(y_1-y_2)^2$ term is
\begin{align*}
\sum_{G \in \mathscr{A}_2^{2}} \psi(G) \sum_{H \cong^l G} \mathbb{E}[L^H] &= \psi(H_1)\left(\frac{11}{864} + \frac{5}{864} + \frac{1}{432} \right) + 3\psi(H_2)\left(\frac{1}{4!}\right)^2 \\
&\approx 0.561356\left(\frac{11}{864} + \frac{5}{864} + \frac{1}{432}\right) + 3 \cdot 0.534015 \frac{1}{576} \approx 0.01448 
\end{align*}

\end{proof}

\section*{Acknowledgements}
The author wishes to thank his PhD advisor Steven Evans for helpful discussions and comments on this work. 

\bibliography{references}
\bibliographystyle{plain}


\end{document}